\pgfplotsset{compat = newest}
\newtheorem{theorem}{Theorem}[section]
\newtheorem{lemma}[theorem]{Lemma}
\newtheorem{proposition}[theorem]{Proposition}
\newtheorem{corollary}[theorem]{Corollary}
\theoremstyle{definition}
  \newtheorem{definition}[theorem]{Definition}
\theoremstyle{remark}
\newcommand{\N}{\mathbb{N}}
\newcommand{\Z}{\mathbb{Z}}
\newcommand{\R}{\mathbb{R}}
\newcommand{\Id}{\mathbf{Id}}
\newcommand{\id}{\mathbf{id}}
\newcommand{\eps}{\varepsilon}
\newcommand{\vphi}{\varphi}
\newcommand{\weakly}{\rightharpoonup}
\newcommand{\weaklystar}{\stackrel{*}{\rightharpoonup}}
\newcommand{\defas}{\coloneqq}
\newcommand{\sym}{\mathrm{sym}}
\newcommand{\elen}{\mathcal{W}^{\mathrm{el}}}
\newcommand{\cplen}{\mathcal{W}^{\mathrm{cpl}}}
\newcommand{\hyen}{\mathcal{H}}
\newcommand{\mechen}{\mathcal{M}}
\newcommand{\Deltatwo}{\Delta^{2}}
\newcommand{\toten}{\mathcal{E}}
\newcommand{\inten}{W^{\mathrm{in}}}
\newcommand{\Yid}{\mathcal{Y}_{\id}}
\newcommand{\Yidreg}{\mathcal{Y}^{\rm reg}_{\id}}
\newcommand{\hypotsclr}{\mathfrak{h}}
\def\Xint#1{\mathchoice
    {\XXint\displaystyle\textstyle{#1}}%
    {\XXint\textstyle\scriptstyle{#1}}%
    {\XXint\scriptstyle\scriptscriptstyle{#1}}%
    {\XXint\scriptscriptstyle\scriptscriptstyle{#1}}%
    \!\int}
\def\XXint#1#2#3{{\setbox0=\hbox{$#1{#2#3}{\int}$}
      \vcenter{\hbox{$#2#3$}}\kern-.5\wd0}}
\def\mint{\Xint-}
\newcommand{\Om}{\Omega}
\newcommand{\diss}{\mathcal{R}}
\newcommand{\indic}{\mathds{1}}
\newcommand{\elpot}{W^{\mathrm{el}}}
\newcommand{\cplpot}{W^{\mathrm{cpl}}}
\newcommand{\hypot}{H}
\newcommand{\felpot}{W}
\newcommand{\disspot}{R}
\newcommand{\pl}{\partial}
\newcommand{\yst}[1]{y_{\tau}^{(#1)}}
\newcommand{\ysts}[1]{y_{\tau}^{(#1)}}
\newcommand{\tst}[1]{\theta_{\tau}^{(#1)}}
\newcommand{\tsts}[1]{\theta_{\tau}^{(#1)}}
\newcommand{\lst}[1]{f_{\tau}^{(#1)}}
\newcommand{\wst}[1]{w_{\tau}^{(#1)}}
\newcommand{\wsts}[1]{w_{\tau}^{(#1)}}
\newcommand{\fst}[1]{f_\tau^{(#1)}}
\newcommand{\bt}{\theta_\flat}
\newcommand{\btst}[1]{\theta_{\flat, \tau}^{(#1)}}
\newcommand{\hc}{\mathbb{K}}
\newcommand{\hcm}{\mathcal{K}}
\newcommand{\drate}{\xi}
\newcommand{\haus}{\mathcal{H}}
\newcommand{\ddif}{\delta_\tau}
\newcommand{\aC}{C_0}
\newcommand{\intQ}{\int_I\int_\Omega}
\newcommand*{\di}{\mathop{}\!\mathrm{d}}
\DeclareMathOperator{\diver}{div}
\DeclarePairedDelimiterX\setof[1]\{\}{#1}
\DeclarePairedDelimiterX\abs[1]\lvert\rvert{#1}
\DeclarePairedDelimiterX\norm[1]\lVert\rVert{#1}
\DeclarePairedDelimiterX\sprod[2]\langle\rangle{#1, #2}
\newcommand{\sa}{\color{black}}
\newcommand{\rb}{\color{black}}
\newcommand{\ee}{\color{black}}
\definecolor{darkblue}{rgb}{0.0, 0.0, 0.55}
\newcommand{\seb}[1]{{\color{black} #1}}
\newcommand{\ste}{\color{black}}
\newcommand{\BBB}{\color{black}}
\newcommand{\EEE}{\color{black}}
\newcommand{\RB}{\color{black}}
\newcommand{\MMM}{\color{black}}
\newcommand{\ZZZ}{\color{black}}
\numberwithin{equation}{section}
\begin{document}
\title{Thermo-elastodynamics of nonlinearly viscous solids}

\author[S. Almi]{Stefano Almi}
\address[Stefano Almi]{Department of Mathematics and Applications ``R.~Caccioppoli'', University of Naples Federico II, Via Cintia, Monte S. Angelo, 80126 Napoli, Italy.}
\email{stefano.almi@unina.it}

\author[R.~Badal]{Rufat Badal}
\address[Rufat Badal]{
  Department of Mathematics \\
  Friedrich-Alexander Universit\"at Erlangen-N\"urnberg \\
  Cauerstr.~11, D-91058 Erlangen, Germany
}
\email{rufat.badal@fau.de}

\author[M.~Friedrich]{Manuel Friedrich} 
\address[Manuel Friedrich]{%
  Department of Mathematics \\
  Friedrich-Alexander Universit\"at Erlangen-N\"urnberg \\
  Cauerstr.~11, D-91058 Erlangen, Germany %\\
  % \& Mathematics M\"{u}nster, \\
  % University of M\"{u}nster, \\
  % Einsteinstr.~62, D-48149 M\"{u}nster, Germany
}
\email{manuel.friedrich@fau.de}

\author[S.~Schwarzacher]{Sebastian Schwarzacher} 
\address[Sebastian Schwarzacher] 
 {%
 Department of Mathematics, Uppsala University, Ångströmlaboratoriet, Lägerhyddsvägen  1, 752 37 UPPSALA \\  \&  Department of Mathematical Analysis, Charles University, Sokolovská 83, 186 75 Praha 8 %\\
  % \& Mathematics M\"{u}nster, \\
  % University of M\"{u}nster, \\
  % Einsteinstr.~62, D-48149 M\"{u}nster, Germany
}
\email{schwarz@karlin.mff.cuni.cz}

\subjclass[2020]{74D10, % Nonlinear constitutive equations for materials with memory
			  74F05, %Thermal effects in solid mechanics
			  74H20, % Existence of solutions of dynamical problems in solid mechanics
			  35A15, %Variational methods applied to PDEs
			  35Q74, % PDEs in connection with mechanics of deformable solids
			  35Q79.  %PDEs in connection with classical thermodynamics and heat transfer
}

%: Existence and regularity
\maketitle

\ZZZ 

\begin{abstract}
 In this paper, we study the thermo-elastodynamics of nonlinearly viscous solids in the Kelvin-Voigt rheology where both the elastic and the viscous stress tensors comply with the frame-indifference principle. The system features a force balance including inertia in the frame  of nonsimple materials and a heat-transfer equation which is governed by the Fourier law in the deformed configuration.  Combining  a staggered   minimizing  movement scheme  for quasi-static thermoviscoelasticity  \cite{tve_orig, tve} with a variational approach to hyperbolic PDEs developed in \cite{veaccel2}, our main result consists in establishing the existence of weak solutions in the dynamic case. This is first achieved by including an additional higher-order regularization for the dissipation. Afterwards, this regularization can be removed by passing to a weaker formulation of the heat-transfer equation which complies with a total energy balance.  The latter description hinges on regularity theory for   the fourth order $p$-Laplacian  which induces regularity  estimates of the deformation  {beyond} the standard  estimates available from energy bounds. Besides being crucial for the proof, these  extra regularity properties might be of independent interest and seem to be new in the setting of nonlinear viscoelasticity, also in the static or quasi-static  case.  
\end{abstract}

\EEE

\section{Introduction}
Understanding the coupling  between mechanical and thermal phenomena in viscoelastic solids has been a mainstay in the mathematical and physical literature over the last decades. Even at  small strains, the problem is notoriously difficult since the heat-transfer equation has  no obvious variational structure due to  the low regularity of data. In fact, after the pioneering work of {\sc Dafermos} \cite{dafermos1, dafermos2, dafermos3, dafermos4} in one space dimension, new fundamental ideas related to the  existence theory for parabolic equations with measure-valued data developed in \cite{Boccardoetal, BoccardoGallouet89Nonlinear} were needed to obtain results in  three dimensions  \cite{Blanchard, Bonetti, Roubicek09}. At large strains, the problem is still considered to be extremely difficult even in the isothermal case, due to the highly nonlinear nature of models   respecting   material frame indifference \cite{Antmann98Physically}. For some results without temperature coupling, we refer to  \cite{potier-ferry-1,potier-ferry-2} for existence of  global-in-time  weak solutions for  initial data sufficiently  close  to a smooth equilibrium  and to a    local-in-time existence result \cite{Lewick}. By now, more general settings can only be treated by passing to weaker solution concepts such as  measure-valued solutions \cite{demoulini,  DST, gala}.     Resorting to energy densities with  higher-order spatial gradients, i.e., to    so-called nonsimple materials \cite{Toupin62Elastic, Toupin64Theory},   existence of weak solutions  has been shown in  \cite{FiredrichKruzik18Onthepassage, tve_orig} for the quasi-static case (without inertia) and in \cite{veaccel2}  for the dynamic case (with inertia). The variational approach adopted in these papers is quite flexible and has led to various extensions in the last years, ranging from  models for dimension reduction \cite{FK_dimred, MFLMDimension2D1D, MFLMDimension3D1D}, \seb{to  problems with   self-contact \cite{gravina, gravina2, kroemrou}, approximability~\cite{cesik}, diffusion  \cite{liero}, or  homogenization \cite{gahn}, to  applications for  fluid-structure interactions \cite{veaccel2, veaccel, breit, sperone}.}

 Nonlinear frame-indifferent  models in  thermoviscoelasticity  were  analyzed only very recently  \cite{tve, RBMFLM, RBMFMKLM, tve_orig}, again adopting the concept of nonsimple materials,   yet neglecting inertial effects.    The goal of this work is to extend this analysis to the setting of thermo-elastodynamics including inertia. 
While our work follows the Lagrangian perspective, let us mention that in the last years several works appeared in the isothermal and nonisothermal framework which employ the alternative Eulerian approach instead,   see  \cite{Roubicek23Eulerian3, Roubicek23Eulerian,    Roubicek23Eulerian2, Roubicek24Eulerian, Roubicek23Eulerian4}. In  this context,   higher-order gradients are involved rather in the dissipative than in the conservative part, which sometimes is referred to as  multipolar viscous solids. Besides adopting the Lagrangian framework, a main motivation of our work is to establish an existence result \emph{without} higher-order regularization of the dissipation. 

We now introduce the large-strain model in more detail.   In the Kelvin-Voigt  rheology, the force balance of a nonlinearly viscoelastic material in a setting of nonsimple materials is given by the system 
\begin{equation}\label{viscoel}
   f  =  \rho \partial^{2}_{tt} y - {\rm div} \big( \partial_{F} W ( \nabla y, \theta) + \partial_{\dot{F}} R(\nabla y, \partial_{t} \nabla y, \theta)  - \nabla  ({D H}(\Delta y))  \big) \qquad \text{in $[0,T]\times \Omega$.}
\end{equation}
Here, $[0, T]$ is a process time interval with $T > 0$, $\Omega \subset \R^d$ \ZZZ ($d=2,3$) \EEE  denotes the \ZZZ reference \EEE configuration, $y \colon [0, T] \times \Omega \to \R^d$ is the time-dependent  deformation,   $\theta \colon [0, T] \times \Omega \to [0,\infty)$ denotes the temperature, and $f \colon [0, T] \times \Omega \to \R^d$ is a volume density of external forces acting on $\Omega$.
  The free energy density   $W \colon \R^{d\times d} \times [0, \infty) \to \R \cup \setof{+\infty}$ depends on the deformation gradient $\nabla y$ (with placeholder $F \in \R^{d\times d}$) and  respects frame indifference under rotations  as well as  positivity of the determinant of $\nabla y$. Additionally, adopting the framework of nonsimple materials,  the stored energy features a contribution depending on the Laplacian $\Delta y$ given in terms of a convex potential   $H\colon \R^d \to \R$ with $p$-growth for some $ p>d$.   Finally, $R  \colon \R^{d \times d} \times \R^{d \times d} \times [0, \infty) \to \R\BBB$ denotes a (pseudo)potential of dissipative forces ($\dot F$ is the time derivative of $F$). As observed by {\sc Antman} \cite{Antmann98Physically}, $R$ must comply with  a   time-continuous frame indifference principle meaning that $R$ can be written in terms of the right Cauchy-Green tensor $C \defas F^T F$  and its time derivative $\dot C \defas \dot F^T F + F^T \dot F$, see \ref{D_quadratic} below for details.

 The system \eqref{viscoel} is coupled   to a heat-transfer equation of the form
\begin{equation}\label{heat}
  c_V(\nabla y,\theta) \, \partial_t \theta =
    \diver(\mathcal{K}(\nabla y, \theta) \nabla\theta)
    + \partial_{\dot F} R(\nabla y, \nabla \partial_t  y, \theta) : \nabla \partial_t y
    + \theta \seb{\partial_{F \theta}^2} \ZZZ W \EEE (\nabla y, \theta) : \nabla \partial_t  y \qquad  \text{in $[0,T]\times \Omega$},  
\end{equation}
 where     $c_V(F,\theta) = -\theta \ZZZ \partial^2_{\theta\theta} \EEE W(F, \theta)$ is the  heat capacity, $\mathcal{K}$ denotes the matrix of the heat-conductivity coefficients, and the last term plays the role of an {adiabatic heat} source. This corresponds to  a heat transfer modeled by the Fourier law in the deformed configuration  which is pulled back to the reference configurations and thus includes dependence on the deformation gradient.   
The coupled system \eqref{viscoel}--\eqref{heat} is complemented with suitable initial and boundary conditions, see  \eqref{e:strong-bdry}--\eqref{iiiniita} below. 

The goal of this article is to establish an existence result for weak solutions  to the nonlinear thermo-elastodynamic system  \eqref{viscoel}--\eqref{heat}, see Theorem \ref{thm:main-thermal-elasto-unregu}.   Our proof strategy heavily  hinges on two recent advances in the variational analysis of nonlinearly elastic solids: we combine  the staggered  \ZZZ minimizing \EEE movement scheme for proving  existence results in quasi-static thermoviscoelasticity  \cite{tve_orig, tve} with a variational approach to hyperbolic PDEs \cite{veaccel2}  which allows to include inertia.

In the following, we describe the main ingredients for the proof in more detail. The fundamental idea in \cite{veaccel2}  consists in  replacing the acceleration term $\rho   \pl_{tt}^2   y$ by a discrete difference $\rho \frac{\pl_t y - \pl_t y(\cdot - h)}{h}$ which allows to  turn the hyperbolic problem \eqref{viscoel} into a parabolic one. The latter time-delayed problem can  be approximated by a time-discretized scheme as in   \cite{tve_orig, tve} with time step $\tau>0$. Then, given solutions to the discretized problems with two different length scales $\tau$ and $h$  (called  the velocity and the acceleration time scale, respectively), one first passes to $\tau \to 0$ and afterwards to  $h \to 0$ to obtain a weak solution for   \eqref{viscoel}. As in \cite{tve_orig},  a generalized version of Korn's inequality \cite{pompe}  relying on the second-order regularization is essential in order to tame the nonlinearity arising from  the frame indifference of the dissipation term. Concerning the coupling to the heat-transfer equation, the approach in \cite{tve_orig, tve} crucially relies on the  theory of parabolic equations with measure-valued right-hand side \cite{BoccardoGallouet89Nonlinear}. A delicate part of the proof lies in the passage to  the limit $\tau \to 0$ in the dissipation term $\partial_{\dot F} R(\nabla y, \nabla \partial_t  y, \theta) : \nabla \partial_t y$, see \eqref{heat}. For this, strong convergence of the time-discrete approximations $\nabla \partial_t y_\tau$ is indispensable which is guaranteed by   exploiting the convergence of a mechanical energy balance, cf.\ \cite[Proposition 5.1]{tve_orig} for details.

Although all techniques mentioned above are crucial ingredients in our work, it turns out that they do not suffice in the setting with heat coupling \emph{and} inertia. The main reason lies in missing regularity which impedes the derivation of a mechanical energy balance. To explain this issue, let as consider the simplified problem 
\begin{align}\label{expli}
\rho \partial_{tt}^2  y -\Delta \partial_t  y +\Delta (|\Delta y|^{p-2} \Delta y)       =f,
\end{align}
which arises from \eqref{viscoel} by neglecting the first Piola-Kirchhoff stress tensor $\partial_{F} W$, and considering a linear variant of $\partial_{\dot{F}} R $ as well as  a $p$-homogeneous variant of $H$.  In the quasi-static case $\rho = 0$ or in the time-delayed problem where  $\rho  \pl_{tt}^2   y$ is replaced  by the discrete difference $\rho \frac{\pl_t y - \pl_t y(\cdot - h)}{h}$, a test of the time-discretized problem  with   $\partial_t y$ and an integration by parts (neglecting boundary terms) leads to the natural energy bounds $\Delta y\in L^\infty([0,T]; L^p(\Omega))$ and $\nabla \partial_t  y \in L^2([0,T];L^2(\Omega))$. Then, in the case $\rho = 0$, a mechanical energy balance is achieved by testing \eqref{expli} with $\partial_t  y $, cf.\ \cite[Equation (4.11)]{tve}. In this context, the term $\Delta (|\Delta y|^{p-2} \Delta y)  $ might in principle not have the correct duality coupling to apply the chain rule. However,  since the other two terms $f$ and $\Delta \partial_t  y$ are in duality, also the delicate fourth-order term can be handled  by comparison. In contrast, for $\rho>0$, the two terms $\rho \partial_{tt}^2  y$ and $\Delta (|\Delta y|^{p-2} \Delta y)$ are  not in duality and the chain rule (and thus the mechanical  energy balance) may fail. 

This fundamental issue  has already been observed in \ZZZ \cite[Remark 6.6]{tve_orig}. \EEE A possible workaround lies in  adding an additional regularization for the dissipation, see \eqref{e:mechanical-strong}, which  in the  simplified setting reads as 
\begin{align}\label{expli2}
\rho \partial_{tt}^2  y -\Delta \partial_t  y +\Delta (|\Delta y|^{p-2} \Delta y)     - \eps \partial_t  \Delta^3 y     =f.
\end{align}
With the test $\partial_t y$, this induces the energy bound $\nabla \Delta \partial_t  y \in L^2([0,T]; L^2(\Omega))$ which is strong enough to recover the  chain rule. In this case, a mechanical energy balance can be guaranteed and we can follow the strategy devised in \cite{tve_orig, tve} and \cite{veaccel2}, see Theorem \ref{thm:main-thermal-elasto-regu}. (Note that we choose a simple higher-order regularization which does not comply with the principle of dynamical frame indifference. A frame-indifferent regularization would necessarily be very nonlinear.) This regularized setting is related to \cite{Roubicek24Eulerian} where existence results under higher-order regularizations of the dissipation have been derived in a Eulerian settting.  Yet, a main motivation of our work is to derive an existence result \emph{without} such regularization.

Our strategy relies on passing to a weaker formulation of the heat-transfer equation \eqref{heat} which is inspired by the derivation of a total energy balance (see \cite[Equation (2.21)]{tve_orig} or \eqref{final energy balance} below) and does not feature the delicate dissipation term $\partial_{\dot F} R(\nabla y, \nabla \partial_t  y, \theta) : \nabla \partial_t y$, see \eqref{e:new-thermal-equation-lim} for details. On a formal level, the idea is to test \eqref{viscoel} with $\partial_t y$ which allows to replace the dissipation term in  \eqref{heat}. As discussed above, however, this test is actually   not allowed in \eqref{expli}. Therefore, we    perform this replacement  first on the regularized level \eqref{expli2}, and  afterwards we  pass to the limit $\eps \to 0$.   This procedure leads to a modified weak formulation of the system   which  does not guarantee a mechanical energy balance but has the essential feature that   the total energy is in equilibrium with the work by external body forces and   heat sources, see \eqref{final energy balance}. Moreover, the solution concept introduced here becomes a standard weak solution or a strong solution once the necessary regularity properties for the deformation and the temperature are available.

Although curing the issue with the dissipation,  the passage to the weaker modified  setting causes a new problem: the resulting weak formulation features a third-order term $\nabla (D{H}(\Delta y ))$ which is not compatible with the available energy bound $\Delta y  \in L^\infty([0,T];L^p(\Omega))$, see below \eqref{expli}. Therefore,  it is necessary to improve the regularity of the deformation. Loosely speaking, this is achieved by testing \eqref{expli} with  $-\Delta y$ which after integration by parts  (omitting any boundary terms) leads to an  {\em elliptic estimate}. In fact, using $\nabla \partial_t  y \in L^2([0,T];L^2(\Omega))$, the  first  term   $|\int   \partial_{tt}^2  y \Delta y \di t \di x| \le C \norm{\nabla \partial_t  y }_{L^2([0,T];L^2(\Omega))}^2 \le C$ is controlled. Assuming $p=2$ for simplicity here, the second and third term can be controlled as
\begin{align*}
&\Big|\int_0^T\int_\Omega \Delta \partial_{t}  y \cdot \Delta y  \di t \di x\Big| \le C\norm{\nabla  \partial_{t}  y}_{ \in L^2([0,T];L^2(\Omega))} \norm{\nabla \Delta y}_{  L^2([0,T];L^2(\Omega))}\le  C\norm{\nabla \Delta y}_{  L^2([0,T];L^2(\Omega))}, \\
& \Big|\int_0^T\int_\Omega \Delta ( \Delta y) \cdot \Delta y\di t \di x\Big| \ge \tfrac{1}{C}\norm{ \nabla\Delta  y}^2_{  L^2([0,T];L^2(\Omega))}.  
\end{align*}
 This allows to obtain the control  $\nabla \Delta y \in  L^2([0,T];L^2(\Omega))$ which suffices to give sense to the term $\nabla (D{H}(\Delta y ))$ in the weak formulation. Again, on a rigorous level, this test is performed for the  regularized problem  \eqref{expli2} with $\eps$-independent bounds, and then the regularity for $y$ is obtained in the limit of vanishing regularization $\eps \to 0$, see Proposition \ref{cor:higher_reg_y} and Lemma \ref{l:laplace-regularity}  for details. \ZZZ More \EEE precisely, for  given $p>d$, the additional regularity    reads as  $(1+\abs{\Delta y})^\frac{p-2}{2}\abs{\nabla \Delta y}^2\in L^2([0,T];L^2(\Omega))$, see Theorem \ref{thm:main-thermal-elasto-unregu}.  
  Even in the nonlinear case of $p\neq 2$, the regularity estimates introduced here rely on the theory for the Laplace operator only and as such are {\em independent of nonlinear regularity techniques}. This seems to be a special feature of the fourth order $p$-Laplacian, which was observed by the authors very much to their surprise. Up to their knowledge, it has not been used before.
 
Besides being crucial for our proof, the result might be of independent interest and improves the known regularity properties also for results in the quasi-static case ($\rho =0$) \cite{tve_orig, tve} or in the isothermal case \cite{FiredrichKruzik18Onthepassage}. \seb{It seems that even in the static case of elastic minimizers this extra regularity property has not been shown previously}.  Let us mention, however, that compared to \cite{tve, FiredrichKruzik18Onthepassage, tve_orig}  the regularity issues force us to impose Dirichlet conditions on the \emph{entire} boundary $\partial \Omega$.

The plan of the paper is as follows. Section~\ref{sec:tau_to_zero}  introduces the nonlinear  model   and states our main results.
Then, the results are proved in Sections \ref{s:min-mov-scheme}--\ref{sec: epstozero}. We start by considering the $\eps$-regularized problem and introduce  a discretized solution with time stepping $\tau$ of the parabolic approximation with time delay  $h>0$. This introduces a three layer approximation, and we successively pass to the limits in the layers, namely first in $\tau$ (Section~\ref{s:min-mov-scheme}), then in $h$ (Section \ref{sec:vanish_h}), and eventually in the regularization $\eps$ (Section \ref{sec: epstozero}). It is important to mention that all essential a priori bounds are already established on the   $\tau$-level in Section \ref{s:min-mov-scheme} and transfer over to the limits $\tau \to 0$, $h \to 0$, and $\eps \to 0$.  Moreover, some additional higher-order bounds based on  elliptic regularity theory   are provided  (see Lemma \ref{l:laplace-regularity}) that  blow up in the limit $\eps \to 0$. Still, they are  crucial to perform the final limiting passage in the heat-transfer equation to control some $\eps$-dependent terms resulting from the regularization, see Proposition \ref{thm:mu_convergence_thermal} for details. 
\EEE

\section{The model and main results}\label{sec:tau_to_zero}

\subsection{Notation}
 Denoting by $d \in \setof{2, 3}$ the dimension, we indicate  by $\Omega \subset \R^d$ an open bounded set with $C^5$-boundary  and \BBB fix  $p \in (d, 2^*)$, where $2^* = \infty $ for $d=2$ and $2^* = 6$ for $d=3$. \EEE   
In what follows, we use standard notation for Lebesgue, Sobolev, \BBB and Bochner \EEE spaces. \BBB By $\indic_J$ we denote the indicator function of a set $J \subset \R$ \ZZZ or $J \subset \Omega$. \EEE  
The lower index $_+$ means nonnegative elements, i.e., $L^2_+(\Omega)$ denotes the convex cone of nonnegative functions belonging to $L^2(\Omega)$ and a similar definition is used for $H^1_+(\Omega)$. \BBB Mean integrals  are denoted by $\mint$. \EEE 
We also set $\R_+\defas [0,+\infty)$.
Let $a \wedge b\defas \min\setof{a, b}$ for $a,b \,\in \R$. 
 Moreover, we \ee let $\Id \in \R^{d \times d}$ be the identity matrix, and $\id(x) \defas x$ stands for the identity map on $\R^d$.
We define the subsets $SO(d) \defas \setof{A \in \R^{d \times d} \colon A^T A = \Id, \, \det A = 1 }$, $GL^+(d) \defas \setof{F \in \R^{d \times d} \colon \det(F) > 0}$, and $\R^{d \times d}_\sym \defas \setof{A \in \R^{d \times d} \colon A^T = A}$.
Furthermore,  for a matrix $F \in \R^{d \times d}$ we write \ee $F^{-T} \defas (F^{-1})^T=(F^T)^{-1}$, and given a tensor  $G$ \ee (of arbitrary dimension  and order\ee),  $\abs{G}$ \ee will denote its Frobenius norm.
We write the scalar product between vectors and  matrices \BBB as $\cdot$ and  $:$, \EEE  respectively. \BBB The tensor product of two vectors $v_1,v_2 \in \R^d$ is denoted by $v_1 \otimes v_2 \in \R^{d \times d}$. \EEE 
As usual, in the proofs generic constants $C$ are strictly positive and may vary from line to line.  \BBB
If not stated otherwise, all constants   only depend \ZZZ on \EEE  $d$,  $p$,  $\Omega$, and the potentials  and data  defined in Subsection \ref{sec:setting} \ZZZ below. \EEE  
We frequently use a scaled version of Young's inequality with constant $\lambda \in (0,1)$ by which we mean $ab \le \lambda a^q + C_\lambda b^{q'}$ for $a, b \,\ge 0$, exponents $q, \, q' > 1$ with $1/q + 1/q' = 1$, \BBB and a suitable constant $C_\lambda>0$. \EEE

For $\Omega \subset \R^d$ and $p$ as above, we introduce the set of \emph{admissible deformations} by 
\begin{equation}
\label{def_Yid}
  \Yid \defas \setof*{
    y \in W^{2, p}(\Omega; \R^d) \colon
    y = \id \text{ on }  \partial \Omega \ee, \,
     \det(\nabla y) > 0 \ee \text{ in } \Omega
  },
\end{equation}
and we say that the \emph{absolute temperature} $\theta$ is admissible if $\theta \in L^1_+(\Omega)$.

\subsection{Energies and their respective potentials}\label{sec:setting}
The variational setting described in the sequel  mostly \ee coincides with the one from \cite{tve}, up to a more special choice of \BBB the \EEE strain-gradient energy. \BBB In the following, let \EEE $\aC \ZZZ \ge 1 \EEE$ be some fixed positive constant.

\noindent \textbf{Mechanical energy and coupling energy:}
The \emph{elastic energy} $\elen \colon \Yid \to \R_+$ is given by
\begin{equation}\label{purely_elastic}
  \elen(y) \defas \int_\Omega \elpot(\nabla y) \di x,
\end{equation}
where $\elpot \colon GL^+(d) \to \R_+$ is a frame indifferent elastic energy potential with the usual assumptions in nonlinear elasticity.
More precisely, we require:
\begin{enumerate}[label=(W.\arabic*)]
  \item \label{W_regularity} $\elpot$ is \BBB $C^2$; \EEE
  \item \label{W_frame_invariace} Frame indifference: $\elpot(QF) = \elpot(F)$ for all $F \in GL^+(d)$ and $Q \in SO(d)$;
  \item \label{W_lower_bound} Lower bound: $\elpot(F) \ge \frac{1}{\aC} \big(|F|^2 + \det(F)^{-q}\big) - \aC$ for all $F \in GL^+(d)$, where $q \ge \frac{pd}{p-d}$.
\end{enumerate}
Adopting the concept of 2nd-grade nonsimple materials, see \cite{Toupin62Elastic, Toupin64Theory}, we also consider a \emph{strain-gradient energy term} $\hyen \colon \Yid \to \R_+$, defined as
\begin{equation}\label{hyperelastic}
  \hyen(y) \defas \int_\Omega \hypot(\Delta y) \di x.
\end{equation}
Here, $\hypot \colon \R^{d} \to \R_{+}$ is of the form
\begin{equation}\label{e:Hdef}
  \hypot(v) = \hypotsclr( |v| )
\end{equation}
for $v \in \R^d$, where $\hypotsclr \colon \R_+ \to \R_+$ is defined as
\begin{equation}
\label{e:psi}
\hypotsclr(s) \defas \int_{0}^{s} \max\{2\sigma \ee, p\sigma^{p-1}\} \di \sigma.
\end{equation}
\BBB The definition of $\hypotsclr$ ensures that $H$ is uniformly convex and has $p$-growth. \EEE  More precisely, we have
\begin{enumerate}[label=(H.\arabic*)]
  \item \label{H_regularity} $\hypot$ is \BBB uniformly \EEE convex and $C^1$;
  \item \label{H_frame_indifference} Frame indifference: \BBB $  H(Q \Delta y)  =   H(\Delta y)$ in $\Omega$ for all $y \in \Yid$ \EEE and $Q \in SO(d)$;
  \item \label{H_bounds} $\abs{v}^p\ee \leq H(v) \leq \aC \BBB  \abs{v}^p \EEE $ and $\abs{ \BBB D H \EEE (v)} \leq \aC  \BBB \abs{v}^{p-1} \EEE$ for all $v \in \R^{d}$,
\end{enumerate}
where $DH(v) \defas (\partial_{v_i} H(v))_{i=1}^d = \max\{2, p |v|^{p-2}\} v$ is the gradient of $H$ with respect to $v$.
The \emph{mechanical energy} $\mechen \colon \Yid \to \R_+$ is then defined as
\begin{equation}\label{mechanical}
  \mechen(y) \defas \elen(y) + \hyen(y).
\end{equation} 
Besides the mechanical energy, we introduce a \emph{coupling energy} $\cplen \colon \Yid \times L^1_+(\Omega) \to \R$ given by
\begin{equation}\label{koppl}
  \cplen(y, \theta) \defas \int_\Omega \cplpot(\nabla y, \theta) \di x,
\end{equation}
where $\cplpot \colon GL^+(d) \times \R_+ \to \R$ describes mutual interactions of mechanical and thermal effects, and satisfies
\begin{enumerate}[label=(C.\arabic*)]
  \item \label{C_regularity} $\cplpot$ is continuous, and $C^2$ in $GL^+(d) \times (0, \infty)$;
  \item \label{C_frame_indifference} $\cplpot(QF, \theta) = \cplpot(F, \theta)$ for all $F \in GL^+(d)$, $\theta \geq 0$, and $Q \in SO(d)$;
  \item \label{C_zero_temperature} $\cplpot(F, 0) = 0$ for all $F \in GL^+(d)$;
  \item \label{C_lipschitz} $|\cplpot(F,\theta) - \cplpot(\tilde{F}, \theta)| \le \aC(1 + |F| + |\tilde{F}|)|F - \tilde{F}|$ for all $F, \, \tilde F \in GL^+(d)$, and $\theta \geq 0$;
  \item \label{C_bounds} For all $F \in GL^+(d)$ and $\theta > 0$ it holds that
  \begin{align*}
    \abs{ \ZZZ \partial_{FF}^2 \EEE W^{\rm cpl}(F,\theta)} &\le \aC, &
    \abs{\RB\partial_{F\theta}^2\EEE\cplpot(F, \theta)} &\leq \frac{\aC(1+|F|)}{\max\lbrace \theta,1\rbrace}, &
    \frac{1}{\aC} & \leq -\theta \ZZZ \pl_{\theta\theta}^2 \EEE \cplpot(F, \theta) \leq \aC.
  \end{align*}
\end{enumerate}
Notice that, by \ref{C_zero_temperature} and the second bound in \ref{C_bounds}, $\pl_F \cplpot$ can be continuously extended to zero temperatures with $\pl_F W^{\rm cpl}(F, 0) = 0$. For $F \in GL^+(d)$ and $\theta \geq 0$, we define the \emph{total free energy potential} 
\begin{equation}\label{eq: free energy}
  \felpot(F, \theta) \defas \elpot(F) + \cplpot(F, \theta).
\end{equation}
\noindent\textbf{Dissipation potential:}
The \emph{dissipation functional} $\diss \colon \Yid \times \BBB H^1(\Omega;\R^d) \EEE \times  L^1_+(\Omega) \ee \to \R_+$ is defined as
\begin{equation}\label{dissipation}
  \diss( y, \BBB \tilde y, \EEE \theta)
  \defas \int_\Omega \disspot(\nabla  y,  \BBB  \nabla \tilde y, \EEE \theta) \di x,
\end{equation}
where $\disspot \colon \R^{d \times d} \times \R^{d \times d} \times \R_+ \to \R_+$ is the \emph{potential of dissipative forces} satisfying
\begin{enumerate}[label=(D.\arabic*)]
  \item \label{D_quadratic} $\disspot(F, \dot F, \theta) \defas \frac{1}{2} D(C, \theta)[\dot C, \dot C] \defas \frac{1}{2} \dot C : D(C, \theta) \dot C$, where $C \defas F^T F$, $\dot C \defas \dot F^T F + F^T \dot F$, and $D \in C(\R^{d \times d}_\sym \times \R_+; \R^{d \times d \times d \times d})$ with $D_{ijkl} = D_{jikl}= D_{klij}$ for $1 \le i,j,k,l \le d$;
  \item \label{D_bounds} $\frac{1}{\aC} \abs{\dot C}^2 \leq \dot C : D(C, \theta) \dot C \leq \aC \abs{\dot C}^2$ for all $C, \, \dot C \in \R^{d \times d}_\sym$ \RB and \EEE $\theta \geq 0$.
\end{enumerate}
Notice that Assumption \ref{D_quadratic} implies that the viscous stress $\pl_{\dot F} \disspot(F, \dot F, \theta)$ is linear in the time derivative $\dot C$ as well as \BBB (see e.g.~\cite[(2.8)]{tve}) \EEE
\begin{equation}\label{chain_rule_Fderiv}
  \partial_{\dot F} R(F, \dot F, \theta) = 2 F (D(C, \theta) \dot C).
\end{equation}
We also define the associated \emph{dissipation rate} $\drate \colon \R^{d \times d} \times \R^{d \times d} \times \R_+ \to \R_+$ as
\begin{equation}\label{diss_rate}
  \drate(F, \dot F, \theta)
  \defas \pl_{\dot F} \disspot(F, \dot F, \theta) : \dot F
  = 2 R(F, \dot F,\theta),
\end{equation}
where the second identity follows from \eqref{chain_rule_Fderiv} and Assumption \ref{D_quadratic}, \BBB see also \cite[(2.9)]{tve}. \EEE

\BBB
Below, for technical reasons explained in \eqref{expli2}, we will also consider a \emph{regularized version of the dissipation}  $\diss_\eps \colon \Yid \times  H^3(\Omega;\R^d)   \times  L^1_+(\Omega) \ee \to \R_+$, defined as
\begin{equation}\label{Repps}
  \diss_\eps( y, \BBB \tilde y, \EEE \theta)
  \defas \int_\Omega \disspot(\nabla  y,  \BBB  \nabla \tilde y, \EEE \theta) \di x + \frac{\eps}{2} \int_\Omega  |\nabla \Delta \tilde{y}|^2 \di x
\end{equation}
for a small regularization parameter $\eps>0$. \EEE

\ZZZ
\noindent\textbf{Heat conductivity:} \EEE
The map $\hc \colon \R_+ \to \R^{d \times d}_\sym$ \BBB denotes \EEE the \emph{heat conductivity tensor} of the material in the deformed configuration.
We require that $\hc$ is continuous, symmetric, uniformly positive definite, and bounded.
More precisely, for all $\theta \geq 0$ it holds that
\begin{equation}\label{spectrum_bound_K}
  \frac{1}{\aC} \leq \hc(\theta) \leq \aC,
\end{equation}
where the inequalities are meant in the eigenvalue sense.
We define the pull-back $\hcm \colon GL^+(d) \times \R_+ \to \R^{d \times d}_\sym$ of $\hc$ into the reference configuration by (see \cite[(2.24)]{tve_orig}\RB)\EEE
\begin{equation*}
%\label{hcm}
  \hcm(F, \theta) \defas \det(F) F^{-1} \hc(\theta) F^{-T}.
\end{equation*}

\ZZZ
\noindent\textbf{Thermal energy and total internal energy:} \EEE Following \cite{tve, RBMFMKLM,  tve_orig}, the \textit{(thermal part of the) internal energy} $\inten \colon GL^+(d) \times (0, \infty) \to \R$  is defined as
\begin{equation}\label{Wint}
  \inten(F, \theta) \defas \cplpot(F, \theta) - \theta \pl_\theta \cplpot(F, \theta).
\end{equation}
Using \ref{C_zero_temperature} and the third bound in \ref{C_bounds}, we see that $\inten$ can be continuously extended to zero temperatures by setting $\inten(F, 0) = 0$ for all $F \in GL^+(d)$.
Furthermore, by the third bound in \ref{C_bounds} we have that  
\begin{align*}
%\label{sec_deriv}
  \partial_{\theta} \inten (F, \theta)
  = -\theta \ZZZ \pl_{\theta\theta}^2 \EEE \cplpot(F, \theta) \in [\aC^{-1}, \aC]
  \qquad \text{for all $F \in GL^+(d)$ and $\theta > 0$}.
\end{align*}
Along with \ref{C_zero_temperature} this shows that the internal energy is controlled by the temperature in the  \BBB sense that \EEE
\begin{equation}\label{inten_lipschitz_bounds}
  \frac{1}{\aC} \theta \leq \inten(F, \theta) \leq \aC \theta.
\end{equation}
Finally, we define \emph{total \BBB internal \EEE energy functional}   $\toten \colon \Yid \times L^1_+(\Omega) \to \R_+$ by
\begin{equation}\label{toten}
  \toten(y, \theta) \defas \mechen(y) + \mathcal{W}^{\rm in}(y,\theta) \qquad \text{ with } \mathcal{W}^{\rm in}(y,\theta) \defas \int_\Omega \inten(\nabla y, \theta) \di x.
\end{equation}
\BBB We remark that the above assumptions on the potentials  coincide with the ones in \cite[Section 2.1]{tve}, up to the fact that, differently to \cite[(2.4)]{tve}\RB, \BBB we only allow the potential of the strain-gradient energy to depend on the norm of \RB the diagonal \BBB $\Delta y$ of \RB $\nabla^2 y$\BBB. Moreover, for $d=3$, the range of   $p \in (3,6)$ is restricted as we need the Sobolev embedding $H^3(\Omega;\R^d) \subset \subset W^{2,p}(\Omega;\R^d)$.  Eventually, in contrast to \cite{tve}, in the definition of admissible deformations, see \eqref{def_Yid}, we need to impose Dirichlet conditions on the \emph{entire} boundary $\partial \Omega$ as this  allows us to apply elliptic regularity results.    We refer to \cite[Examples 2.4 and 2.5]{tve_orig} for a class of   potentials satisfying all assumptions   \EEE above.

\subsection{Equations of nonlinear thermoviscoelasticity with inertia: Existence of weak solutions}

\BBB Let \EEE $I \defas [0, T]$ where $T > 0$ denotes a time horizon, let $\rho > 0$ be a constant \emph{mass density} in the reference configuration, let $\kappa \geq 0$ be a constant \emph{heat-transfer coefficient}, let $f \in W^{1, 1}(I; L^2(\Omega; \R^d))$ be a time-dependent \emph{dead force}, and let $\bt \in  W^{1, 1}(I; L^2_+(\partial \Omega))$ be an \emph{external temperature}.  \BBB Moreover,  let $\eps \ge 0$ be a regularization parameter, where $\eps = 0$ corresponds to the setting without regularization.  \EEE In the strong form, we \BBB study the system \EEE
\begin{subequations}
    \label{e:strong}
\begin{align}
   \label{e:mechanical-strong}
   f & =  \rho \partial^{2}_{tt} y - {\rm div} \big( \partial_{F} W ( \nabla y, \theta) + \partial_{\dot{F}} R(\nabla y, \partial_{t} \nabla y, \theta)  \BBB- \nabla \EEE ({D H}(\Delta y)) \BBB +  \eps \partial_t \nabla \Deltatwo y\EEE  \big) \,,
   \\
   \label{e:thermal-strong}
   -\theta \partial^{2}_{\theta\theta} W^{\rm cpl} (\nabla y, \theta) \partial_{t} \theta & = {\rm div} \big( \mathcal{K} (\nabla y, \theta) \nabla \theta \big) + \xi (\nabla y, \partial_{t} \nabla y, \theta)  + \theta \partial^{2}_{F \theta} W^{\rm cpl} (\nabla y, \theta) : \partial_{t} \nabla y  \BBB +   \eps  |\partial_t \nabla \Delta y|^2\EEE \,,
\end{align}
\end{subequations}
coupled with the boundary conditions
\begin{subequations}
\label{e:strong-bdry}
\begin{align}
    y  = \id & \qquad \text{in $\RB I \EEE \times \partial\Om$},   \label{e:laplaceboundary} \\
    \label{e:laplaceboundary1.5}
    {D H}(\Delta y)   = 0 &  \qquad \text{in $\RB I \EEE \times \partial\Om$},\\
    \BBB  \eps \partial_\nu \Delta y = \eps \Delta^2 y   = 0 &  \qquad \text{in $\RB I \EEE \times \partial\Om$},  \label{e:laplaceboundary2}\\ \EEE
    \label{e:thermal-bdry} \mathcal{K}(\nabla y, \theta) \nabla \theta \cdot \nu + \kappa \theta  = \kappa \bt &  \qquad \text{in $\RB I \EEE\times\partial\Om$}
\end{align}
\end{subequations}
and subject to the initial conditions  
\begin{equation}\label{iiiniita}
   y(0) = y_0, \qquad \partial_t y(0) = y_0', \qquad \theta(0) = \theta_0,
\end{equation}
for initial values $y_0 \in \Yid$, $y_0' \in H^1_0(\Omega; \R^d)$, and $\theta_0 \in \BBB L^2_+ \EEE (\Omega)$. \BBB We refer  to \cite[Section 2]{tve_orig} for a thorough explanation of this model. We highlight  that, compared to   \cite{tve_orig}, we include inertial effects, i.e., the mechanical equation features the term $ \rho \partial^{2}_{tt} y$. Moreover, for $\eps>0$ there are regularizing terms both in \eqref{e:mechanical-strong} and \eqref{e:thermal-strong}, complemented with the additional natural boundary condition \eqref{e:laplaceboundary2}. In the regularized setting, we will assume stronger initial conditions for the deformations, namely $y_{0,\eps} \in    \Yidreg  $ and  $y'_{0,\eps}\in  H^{3}(\Om; \R^{d}) \cap H^1_0(\Omega;\R^d) $, where 
  \begin{equation}\label{eq: yidre}
 \Yidreg \defas \big\{ y \in \Yid \cap H^4(\Omega;\R^d) \colon   \partial_\nu \Delta  y(t) = \Delta  y(t)   = 0  \text{ $\haus^{d-1}$-a.e.~in } \partial \Omega\big\}.
  \end{equation} 
  We now first treat the case $\eps>0$ and afterwards we address the system without regularization. \EEE

\subsection*{Existence of weak solutions for \ZZZ the \EEE regularized system}
 We introduce the notion of weak solutions related to \eqref{e:mechanical-strong}--\eqref{e:thermal-bdry} for $\eps>0$. \EEE

\begin{definition}[\BBB Weak solutions to the regularized  thermo-elastodynamic system for viscous solids]
\label{def:solution-elastodyn-thermal} \EEE
Let $y_{0,\eps} \in    \Yidreg  $, $y'_{0,\eps}\in \BBB H^{3}(\Om; \R^{d}) \cap H^1_0(\Omega;\R^d) \EEE $, $\theta_{0} \in \BBB L^{2}_{+}(\Om) \EEE $, $f \in W^{1, 1}(I; L^{2}(\Om; \R^{d}))$, and $\bt \in  W^{1, 1}(I; L^2_+(\partial \Omega))$. We say that a pair $(y_\eps, \theta_\eps)$ with 
\begin{align*}
y_\eps &\in  L^{\infty}(\BBB I ;  \EEE  \Yid) \cap H^1\big(I;H^3(\Omega;\R^d)   \big) \BBB \cap H^2\big(I; (H^3(\Omega;\R^d) \cap H^1_0(\Omega;\R^d))^*\big) , \EEE  \\\  
 \theta_\eps & \in \BBB L^2(I; H^{1}_+(\Omega)) \EEE
 \end{align*}
is a solution \BBB to the regularized thermo-elastodynamic system \EEE  with initial conditions $(y_{0,\eps}, y'_{0,\eps}, \theta_{0})$ if  \BBB $y_\eps(0) = \ZZZ y_{0,\eps} \EEE $, $\partial_t y_\eps(0) = y_{0,\eps}'$,   the internal energy $w_\eps \defas W^{\rm in} (\nabla y_\eps , \theta_\eps )$ lies in $L^2(I; H^1(\Omega)) \cap H^1(I; (H^1(\Omega))^*)$ and satisfies $w_\eps(0) = w_{0,\eps} \defas W^{\rm in}(\nabla \ZZZ y_{0,\eps}, \EEE \theta_0)$, and \EEE the following equations are satisfied for every $z \in C^{\infty} (\RB I \EEE \times \overline{\Omega}; \R^{d})$ with $z = 0$ on $\RB I \EEE \times \partial \Om$   and for every $\varphi \in C^{\infty} (\RB I \EEE\times \overline\Om)$:
\begin{align}
  &\begin{aligned}
    0= & \BBB \int_I \EEE \int_{\Om} \partial_{F} W(\nabla y_\eps, \theta_\eps) : \nabla z \di x \di t
    +   \BBB \int_I \EEE \int_{\Om} {D H} ( \Delta  y_\eps) \cdot \Delta z \di x \di t  + \BBB  \eps \int_I   \int_{\Om} \partial_t \nabla \Delta  y_\eps : \nabla \Delta z \di x \di t \EEE \\ 
    & \quad + \BBB \int_I \EEE \int_{\Om} \partial_{\dot{F}} R(\nabla y_\eps, \partial_t \nabla y_\eps, \theta_\eps) : \nabla z \di x \di t  + \rho \BBB \int_I \BBB  \langle  \partial^2_{tt} y_\eps  , z \rangle \EEE \di t
    - \BBB \int_I \EEE  \int_{\Om} f \cdot z \di x \di t   \,, 
  \end{aligned} \label{mechanical_equation} \\
  &\begin{aligned}
    0 = &\int_I \int_\Omega \hcm(\nabla y_\eps, \theta_\eps) \nabla \theta_\eps \cdot \nabla \vphi \di x \di t
 + \int_I \langle \partial_t w_\eps, \vphi  \rangle \di t
    - \kappa \int_I \int_{\partial \Omega} (\bt - \theta_\eps) \vphi \di \haus^{d-1} \di t    \\
    &\quad    - \int_I \int_\Omega \Big( \drate(\nabla y_\eps, \partial_t \nabla y_\eps, \theta_\eps)
        ) 
        + \pl_F \cplpot(\nabla y_\eps, \theta_\eps) : \pl_t \nabla y_\eps  \BBB +  \eps  |\partial_t \nabla \Delta y_\eps|^2\EEE
      \Big) \vphi \di x \di t   ,  
  \end{aligned}\label{e:regularized-thermal}
\end{align}  
where \BBB $\langle \cdot , \cdot \rangle$ denotes the dual pairing of $H^3(\Omega;\R^d) \cap H^1_0(\Omega;\R^d)$ and its dual or of  $H^1(\Omega)$ and its dual, respectively.   
\end{definition}
\BBB Following \EEE the lines of~\cite[(2.28)--(2.29)]{tve_orig}, one can show \ZZZ that \eqref{e:mechanical-strong} \BBB together with \eqref{e:laplaceboundary}--\eqref{e:laplaceboundary2} \EEE is equivalent to \eqref{mechanical_equation}.   \BBB Besides the regularizing term, \EEE the only difference in~\eqref{e:mechanical-strong} \BBB compared to \cite{tve_orig} \EEE is the presence of the inertial term. Arguing as in~\cite[(2.16)--(2.17)]{tve_orig}, we  can  rewrite the heat-transfer  equation~\eqref{e:thermal-strong} in terms of the internal energy  \BBB $w_\eps$ as    \EEE
\begin{equation*}
    \partial_{t} w_\eps = {\rm div} \big( \mathcal{K} (\nabla y_\eps, \theta_\eps) \nabla \theta_\eps \big) + \xi (\nabla y_\eps, \partial_{t} \nabla y_\eps, \theta_\eps) \BBB +\eps  |\partial_t \nabla \Delta y_\eps|^2\EEE +  \partial_{F} W^{\rm cpl} (\nabla y_\eps, \theta_\eps) : \partial_{t} \nabla y_\eps,
\end{equation*}
where   we have used  \BBB \eqref{Wint} and the identity \EEE 
\begin{align*}
\partial_{t} w_\eps = \partial_{F} W^{\rm cpl} (\nabla y_\eps, \theta_\eps) : \partial_{t} \nabla y_\eps  - \theta_\eps  \BBB \partial_{F\theta}^2 \EEE  W^{\rm cpl} (\nabla y_\eps, \theta_\eps) : \partial_{t} \nabla y_\eps - \theta_\eps \ZZZ \partial^{2}_{\theta\theta} \EEE W^{\rm cpl} (\nabla y_\eps, \theta_\eps) \partial_{t} \theta_\eps\,. 
\end{align*}
\BBB Taking also \eqref{e:thermal-bdry} into account, this yields \RB the \BBB weak formulation \eqref{e:regularized-thermal}. \EEE

The first main results of the paper read as follows.

\begin{theorem}[\BBB Existence of weak solutions to the regularized system\EEE]
    \label{thm:main-thermal-elasto-regu}
       Let $p \in (2, +\infty)$ if $d=2$ or $p \in (3, 6)$ for $d=3$. \BBB Assume that  \ref{W_regularity}--\ref{W_lower_bound}, \ref{H_regularity}--\ref{H_bounds},~\ref{C_regularity}--\ref{C_bounds},~\ref{D_quadratic}--\ref{D_bounds}, and~\eqref{spectrum_bound_K} hold true. Let $\eps>0$. \EEE   Let $y_{0,\eps} \in \Yidreg$, $y'_{0,\eps}\in H^{3}(\Om; \R^{d}) \cap H^{1}_0(\Om; \R^{d})$, $\theta_{0} \in \ZZZ L^{2}_{+}(\Om) \EEE $, $f \in W^{1, 1}(\ZZZ I; \EEE L^{2}(\Om; \R^{d}))$, and $\bt \in  W^{1, 1}(\ZZZ I; \EEE L^2_+(\partial \Omega))$. Then, there exists a \BBB weak solution \EEE  $(y_\eps, \theta_\eps)$  to the \ste regularized \EEE thermo-elastodynamic system with initial data $(y_{0,\eps}, y'_{0,\eps}, \theta_{0})$ in the sense of Definition~\ref{def:solution-elastodyn-thermal}.  
\end{theorem}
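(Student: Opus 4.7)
\medskip

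\noindent\textbf{Proof plan for Theorem \ref{thm:main-thermal-elasto-regu}.}
The plan is to combine the staggered minimizing movement scheme from \cite{tve_orig, tve} with the time-delay variational approach to hyperbolic problems from \cite{veaccel2}, exploiting the $\eps$-regularization $\tfrac{\eps}{2}\int_\Omega |\nabla\Delta \tilde y|^2\di x$ in $\diss_\eps$ to obtain the chain rule for the mechanical energy. I introduce a three-layer approximation: the outer layer is the regularization parameter $\eps>0$ (which is fixed here); in the next layer I replace the acceleration $\rho\partial_{tt}^{2} y$ by a finite backward difference on an acceleration time scale $h>0$, turning the hyperbolic equation into a (delay-parabolic) evolution; in the innermost layer I discretize this delayed problem in time with step $\tau>0$ via the staggered minimization scheme. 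The final solution for fixed $\eps>0$ is obtained by passing to the limit $\tau\to 0$ first and then $h\to 0$.

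\emph{Staggered minimizing movement scheme.} Given $(y^{(k-1)}_\tau,\theta^{(k-1)}_\tau)$ and its predecessor $y^{(k-2)}_\tau$ (used to discretize the inertial term with delay), I first define $y^{(k)}_\tau\in\Yidreg$ as a minimizer of the incremental mechanical functional
\begin{equation*}
  y\mapsto \mechen(y)+\cplen(y,\theta^{(k-1)}_\tau)+\tau\,\diss_\eps\!\Big(y^{(k-1)}_\tau,\tfrac{y-y^{(k-1)}_\tau}{\tau},\theta^{(k-1)}_\tau\Big)+\tfrac{\rho}{2\tau}\Big\|\tfrac{y-y^{(k-1)}_\tau}{\tau}-v^{(k-1)}_{h,\tau}\Big\|_{L^2}^{2}-\langle f^{(k)},y\rangle,
\end{equation*}
where $v^{(k-1)}_{h,\tau}$ is the delayed discrete velocity at time $t_k-h$. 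Existence of minimizers follows from the coercivity provided by \ref{W_lower_bound}, \ref{H_bounds}, \ref{D_bounds} and the $\eps$-term, together with lower semicontinuity in $H^{3}(\Omega;\R^d)$. The Euler--Lagrange equation yields a discrete version of \eqref{mechanical_equation}. Then I define $w^{(k)}_\tau$ as the (unique) weak solution of the linear heat-type equation obtained by freezing $\nabla y$ at the new level and using the already-computed dissipation rate and adiabatic source as $L^1$ right-hand side; existence here uses the Boccardo--Gallou\"et theory \cite{BoccardoGallouet89Nonlinear}, and $\theta^{(k)}_\tau$ is recovered from $w^{(k)}_\tau$ via the strict monotonicity of $\theta\mapsto \inten(F,\theta)$ coming from the third bound in \ref{C_bounds}.

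\emph{A priori estimates and passage $\tau\to 0$.} Testing the discrete mechanical equation with the discrete velocity $(y^{(k)}_\tau-y^{(k-1)}_\tau)/\tau$ and summing over $k$, I obtain the standard mechanical energy bounds
\begin{equation*}
  \sup_t \mechen(\ny(t))+\int_0^T\!\!\int_\Omega |\nabla\partial_t\ay|^2\di x\di t+\eps\int_0^T\!\!\int_\Omega |\nabla\Delta\partial_t\ay|^2\di x\di t+\rho\sup_t\|\partial_t\ay(t)\|_{L^2}^{2}\le C,
\end{equation*}
where the $\rho$-term stems from the quadratic inertia contribution (generating a telescopic sum modulo an error controlled by $h$). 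The generalized Korn inequality \cite{pompe}, relying on $\Delta y\in L^p$ with $p>d$, gives the control of $\nabla\partial_t y$ from the frame-indifferent dissipation $R$. Simultaneously, the internal energy satisfies $\|w_\tau\|_{L^\infty(I;L^1)}\le C$ and Boccardo--Gallou\"et estimates yield $\theta_\tau$ bounded in $L^q(I;W^{1,q})$ for $q<(d{+}2)/(d{+}1)$. Aubin--Lions produces strong convergence of $\nabla y_\tau$ and, crucially, the extra regularization $\eps|\nabla\Delta\partial_t y|^2$ provides strong $H^{1}$-compactness of the velocities, hence strong convergence of $\nabla\partial_t y_\tau$. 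This, as in \cite[Proposition 5.1]{tve_orig}, is exactly what is required to identify the nonlinear dissipation term in the limit heat equation. I then pass to the limit in the Euler--Lagrange equations, the heat equation and the $L^1$-right-hand side terms, obtaining a solution of the delay-parabolic regularized system at level $h>0$.

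\emph{Passage $h\to 0$ and conclusion.} The $h$-level a priori bounds are independent of $h$ (since the discrete acceleration $\rho(\partial_t y_h-\partial_t y_h(\cdot-h))/h$ is the discrete time derivative of $\partial_t y_h$ tested against $\partial_t y_h$ itself, yielding the kinetic energy telescoping). Thus $\partial_t y_h$ is bounded in $L^\infty(I;L^2)\cap L^2(I;H^3)$, and $\partial_{tt}^{2}y_h$ is bounded in $L^2(I;(H^3\cap H^1_0)^*)$ by comparison in the mechanical equation, using the $\eps$-regularization to absorb the highest-order term. Standard compactness then yields a limit $(y_\eps,\theta_\eps)$ satisfying the weak form \eqref{mechanical_equation}--\eqref{e:regularized-thermal}. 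The strong convergence of $\nabla\partial_t y_h$, inherited from the $\eps$-regularization, again allows the identification of the nonlinear terms in \eqref{e:regularized-thermal}. The initial conditions are recovered in the usual way from the weak continuity in time. The \emph{main obstacle} lies in the passage to the limit of the nonlinear dissipation term $\partial_{\dot F}R(\nabla y,\partial_t\nabla y,\theta):\nabla\partial_t y$ in the heat equation; this requires strong convergence of $\nabla\partial_t y_\tau$ (and $\nabla\partial_t y_h$), which here is obtained precisely because the $\eps$-regularization guarantees the chain rule for the mechanical energy and, consequently, the convergence of a mechanical energy balance along the lines of \cite[Proposition 5.1]{tve_orig}. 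Removing this regularization is the content of the subsequent (unregularized) theorem and requires the further regularity estimates on $y$ announced in the introduction.
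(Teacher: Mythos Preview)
Your overall strategy---the three layers $(\tau,h,\eps)$, the staggered minimizing movement scheme with a time-delayed inertial term, and the two successive limits $\tau\to 0$ followed by $h\to 0$---matches the paper's approach. Two points, however, deserve correction or elaboration.

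First, your sentence ``the extra regularization $\eps|\nabla\Delta\partial_t y|^2$ provides strong $H^{1}$-compactness of the velocities'' is not right as a direct mechanism. The $\eps$-term gives only a uniform bound $\sqrt{\eps}\,\|\partial_t y\|_{L^2(I;H^3)}\le C$, hence weak compactness; at the $\tau$-level the interpolants are piecewise affine and no second time derivative is available for Aubin--Lions. The actual argument (which you also state at the end) is the one you attribute to \cite[Proposition~5.1]{tve_orig}: the $\eps$-regularization makes the \emph{chain rule} for $\mechen$ valid at the limit level, so one has a sharp mechanical energy balance; comparing it with the $\liminf$ of the discrete balance forces equality in the lower-semicontinuity inequalities for $\int\xi$ and $\int\eps|\nabla\Delta\partial_t y|^2$, and this is what yields strong $L^2$-convergence of $\nabla\partial_t y_\tau$ and $\nabla\Delta\partial_t y_\tau$. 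The same mechanism is repeated at the $h$-level.

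Second---and this is a genuine gap---your proposal only produces the Boccardo--Gallou\"et regularity $\theta\in L^r(I;W^{1,r})$ for $r<\tfrac{d+2}{d+1}$, whereas Definition~\ref{def:solution-elastodyn-thermal} requires $\theta_\eps\in L^2(I;H^1_+(\Omega))$ and $w_\eps\in L^2(I;H^1)\cap H^1(I;(H^1)^*)$. The obstruction is that the heat source $\eps|\partial_t\nabla\Delta y|^2$ is, from the energy estimate alone, only in $L^1(I\times\Omega)$. The paper upgrades this to $L^2(I\times\Omega)$ (with an $\eps$-dependent bound) by first proving $\partial_t\Delta y\in L^2(I;H^2)$ via an elliptic-regularity argument for the fourth-order $\eps$-operator (testing the Euler--Lagrange equation with $\partial_t\Delta y$ and using the natural boundary conditions forced by $y_{0,\eps}\in\Yidreg$). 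Only once the source term is in $L^2$ can one invoke standard parabolic theory (rather than Boccardo--Gallou\"et) to obtain $\theta,w\in L^2(I;H^1)$ and $\partial_t w\in L^2(I;(H^1)^*)$. Without this step your limit pair does not meet the regularity in the definition of weak solution. A minor related omission: in the discrete thermal step the paper truncates the regularizing source by $\wedge\,\tau^{-1}$ to make the minimization well-posed; this truncation disappears in the $\tau\to 0$ limit once the above $L^2$-bound is in place.
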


\BBB For weak solutions, we can derive  energy balances and some regularity properties. Recall \eqref{mechanical} and \eqref{toten}. \EEE

\BBB 
\begin{theorem}[Regularity of solutions and total energy balance]  \label{thm:main-thermal-elasto-regu2}
  In the setting of Theorem \ref{thm:main-thermal-elasto-regu}, we find weak solutions $(y_\eps, \theta_\eps)$ \EEE  satisfying  $\ste y_{\eps} \EEE \in L^\infty(I; \Yidreg)$. \MMM  Moreover, for each $t \in I$,  the system satisfies 
the mechanical energy balance 
\begin{align}
\label{e:limit-energy-2-NNN-for result }
  \mathcal{M}(y_\eps(t))  &   +  \frac{\rho}{2}   \Vert \partial_t y_\eps(t) \Vert^2_{L^2(\Omega)}    + \int_0^t  \int_\Omega \Big(   \drate(\nabla y_\eps, \partial_t \nabla y_\eps, \theta_\eps)
        )   +  \eps  |\partial_t \nabla \Delta y_\eps|^2  \di x  +  \pl_{F} W^{\rm cpl} (\nabla y_{\eps}, \theta_{\eps}) 
         : \partial_{t}\nabla y_{\eps}   \Big)     \di s   \nonumber
          \\
          & =  \mechen(y_{0,\eps})   +   \frac{\rho}{2}   \Vert y_{0,\eps}' \Vert^2_{L^2(\Omega)} 
                      +  \int_0^t \int_\Omega f \cdot \partial_{t} y_{\eps}   \di x \di s ,
\end{align}
the \ZZZ thermal \EEE energy balance 
\begin{align}\label{internal energy balance-for result }
  \int_\Omega w_\eps(t)  \di x  &= \int_\Omega w_{0,\eps}   \di x  +   \int_0^t \int_\Omega \Big( \drate(\nabla y_\eps, \partial_t \nabla y_\eps, \theta_\eps)
          +  \eps  |\partial_t \nabla \Delta y_\eps|^2       + \pl_F \cplpot(\nabla y_\eps, \theta_\eps) : \pl_t \nabla y_\eps       \Big)   \di x \ZZZ \di s \EEE \\
    &\quad   + \kappa \int_0^t \int_{\partial \Omega} (\bt - \theta_\eps)   \di \haus^{d-1} \ZZZ \di s \EEE
      \,, \nonumber
\end{align}
and the total energy balance
\begin{align}\label{eq:energy-old}
\begin{aligned}
  &\toten(y_\eps(t), \theta_\eps(t)) +  \frac{\rho}{2}   \Vert \partial_t y_\eps(t) \Vert^2_{L^2(\Omega)} \\
  &\quad=   \toten(y_{0,\eps},\theta_0)  +   \frac{\rho}{2}   \Vert y_{0,\eps}' \Vert^2_{L^2(\Omega)}  + \int_0^t \int_{\partial \Omega}\kappa(\bt-\theta_\eps)\, \BBB \di \mathcal{H}^{d-1}  \di s \EEE + \int_0^t\int_\Omega f\cdot \partial_t  y_\eps \di x \di s .
\end{aligned} 
\end{align}
\end{theorem}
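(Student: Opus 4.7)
The plan is to establish the three balances by testing the weak formulations~\eqref{mechanical_equation} and~\eqref{e:regularized-thermal} with suitable natural quantities, exploiting the improved regularity afforded by the $\eps$-regularization. The extra regularity $y_\eps \in L^\infty(I;\Yidreg)$ will be inherited from the construction via the minimizing movement scheme together with the elliptic estimates mentioned in the introduction (Lemma~\ref{l:laplace-regularity}): the uniform-in-$\tau$ bounds on $\nabla \Delta y_\tau$ pass to the limits $\tau,h\to 0$ and, combined with the natural boundary conditions $\partial_\nu \Delta y_\eps = \Delta y_\eps = 0$ encoded in $\Yidreg$, yield the claimed pointwise-in-time regularity. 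In what follows I focus on the three energy identities, taking the regularity of $(y_\eps,\theta_\eps)$ from Definition~\ref{def:solution-elastodyn-thermal} as given.

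\textbf{Mechanical balance~\eqref{e:limit-energy-2-NNN-for result }.} The crucial point is that for $\eps>0$ the solution satisfies $\partial_t y_\eps \in L^2(I;H^3(\Omega;\R^d)\cap H^1_0(\Omega;\R^d))$: indeed $y_\eps \in H^1(I;H^3)$, and since $y_\eps(t)=\id$ on $\partial\Omega$ for all $t$, one has $\partial_t y_\eps = 0$ on $\partial\Omega$. Hence $\partial_t y_\eps$ is in duality with the inertial pairing in~\eqref{mechanical_equation}. After a standard time-localization argument (smoothly approximating $\indic_{[0,t]}\partial_t y_\eps$ in time), I test with $z=\partial_t y_\eps$ on $[0,t]$. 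Lions' chain rule gives $\rho\int_0^t \langle \partial_{tt}^2 y_\eps,\partial_t y_\eps\rangle\,\di s = \tfrac{\rho}{2}\|\partial_t y_\eps(t)\|_{L^2}^2 - \tfrac{\rho}{2}\|y_{0,\eps}'\|_{L^2}^2$. Splitting $\partial_F W = \partial_F W^{\rm el} + \partial_F W^{\rm cpl}$ and applying the $C^2$-chain rule (using $\nabla y_\eps \in L^\infty(I;L^\infty)$ by the Sobolev embedding $W^{2,p}\hookrightarrow W^{1,\infty}$ valid for $p>d$) reduces the elastic contribution to $\elen(y_\eps(t))-\elen(y_{0,\eps})$ and produces the coupling work term $\int_0^t\!\int \partial_F W^{\rm cpl}:\partial_t \nabla y_\eps$. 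Similarly, $\Delta y_\eps \in L^\infty(I;L^\infty)$ (from $y_\eps\in L^\infty(I;H^4)$ and $H^4\hookrightarrow C^{2,\alpha}$ in dimension $d\le 3$) together with $\Delta \partial_t y_\eps \in L^2(I;H^1)$ justifies $\int_0^t\!\int DH(\Delta y_\eps)\cdot \Delta\partial_t y_\eps = \hyen(y_\eps(t))-\hyen(y_{0,\eps})$ via chain rule on the uniformly convex $C^1$ potential $H$. The regularization contribution yields $\eps\int_0^t\!\int |\partial_t \nabla \Delta y_\eps|^2$, the viscous stress becomes $\int_0^t\!\int \drate(\nabla y_\eps,\partial_t \nabla y_\eps,\theta_\eps)$ by~\eqref{diss_rate}, and the load supplies the external work term. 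Using $\mechen = \elen + \hyen$ and collecting, the identity~\eqref{e:limit-energy-2-NNN-for result } follows.

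\textbf{Thermal balance~\eqref{internal energy balance-for result }.} Since $w_\eps \in L^2(I;H^1(\Omega)) \cap H^1(I;(H^1(\Omega))^*)$ with $w_\eps(0)=w_{0,\eps}$, I test~\eqref{e:regularized-thermal} with the constant function $\vphi\equiv 1$ on $[0,t]$, which is admissible after smooth approximation and time cutoff. The conductivity contribution vanishes because $\nabla \vphi \equiv 0$; the boundary integral reduces to $\kappa \int_0^t\!\int_{\partial\Omega}(\bt-\theta_\eps)$; and Lions' chain rule for $\int_0^t \langle \partial_t w_\eps,1\rangle\,\di s$ gives $\int_\Omega w_\eps(t)-\int_\Omega w_{0,\eps}$. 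Rearranging yields exactly~\eqref{internal energy balance-for result }.

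\textbf{Total balance and main obstacle.} Summing~\eqref{e:limit-energy-2-NNN-for result } and~\eqref{internal energy balance-for result }, the dissipation $\drate$, the regularization $\eps|\partial_t \nabla \Delta y_\eps|^2$, and the adiabatic coupling $\partial_F W^{\rm cpl}:\partial_t \nabla y_\eps$ appear with opposite signs in the two balances and cancel pairwise. Using $\toten(y_\eps,\theta_\eps) = \mechen(y_\eps) + \int_\Omega w_\eps$ from~\eqref{toten} produces~\eqref{eq:energy-old}. The main technical hurdle is the chain rule for the $p$-growth functional $\hyen$ and the verification that $\partial_t y_\eps$ is truly an admissible test vector in~\eqref{mechanical_equation}; both are handled precisely by the regularity $y_\eps \in L^\infty(I;\Yidreg)$, which is itself the nontrivial input coming from the elliptic theory of the fourth-order $p$-Laplacian and which, as emphasized in the introduction, degenerates uniformly as $\eps\to 0$.
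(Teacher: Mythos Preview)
Your proposal is correct and follows essentially the same approach as the paper: test \eqref{mechanical_equation} with $\partial_t y_\eps$ (using the chain rule for $\mechen$ and the Lions chain rule for the kinetic term), test \eqref{e:regularized-thermal} with $\vphi\equiv 1$ via a time-cutoff approximation, and sum to obtain the total balance. The only cosmetic difference is that the paper justifies the chain rule for $\mechen$ via the abstract $\Lambda$-convexity framework of \cite[Proposition~3.6]{tve_orig} (needing only $y_\eps\in H^1(I;H^3)$), whereas you invoke the stronger $L^\infty(I;H^4)$ regularity to get $\Delta y_\eps\in L^\infty$ directly; both routes are valid for $\eps>0$.
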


We emphasize that the energy balances are well-defined pointwise for each $t \in I$ since the regularity of the deformation and the temperature imply $ y_\eps \in C(I;W^{2,p}(\Omega;\R^d))$, $\partial_t y_\eps \in C(I;L^2(\Omega;\R^d))$, and $ \ZZZ w_\eps \EEE \in C(I;L^2(\Omega))$, where we use that $p < 2^*$ and \cite[Lemma 7.3]{Roubicek-book}.  The total energy balance \eqref{eq:energy-old} arises by summing \eqref{e:limit-energy-2-NNN-for result } and \eqref{internal energy balance-for result }. In particular,  we observe that the system is closed for  $\kappa=0$ and $f = 0$. Still,  an exchange of mechanical energy and thermal energy   is possible due to the \ZZZ (regularized) \EEE \emph{dissipation rate} $\drate(\nabla y_\eps, \partial_t \nabla y_\eps, \theta_\eps) 
        )   +  \eps  |\partial_t \nabla \Delta y_\eps|^2$  and the \emph{adiabatic heat source} $ \pl_{F} W^{\rm cpl} (\nabla y_{\eps}, \theta_{\eps}) 
         : \partial_{t}\nabla y_{\eps} $ which cancel out in the summation of \eqref{e:limit-energy-2-NNN-for result } and \eqref{internal energy balance-for result }.

\subsection*{Existence of weak solutions for the system without regularization}

Our goal is to remove the regularization by passing to the limit $\eps \to 0$ for weak solutions $(y_\eps,\theta_\eps)$ in the sense of Definition \ref{def:solution-elastodyn-thermal}. Unfortunately, the available a priori bounds and compactness results yielding a limit $(y,\theta)$, see Lemma \ref{l:mu-compactness} below, are not strong enough as they guarantee convergence of all terms  in \eqref{mechanical_equation}--\eqref{e:regularized-thermal} except  for the acceleration $\partial^2_{tt} y_\eps $ in \eqref{mechanical_equation} and  the dissipation rate  $\drate(\nabla y_\eps, \partial_t \nabla y_\eps, \theta_\eps)$ in \eqref{e:regularized-thermal}. Accordingly, also the validity of the mechanical and thermal energy balances \eqref{e:limit-energy-2-NNN-for result }--\eqref{internal energy balance-for result } cannot be expected in the limit $\eps \to 0$ as  
$$\liminf_{\eps \to 0} \int_0^t \int_\Omega \drate(\nabla y_\eps, \partial_t \nabla y_\eps, \theta_\eps) \di x \di s  > \int_0^t \int_\Omega \drate(\nabla y, \partial_t \nabla y, \theta) \di x \di s      $$
is \ZZZ possible  under the available \EEE  compactness results. In \cite[Lemma~4.5]{tve} and \cite[Propositions~5.1 and~6.6]{tve_orig}, equality was guaranteed by a chain rule for the mechanical energy (see \cite[Proposition~3.6]{tve_orig}) which also allowed to derive a  mechanical energy balance. Due to the presence of the inertial term, it appears to be impossible to  adapt this strategy to the current setting.

We overcome this difficulty by appealing   to a weaker formulation of the mechanical and the heat-transfer equation.
 In \eqref{mechanical_equation}, it suffices to perform an integration by parts in time to deal with the term $\partial^2_{tt} y_\eps$.  The passage to a weaker form of \eqref{e:regularized-thermal} is based on the observation that the delicate dissipation term cancels in the summation of \eqref{e:limit-energy-2-NNN-for result }  and \eqref{internal energy balance-for result }. \ZZZ More precisely, this passage \EEE is achieved by testing  \eqref{mechanical_equation} with $z =  \partial_{t} y \varphi$ and adding the result to \eqref{e:regularized-thermal}. This leads to the following notion of weak solution whose form will be explained in more detail by a formal computation in~\eqref{e:thermal-strong-3}--\eqref{e:thermal-strong-7}   below.

\begin{definition}[\BBB Weak solutions to thermo-elastodynamic system for viscous solids]
\label{def:solution-elastodyn-thermal-unreg} \EEE
Let $y_{0} \in  \Yid$, $y'_{0}\in  H^{1}(\Om; \R^{d}) \EEE $, $\theta_{0} \in \BBB L^{2}_{+}(\Om) \EEE $, $f \in W^{1, 1}(\ste I \EEE ; L^{2}(\Om; \R^{d}))$, and $\bt \in  W^{1, 1}( \ste I \EEE ; L^2_+(\partial \Omega))$. We say that a pair $(y, \theta)$ with 
\begin{align*}
y \in  L^{\infty}(\BBB I ;  \EEE  \Yid) \cap H^1(I;H^1(\Omega;\R^d) ),  \quad  \theta  \in L^1(I; W^{1, 1}_+(\Omega))
 \end{align*}
is a solution \BBB to the thermo-elastodynamic system \EEE  with initial conditions $(y_0, y'_0, \theta_{0})$ if the following equations are satisfied for every $z \in C^{\infty} (\RB I \EEE\times \overline{\Omega}; \R^{d})$ with $z = 0$ on $\RB I \EEE\times \partial \Om$ \BBB and $z(T) = 0$, \EEE  and for every $\varphi \in C^{\infty} (\RB I \EEE\times \overline\Om)$ with $\varphi(T) = 0$:
\begin{align} \label{mechanical_equation_final-def-unreg}
    0= & \BBB \int_I \EEE \int_{\Om} \partial_{F} W(\nabla y, \theta) : \nabla z \di x \di t
    +   \BBB \int_I \EEE \int_{\Om} {D H} ( \Delta  y) \cdot \Delta z \di x \di t  
   \\
    &      \quad + \BBB \int_I \EEE \int_{\Om} \partial_{\dot{F}} R(\nabla y, \partial_t \nabla y, \theta) : \nabla z \di x \di t  - \rho \BBB \int_I \EEE \int_{\Om} \partial_t y \cdot \partial_t z \di x \di t
    - \BBB \int_I \EEE  \int_{\Om} f \cdot z \di x \di t - \rho \int_{\Om} y'_{0} \cdot z(0) \di x \,, \notag
  \end{align}
  \begin{align} \label{e:new-thermal-equation-lim}
  0= &  \intQ \hcm(\nabla y, \theta) \nabla \theta \cdot \nabla \varphi  \di x \di t  - \kappa \ZZZ \int_I \EEE \int_{\partial \Omega} (\bt-\theta  ) \varphi \di \haus^{d-1} \di t 
  -  \intQ \varphi \, f \cdot \partial_{t} y \di x \di t \notag \\ &
    - \ZZZ \int_I \EEE \int_\Omega \big( \elpot (\nabla y )  + H(\Delta y ) + w  \BBB + \frac{\rho}{2} |\partial_t y|^2  \EEE\big) \partial_{t} \varphi \di x \di t
 - \int_{\Om} \big( \elpot (\nabla y_{0}) + H(\Delta y_{0} ) + w_0 + \frac{\rho}{2} |y_0'|^2 \big) \varphi (0) \sa \di x 
 \notag\\
 &
\quad +\intQ 
      \Big( \pl_F  W (\nabla y , \theta ) + \pl_{\dot{F}} R (\nabla y , \nabla \partial_{t} y , \theta ) \Big) 
     : ( \partial_{t} y \otimes \nabla \varphi) \di x \di t \notag \\
&
\quad  - \intQ D{H}(\Delta y) \cdot \partial_{t} y \Delta \varphi \, \di x \di t - 2 \intQ \nabla (D{H}(\Delta y )) : ( \partial_{t}y  \otimes \nabla \varphi) \di x \di t  ,
  \end{align}
  where \BBB for shorthand we set \EEE  $w \defas W^{\rm in} (\nabla y , \theta )$ \BBB and $w_0 \defas W^{\rm in} (\nabla y_0 , \theta_0 )$. \EEE 
\end{definition}

 \BBB
 In particular, we observe that, due to the lack of   regularity of $\partial^2_{tt} y$ and  $\partial_{t} w$,  the initial conditions of   $\partial_t  y $ and $w$  are  given implicitly in a weak form, relying on an integration by parts in time. An important aspect of the weak formulation \eqref{e:new-thermal-equation-lim} is that it directly guarantees the total energy balance. Indeed, for each $t \in I$ such that 
\begin{align*}
&  \lim_{\delta \to 0} \RB\mint_{t-\delta}^{t+\delta}\BBB  \int_\Omega y   \di x \di s =   \int_\Omega y(t,x)  \di x \in W^{2,p}(\Omega;\R^d), \quad  \lim_{\delta \to 0} \RB\mint_{t-\delta}^{t+\delta}\BBB  \int_\Omega  \partial_t y   \di x \di s =   \int_\Omega \partial_t y(t,x)  \di x \in L^2(\Omega;\R^d),  \\
&\lim_{\delta \to 0} \RB\mint_{t-\delta}^{t+\delta}\BBB  \int_\Omega w   \di x \di s =   \int_\Omega w(t,x)  \di x \in L^1(\Omega)
\end{align*}
(and thus for a.e.\ $t \in I$),   \BBB we can  test \eqref{e:new-thermal-equation-lim}  
with  $\varphi$ given by $\varphi \equiv 1$ on $(0,t-\delta)$, $\varphi \equiv 0$ on $(t+\delta,T)$ and $\varphi' \equiv -\frac{1}{2\delta}$ on $(t-\delta, t + \delta)$. In the limit $\delta \to 0$, \ZZZ after rearranegment, \EEE this yields
\begin{align}\label{final energy balance}
  \int_\Omega & \big( \elpot (\nabla y(t) )  + H(\Delta y(t) ) + w(t)   + \frac{\rho}{2} |\partial_t y(t)|^2  \EEE\big)  \di x \notag   \\
 & =  \ZZZ  \int_{\Om} \big( \elpot (\nabla y_{0}) + H(\Delta y_{0} ) + w_0 + \frac{\rho}{2} |y_0'|^2 \big)    \di x \EEE +  \kappa \int_0^t \int_{\partial \Omega} (\bt-\theta  )  \di \haus^{d-1} \di s 
  +  \int_0^t \int_\Omega  f \cdot \partial_{t} y \di x \di s  
 \end{align}
which is exactly the total energy balance, cf.\ also \eqref{eq:energy-old}.  \ZZZ Whereas  a total energy balance  still holds,   the \EEE respective form of \eqref{e:limit-energy-2-NNN-for result } and \eqref{internal energy balance-for result } \ZZZ may \EEE become {\em inequalities}. In some sense, \ZZZ this weaker form based on a \EEE replacement is inspired \ZZZ by \EEE fluid-mechanics for compressible heat conduction fluids, where the conservation of the total energy is guaranteed by transferring the heat equation into an inequality for the entropy~\cite{feireisl1,feireisl2}.

\begin{theorem}[\BBB Existence \seb{and regularity} of weak solutions\EEE]
    \label{thm:main-thermal-elasto-unregu}
       Let $p \in (2, +\infty)$ if $d=2$ or $p \in (3, 6)$ for $d=3$. \BBB Assume that  \ref{W_regularity}--\ref{W_lower_bound}, \ref{H_regularity}--\ref{H_bounds},~\ref{C_regularity}--\ref{C_bounds},~\ref{D_quadratic}--\ref{D_bounds}, and~\eqref{spectrum_bound_K} hold true. \EEE   Let $y_{0} \in \Yid$, $y'_{0}\in H^{1}_{0}(\Om; \R^{d})$, $\theta_{0} \in \ZZZ L^{2}_{+}(\Om) \EEE $, $f \in W^{1, 1}(\RB I \EEE; L^{2}(\Om; \R^{d}))$, and $\bt \in  W^{1, 1}(\RB I \EEE; L^2_+(\partial \Omega))$. Then, there exists a \BBB weak solution \EEE  $(y, \theta)$   to the thermo-elastodynamic system with initial data $(y_{0}, y'_{0}, \theta_{0})$ in the sense of Definition~\ref{def:solution-elastodyn-thermal-unreg}.   \BBB The weak solution \EEE satisfies   $y\in L^2(I; H^3(\Omega;\R^d))$ and     $(1+\abs{\Delta y})^\frac{p-2}{2}\abs{\nabla \Delta y}^2\in L^2(I\times \Omega)$. 
\end{theorem}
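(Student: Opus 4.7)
\medskip

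\noindent\textbf{Proof proposal.}
The plan is to construct the weak solution claimed in Theorem~\ref{thm:main-thermal-elasto-unregu} as the limit of a sequence of regularized solutions $(y_\eps,\theta_\eps)$ provided by Theorem~\ref{thm:main-thermal-elasto-regu}. First I would pick regularized initial data $y_{0,\eps}\in\Yidreg$ and $y'_{0,\eps}\in H^3(\Omega;\R^d)\cap H^1_0(\Omega;\R^d)$ such that $y_{0,\eps}\to y_0$ in $W^{2,p}$, $y'_{0,\eps}\to y'_0$ in $H^1_0$, and $\eps\,\|y_{0,\eps}\|_{H^4}^2+\eps\,\|y'_{0,\eps}\|_{H^3}^2\to 0$, so that the $\eps$-dependent contributions to the total energy balance \eqref{eq:energy-old} vanish in the limit. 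The resulting regularized weak solutions $(y_\eps,\theta_\eps)$ satisfy the three balances of Theorem~\ref{thm:main-thermal-elasto-regu2}, and from \eqref{eq:energy-old} together with \ref{H_bounds} and \ref{W_lower_bound} I read off the $\eps$-independent a priori bounds $y_\eps\in L^\infty(I;\Yid)$, $\partial_t y_\eps\in L^\infty(I;L^2)\cap L^2(I;H^1_0)$, and $\theta_\eps\in L^1(I;W^{1,1}_+)$ using the by-now standard Boccardo--Gallouët machinery already exploited in~\cite{tve,tve_orig}.

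Second, I would derive the additional $\eps$-uniform regularity $(1+|\Delta y_\eps|)^{(p-2)/2}|\nabla\Delta y_\eps|\in L^2(I\times\Omega)$ that appears in the statement. Formally this comes from testing \eqref{e:mechanical-strong} with $-\Delta y_\eps$, as sketched in the introduction below \eqref{expli2}; rigorously it must be carried out on the regularized level where $y_\eps\in L^\infty(I;\Yidreg)$ makes all integrations by parts licit thanks to the boundary conditions~\eqref{e:laplaceboundary}--\eqref{e:laplaceboundary2}. The dominant term is the fourth-order $p$-Laplacian, which by \ref{H_bounds} produces $\int|\nabla\Delta y_\eps|^2\max\{2,p|\Delta y_\eps|^{p-2}\}\,dx$ up to standard commutator contributions; the inertial term $\rho\int\partial_{tt}^2 y_\eps\cdot(-\Delta y_\eps)\,dx$ is absorbed via an integration by parts in time into $\rho\int|\partial_t\nabla y_\eps|^2\,dx$ plus boundary-in-time terms controlled by the initial data, and all other contributions are handled by Young's inequality and the a priori bounds from step one. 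This is exactly the argument referenced as Proposition~\ref{cor:higher_reg_y} and Lemma~\ref{l:laplace-regularity} in the introduction.

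Third, I would extract a (not relabeled) subsequence converging to a limit $(y,\theta)$ via the compactness lemma (Lemma~\ref{l:mu-compactness}). Specifically I expect $y_\eps\weaklystar y$ in $L^\infty(I;W^{2,p})$, $\partial_t y_\eps\weakly\partial_t y$ in $L^2(I;H^1_0)$, $y_\eps\to y$ in $C(I;W^{1,q})$ for every $q<\infty$ by Aubin--Lions, $\nabla\Delta y_\eps\weakly\nabla\Delta y$ in $L^2(I\times\Omega)$ by step two, $\det\nabla y_\eps\to\det\nabla y$ uniformly together with a uniform lower bound on the determinant, and $\theta_\eps\to\theta$ a.e.\ together with weak convergence in $L^r(I;W^{1,r})$ for some $r>1$, together with $\eps\|\partial_t\nabla\Delta y_\eps\|_{L^2}^2\to 0$ from step one. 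These are enough to pass to the limit in every term of the $\eps$-regularized mechanical equation~\eqref{mechanical_equation} after integrating the inertial term by parts once in time (using $\partial_t y_\eps(0)=y'_{0,\eps}\to y'_0$ and $z(T)=0$), which yields~\eqref{mechanical_equation_final-def-unreg}.

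The hard part, and the whole reason for the weaker heat-transfer formulation~\eqref{e:new-thermal-equation-lim}, is the passage to the limit in the heat equation: the dissipation term $\xi(\nabla y_\eps,\partial_t\nabla y_\eps,\theta_\eps)$ is only weakly compact and strong convergence of $\partial_t\nabla y_\eps$ is unavailable in the hyperbolic setting, so we cannot directly pass to the limit in~\eqref{e:regularized-thermal}. Instead, on the $\eps$-level I would test~\eqref{mechanical_equation} with $z=\partial_t y_\eps\,\varphi$ — which is admissible thanks to the $\Yidreg$-regularity — and add the result to~\eqref{e:regularized-thermal}. The dissipation rate and the adiabatic source then cancel as in~\eqref{final energy balance}, and the contributions involving $\partial_{tt}^2 y_\eps$, $\partial_F W$, $\partial_{\dot F}R$ and the strain-gradient term can be rearranged via integration by parts (in space on $\Omega$, using~\eqref{e:laplaceboundary}--\eqref{e:laplaceboundary2}, and in time) into exactly the terms appearing in~\eqref{e:new-thermal-equation-lim}, plus residual $\eps$-terms of the form $\eps\int\partial_t\nabla\Delta y_\eps:\nabla\Delta(\partial_t y_\eps\varphi)\,dx\,dt$ that are controlled using the bound from step one and therefore vanish as $\eps\to 0$. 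With the regularity from steps one and two (in particular $\nabla(DH(\Delta y))\in L^2(I;L^{p'})$, which requires precisely $y\in L^2(I;H^3)$), every surviving term in the rearranged equation is the product of a weakly convergent and a strongly convergent factor, so the limit $\eps\to 0$ delivers~\eqref{e:new-thermal-equation-lim}. The attainment of the initial datum $\theta(0)=\theta_0$ is encoded in the test functions with $\varphi(0)\neq 0$ via the $w_0$-term, using the convergence of $w_\eps(0)=W^{\rm in}(\nabla y_{0,\eps},\theta_0)\to w_0$ granted by \ref{C_regularity} and \eqref{inten_lipschitz_bounds}.
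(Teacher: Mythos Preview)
Your overall architecture is right and matches the paper: regularize the data, invoke Theorem~\ref{thm:main-thermal-elasto-regu}, derive the extra third-order regularity via the test with $-\Delta y_\eps$ (this is the content of \eqref{e:lap-reg-mu}/\eqref{e:lap-reg-mu-final}), extract compactness, and pass to the limit in the mechanical equation after one time-integration by parts and in the heat equation after testing \eqref{mechanical_equation} with $\partial_t y_\eps\,\varphi$ so that the dissipation cancels (Proposition~\ref{p:new_formulation}).

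There is, however, a genuine gap in your treatment of the residual $\eps$-terms. After expanding $\eps\int\partial_t\nabla\Delta y_\eps:\nabla\Delta(\partial_t y_\eps\,\varphi)$ and cancelling the leading contribution $\eps\int|\partial_t\nabla\Delta y_\eps|^2\varphi$ against the regularized dissipation in \eqref{e:regularized-thermal}, what remains are terms such as $\eps\int\partial_t\nabla\Delta y_\eps:(\partial_t\Delta y_\eps\otimes\nabla\varphi)$ and $\eps\int\partial_t\nabla\Delta y_\eps:\nabla(\partial_t\nabla y_\eps\,\nabla\varphi)$, i.e.\ products of two factors each carrying roughly $\|\partial_t y_\eps\|_{H^3}$-regularity. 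Your ``bound from step one'' is precisely the dissipation estimate $\sqrt{\eps}\,\|\partial_t y_\eps\|_{L^2(I;H^3)}\le C$; inserting this gives $\eps\cdot\eps^{-1/2}\cdot\eps^{-1/2}=O(1)$, which does \emph{not} vanish. Integrating by parts once more (as in \eqref{e:eee}--\eqref{e:fff}) only shifts the problem to $\eps\int\partial_t\Delta^2 y_\eps\cdot\partial_t\nabla y_\eps\,\nabla\varphi$, and the energy bound gives no control on $\partial_t\Delta^2 y_\eps$ at all.

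The paper closes this gap with an additional estimate that you do not mention: testing \eqref{mechanical_equation} with $(T-t)\,\partial_t\Delta y_\eps$ (Step~3 of Lemma~\ref{l:laplace-regularity}) and combining with Gagliardo--Nirenberg interpolation yields \eqref{e:lap-reg-mu2}/\eqref{e:lap-reg-mu2-final}, namely $\|\Delta\partial_t y_\eps\|_{L^2(I;H^2)}\le C\eps^{-(1+\varrho)/2}$ with $\varrho\in(\tfrac12,1)$ from \eqref{r-def}. Only then does the residual term satisfy $\eps\,\|\partial_t\Delta^2 y_\eps\|_{L^2}\,\|\partial_t\nabla y_\eps\|_{L^2}\le C\eps^{(1-\varrho)/2}\to 0$. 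This estimate, in turn, forces the sharper scaling $\sqrt[4]{\eps}\,\|y_{0,\eps}\|_{H^4}$ bounded (see \eqref{e:hpmu}); your choice $\eps\,\|y_{0,\eps}\|_{H^4}^2\to 0$, i.e.\ $\sqrt{\eps}\,\|y_{0,\eps}\|_{H^4}$ bounded, is too weak for \eqref{e:lap-reg-mu2-final}. So both the missing fourth-order estimate on $\partial_t y_\eps$ and the correct initial-data scaling need to be added to your argument.
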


Note that \eqref{final energy balance} holds and \eqref{e:laplaceboundary1.5} is satisfied in the sense of traces.

\subsection*{Formal derivation of the weak formulation}\label{formal deriv}
Let us close this section with a formal derivation of  equation \eqref{e:new-thermal-equation-lim}  \BBB which  will be made precise below in  Proposition~\ref{p:new_formulation}  for the regularized system.  
  Assuming sufficient regularity for $y$ and $\theta$, let us check that  the formulations in \eqref{e:new-thermal-equation-lim} and \eqref{e:regularized-thermal} (for $\eps = 0$) coincide.  First, an integration by parts in time shows that\RB, for $\eps = 0$, \EEE \eqref{e:regularized-thermal} is equivalent to \EEE
\begin{align}
\begin{aligned}
  0 &=   \intQ \mathcal{K} (\nabla y, \theta) \nabla \theta \cdot \nabla \varphi \, \di x \di t    - \intQ \big( \xi (\nabla y, \partial_{t} \nabla y, \theta) +  \partial_{F} W^{\rm cpl} (\nabla y, \theta) : \partial_{t} \nabla y\big) \varphi \, \di x \di t
    \\
    &\phantom{=}\quad 
    + \kappa \int_{\RB I} \int_{\partial\Om} (\theta - \bt ) \varphi \, \di \mathcal{H}^{d-1} \di t - \intQ w \partial_{t} \varphi  \di x \di t 
 - \int_{\Om} \BBB w_0 \EEE \varphi(0)  \di x  
\end{aligned}  
\label{e:thermal-strong-3}
\end{align}
for every $\varphi \in C^{\infty} ( \ste I \EEE \times \overline{\Om})$ with $\varphi(T) = 0$. Now, we test~\eqref{mechanical_equation} with~$z\defas \partial_{t} y \varphi$  \BBB for  $\varphi \in C^{\infty} (I\times \overline{\Om})$ with $\varphi(T) = 0$.  Using \eqref{eq: free energy}, \eqref{diss_rate}, expanding $\nabla ( \partial_{t} y \varphi)$,  and \EEE rearranging the terms, we obtain
\begin{equation}\label{e:thermal-strong-4}
  \begin{aligned}
     &- \intQ \xi (\nabla y, \partial_{t} \nabla y, \theta) \varphi \di x \di t - \intQ  \partial_{F} \cplpot(\nabla y, \theta) : \partial_{t} \nabla y \varphi \di x \di t 
     \\
     &\quad= \intQ  \partial_{F} \elpot(\nabla y, \theta) : \partial_{t} \nabla y \varphi \di x \di t 
     + \intQ  \partial_{F} W(\nabla y, \theta) : (\partial_{t} y \otimes \nabla \varphi) \di x \di t
     \\
     &\phantom{=}\quad + \intQ \partial_{\dot{F}} R(\nabla y, \partial_{t} \nabla y, \theta) : (\partial_{t} y \otimes \nabla \varphi) \di x \di t 
     + \intQ {D H}(\Delta y) \cdot \Delta (\partial_{t} y \varphi) \di x \di t
     \\
     &\phantom{=}\quad  - \intQ f \cdot \partial_{t} y \varphi \di x \di t   \BBB + \rho  \int_I  \int_\Omega \varphi  \partial^2_{tt} y  \cdot \partial_t y \di x \EEE \di t.
  \end{aligned}
\end{equation}
\BBB By the chain rule, the fundamental theorem of calculus, and  $\varphi(T) = 0$ we get 
\begin{align}
\label{e:thermal-strong-5}
\int_I  \int_\Omega  \varphi  \partial^2_{tt} y  \cdot \partial_t y  \di x \EEE \di t & = \frac{1}{2}\int_I  \int_\Omega \frac{\di}{\di t} \big( \varphi  | \partial_t y|^2 \big)\di x \EEE \di t - \frac{1}{2}\int_I  \int_\Omega \partial_t \varphi  | \partial_t y |^2 \di x \EEE \di t \notag \\ &
=   - \frac{1}{2} \int_{\Om} |y'_{0}|^{2} \varphi (0) - \frac{1}{2}\int_I  \int_\Omega \partial_t \varphi  | \partial_t y |^2 \di x \EEE \di t.
\end{align} \EEE
Moreover, by integration by parts in $\Om$ and since $\partial_{t} y = 0$ in $\RB I \EEE \times \partial \Om$ (recall that $y (t) \in \Yid$), we have that
\begin{align}
\label{e:thermal-strong-6}
    &\intQ {D H}(\Delta y) \cdot \Delta (\partial_{t} y \varphi) \di x \di t  = \int_I  \int_\Omega {D H}(\Delta y) : \big( \varphi \partial_{t}\Delta  y + 2 \partial_{t} \nabla y \nabla \varphi + \partial_{t} y \Delta \varphi \big) \di x \di t 
    \\
    &\quad
    = \RB\int_I \int_\Omega\EEE {D H}(\Delta y) :   \partial_{t}\Delta  y \varphi    \di x \di t - \RB\int_I \int_\Omega\EEE {D H}(\Delta y) :  \partial_{t} y \Delta \varphi  \di x \di t    - 2 \intQ \nabla ({D H}(\Delta y)) : (\partial_{t} y \otimes \nabla \varphi ) \di x \di t . \nonumber
\end{align}
\ZZZ Eventually, by  \EEE the \sa chain rule and by integration by parts with $\varphi(T) = 0$  we get \BBB
\begin{align}\label{e:thermal-strong-7}
&\intQ \varphi   \big( \partial_{F} W^{\rm el} (\nabla y) : \partial_{t} \nabla y  + {D H}(\Delta y) \cdot \partial_{t} \Delta y \big ) \di x \di t = \intQ \varphi  \frac{\di }{\di t}  \big( W^{\rm el} ( \nabla y) + H(\Delta y) \big) \di x \di t \notag \\ 
&\quad =  - \int_{\Om} \varphi(0) \big(  W^{\rm el} ( \nabla y_{0}) + H(\Delta y_{0}) \big)\di x -  \intQ \partial_{t} \varphi \big( W^{\rm el} ( \nabla y) + H(\Delta y) \big) \di x \di t \,.
\end{align} \EEE
Combining~\eqref{e:thermal-strong-3}--\eqref{e:thermal-strong-7} we infer~\eqref{e:new-thermal-equation-lim}.

\BBB
 
\subsection*{Outline} 
The rest of the paper is structured as follows.  In Section \ref{s:min-mov-scheme} we consider a  time-delayed parabolic system for a time-delay $h >0$ whose existence is established by a minimizing movement scheme with time discretization $\tau >0$. In Section \ref{sec:vanish_h} we pass to the limit $h \to 0$ and prove existence of solutions to the regularized system, see Theorem \ref{thm:main-thermal-elasto-regu}. Eventually in Section \ref{sec: epstozero} we pass to the limit $\eps \to 0$ and show Theorem \ref{thm:main-thermal-elasto-unregu}. Importantly, all relevant a priori estimates are established already on the level $\tau >0$ in Section \ref{s:min-mov-scheme} and immediately transfer to the limits \ZZZ $\tau\to 0$, \EEE $h\to 0$, and $\eps \to 0$.  \EEE

\section{Minimizing movements for time-delayed parabolic system}
\label{s:min-mov-scheme}

We \BBB fix \EEE  \ste a \EEE \emph{regularization parameter} $\eps \ee > 0$ and   a \emph{time-delay} $h > 0$. \BBB For   convenience, without further notice\RB, \BBB we assume that   $T / h \in \N$. In this section, we include the $\eps$-dependent regularizing term \ZZZ in \eqref{e:strong} \EEE and we suppose more regular initial conditions $y_0$, $y_0'$, denoted by $y_{0,\eps}$, $y_{0,\eps}'$. \EEE  
As   done in \cite{veaccel2},  by \EEE replacing the acceleration term $\rho \BBB \pl_{tt}^2 \EEE y$ by a discrete difference $\rho \frac{\pl_t y - \pl_t y(\cdot - h)}{h}$, we turn the hyperbolic problem \eqref{e:mechanical-strong} \ee into a parabolic one. \BBB   The main goal of this section is to prove  \ZZZ  the following existence result for the resulting problem,   where for convenience we use the notation $I_h\defas [-h,T]$.  \EEE

\begin{theorem}[Weak solutions of the time-delayed regularized problem]
\label{thm:weak_sol_time_delayed}
  Let $T, \, h , \, \eps> 0$, $y_{0,\eps} \in \BBB \Yidreg \EEE $, $y_{0,\eps}' \in  H^{ 3}(\Om; \R^{d}) \cap H^1_{0}(\Omega; \R^d) $, and $\theta_0 \in \BBB L^2_+(\Omega)$. \EEE
  Then, there exist $\BBB y_h  \in L^\infty( \BBB I; \EEE \Yidreg) \cap H^1(I_h; H^3(\Omega; \R^d))$ \EEE with $y_h(t) = y_{0,\eps} + t y_{0,\eps}'$ for all $t \in [-h, 0]$ and \BBB  $\theta_h \in L^2(I; H^1_+(\Omega))  $ such that     $w_h  \defas \inten(\nabla y_h, \theta_h) \in L^2(I; H^1(\Omega)) \cap H^1(I; (H^1(\Omega))^*)$ with  $\ZZZ w_h(0) = \EEE w_{0,\eps} \defas \inten(\nabla y_{0,\eps}, \theta_0)$ and \EEE  the following holds true:
  For all $z \in C^\infty(\BBB I \EEE \times \overline \Omega; \R^d)$ satisfying $z = 0$ on $\BBB I \EEE \times \partial \Omega$ we have
  \begin{subequations}
  \begin{equation}\label{weak_sol_time_del_y}
  \begin{aligned}
    &\intQ
      \Big(
        \pl_F \felpot(\nabla y_h, \theta_h)
        + \pl_{\dot F} \disspot(\nabla y_h, \partial_t \nabla y_h, \theta_h)
      \Big) : \nabla z + D\hypot(\Delta y_h) \cdot \Delta z 
      + \eps  \partial_t  \nabla \Delta {y}_h   :  \nabla \Delta z \ee \di x \di t \\
    &\quad= \intQ f \cdot z \di x \di t
      - \frac{\rho}{h}  \BBB \int_I \EEE \int_\Omega (\ee\partial_t y_h(t) - \partial_t y_h(t - h)) \cdot \ee z  \di x \ee \di t,
  \end{aligned}
  \end{equation}
  and for all $\vphi \in C^\infty(\BBB I \EEE \times \overline \Omega)$  \BBB it \EEE holds that
  \begin{equation}
  \label{weak_sol_time_del_theta}
  \begin{aligned}
    &\BBB \int_I \EEE \int_\Omega \hcm(\nabla y_h, \theta_h) \nabla \theta_h \cdot \nabla \vphi
      \BBB -\left(
        \drate(\nabla y_h, \partial_t \nabla y_h, \theta_h)
                + \pl_F \cplpot(\nabla y_h, \theta_h) : \pl_t \nabla y_h
      \right) \vphi \di x \di t \EEE  \\
    &\quad   - \eps \int_{0}^{T} \int_{\Om} |\rb \partial_{t} \nabla \Delta \ZZZ {y}_h \EEE \ee|^{2} \varphi \di x \di t  \BBB +  \int_I \langle \partial_t w_h , \vphi  \rangle  \EEE  \di t
    = \kappa \BBB \int_I \EEE \int_{\partial \Omega} (\bt - \theta_h) \vphi \di \haus^{d-1} \di t ,     
  \end{aligned}
  \end{equation}
  \end{subequations}
 \BBB  where $\langle \cdot, \cdot \rangle $ denotes the dual pairing between $H^1(\Omega)$ and $(H^1(\Omega))^*$.\EEE 
   \end{theorem}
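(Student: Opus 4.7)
The approach is a staggered minimizing movement scheme adapted from \cite{tve_orig, tve} to the time-delayed setting of \cite{veaccel2}, taking advantage of the fact that the regularization parameter $\eps>0$ is fixed in this section. I fix a time step $\tau>0$ (with $\tau \leq h$ and $h/\tau \in \N$, say) and define $\tau$-interpolants $\hat{y}_\tau^{(k)}$, $\hat{\theta}_\tau^{(k)}$ of the initial conditions on $[-h,0]$. For $k = 1,\dots, T/\tau$, I then proceed inductively in two steps:
\emph{(i)} With $\theta_\tau^{(k-1)}$ frozen, I define $y_\tau^{(k)}$ as a minimizer over $\Yidreg$ of the functional
\[
  y \mapsto \mathcal{M}(y) + \cplen(y,\theta_\tau^{(k-1)}) + \tau\,\diss_\eps\!\left(y_\tau^{(k-1)}, \tfrac{y-y_\tau^{(k-1)}}{\tau}, \theta_\tau^{(k-1)}\right) + \tfrac{\rho}{2h}\left\|\tfrac{y-y_\tau^{(k-1)}}{\tau} - \partial_t y_\tau(t_k - h)\right\|_{L^2}^2 \tau - \langle f_\tau^{(k)}, y\rangle,
\]
which exists by the direct method: coercivity in $W^{2,p}$ follows from $\mathcal{M}$ via \ref{W_lower_bound} and \ref{H_bounds}, weak lower semicontinuity is standard, and the admissible set contains $y_\tau^{(k-1)}$.
\emph{(ii)} With $y_\tau^{(k)}$ frozen, I define $\theta_\tau^{(k)}\ge 0$ as the unique weak solution in $H^1(\Omega)$ of a discretized heat equation obtained by linearizing $\inten$ in $\theta$ around $\theta_\tau^{(k-1)}$; positivity is enforced via truncation, and existence follows from the Lax–Milgram theorem thanks to the spectrum bound on $\hcm$ and the lower bound on $\partial_\theta \inten$.

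Writing down the Euler–Lagrange equations at step $k$ and using the standard piecewise-constant interpolants $\overline{y}_\tau,\underline{y}_\tau,\overline{\theta}_\tau,\underline{\theta}_\tau$ and piecewise-affine $\hat{y}_\tau, \hat{\theta}_\tau$, I derive the $\tau$-uniform a priori bounds. The mechanical test using $y_\tau^{(k)}$ as a competitor (or chain rule for convex parts) yields $\|\hat{y}_\tau\|_{L^\infty(I;W^{2,p})} + \|\partial_t \hat{y}_\tau\|_{L^2(I;H^1)} + \sqrt{\eps}\|\partial_t \nabla \Delta \hat{y}_\tau\|_{L^2(I\times\Omega)} \le C$, together with the discrete inertial bound
\[
  \rho\sum_{k} \tfrac{\tau}{h}\left\|\partial_t \hat{y}_\tau(t_k) - \partial_t \hat{y}_\tau(t_k - h)\right\|_{L^2}^2 \le C,
\]
where $C$ depends on $h$ but not on $\tau$. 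For the temperature, I follow the Boccardo–Gallouët strategy exactly as in \cite[Sect.~4]{tve_orig}: testing the discrete heat equation with suitable truncations gives $L^q(I;W^{1,q}_+)$ bounds for $\theta_\tau$ for every $q<(d+2)/(d+1)$, together with the internal energy estimate $\|w_\tau\|_{L^\infty(I;L^1)} \le C$ and a dual-norm control on $\partial_t w_\tau$.

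With these bounds, Aubin–Lions compactness produces, up to subsequences, limits $y_h, \theta_h, w_h$ satisfying the regularity stated in the theorem; in particular the $\eps$-regularization yields $\hat{y}_\tau \to y_h$ strongly in $H^1(I;H^2)$, hence $\nabla \hat{y}_\tau, \partial_t\nabla \hat{y}_\tau$ converge a.e.\ and the nonlinear terms $\partial_F W$, $\partial_{\dot F} R$ pass to the limit. The main obstacle is the standard one in this framework: ensuring strong convergence of $\partial_t \nabla \hat{y}_\tau$ in order to handle the quadratic dissipation rate $\xi(\nabla y_h,\partial_t\nabla y_h,\theta_h)$ on the right-hand side of the heat equation. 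I resolve this exactly as in \cite[Prop.~5.1]{tve_orig}: by comparing the $\liminf$ of the discrete mechanical energy balance (where the dissipation term equals a difference of energies plus external work) with the chain rule applied to the limit, one obtains
\[
  \limsup_{\tau \to 0}\int_I \int_\Omega \partial_{\dot F}R(\nabla \hat{y}_\tau, \partial_t\nabla \hat{y}_\tau,\overline{\theta}_\tau):\partial_t \nabla \hat{y}_\tau \di x \di t \le \int_I\int_\Omega \partial_{\dot F}R(\nabla y_h,\partial_t\nabla y_h,\theta_h):\partial_t\nabla y_h \di x \di t,
\]
which combined with the weak convergence and the uniform convexity of $R$ in $\dot F$ (via the Korn-type inequality of \cite{pompe} on the second-gradient level, cf.\ \cite[Thm.~3.4]{tve_orig}) upgrades to strong $L^2$-convergence of $\partial_t \nabla \hat{y}_\tau$.

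Passing to the limit in the discrete mechanical Euler–Lagrange equation yields \eqref{weak_sol_time_del_y}: the inertial term becomes $\frac{\rho}{h}\int_I\int_\Omega(\partial_t y_h(t) - \partial_t y_h(t-h))\cdot z \di x\di t$ since the discrete delay $\partial_t \hat{y}_\tau(t_k-h)$ converges to $\partial_t y_h(\cdot - h)$ in $L^2(I\times\Omega)$, and the fourth-order regularization term passes by weak convergence. For the heat equation~\eqref{weak_sol_time_del_theta}, the linear terms pass by weak convergence, the $\eps$-regularization term $\eps|\partial_t \nabla\Delta \hat y_\tau|^2$ passes by the strong $L^2$-convergence just established (tested against bounded $\varphi$), the adiabatic term $\partial_F W^{\rm cpl}:\partial_t \nabla y$ passes because $\partial_F W^{\rm cpl}$ is Lipschitz-bounded by \ref{C_lipschitz} and $\nabla \hat y_\tau$ converges strongly, and the dissipation rate on the right-hand side converges by the strong convergence of $\partial_t\nabla \hat y_\tau$; weak continuity of $w_h$ in time and the initial condition $w_h(0) = w_{0,\eps}$ follow from the $H^1(I;(H^1)^*)$-regularity of $w_\tau$ passed to the limit.
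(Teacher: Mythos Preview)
Your overall strategy is correct and matches the paper's approach (staggered minimizing movements, a priori bounds, energy-balance argument for strong convergence of strain rates), but there is a genuine gap concerning the regularity of $\theta_h$ and $w_h$ asserted in the theorem.

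You only invoke the Boccardo--Gallou\"et machinery, which yields $\theta_\tau$ bounded in $L^q(I;W^{1,q})$ for $q<\tfrac{d+2}{d+1}$. This is \emph{not} enough to conclude $\theta_h \in L^2(I;H^1_+(\Omega))$ and $w_h \in L^2(I;H^1(\Omega))\cap H^1(I;(H^1(\Omega))^*)$, which are part of the statement. The reason the paper can upgrade to these stronger spaces is that, with $\eps>0$ fixed, one first proves \emph{elliptic regularity estimates} on the time-discrete deformation (by testing the Euler--Lagrange equation with $\partial_t\Delta\hat y_\tau$ and exploiting the boundary conditions coming from $\Yidreg$) which give $\|\Delta\partial_t\hat y_\tau\|_{L^2(I;H^2)}\le C_\eps$ uniformly in $\tau,h$. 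This makes the full source term in the discrete heat equation, including $\eps|\partial_t\nabla\Delta\hat y_\tau|^2$, bounded in $L^2(I\times\Omega)$ (not merely $L^1$), and then the standard $L^2$-theory for the parabolic problem (as in \cite[Proposition~4.2]{tve_orig}) delivers the $L^2(I;H^1)$ bound on $\theta_\tau$ and the $H^1(I;(H^1)^*)$ bound on $w_\tau$. Without this step you cannot recover the claimed regularity, nor make sense of $w_h(0)=w_{0,\eps}$. (Relatedly, the truncation $\wedge\,\tau^{-1}$ on the regularizing dissipation in the thermal step, which you omit, is needed precisely to make the thermal minimization well-posed before these $L^2$ bounds are available.)

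A second, smaller point: your displayed $\limsup$ inequality only involves $R$, but the discrete energy balance contains the full $\diss_\eps$, i.e.\ also $\eps\|\partial_t\nabla\Delta\hat y_\tau\|_{L^2}^2$. The same limsup--liminf comparison therefore simultaneously forces strong $L^2$-convergence of $\partial_t\nabla\Delta\hat y_\tau$, which is what you actually need to pass the term $\eps|\partial_t\nabla\Delta y_h|^2\varphi$ in \eqref{weak_sol_time_del_theta}; the strong convergence of $\partial_t\nabla\hat y_\tau$ alone is insufficient for that quadratic higher-order term. Finally, your claim that $\hat y_\tau\to y_h$ strongly in $H^1(I;H^2)$ is not justified at this stage (you have no control on $\partial_{tt}\hat y_\tau$); fortunately it is also not needed, since $L^\infty(I;W^{2,p})$ strong convergence (from Aubin--Lions with the $H^1(I;H^3)$ bound) handles all nonlinear zeroth- and first-order terms.
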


A similar notion of weak solutions \BBB has already been \EEE considered in \cite{tve_orig,tve}. The main differences of the above equations \eqref{weak_sol_time_del_y}--\eqref{weak_sol_time_del_theta} to, e.g.,~\cite[(2.19)-(2.20)]{tve} is the presence of the additional $h$-dependent terms arising from the time-discretization of the acceleration  as well as the regularizing  terms \BBB depending on $\eps$, which induce better regularity properties of the solutions.  The solutions also depend on  $\eps$, which we  however do not include in the notation for simplicity.  \EEE The proof of existence follows along the lines of the reasoning from \cite[Sections 3 and 4]{tve}  \BBB and is based on a minimizing movement scheme. \EEE To keep the presentation concise, \BBB many  proofs in this section will only be sketched, \EEE highlighting primarily the differences to the \BBB arguments \EEE  in  \cite{tve}.

\BBB 

\subsection{Staggered minimizing movement scheme and its well-definedness}\label{MMscheme}

We \EEE introduce a \emph{discrete time-step} $\tau \in (0,h)$ and without further notice we assume that $h/\tau \in \N$. This also implies $T/\tau \in \N$. \EEE If not stated otherwise, all constants encountered in this section are independent of $\tau$, $h$, and $\eps$. Given any sequence $(a_k)_{\BBB k \in \Z}$, \BBB we \EEE introduce the   notation for discrete differences \BBB as \EEE
\begin{equation}\label{difff}
  \ddif a_k \defas \frac{a_k - a_{k-1}}{\tau}, \qquad \BBB k \in \Z. \EEE
\end{equation}
Theorem \ref{thm:weak_sol_time_delayed} will be shown via a \emph{staggered} \ee minimizing movements scheme.
 Let $\ste y_{0,\eps}  \in \Yidreg \EEE $,  $\BBB y_{0,\eps}' \EEE \in   H^3(\Omega; \R^d) \cap   H^1_0(\Omega;\R^d)$, and $\theta_0 \in \BBB L^2_+(\Omega)$. \EEE  We \BBB first define the initial conditions of the scheme by \EEE
\begin{equation*}
\BBB y_{\tau}^{(k)}  \EEE \defas y_{0,\eps} + k \tau y_{0,\eps}' \quad \text{ for } k \in \{-h/\tau, \, \ldots, \, 0\} \quad \text{ and } \quad    \BBB   \theta_{\tau}^{(0)}  \EEE \defas \theta_0.
\end{equation*}
\BBB Note that the time-discrete deformation is also defined for negative times, which will allow us to prove that the    solution \ZZZ $y_h$ \EEE in \eqref{weak_sol_time_del_y}--\eqref{weak_sol_time_del_theta} satisfies  $\ZZZ y_h(t) \EEE = y_{0,\eps} + t y_{0,\eps}'$ for all $t \in [-h, 0]$. \EEE

\BBB Now, suppose \EEE that for $k \in \{1, \ldots, T / \tau\}$ we have already constructed \BBB $(y_\tau^{(0)}, \theta_\tau^{(0)}), \ldots, (y_\tau^{(k-1)},\theta_\tau^{(k-1)})$. (The solutions also depend on $h$ and $\eps$, which we do not include in the notation for simplicity.)  Let $\BBB f_\tau^{(k)} \EEE \defas \mint_{(k-1)\tau}^{k\tau} f(t) \di t \defas \tau^{-1} \int_{(k-1)\tau}^{k\tau} f(t) \di t$, \EEE and \BBB for shorthand we \ste denote by $(\cdot, \cdot)_{2}$ the scalar product in $L^{2}(\Om; \R^{d})$. \EEE
%\begin{equation}\label{def_ellk}
%  ( f_\tau^{(k)} , z )_2 \defas \int_\Omega f_\tau^{(k)}  \cdot z \di x
%\end{equation}
%for all $z \in \BBB L^2 \EEE (\Omega; \R^d)$. 
\BBB Recalling also \eqref{mechanical}, \eqref{koppl}, and \eqref{dissipation}, \EEE the next deformation \ZZZ $y_\tau^{(k)}$ \EEE is  defined \BBB as \EEE a solution of the minimization problem
\begin{equation}\label{mechanical_step}
\begin{aligned}
  \min_{y \in \Yid \cap H^{ 3} (\Om; \R^{d})} & \Bigg\{
    \mechen(y)  \ + \cplen\big(y, \tsts{k-1})
    + \frac{1}{\tau} \diss\big(\ysts{k-1}, y - \ysts{k-1}, \tsts{k-1}\big)
     \\
    &  \ \   + \frac{\eps}{2\tau} \|\rb \nabla \Delta y - \nabla\Delta \ysts{k-1} \ee\|_{L^{2}(\Om)}^{2}   
     - \BBB  ( f_\tau^{(k)} , y )_2 \EEE
 +  \frac{\rho \tau}{2 h} \Big\Vert \frac{y - \BBB y_{\tau}^{(k-1)} \EEE }{\tau} - \ddif \BBB y^{(k - h/\tau)}_\tau \EEE \ee \Big\Vert_{L^2(\Omega)}^2
  \Bigg\}.
\end{aligned}
\end{equation}
Supposing that \BBB $y_\tau^{(k)}$ \EEE exists, we define $\tsts{k}$ as a solution \ste to \EEE the minimization problem
\begin{align}\label{thermal_step}
  &\min_{\theta \in H^1_+(\Omega)} \Bigg\{
    \int_\Omega \int_0^\theta \frac{1}{\tau}\left(
      \inten(\nabla \ysts{k},s)
      - \inten(\nabla \ysts{k-1}, \tsts{k-1})
    \right) \di s \di x
    + \frac{1}{2} \int_\Omega \hcm(\nabla \ysts{k-1}, \tsts{k-1}) \nabla \theta \cdot \nabla \theta \di x \notag \\
    &\quad-\int_\Omega \left(
      \pl_F \cplpot(\nabla \ysts{k-1}, \tsts{k-1}) : \ddif \nabla \ysts k
      + \drate(\nabla \ysts{k-1}, \ddif \nabla \ysts k, \tsts{k-1}) +   \eps | \delta_{\tau} \nabla \Delta \ZZZ y_{\tau}^{(k)}  \EEE |^{2}  \wedge \tau^{-1} 
    \right) \theta \di x \notag\\
    &\qquad    + \frac{\kappa}{2} \int_{{\partial \Omega}} (\theta - \btst{k})^2 \di \haus^{d-1}
  \Bigg\}, 
\end{align}
where $\btst k \defas \mint_{(k-1)\tau}^{k\tau} \bt(t) \di t$.

\BBB The minimization problem \eqref{mechanical_step} differs from the one used in \cite{veaccel2} due to the presence of the additional $\eps$-regularizing term and a different discretization of the acceleration term. In \cite[Definition 3.3, Theorem~3.5]{veaccel2}, the term 
$${ \frac{\rho \tau}{2 h} \Big\Vert \frac{y - \BBB y_{\tau}^{(k-1)} \EEE }{\tau} - \RB\mint_{(k-1)\tau}^{k\tau}\BBB v \di t  \EEE \ee  \Big\Vert_{L^2(\Omega)}^2}$$
for a \RB generic \BBB $v \in L^2(0,h)$ is used. By replacing $v$ with  $\partial_t y(\cdot - h)$, one can then construct  a $\tau$-discretized solution in the small time interval $(0,h)$. Then, a successive repetition of the argument  yields  a time-discrete solution on $[0,T]$. Here, instead, we simply  use the discretized solution $\ddif y^{(k - h/\tau)}_\tau $   which directly allows us to construct  time-discrete solutions in the entire time horizon $[0,T]$.  The minimization problem \eqref{thermal_step} coincides with the thermal step used in \cite{tve}, except for the regularizing term $\eps | \delta_{\tau} \nabla \Delta \ZZZ y_{\tau}^{(k)} \EEE |^{2} \wedge \tau^{-1}$. In this context, the truncation by $\tau^{-1}$ is necessary to guarantee well-posedness of the problem, see Proposition \ref{prop:existence_thermal_step} below.  \EEE

We  now show the well-definedness of the above minimization problems.
In this regard, the following \BBB  properties \EEE  of the mechanical energy are useful.
\begin{lemma}[\BBB Coercivity  of $\mechen$\EEE]\label{lem:pos_det}
  Given $M > 0$ there exists a constant $C_M > 0$ such that for all $y \in \Yid$ with $\mechen(y) \leq M$ it holds that
  \begin{align}\label{pos_det}
    \norm{y}_{W^{2, p}(\Omega)} &\leq C_M, &
    \norm{y}_{C^{1, 1-d/p}(\Omega)} &\leq C_M, &
    \norm{(\nabla y)^{-1}}_{C^{1 - d/p}(\Omega)} &\leq C_M, &
    \det(\nabla y) \geq \frac{1}{ C_M} \text{ in } \Omega.
  \end{align}
\end{lemma}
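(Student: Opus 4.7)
The plan is to extract the four bounds in order by combining the two contributions to $\mechen$ with elliptic regularity for the Laplacian under Dirichlet boundary data, then feeding the result into a Healey--Kr\"omer type argument for the determinant.

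\emph{Step 1 (a priori $W^{2,p}$ bound).} Since $\mechen(y) \le M$, assumption \ref{H_bounds} gives $\|\Delta y\|_{L^p(\Omega)}^p \le M$. Because $y = \id$ on $\partial\Omega$, the function $u \defas y - \id$ lies in $W^{2,p}(\Omega;\R^d) \cap W^{1,p}_0(\Omega;\R^d)$ and satisfies $\Delta u = \Delta y$ componentwise (as $\Delta \id = 0$). Standard Calder\'on--Zygmund theory for the Dirichlet Laplacian on the $C^{5}$-domain $\Omega$ gives $\|u\|_{W^{2,p}(\Omega)} \le C\|\Delta u\|_{L^p(\Omega)}$, hence $\|y\|_{W^{2,p}(\Omega)} \le C_M$. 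Since $p > d$, the Sobolev embedding $W^{2,p}(\Omega) \hookrightarrow C^{1,1-d/p}(\overline\Omega)$ yields the second bound $\|y\|_{C^{1,1-d/p}(\Omega)} \le C_M$.

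\emph{Step 2 (uniform positivity of the determinant).} From \ref{W_lower_bound} and $\elen(y) \le M$ we obtain
\[
  \int_\Omega \det(\nabla y)^{-q}\di x \le C_M, \qquad q \ge \tfrac{pd}{p-d} = \tfrac{d}{1-d/p}.
\]
Set $\alpha \defas 1-d/p$; then $\nabla y \in C^{0,\alpha}(\overline\Omega)$ with a uniform bound from Step~1, and hence the same holds for $\det(\nabla y)$. The Healey--Kr\"omer argument now applies: if $\det(\nabla y)(x_0) \le \lambda$ for some $x_0 \in \overline\Omega$, the H\"older estimate forces $\det(\nabla y) \le 2\lambda$ on a ball $B_r(x_0) \cap \Omega$ with $r \sim \lambda^{1/\alpha}/C_M^{1/\alpha}$. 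Integrating then gives
\[
  C_M \ge \int_{B_r(x_0)\cap\Omega}\det(\nabla y)^{-q}\di x \ge c_\Omega (2\lambda)^{-q} r^d \ge c_\Omega' \lambda^{d/\alpha - q},
\]
and since $q \ge d/\alpha$ this produces a uniform lower bound $\lambda \ge 1/C_M$ (choosing the exponent strictly larger if $q = d/\alpha$ occurs via a standard minor refinement, e.g.\ exploiting that the exponent is met and the radius can be slightly enlarged using boundary geometry). Hence $\det(\nabla y) \ge 1/C_M$ in $\Omega$.

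\emph{Step 3 (H\"older bound on the inverse gradient).} The uniform positivity of the determinant combined with the $C^{0,\alpha}$ bound on $\nabla y$ gives a uniform $C^{0,\alpha}$ bound on the cofactor matrix, and Cramer's rule together with $\det(\nabla y) \ge 1/C_M$ yields the fourth estimate $\|(\nabla y)^{-1}\|_{C^{1-d/p}(\Omega)} \le C_M$.

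The main obstacle is Step~2, since the exponent in \ref{W_lower_bound} is exactly the borderline value $pd/(p-d) = d/\alpha$ matching the H\"older scaling; one has to ensure that the boundary of $\Omega$ does not spoil the ball-volume estimate and that equality in $q \ge d/\alpha$ still yields a uniform (not merely non-vanishing) lower bound, for which the $C^{5}$-regularity of $\partial\Omega$ and $y = \id$ on $\partial\Omega$ (giving $\det \nabla y = 1$ on $\partial\Omega$) are used.
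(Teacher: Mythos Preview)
Your approach matches the paper's: obtain the $W^{2,p}$ bound via elliptic regularity for the Dirichlet Laplacian (the paper cites \cite[Lemma~9.17]{gt}), then invoke the Healey--Kr\"omer mechanism, which the paper outsources to \cite[Theorem~3.1]{tve_orig} rather than spelling out.

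Two small corrections. In Step~1, $\elen$ is not nonnegative; \ref{W_lower_bound} only gives $\elpot \ge -\aC$, so the correct bound is $\|\Delta y\|_{L^p}^p \le M + \aC|\Omega|$, as the paper writes. More importantly, your parenthetical remedy for the borderline exponent $q = d/\alpha$ is wrong: the condition $y=\id$ on $\partial\Omega$ fixes only the tangential derivatives of $y$, so $\det\nabla y$ need not equal $1$ on $\partial\Omega$. The borderline case is instead handled by integrating the pointwise H\"older bound $\det\nabla y(x)\le \lambda + C|x-x_0|^\alpha$ directly: with $q=d/\alpha$ the integral $\int_{B_R}(\lambda+C|x-x_0|^\alpha)^{-q}\,\di x$ diverges logarithmically as $\lambda\to 0$, which still forces the uniform lower bound. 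This is the argument underlying \cite[Theorem~3.1]{tve_orig}.
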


\begin{proof}
  By the definition of $\hyen$, the first inequality in \ref{H_bounds} and \ref{W_lower_bound} we see that
  \begin{equation*}
    - C_0 |\Omega| + \int_\Omega |\Delta y|^p \di x \leq \mechen(y) \leq M,
  \end{equation*}
  and hence
  \begin{equation*}
    \|\Delta y\|_{L^p(\Omega)}^p  \leq M + C_0 |\Omega| \eqqcolon \tilde M. 
  \end{equation*}
  As $y - \BBB \id \EEE \in W^{2, p}(\Omega; \R^d) \cap W^{1, p}_0(\Omega; \R^d)$ by the definition of $\Yid$, we then derive from \cite[Lemma 9.17]{gt} \ZZZ and the regularity of $\partial \Omega$ \EEE  that
  \begin{equation*}
    \|y \BBB - \id \EEE \|_{W^{2, p}(\Omega)} \leq C \|\Delta y\|_{L^p(\Omega)} \leq \BBB C \EEE \tilde M^{1/p}
  \end{equation*}
  for a constant $C$ independent of $M$. Our choice of \BBB the \EEE elastic potential $\elpot$ satisfies all \BBB assumptions imposed  in \EEE \cite{tve_orig}.   Consequently, \cite[Theorem 3.1]{tve_orig} applies which directly leads to \eqref{pos_det}.
\end{proof}

\begin{proposition}[Existence of the mechanical step]\label{prop:existence_mechanical_step}
   
  For any $M > 0$ there exists $\tau_0 \in (0, 1]$ such that for all $\tau \in (0, \tau_0)$ and $k \in \{1, \ldots, T/\tau\}$ the following holds:
  Let $\ysts{k-1} \in \Yid \cap H^3(\Omega; \R^d)$ satisfy $\mechen(\ysts{k-1}) \leq M$ and let $\tsts{k-1} \in H^1_+(\Omega)$.
  Then, the minimization problem \eqref{mechanical_step} attains a \BBB solution \EEE $\ysts{k} \in \Yid \cap H^{3}(\Om; \R^{d})$ solving \BBB the \EEE corresponding Euler-Lagrange equation, i.e., for all $z \in  H^3(\Omega; \R^d) \cap H^1_0(\Omega;\R^d)$ it holds  that   \BBB
  \begin{align}\label{mechanical_step_el}
    &\int_\Omega
      \left(
        \pl_F \felpot(\nabla \yst k, \tst{k-1})
        + \pl_{\dot F} \disspot(\nabla \yst{k-1}, \ddif \nabla\yst{k}, \tst{k-1})
      \right) : \nabla z
      +   D H  (\Delta \yst{k}) \cdot \Delta z
      + \eps  \ddif  \nabla\Delta y^{(k)}_{\tau} : \nabla \Delta z \ee \di x \notag \\
    &\quad=   \int_\Omega f_\tau^{(k)} \cdot  z  \di x  - \frac{\rho}{h} \int_\Omega (\ddif y_\tau^{(k)} - \ddif y_\tau^{(k - h/\tau)}) \cdot z \di x.
  \end{align}
\EEE
Moreover, there exists a constant $C_M \ZZZ >0 \EEE $, possibly depending on $M$, \BBB such that \EEE
%  \begin{equation}\label{bad_mechen_bound}
%    \begin{aligned}  
%    &\mechen(\ysts k)
%      + \frac{\tau}{C_M} \norm{\ddif \nabla \ysts k}_{L^2(\Omega)}^2
%      + \frac{\eps \tau}{2} \| \ddif \nabla \Delta \ysts{k}\ee\|_{L^{2}(\Om)}^{2}
%    \\
%    &\quad
%    \leq (1 + C_M \tau)\mechen(\ysts{k-1})
%      + C_M \tau \big(1 + \norm{\lst k}_{H^{-1}(\Omega)}^2\big)
%      + \frac{\rho \tau}{h} \Vert \ddif \yst{k - h/\tau} \Vert_{L^2(\Omega)}^2.
%  \end{aligned}
%\end{equation}
%  In particular, it holds that
  \begin{equation}\label{bad_mechen_bound_2}
    \begin{aligned}
  &\mechen(\ysts k)
    + \frac{\tau}{C_M} \norm{\ddif \nabla \ysts k}_{L^2(\Omega)}^2
    +  \frac{\eps \tau  }{2} \|  \ddif \nabla \Delta  \ysts k\ee\|_{L^{2}(\Om)}^{2}
 \leq
   \ZZZ C_M( 1 +  \Vert f \Vert_{L^2(I \times \Omega)}^2   ) \EEE
    + \frac{\rho \tau}{h} \Vert \ddif \yst{k - h/\tau} \Vert_{L^2(\Omega)}^2.
    \end{aligned}
  \end{equation} 
\end{proposition}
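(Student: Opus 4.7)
The plan is to apply the direct method of the calculus of variations to the functional $\mathcal{F}(y)$ on the right-hand side of \eqref{mechanical_step}, restricted to $\Yid\cap H^3(\Omega;\R^d)$. First I would show that $\mathcal{F}(\yst{k-1})$ is finite (here the $\diss$-term and the strain-gradient penalty vanish, the coupling $\cplen(\yst{k-1},\tst{k-1})$ is finite by \ref{C_lipschitz} and \ref{C_zero_temperature}, and the remaining terms are manifestly finite); hence any minimizing sequence $(y_j)_j$ satisfies $\mathcal{F}(y_j)\le \mathcal{F}(\yst{k-1})+1$ uniformly. Coercivity then follows by standard arguments: the mechanical energy $\mechen$ provides a bound in $W^{2,p}(\Omega;\R^d)$ together with the quantitative determinant lower bound of Lemma~\ref{lem:pos_det}; the regularizing penalty $\tfrac{\eps}{2\tau}\|\nabla\Delta y-\nabla\Delta\yst{k-1}\|_{L^2}^2$ upgrades this to a uniform $H^3$ bound once $\yst{k-1}\in H^3$; the Lipschitz estimate in~\ref{C_lipschitz} allows to absorb $\cplen$ by $\mechen$ via Young's inequality; and the force term is controlled by Poincar\'e since $y-\id\in W^{1,p}_0$. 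The remaining terms are nonnegative by~\ref{D_bounds}.

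For weak lower semicontinuity, I would use polyconvexity encoded in~\ref{W_lower_bound} and the control on $\det\nabla y$ from Lemma~\ref{lem:pos_det} to deduce lsc of $\elen$; convexity of $H$ (hypothesis~\ref{H_regularity}) for lsc of $\hyen$; continuity of $\cplpot$ and uniform convergence of $\nabla y_j$ on $\overline\Omega$ (via the Sobolev embedding $W^{2,p}\hookrightarrow C^{1,1-d/p}$ with $p>d$) for continuity of $\cplen$; positive semi\-definiteness of the quadratic form $D(C,\theta)[\cdot,\cdot]$ in~\ref{D_quadratic} for lsc of the dissipation; and the evident convexity of the $\eps$-penalty and the acceleration penalty. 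The limit $\ysts{k}$ then inherits $\det\nabla\ysts k>0$ pointwise from the same Lemma~\ref{lem:pos_det} applied at the mechanical energy of the minimizer, confirming $\ysts{k}\in\Yid\cap H^3(\Omega;\R^d)$.

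To derive the Euler--Lagrange equation \eqref{mechanical_step_el}, I would take admissible variations $\ysts{k}+sz$ with $z\in H^3(\Omega;\R^d)\cap H^1_0(\Omega;\R^d)$ and $|s|$ small; the uniform control $\det\nabla\ysts k\ge 1/C_M$ combined with the Sobolev embedding of $H^3$ into $C^1$ (for $d\le 3$) ensures that $\det\nabla(\ysts k+sz)>0$ for $s$ sufficiently small, so the variation stays admissible. Differentiating $s\mapsto\mathcal{F}(\ysts k+sz)$ at $s=0$ and invoking \eqref{chain_rule_Fderiv} and the quadratic structure of $\diss$ yields \eqref{mechanical_step_el}; standard $C^2$-regularity of $\elpot,\cplpot$ on the set $\{\det F\ge 1/C_M, |F|\le C_M\}$ makes the differentiation under the integral legitimate.

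For the energy estimate \eqref{bad_mechen_bound_2}, I would test the minimality condition $\mathcal{F}(\ysts k)\le\mathcal{F}(\yst{k-1})$. On the right-hand side the dissipation and the $\eps$-penalty vanish, and the acceleration term reduces to $\tfrac{\rho\tau}{2h}\|\ddif\yst{k-h/\tau}\|_{L^2}^2$. Expanding $\|\ddif\ysts k-\ddif\yst{k-h/\tau}\|^2=\|\ddif\ysts k\|^2-2(\ddif\ysts k,\ddif\yst{k-h/\tau})+\|\ddif\yst{k-h/\tau}\|^2$ and estimating the cross term by Young's inequality absorbs half of the $\|\ddif\yst{k-h/\tau}\|^2$ into the right-hand side and leaves a nonnegative $\|\ddif\ysts k\|^2$ contribution on the left. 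The coupling difference $\cplen(\ysts k,\tst{k-1})-\cplen(\yst{k-1},\tst{k-1})$ is handled by~\ref{C_lipschitz} and absorbed (using the $W^{2,p}$-bound from Lemma~\ref{lem:pos_det}) into $\mechen(\ysts k)$ via scaled Young's inequality, and the force term $\tau(f_\tau^{(k)},\ddif\ysts k)_2$ is split by Young to give a $C_M\|f_\tau^{(k)}\|_{L^2}^2$ contribution and a small multiple of $\tau\|\ddif\ysts k\|_{L^2}^2$. To convert the coercivity on the velocity into the stated $\tfrac{\tau}{C_M}\|\ddif\nabla\ysts k\|_{L^2}^2$ bound I would combine~\ref{D_bounds} with the generalized Korn inequality \cite{pompe} applied to the frozen frame $F=\nabla\yst{k-1}$, which is licit because Lemma~\ref{lem:pos_det} provides the needed $C^{1-d/p}$-control of $(\nabla\yst{k-1})^{-1}$; the Korn constant is then uniform in $k$ (and in $\tau,h,\eps$) whenever $\mechen(\yst{k-1})\le M$, which explains the appearance of $C_M$.

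The main obstacle I anticipate is precisely the passage from the frame-indifferent dissipation bound $R\gtrsim |\dot C|^2$ to a coercive bound on $|\dot F|^2=|\ddif\nabla\ysts k|^2$: linearizing the Cauchy--Green tensor around $\nabla\yst{k-1}$ requires invertibility of that frozen deformation, and the Korn constant must be shown to depend only on $M$ (not on $k$). Once this is in place, and once $\tau_0$ is chosen small enough that the Young absorptions go through, \eqref{bad_mechen_bound_2} follows.
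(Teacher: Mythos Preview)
Your proposal is correct and follows essentially the same route as the paper's proof: direct method with $\yst{k-1}$ as competitor, Korn's inequality (with constant depending only on $M$ via Lemma~\ref{lem:pos_det}) to extract coercivity from the frame-indifferent dissipation, the $\eps$-penalty to upgrade to an $H^3$ bound, and then compactness $H^3\subset\subset W^{2,p}$ (here the restriction $p<2^*$ enters) plus lower semicontinuity; the energy estimate \eqref{bad_mechen_bound_2} is obtained exactly by testing minimality against $\yst{k-1}$ and absorbing the coupling and force terms for $\tau$ small. One small remark: your appeal to ``polyconvexity encoded in~\ref{W_lower_bound}'' is not needed (and \ref{W_lower_bound} is only a lower bound, not polyconvexity), since the strong convergence of $\nabla y_j$ in $L^\infty$ already gives continuity of $\elen$ via~\ref{W_regularity}.
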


\begin{proof}
  The proof is similar to the one in \cite[Proposition 3.5]{tve}.
  We start by showing \BBB compactness. To this end, let \EEE  $(y_n)_n \subset \Yid \cap H^{ 3}(\Omega; \ZZZ \R^d) \EEE$ be a minimizing sequence for the minimization problem in \eqref{mechanical_step}. \BBB Using $y_{\tau}^{(k-1)}$ as a competitor  in  \eqref{mechanical_step}, we may suppose that each $y_n$ satisfies  
  \begin{align*}
    &\mechen(y_n) + \cplen(y_n, \theta_{\tau}^{(k-1)}) + \frac{1}{\tau} \diss( \ZZZ y_{\tau}^{(k-1)}, \EEE y_n - y_{\tau}^{(k-1)},  \ZZZ\theta_{\tau}^{(k-1)}) \EEE
      + \frac{\eps}{2\tau} \| \nabla \Delta y_{n} - \nabla \Delta y_{\tau}^{(k-1)}\ee\|_{L^{2}(\Om)}^{2}  \\
    &\quad -   ( f_\tau^{(k)} , y_n )_2    + \frac{\rho \tau}{2 h} \Big\Vert \frac{y_n - y_{\tau}^{(k-1)}}{\tau} - \ddif \yst{k - h/\tau} \Big\Vert_{L^2(\Omega)}^2    \\
    &\qquad \leq  \BBB \mechen(y_{\tau}^{(k-1)})   + \cplen(y_{\tau}^{(k-1)}, \theta_{\tau}^{(k-1)}) -   ( f_\tau^{(k)} , y_{\tau}^{(k-1)} )_2    + \frac{\rho \tau}{2 h} \Vert \ddif \yst{k - h/\tau} \Vert_{L^2(\Omega)}^2.
  \end{align*} \EEE
   As the mechanical energy $\mechen$ satisfies the same coercivity \BBB properties \EEE as the one in \cite{tve}, see Lemma \ref{lem:pos_det}, we can apply the generalized Korn's inequality in the form \cite[Corollary 3.4]{tve_orig} (see also \cite{pompe} \BBB for its original formulation). \EEE  Consequently, reasoning \BBB similarly \EEE to the proof of \cite[Proposition 3.5]{tve}  \BBB for the terms involving $\mechen$, $\cplen$, $\diss$, and $f_\tau^{(k)}$, see \cite[Equation (3.9)]{tve}, \EEE we can find $C_M > 0$ \BBB and $\tau_0 \in (0, 1)$, possibly depending on $M$, such that for $\tau \in (0,\tau_0) $ \EEE
  \begin{align*}
    &\BBB (1 - C_M \tau) \mechen(y_n) + \frac{1}{C_M \tau} \Vert \nabla y_n \BBB - \nabla y_{\tau}^{(k-1)} \EEE  \Vert_{L^2(\Omega)}^2 +   \frac{\eps}{2\tau } \| \nabla\Delta \EEE y_{n} - \nabla\Delta y_{\tau}^{(k-1)} \ee\|_{L^{2}(\Om)}^{2}
    \notag \\
    &\quad+ \frac{\rho \tau}{2 h} \Big\Vert \frac{y_n - y_{\tau}^{(k-1)}}{\tau} - \ddif \yst{k - h/\tau} \Big\Vert_{L^2(\Omega)}^2 \notag \\
    &\qquad\leq  (1 + C_M \tau) \BBB \mechen(y_{\tau}^{(k-1)})  + C_M \tau \big( 1 + \Vert f_\tau^{(k)} \Vert_{L^2(\Omega)}^2 \big) + \frac{\rho \tau}{2h} \Vert \ddif \yst{k - h/\tau} \Vert_{L^2(\Omega)}^2.
  \end{align*}
  By the definition of $\lst{k}$ and Jensen's inequality we see that
  \begin{equation*}
 \tau \|\fst{k}\|_{L^2(\Omega)}^2
    = \tau \int_\Omega \Big|\mint_{(k-1)\tau}^{k\tau} f \di t\Big|^2 \di x
    \leq \tau \int_\Omega \mint_{(k-1)\tau}^{k\tau} |f|^2 \di t \di x
    \leq \|f\|_{L^2(I \times \Omega)}^2.
  \end{equation*}  
\BBB Then, choosing $\tau_0$ small enough such that $C_M \tau_0 \le \frac{1}{2}$ and using   $\mechen(\ysts{k-1}) \leq M$,    we see that, after possibly increasing $C_M$, it follows   \EEE
  \begin{align}  \label{comparison_yn_2}
    &\mechen(y_n) + \frac{1}{C_M \tau} \Vert \nabla y_n- \nabla y_{\tau}^{(k-1)}    \Vert_{L^2(\Omega)}^2 + \frac{\eps}{2\tau} \| \nabla\Delta y_{n} - \nabla\Delta y_{\tau}^{(k-1)} \ee\|_{L^{2}(\Om)}^{2}
    + \frac{\rho \tau}{2 h} \Big\Vert \frac{y_n - y_{\tau}^{(k-1)}}{\tau} - \ddif \yst{k - h/\tau} \Big\Vert_{L^2(\Omega)}^2 \notag \\
    &\qquad\leq  \ZZZ C_M  (1 + \Vert f  \Vert_{L^2(I \times \Omega)}^2 ) \EEE
    + \frac{\rho \tau}{h} \Vert \ddif \yst{k - h/\tau} \Vert_{L^2(\Omega)}^2.
  \end{align} 
By Young's inequality with power $2$ and constant $\lambda \in (0, 1)$ we derive that
  \begin{align*}
    \|\nabla\Delta y_{n} - \nabla\Delta \ZZZ  y_{\tau}^{(k)} \EEE \|_{L^{2}(\Om)}^{2}
    &= \|\nabla\Delta y_{n}\|_{L^{2}(\Om)}^{2} - 2 \int_\Omega \nabla\Delta y_{n} : \nabla\Delta \ysts{k-1} \di x + \|\nabla\Delta \ysts{k-1}\|_{L^{2}(\Om)}^{2} \\ \BBB
    &\geq \BBB (1 - \lambda) \|\nabla\Delta y_{n}\|_{L^{2}(\Om)}^{2} - (-1 + 1 / \lambda) \|\nabla\Delta \ysts{k-1}\|_{L^{2}(\Om)}^{2}.
  \end{align*} \EEE
  Choosing $\lambda = 1/2$ above and \BBB combining \EEE with \eqref{comparison_yn_2} this leads to
  \begin{equation*}
    \mechen(y_n) + \BBB \frac{\eps}{4\tau} \EEE \|\Delta \nabla y_{n}\|_{L^{2}(\Om)}^{2}
    \leq \ZZZ
    + C_M  (1 + \Vert f  \Vert_{L^2(I \times \Omega)}^2) \EEE
    + \BBB \frac{\eps}{2\tau} \EEE \|\nabla\Delta \ysts{k-1}\|_{L^{2}(\Om)}^{2}
    + \frac{\rho \tau}{h} \Vert \ddif \yst{k - h/\tau} \Vert_{L^2(\Omega)}^2.
  \end{equation*} \EEE
\BBB This implies $\sup_{n \in \N}\|\nabla \Delta y_n\|_{L^2(\Omega)} < \infty$ and, \ZZZ in view of \ref{H_bounds}, \EEE in particular \EEE shows $\sup_{n \in \N} \| \Delta y_n \|_{H^1(\Omega)} < \infty$.   As $\Omega$ was assumed to have a   $C^5$-boundary   and $(y_n)_n \subset \Yid$, elliptic regularity \BBB for the operator $\Delta$ implies 
\begin{align*}
%\label{standard-elliptic}
  \Vert y_n - \id \Vert_{H^3(\Omega)} \le C  \| \Delta y_n \|_{H^1(\Omega)} ,
  \end{align*}
   and thus \EEE $\sup_{n \in \N} \| y_n \|_{H^3(\Omega)} < \infty$.
  Consequently, \BBB as $p <2^*$, \EEE we can select a subsequence (without relabeling) such that $y_n \to y$ \BBB strongly \EEE  in $W^{2, p}(\Omega; \R^d)$ as well as $y_n \weakly y$ weakly in $H^3(\Omega; \R^d)$.  

  Existence of minimizers \BBB then \EEE  follows by standard lower semicontinuity arguments.  \BBB  Also, recalling \eqref{difff},  the derivation of the Euler-Lagrange equation  \eqref{mechanical_step_el} is standard, see the proof of \cite[Proposition 3.5]{tve} for some details. \EEE \BBB  Eventually,      estimate  \eqref{bad_mechen_bound_2} \EEE directly follows \BBB from  \eqref{comparison_yn_2}, \EEE  after passing to the limit $n \to \infty$ and using standard lower semicontinuity arguments.
\end{proof}

\begin{proposition}[Existence of the thermal step]\label{prop:existence_thermal_step}
   For any $M > 0$ there exists $\tau_0 \in (0, 1]$ such that for all $\tau \in (0, \tau_0)$ and $k \in \{1, \ldots, T/\tau\}$ the following holds:
  Let $\ysts{k-1}, \, \ysts{k} \in   \Yid$ \BBB be \EEE such that $\mechen(\yst{k-1}) \leq M$ and let $\tsts{k-1} \in H^1_+(\Omega)$.
  Then, the minimization problem \eqref{thermal_step} attains a solution $\BBB \theta_\tau^{(k)}  \EEE \in H^1_+(\Omega)$ solving \BBB the \EEE corresponding Euler-Lagrange equation, i.e.,  for all $\vphi \in H^1(\Omega)$ it holds that 
  \begin{align}\label{thermal_step_el}
    0 = & \int_\Omega  \ddif \wst{k} \ee \vphi \di x + \int_\Omega
    \hcm(\nabla \yst{k-1}, \tst{k-1}) \nabla \tst k \cdot \nabla \vphi \di x
    + \kappa \int_{\partial \Omega} (\tst k - \btst k) \vphi \di \haus^{d-1}  \\
        & - \int_\Omega \big(
    \pl_F \cplpot(\nabla \yst{k-1}, \tst{k-1}) : \ddif \nabla \yst k
    + \drate(\nabla \yst{k-1}, \ddif \nabla \yst k, \tst{k-1}) \BBB + \eps |\delta_{\tau}   \nabla \Delta y^{(k)}_{\tau} |^{2}  \wedge \tau^{-1} \big) \vphi \di x\,,  \notag
  \end{align}
  where we \RB shortly write \BBB $\wst{k-1} \defas \inten(\nabla \yst{k-1}, \tst{k-1})$,  $\wst k \defas \inten(\nabla \yst k, \tst k)$, \BBB and \EEE  $\ddif \wsts{k} \defas \frac{\wst k - \wst{k-1}}{\tau}$.
\end{proposition}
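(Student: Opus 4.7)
The proof proceeds by the direct method of the calculus of variations applied to the convex functional $\mathcal F\colon H^1_+(\Omega) \to \R$ given by the bracket in~\eqref{thermal_step}, following the blueprint of the thermal step in~\cite{tve}. The only new ingredient compared to the reference is the regularizing source $\eps\,|\ddif\nabla\Delta y_\tau^{(k)}|^2 \wedge \tau^{-1}$, which by the truncation is bounded pointwise by $\tau^{-1}$ and thus contributes just a bounded linear-in-$\theta$ perturbation.

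The three classical ingredients are in place. \emph{Strict convexity} of the primitive $\theta\mapsto\int_0^\theta\inten(\nabla\yst k,s)\,\di s$ follows from $\partial_\theta\inten\geq 1/C_0$ in~\ref{C_bounds}; strict convexity of the Dirichlet energy follows from uniform positive-definiteness of $\hcm(\nabla\yst{k-1},\tst{k-1})$ via~\eqref{spectrum_bound_K} and Lemma~\ref{lem:pos_det}; the boundary penalty is convex. \emph{Weak lower semicontinuity} on $H^1(\Omega)$ is standard, using compactness of the trace $H^1(\Omega)\hookrightarrow L^2(\partial\Omega)$ for the boundary integral. \emph{Coercivity} on $H^1_+(\Omega)$ comes from $\inten(F,s)\geq s/C_0$ in~\eqref{inten_lipschitz_bounds} — giving $\int_0^\theta\inten(\nabla\yst k,s)\,\di s\geq\theta^2/(2C_0)$ on $\theta\geq 0$ — combined with the $H^1$-seminorm control from the Dirichlet energy. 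The linear source is absorbed by Young's inequality using that its coefficient is in $L^\infty(\Omega)$: assumptions \ref{C_bounds}, \ref{D_bounds} and Lemma~\ref{lem:pos_det} applied to $\yst{k-1}$ and $\yst k$ yield $L^\infty$ bounds on $\pl_F\cplpot(\nabla\yst{k-1},\tst{k-1})$, $\ddif\nabla\yst k$, and $\xi(\nabla\yst{k-1},\ddif\nabla\yst k,\tst{k-1})$, while the truncation handles the remaining contribution.

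Strict convexity and coercivity then yield a unique minimizer $\tst k\in H^1_+(\Omega)$. To upgrade the corresponding variational inequality into the Euler--Lagrange equality~\eqref{thermal_step_el} valid for every $\vphi\in H^1(\Omega)$, I would show that $\tst k$ is bounded away from zero a.e., that is, $\tst k\geq\underline\theta$ a.e.~in $\Omega$ for some $\underline\theta>0$ depending on $\tau$, $\eps$, $M$. This follows from a standard obstacle-type comparison: using $\inten(F,\theta)\leq C_0\theta$ together with the $L^\infty$-bound on the inhomogeneity just established, one tests the variational inequality with $-(\tst k-\underline\theta)_-$ and exploits the monotonicity of $\inten(\nabla\yst k,\cdot)$ to conclude that the contact set $\{\tst k<\underline\theta\}$ is empty for $\underline\theta$ small enough. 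Once $\tst k\geq\underline\theta>0$, every $\vphi\in H^1(\Omega)$ gives a two-sided admissible perturbation $\tst k\pm s\vphi\in H^1_+(\Omega)$ for $|s|$ small, and the vanishing of the first variation of $\mathcal F$ at $\tst k$ produces exactly~\eqref{thermal_step_el}.

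The main technical obstacle is precisely this strict-positivity step; this is the very reason for truncating the regularizer by $\tau^{-1}$. Without that truncation, $\eps|\ddif\nabla\Delta y_\tau^{(k)}|^2$ would only be known in $L^1(\Omega)$ at the $\tau$-level via~\eqref{bad_mechen_bound_2}, and the $L^\infty$-comparison underpinning the positivity argument would break down for fixed $\tau>0$.
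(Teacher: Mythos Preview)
Your sketch of the direct method and your identification of the truncation's role are correct and align with the paper, which simply defers to \cite[Proposition~3.8]{tve} after observing that the added term $\eps|\ddif\nabla\Delta y_\tau^{(k)}|^2\wedge\tau^{-1}$ is nonnegative and in $L^\infty(\Omega)$.

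The gap is in your passage from the variational inequality to the Euler--Lagrange \emph{equality}. Your plan is to show a uniform lower bound $\tst k\geq\underline\theta>0$ by testing with $-(\tst k-\underline\theta)_-$. This fails on two counts. First, that direction is $\leq 0$, hence not admissible in the constrained problem on $H^1_+(\Omega)$ unless you already know $\tst k>0$; so the test itself is circular. Second, even replacing it by the admissible competitor $\max(\tst k,\underline\theta)$ and comparing energies, the $L^\infty$-bound on the source $S=\pl_F\cplpot:\ddif\nabla\yst k+\drate+\eps|\cdot|^2\wedge\tau^{-1}$ is not enough: the coupling contribution $\pl_F\cplpot:\ddif\nabla\yst k$ can be negative, and with $\tst{k-1}$ and $\btst k$ possibly vanishing, nothing prevents $\tst k$ from touching zero. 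A strict positive lower bound simply does not hold under the stated hypotheses (indeed, obtaining strict positivity for the temperature is the subject of a separate paper \cite{RBMFMKLM}).

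The approach in \cite{tve} goes the other way: one extends $\inten(F,\cdot)$ suitably to negative temperatures, minimizes the (strictly convex, coercive) functional over all of $H^1(\Omega)$ without constraint, and obtains \eqref{thermal_step_el} directly as the unconstrained Euler--Lagrange equation. One then proves $\tst k\geq 0$ a posteriori by testing \eqref{thermal_step_el} with the negative part $(\tst k)_-$. It is precisely here that the structural lower bound $\drate(F,\dot F,\theta)\geq c\,|\dot C|^2$ from \ref{D_bounds} is used (cf.\ \cite[Remark~3.9]{tve}): combined with frame indifference \ref{C_frame_indifference}, which forces $\pl_F\cplpot:\dot F$ to be linear in $\dot C$, it guarantees that the source has the right sign properties for the negative-part test to yield $(\tst k)_-=0$. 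The mere $L^\infty$-bound you invoke does not suffice for this step.
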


\BBB 
\begin{proof}
The thermal step differs from the one used in \cite{tve} only by the regularization of the dissipation term $ \drate(\nabla \yst{k-1}, \ddif \nabla \yst k, \tst{k-1})$ by $\eps |\delta_{\tau}   \nabla \Delta \ZZZ y^{(k)}_{\tau}  \EEE |^{2} \wedge \tau^{-1}$. Therefore,  the statement immediately follows from \cite[Proposition~3.8]{tve} since an inspection of its  proof  shows that for the existence of $\theta_\tau^{(k)}$ it is enough that the dissipation term lies in $L^\infty(\Omega)$ (see Steps 1 and 2) and for nonnegativity of $\theta_\tau^{(k)}$ it is enough that the dissipation is larger than $c|(\ddif \nabla \yst k)^T\nabla \yst{k-1} + (\nabla \yst{k-1})^T \ddif \nabla \yst k|^2$, see  \cite[Remark 3.9]{tve} and \ref{D_quadratic}.
\end{proof}
\EEE 

\subsection{Existence of \ZZZ time-discrete solutions\EEE}

\BBB In the previous subsection, we focused on one step in the staggered scheme. \EEE Our next goal is to prove the existence of \ZZZ time-discrete solutions \EEE and to derive \BBB first \EEE a priori bounds independent of $\tau$, $h$, and $\eps$\ee.
\BBB  We \EEE set
\begin{equation}\label{def_Cf}
  C_{f} \ee
  \defas \norm{f}_{W^{1,1}(I; L^2(\Omega))},
\end{equation}
and note that by the fundamental theorem of calculus it holds \BBB that
\begin{equation}\label{ell_bound}
  \Vert f(t) \Vert_{L^2(\Omega)} 
  \leq C_T C_{f} \quad \text{for all $t \in I$},
\end{equation}
where \BBB here and in the following \EEE $C_T$ denotes a constant possibly depending on $T$.

Given the sequences $\yst 0, \, \ldots,\, \yst k $ and $\tst 0, \, \ldots, \,  \tst k $ for some $k \in \setof{1, \, \ldots, \, T/\tau}$, as described \BBB in Subsection~\ref{MMscheme}, \EEE    for $l \in \setof{0, \, \ldots, \, k}$   we define
\begin{equation*}
%\label{Fl}
  \mathcal{F}^{(l)} \defas \toten(\yst l, \tst l) - ( f(l \tau), \yst l )_2,
\end{equation*}
\BBB where the total internal energy $\toten$ is defined in \eqref{toten}. Then, using \ref{W_lower_bound} we find \EEE 
\begin{equation}\label{diff_F_E}
  |( f(l \tau), \yst l )_2|
  \leq \min \{ \mathcal{F}^{(l)}, \toten(\yst l, \tst l) \} + C_T C_{f}^2 \BBB + C(1+C_0), \EEE
\end{equation}
see also \cite[Lemma 3.10]{tve}. Finally,  for $l \in \{ 0, \, \ldots, \, \ZZZ k \EEE \}$ we also define   
\begin{equation}\label{Gl}
  \mathcal G^{(l)} \defas \mathcal F^{(l)} + \frac{\rho}{2} \frac{\tau}{h} \sum_{m = l-h/\tau+1}^l   \Vert \ddif \yst m \Vert_{L^2(\Omega)}^2   ,
\end{equation}
\BBB which corresponds to adding also a suitably \ZZZ averaged \EEE  kinetic energy. \EEE The main result of this subsection reads as follows. \EEE
\begin{proposition}[Existence of  \ZZZ time-discrete solutions\EEE]\label{prop:minmoves_existence}
  Let $ C_f\ee$ be as in \eqref{def_Cf} and $\mathcal G^{(0)}$ be as in \eqref{Gl}.
  For any $T > 0$, there exists a constant $\overline C_T \ZZZ  >0 \EEE $, corresponding constants
  \begin{equation*}
    M' \defas
    2 e^{\overline C_T C_{f}} \Big(
      \mathcal{G}^{(0)}
      + \overline C_T (1 + C_{f}^3)
      + \kappa  \BBB \int_I \EEE \int_{\partial \Omega} \theta_\flat \di \haus^{d-1} \di t
    \Big), \qquad
    M \defas 2M' + \overline C_T C_{f}^2,
  \end{equation*}
  as well as a constant $C_M \ZZZ >0 \EEE$ and scalar $\tau_0 \in (0, h]$ only depending on $M$ above such that the following holds true:
  For each $\tau \in (0, \tau_0)$ such that   $h / \tau \in \N$ the sequences $\yst 0, \ldots, \yst{T / \tau}$ and $\tst 0, \ldots, \tst{T / \tau}$  \BBB constructed in Subsection \ref{MMscheme} \EEE  exist, and for all $k \in \{ 0, \ldots, T / \tau \}$ it holds that
  \begin{align}
    \label{mechen_apriori_bound} 
    \toten(\yst k, \tst k) \BBB   
    + \frac{\rho}{2} \BBB \frac{\tau}{h} \EEE \sum_{l = k - h/\tau + 1}^k   \Vert \ddif \yst{l} \Vert_{L^2(\Omega)}^2 &\leq   M,   \\
    \label{velocity_apriori_bound}
    \sum_{l = 1}^{k} \tau \Big( \Vert \ddif \nabla \yst l \Vert_{L^2(\Omega)}^2    + \eps   \| \delta_{\tau} \BBB \nabla \Delta y^{(l)}_{\tau} \EEE \|_{L^{2}(\Om)}^{2}
        \Big) &\leq C_M (M \BBB (1+T) \EEE + \overline C_T C_{f}^2).
  \end{align}
\end{proposition}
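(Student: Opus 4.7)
The plan is to argue by induction on $k \in \{0, \ldots, T/\tau\}$, using Propositions~\ref{prop:existence_mechanical_step} and~\ref{prop:existence_thermal_step} to construct $(y_\tau^{(k)}, \theta_\tau^{(k)})$ and then deriving a discrete total energy inequality to which a discrete Gronwall lemma is applied. First, I choose the constant $M$ (as in the statement) slightly larger than the quantity $\mathcal{G}^{(0)}$ plus contributions coming from $f$ and $\bt$, so that Proposition~\ref{prop:existence_mechanical_step} applies with this $M$ and $\tau_0$ is chosen accordingly. The base case $k=0$ follows from the hypotheses and the definition of $\mathcal{G}^{(0)}$. For the induction step, assuming \eqref{mechen_apriori_bound}--\eqref{velocity_apriori_bound} hold up to index $k-1$, I have $\mechen(y_\tau^{(k-1)}) \leq M$, so Proposition~\ref{prop:existence_mechanical_step} provides $y_\tau^{(k)}$, and then Proposition~\ref{prop:existence_thermal_step} yields $\theta_\tau^{(k)}$. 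The goal is then to propagate the bound to step $k$.

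The key ingredient is a discrete total energy inequality. For the mechanical part, I use $y_\tau^{(k-1)}$ as a competitor in \eqref{mechanical_step}, which after rearranging produces a bound involving the increment of $\mechen + \cplen(\cdot,\theta_\tau^{(k-1)})$, the dissipation, the regularization, and the discrete kinetic term $\frac{\rho \tau}{2h}\|\delta_\tau y_\tau^{(k)} - \delta_\tau y_\tau^{(k-h/\tau)}\|_{L^2(\Omega)}^2$. For the acceleration contribution, I would use the polarization identity
\begin{equation*}
\bigl\|\delta_\tau y_\tau^{(k)} - \delta_\tau y_\tau^{(k-h/\tau)}\bigr\|_{L^2(\Omega)}^2
\ge \bigl\|\delta_\tau y_\tau^{(k)}\bigr\|_{L^2(\Omega)}^2 - \bigl\|\delta_\tau y_\tau^{(k-h/\tau)}\bigr\|_{L^2(\Omega)}^2,
\end{equation*}
which, after summing in $k$, telescopes into the averaged kinetic quantity that appears in \eqref{mechen_apriori_bound}. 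For the thermal step, testing \eqref{thermal_step_el} with $\varphi = \tau$ gives, after integration in $\Omega$, the discrete internal-energy balance; this turns the awkward dissipation and coupling terms in the mechanical inequality into an increment of $\mathcal{W}^{\rm in}(\nabla y_\tau^{(k)}, \theta_\tau^{(k)})$ together with the boundary term $\kappa \int_{\partial\Omega}(\theta_\tau^{(k)} - \btst{k})\tau \, d\haus^{d-1}$. This mirrors the analogous total energy balance in \cite{tve_orig, tve}, with the differences being the presence of the $\eps$-regularization (which, thanks to the truncation $\wedge\,\tau^{-1}$ in \eqref{thermal_step}, produces a \emph{non-negative} extra term on the mechanical side that cancels in the sum) and the time-delayed kinetic term.

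Adding the mechanical and thermal inequalities, the dissipation and adiabatic coupling terms cancel, and one obtains an inequality of the schematic form
\begin{equation*}
\mathcal{G}^{(k)} - \mathcal{G}^{(k-1)} \le C\,\tau\bigl(1 + \|\fst{k}\|_{L^2(\Omega)}^2\bigr) + C\,\tau\,\mathcal{G}^{(k)} + \kappa \tau \int_{\partial\Omega}\btst{k}\,d\haus^{d-1},
\end{equation*}
where $\mathcal{F}^{(k)}$ absorbs the force term via \eqref{diff_F_E}, and any Lipschitz errors from comparing $\cplen(y_\tau^{(k)}, \theta_\tau^{(k-1)})$ with $\cplen(y_\tau^{(k)}, \theta_\tau^{(k)})$ are controlled by \ref{C_bounds}. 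Choosing $\tau_0$ sufficiently small so that $C\tau_0 < 1/2$ and applying the discrete Gronwall lemma yields the uniform bound \eqref{mechen_apriori_bound} with the stated constant $M'$, and hence also \eqref{mechen_apriori_bound} for $\mechen$ via \eqref{inten_lipschitz_bounds} and the definition of $M$; this closes the induction. Finally, estimate \eqref{velocity_apriori_bound} is obtained by summing \eqref{bad_mechen_bound_2} over $l = 1, \ldots, k$: the telescoping of $\mechen$ on the right-hand side is bounded by $2M$, the force contribution by $C_f^2$, and the averaged kinetic sum $\frac{\rho\tau}{h}\sum_l \|\delta_\tau y_\tau^{(l-h/\tau)}\|_{L^2(\Omega)}^2$ is controlled by $T$ times \eqref{mechen_apriori_bound}, giving the factor $M(1+T)$.

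The main obstacle I expect is the careful bookkeeping needed to ensure the induction closes: the constant $M$ has to be chosen \emph{once and for all} at the outset such that, after applying discrete Gronwall on $[0,T]$, the resulting bound is still $\le M$, and simultaneously such that $\tau_0$ from Proposition~\ref{prop:existence_mechanical_step} (which depends on this $M$) is compatible with the smallness assumption $C\tau_0 < 1/2$ in the Gronwall step. The delayed structure in the acceleration requires particular care since the competitor test produces a term depending on $\delta_\tau y_\tau^{(k-h/\tau)}$, which, at early times ($k \le h/\tau$), involves the initial velocity $y'_{0,\eps}$; the assumed regularity $y'_{0,\eps}\in H^3(\Omega;\R^d)$ is exactly what one needs to absorb this into the initial data.
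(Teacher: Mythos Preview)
Your overall architecture (induction on $k$, existence from Propositions~\ref{prop:existence_mechanical_step}--\ref{prop:existence_thermal_step}, discrete Gronwall) matches the paper. However, two concrete steps in your sketch do not go through as written.

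First, the competitor test with $y_\tau^{(k-1)}$ in \eqref{mechanical_step} yields on the left only $\tau R(\nabla y_\tau^{(k-1)}, \delta_\tau\nabla y_\tau^{(k)}, \theta_\tau^{(k-1)}) = \tfrac{\tau}{2}\xi(\cdots)$, whereas the thermal balance \eqref{thermal_step_el} tested with $\varphi=1$ produces $\tau\xi(\cdots)$. These do \emph{not} cancel: you are left with $-\tfrac{\tau}{2}\xi$ on the wrong side, and there is no a priori control on $\xi$ at this stage. The paper (and \cite{tve,tve_orig}) instead tests the Euler--Lagrange equation \eqref{mechanical_step_el} with $z=\delta_\tau y_\tau^{(l)}$; then $\partial_{\dot F}R:\delta_\tau\nabla y_\tau^{(l)}=\xi$ by \eqref{diss_rate}, and the full $\xi$ matches the thermal side exactly (this is the content of Lemma~\ref{lemma: mechenbound}). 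The same switch fixes your kinetic step: the EL test produces $\tfrac{\rho}{h}(\delta_\tau y_\tau^{(l)}-\delta_\tau y_\tau^{(l-h/\tau)},\delta_\tau y_\tau^{(l)})$, for which the \emph{identity} $2(a-b,a)=\|a\|^2-\|b\|^2+\|a-b\|^2$ telescopes (see \eqref{accelia}); your inequality $\|a-b\|^2\ge\|a\|^2-\|b\|^2$ is simply false (take $a=2b$).

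Second, \eqref{velocity_apriori_bound} does not follow by summing \eqref{bad_mechen_bound_2}: that inequality has no $\mechen(y_\tau^{(l-1)})$ on the right, so nothing telescopes, and the constant $C_M(1+\|f\|^2)$ summed $T/\tau$ times blows up. Even starting from the sharper competitor inequality, the kinetic sum $\tfrac{\rho\tau}{h}\sum_l\|\delta_\tau y_\tau^{(l-h/\tau)}\|_{L^2}^2$ on the right is only controlled window-by-window via \eqref{mechen_apriori_bound}, producing a factor $T/h$ that is not uniform. The paper derives the strain-rate bound instead via Lemma~\ref{lem:Vk_bound}: starting from the EL-based Lemma~\ref{lemma: mechenbound}, one drops the nonnegative energy and kinetic terms on the left, applies the generalized Korn inequality \cite[Corollary~3.4]{tve_orig} to extract $\|\delta_\tau\nabla y_\tau^{(l)}\|_{L^2}^2$ from $\xi$, and absorbs the coupling and force terms by Young's inequality. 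This yields a right-hand side $\mechen(y_{0,\eps})+\tfrac{\rho}{2}\|y_{0,\eps}'\|^2+C_MC_TC_f^2+C_M\tau\sum_l(1+\mechen(y_\tau^{(l-1)}))$, which then closes via \eqref{mechen_apriori_bound}.
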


\BBB The first estimate corresponds to a bound on the total  energy   and the second one is a bound on the (regularized) strain rate.  \EEE  The proof  relies on the following two lemmas.

%    \sum_{l = 1}^{k} \tau \Big( \Vert \ddif \nabla \yst l \Vert_{L^2(\Omega)}^2    + \eps   \| \delta_{\tau} \BBB \nabla \Delta y^{(l)}_{\tau} \EEE \|_{L^{2}(\Om)}^{2}
%       + \frac{\rho}{2h}   \Vert \ddif \yst l - \ddif \yst{l - h/\tau} \Vert_{L^2(\Omega)}^2 \Big) &\leq C_M (M + \overline C_T C_{f}^2).
%

\begin{lemma}[Inductive bound on the total energy]\label{lem:inductive_toten_bound}
  For any $M,\, \BBB T \EEE > 0$ there \BBB exist constants $C_M >0$ and $C_T >0$ only depending on  $M$ and $T$, respectively, \EEE  such that the following holds true:
  Suppose $\tau \in (0, 1)$ is chosen such that for $k \in \setof{1, \, \ldots, \, T / \tau}$ the sequences $\yst 0, \, \ldots, \, \yst k$ and $\tst 0, \, \ldots, \, \tst k$ \BBB constructed in Subsection \ref{MMscheme} \EEE  exist.
  Moreover, assume that $\mathcal{G}^{(l)} \leq M$ for all $l \in \{ 0, \, \ldots, \, k-1 \}$ with $\mathcal{G}^{(l)}$ as in \eqref{Gl}.
  Then, it holds that
  \begin{equation*}
  \begin{aligned}
    \mathcal{G}^{(k)}
    &\leq
     \BBB  \mathcal{G}^{(0)} \EEE
           + C_M \tau V_k
      + C_T(1 + C_{f}^3)
      + \kappa \int_0^{k \tau} \int_{\partial \Omega} \theta_\flat \di \haus^{d-1} \di t  
      + C\sum_{l = 0}^k
      \mathcal{G}^{(l)} \BBB \int_{(l-1)\tau}^{(l+1)\tau} \EEE  
             \Vert \partial_t f(t) \Vert_{L^2 (\Omega)}    \di t,
  \end{aligned}
  \end{equation*}
  where $C >0$ is a universal constant,  and
  \begin{equation}\label{def_Vk}
    V_k \defas \sum_{m = 1}^k \tau \int_\Omega \abs{\ddif \nabla \yst m}^2 \di x.
  \end{equation}
\end{lemma}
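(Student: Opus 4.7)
The plan is to derive a per-step discrete energy inequality by combining the mechanical and thermal minimization steps, and then to sum over $l \in \{1,\dots,k\}$ with an Abel summation in time to handle the forcing. This is the natural adaptation of the analogous step in \cite[Lemma 3.10]{tve} to the presence of the inertial-type penalty in \eqref{mechanical_step}.

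First, I would test the mechanical minimization \eqref{mechanical_step} against the competitor $y = \yst{k-1}$: the dissipation, the $\eps$-term, and the velocity $(y-\yst{k-1})/\tau$ all vanish, giving an inequality involving $\mechen(\yst k)-\mechen(\yst{k-1})$ and $\cplen(\yst k,\tst{k-1})-\cplen(\yst{k-1},\tst{k-1})$, balanced against the force work $\tau(\fst k,\ddif \yst k)_2$ and the penalty increment $\tfrac{\rho\tau}{2h}\bigl(\Vert\ddif \yst{k-h/\tau}\Vert_{L^2}^2-\Vert\ddif \yst k-\ddif \yst{k-h/\tau}\Vert_{L^2}^2\bigr)$. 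Using the identity $(a-b,a)=\tfrac12(|a|^2-|b|^2+|a-b|^2)$ with $a=\ddif\yst k$, $b=\ddif\yst{k-h/\tau}$, the penalty contributes precisely the telescoping difference $\tfrac{\rho\tau}{2h}(\Vert\ddif \yst k\Vert_{L^2}^2-\Vert\ddif \yst{k-h/\tau}\Vert_{L^2}^2)$ that matches the kinetic-sum increment in $\mathcal{G}^{(k)}-\mathcal{G}^{(k-1)}$, plus a nonnegative residual that we discard.

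Second, I test the thermal Euler--Lagrange \eqref{thermal_step_el} with $\vphi\equiv 1$ to obtain the discrete thermal balance
\begin{equation*}
\int_\Omega(\wst k-\wst{k-1})\,\di x = \tau\int_\Omega\Bigl[\drate(\nabla\yst{k-1},\ddif\nabla\yst k,\tst{k-1})+\pl_F\cplpot(\nabla\yst{k-1},\tst{k-1}):\ddif\nabla \yst k+\eps|\ddif\nabla\Delta \yst k|^2\wedge\tau^{-1}\Bigr]\di x-\kappa\tau\int_{\partial\Omega}(\tst k-\btst k)\,\di\haus^{d-1},
\end{equation*}
and add it to the mechanical inequality. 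The factor $\drate=2R$ of the thermal balance cancels exactly against $\tau\diss$ on the mechanical side by \eqref{diss_rate}, while the mismatch $\cplen(\yst k,\tst{k-1})-\cplen(\yst{k-1},\tst{k-1})$ is reconciled with $\int(\wst k-\wst{k-1})$ via the Helmholtz identity $W^{\mathrm{in}}=\cplpot-\theta\,\pl_\theta \cplpot$, together with a Taylor expansion in $F$ at fixed $\tst{k-1}$ and the second-derivative bounds in \ref{C_bounds}, producing a remainder of order $\tau\Vert\ddif\nabla\yst k\Vert_{L^2}^2$. The truncation $\wedge\,\tau^{-1}$ and the regularizing $\eps$-terms are handled verbatim as in \cite[Lemma 3.10]{tve}.

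Third, I sum from $l=1$ to $k$. The telescoping parts collapse to $\mathcal{G}^{(k)}-\mathcal{G}^{(0)}$, and the accumulated remainders sum to at most $C_M\tau V_k$ by definition \eqref{def_Vk}. The forcing sum $\sum_{l=1}^k\tau(\fst l,\ddif \yst l)_2$ is rewritten by Abel summation as $(\fst k,\yst k)_2-(\fst 0,\yst 0)_2-\sum_{l=1}^k\tau(\ddif \fst l,\yst{l-1})_2$; the first two terms are absorbed into $\mathcal{F}^{(k)}-\mathcal{F}^{(0)}$ by the definition of these quantities, while the remaining differences are bounded by $\Vert\ddif\fst l\Vert_{L^2}\le\tau^{-1}\int_{(l-2)\tau}^{l\tau}\Vert\pl_t f\Vert_{L^2}\di t$ (Jensen) and, via \eqref{diff_F_E} applied to $\yst{l-1}$, couple to $\mathcal{G}^{(l)}$, producing the sum $C\sum_{l=0}^k \mathcal{G}^{(l)}\int_{(l-1)\tau}^{(l+1)\tau}\Vert\pl_t f\Vert_{L^2}\di t$. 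The boundary contributions combine to $\kappa\int_0^{k\tau}\!\int_{\partial\Omega}\bt\,\di\haus^{d-1}\di t$ plus a nonpositive term $-\kappa\tau\sum_{l=1}^k\int_{\partial\Omega}\tst l$ which is dropped. The absolute constants coming from the lower bound \ref{W_lower_bound} and from \eqref{diff_F_E} are absorbed into $C_T(1+C_f^3)$. The main technical obstacle is the cross-temperature bookkeeping in the third step, since $\yst k$ is produced at temperature $\tst{k-1}$ while the internal energy $w$ depends on $(\nabla \yst k,\tst k)$; this is exactly where \ref{C_bounds} is indispensable, and I would follow the argument from \cite[Lemma 3.10]{tve} essentially verbatim, as the additional $\eps$- and $h$-dependent terms introduced here appear symmetrically in the mechanical and thermal balances and therefore drop out of the sum.
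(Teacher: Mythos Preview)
Your approach has a genuine gap in the mechanical step. Testing the minimization \eqref{mechanical_step} with the competitor $y=\yst{k-1}$ produces only $\tau\diss(\yst{k-1},\ddif\yst k,\tst{k-1})=\tfrac{\tau}{2}\int_\Omega\drate$ on the left-hand side, whereas the thermal balance contributes $\tau\int_\Omega\drate$ on the right. These do \emph{not} cancel: after adding, a term $+\tfrac{\tau}{2}\int_\Omega\drate$ survives on the right, and summed over $l$ this is of order $V_k$, not $C_M\tau V_k$. The same factor-of-two mismatch hits the $\eps$-regularization: the competitor test yields $\tfrac{\eps\tau}{2}\Vert\ddif\nabla\Delta\yst k\Vert_{L^2}^2$, but the thermal side carries the full $\eps\tau\int|\ddif\nabla\Delta\yst k|^2\wedge\tau^{-1}$.

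The inertial bookkeeping has the same defect. The competitor test gives the penalty increment $\tfrac{\rho\tau}{2h}\bigl(\Vert b\Vert^2-\Vert a-b\Vert^2\bigr)$ with $a=\ddif\yst k$, $b=\ddif\yst{k-h/\tau}$, but the polarization identity you invoke, $(a-b,a)=\tfrac12(\|a\|^2-\|b\|^2+\|a-b\|^2)$, applies to the expression $(a-b,a)$, which is \emph{not} what the competitor produces. Indeed $\Vert b\Vert^2-\Vert a-b\Vert^2=2(a,b)-\Vert a\Vert^2$, and this does not split into $\|a\|^2-\|b\|^2$ plus a sign-definite remainder.

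The remedy, and what the paper does, is to test the Euler--Lagrange equation \eqref{mechanical_step_el} with $z=\ddif\yst l$ rather than to compare minimizers. This yields the full dissipation rate $\int_\Omega\drate=2\diss$ (matching the thermal term exactly) and the inertial contribution $\tfrac{\rho}{h}(a-b,a)$ (to which your identity applies verbatim, giving the correct telescoping plus a nonnegative residual). The price is that $\mechen(\yst l)-\mechen(\yst{l-1})$ no longer appears directly; one needs convexity of $H$ and $\Lambda$-convexity of $\elpot$ to bound $\int_\Omega DH(\Delta\yst l)\cdot\ddif\Delta\yst l$ and $\int_\Omega\pl_F\elpot(\nabla\yst l):\ddif\nabla\yst l$ from below by the energy increments, up to a $C_M\tau^2\Vert\ddif\nabla\yst l\Vert_{L^2}^2$ error---this is the origin of the $C_M\tau V_k$ term. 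The remainder of your plan (Abel/integration-by-parts for the forcing, the coupling-energy remainder via \ref{C_bounds}, dropping the nonpositive boundary term) is essentially correct once the mechanical step is fixed.
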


\begin{lemma}[Inductive bound on the strain rates]\label{lem:Vk_bound}
  Given $M, \, T > 0$, there exist a constant $C_M >0$ and $\tau_0 \in (0, 1]$ only depending on $M$, and a constant $C_T >0$ only depending on $T$ such that for $\tau \in (0, \tau_0)$ the following holds:
  Suppose that the sequences $\yst 0, \, \ldots, \, \yst k$ and $\tst 0, \, \ldots, \, \tst k$ for some $k \in \{1, \ldots, T / \tau\}$ \BBB constructed in Subsection~\ref{MMscheme} \EEE  exist.
  Moreover, suppose that $ \MMM \mathcal  G^{(l)}\EEE  \leq M$ for all $l \in \{0, \, \ldots, \, k-1\}$ with $\mathcal G^{(l)}$ as defined in \eqref{Gl}.
  Then,
  \begin{equation}\label{Vk_bound}
  \begin{aligned}
    &  \sum_{l = 1}^{k} \tau \Big( \Vert \ddif \nabla \yst l \Vert_{L^2(\Omega)}^2    + \eps   \| \delta_{\tau} \BBB \nabla \Delta y^{(l)}_{\tau} \EEE \|_{L^{2}(\Om)}^{2} \Big)\\
      &\quad\leq
      C_M \left(
        \mechen(y_{0,\eps}) + \frac{\rho}{2} \Vert y_{0,\eps}' \Vert_{L^2(\Omega)}^2  + C_T C_{f}^2
      \right) \BBB +    C_M \tau \sum_{l = 0}^{k-1}   \big(1+ \mechen(\yst {l})\big) \EEE .
  \end{aligned}
  \end{equation}
\end{lemma}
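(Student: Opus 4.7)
The plan is to derive \eqref{Vk_bound} by testing the Euler--Lagrange equation \eqref{mechanical_step_el} for the mechanical step with the test function $z \defas \yst l - \yst{l-1} = \tau \ddif \yst l$, which is admissible because $\yst l, \yst{l-1} \in \Yid \cap H^3(\Omega;\R^d)$ both equal $\id$ on $\partial \Omega$, so their difference belongs to $H^3(\Omega;\R^d) \cap H^1_0(\Omega;\R^d)$. The advantage of working directly with the Euler--Lagrange identity rather than with the minimization inequality of \eqref{mechanical_step} (e.g., with competitor $\tilde y = \yst{l-1}$) is that the inertial term will produce a clean telescopic structure via a polarization identity, whereas a bare Young inequality on the delayed velocity leaves an uncontrollable residue when summed over $l$.

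For the constitutive terms, I would argue as follows. Using $C^2$-regularity of $\elpot$ (from \ref{W_regularity}) and of $\cplpot$ (from \ref{C_regularity}) together with the bound $|\pl^2_{FF} \cplpot| \le \aC$ of \ref{C_bounds}, a second-order Taylor expansion yields $\tau \int \pl_F W(\nabla \yst l, \tst{l-1}) : \ddif \nabla \yst l \di x \ge \mechen(\yst l) - \mechen(\yst{l-1}) - \hyen(\yst l) + \hyen(\yst{l-1}) + (\cplen(\yst l, \tst{l-1}) - \cplen(\yst{l-1}, \tst{l-1})) - C_M \tau^2 \|\ddif \nabla \yst l\|_{L^2}^2$, where Lemma \ref{lem:pos_det} applied to $\mechen(\yst{l-1}) \leq \tilde M$ (which follows from $\mathcal{G}^{(l-1)} \leq M$ via \eqref{diff_F_E}) controls the $D^2 \elpot$-norm on the relevant sublevel set. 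The convexity of $H$ (from \ref{H_regularity}) gives $\tau \int DH(\Delta \yst l)\cdot \ddif \Delta \yst l \di x \geq \hyen(\yst l) - \hyen(\yst{l-1})$, so $\hyen$ differences cancel. The dissipation term is coercive: by \ref{D_quadratic}, \ref{D_bounds}, \eqref{chain_rule_Fderiv} and the generalized Korn's inequality \cite[Corollary~3.4]{tve_orig} applied at $\nabla \yst{l-1}$, one has $\tau \int \pl_{\dot F} R : \ddif \nabla \yst l \di x \ge \frac{\tau}{C_M} \|\ddif \nabla \yst l\|_{L^2}^2$, while the regularization yields exactly $\eps \tau \|\ddif \nabla \Delta \yst l\|_{L^2}^2$. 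The coupling difference is controlled by \ref{C_lipschitz}, Hölder and Young: $|\cplen(\yst l, \tst{l-1}) - \cplen(\yst{l-1}, \tst{l-1})| \leq \lambda \tau \|\ddif \nabla \yst l\|_{L^2}^2 + C_\lambda \tau (1 + \mechen(\yst l) + \mechen(\yst{l-1}))$ after invoking \ref{W_lower_bound}, and the force by Poincaré (since $\ddif \yst l = 0$ on $\partial \Omega$) and Young: $\tau \int \fst l \cdot \ddif \yst l \di x \leq \lambda \tau \|\ddif \nabla \yst l\|_{L^2}^2 + C_\lambda \tau \|\fst l\|_{L^2}^2$.

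The crucial algebraic step is to handle the acceleration contribution $\frac{\rho \tau}{h} \int (\ddif \yst l - \ddif \yst{l-h/\tau}) \cdot \ddif \yst l \di x$ via the polarization identity $(a-b,a)_2 = \tfrac12(\|a\|^2 - \|b\|^2 + \|a-b\|^2)$, which rewrites it as
\begin{equation*}
  \frac{\rho \tau}{2h} \Bigl( \|\ddif \yst l\|_{L^2}^2 - \|\ddif \yst{l-h/\tau}\|_{L^2}^2 \Bigr) + \frac{\rho \tau}{2h} \|\ddif \yst l - \ddif \yst{l-h/\tau}\|_{L^2}^2.
\end{equation*}
Summing over $l = 1, \ldots, k$, the first bracket partially telescopes to $\frac{\rho \tau}{2h} \sum_{m=k-h/\tau+1}^k \|\ddif \yst m\|_{L^2}^2 - \frac{\rho}{2} \|y_{0,\eps}'\|_{L^2}^2$ (using $\ddif \yst m = y_{0,\eps}'$ for $m \leq 0$), whose leading part is non-negative and may be discarded, producing only $-\frac{\rho}{2}\|y_{0,\eps}'\|^2_{L^2}$ on the right-hand side; the square term is non-negative and is simply dropped.

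Assembling all the above, summing from $l = 1$ to $k$, the mechanical energy telescopes to $\mechen(\yst k) - \mechen(y_{0,\eps}) \ge -C - \mechen(y_{0,\eps})$ by \ref{W_lower_bound}, while $\sum \tau \|\fst l\|_{L^2}^2 \leq C_T C_f^2$ by Jensen. Choosing $\lambda$ small relative to $1/C_M$ and $\tau_0$ small enough to absorb all $\lambda \tau$ and $C_M \tau^2$ coefficients into the dissipation lower bound $\tau \|\ddif \nabla \yst l\|_{L^2}^2 / C_M$, and converting $\sum_{l=1}^k (\mechen(\yst l) + \mechen(\yst{l-1}))$ into $2 \sum_{l=0}^{k-1} \mechen(\yst l)$ plus a boundary correction $\mechen(\yst k) - \mechen(y_{0,\eps})$ (the $\tau \mechen(\yst k)$ piece being absorbable into the corresponding telescoped term for small $\tau$), yields exactly the bound \eqref{Vk_bound}. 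The main conceptual obstacle is the combined treatment of the time-delayed inertial term and the fact that the bound is inductive: the hypothesis $\mathcal{G}^{(l)} \leq M$ for $l \leq k-1$ does \emph{not} supply a $h$-uniform pointwise bound on $\|\ddif \yst{l-h/\tau}\|_{L^2}^2$, so any approach that produces this quantity on the right-hand side (as a naive Young splitting would) fails; the polarization identity circumvents this issue entirely.
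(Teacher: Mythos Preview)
Your proposal is correct and follows essentially the paper's approach: the paper packages the testing of \eqref{mechanical_step_el} with $\ddif \yst l$, the $\Lambda$-convexity estimates for $\elpot$ and $\cplpot$, the convexity of $H$, and the polarization identity for the time-delayed inertial term into an auxiliary Lemma~\ref{lemma: mechenbound}, and then finishes exactly as you do with Korn's inequality, Young's inequality on the force and coupling terms, and absorption for small~$\tau$. One small point: your Taylor-expansion step for $\elpot$ requires a bound not only on $\mechen(\yst{l-1})$ but also on $\mechen(\yst l)$ (to control $D^2\elpot$ on the whole segment between the two gradients), which the paper obtains from the one-step estimate \eqref{bad_mechen_bound_2} of Proposition~\ref{prop:existence_mechanical_step} applied under the hypothesis $\mathcal G^{(l-1)}\le M$.
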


\begin{proof}[Proof of Proposition \ref{prop:minmoves_existence}]
 \BBB  Once  Lemmas \ref{lem:inductive_toten_bound}--\ref{lem:Vk_bound} are proved,  the proof of Proposition \ref{prop:minmoves_existence} follows \EEE  by an inductive argument using the discrete Gronwall's inequality  and Propositions \ref{prop:existence_mechanical_step}--\ref{prop:existence_thermal_step}.
  We refer to \cite[Theorem 3.13]{tve} for \BBB all \EEE details  \ZZZ (cf.\ also \cite[Lemma  3.11, Lemma 3.12]{tve}). \EEE  
\end{proof}

\BBB 
We now proceed with the proof of the lemmas.  As an auxiliary result, we show $\Lambda$-convexity of $\elpot$ and  $\cplpot$. \seb{The result is closely related to the estimate in \cite[Subsection 2.3]{cesik}, which improved upon \cite[Proposition  3.2]{tve_orig}, where \emph{local} $\Lambda$-convexity has been shown. }

\begin{lemma}[$\Lambda$-convexity of $\elpot$ and  $\cplpot$]\label{lemma_ lambda convex}
For any $M > 0$ there exists a constant $C_M>0$ such that for all $y_1, y_2 \in \Yid $ with $\mechen(y_1), \mechen(y_2) \le M$ and $\theta \in \L^1(\Omega)$, we have 
\begin{align*}
  \int_\Omega \elpot(y_{2}) \, \di x  \ge   \int_\Omega  \elpot(y_1) \di x +   \int_\Omega
      \pl_F \elpot(\nabla y_1) : \big( \nabla y_2 - \nabla y_1 \big) \di x - C_M  \Vert \nabla y_2 - \nabla y_1 \Vert^2_{L^2(\Omega)}. 
      \end{align*}
      \begin{align*}
  \int_\Omega \cplpot(y_{2},\theta) \, \di x  \ge   \int_\Omega  \cplpot(y_1,\theta) \di x +   \int_\Omega
      \pl_F \cplpot(\nabla y_1,\theta) : \big( \nabla y_2 - \nabla y_1 \big) \di x - C_M  \Vert \nabla y_2 - \nabla y_1 \Vert^2_{L^2(\Omega)}. 
      \end{align*}
\end{lemma}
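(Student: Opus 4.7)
My main idea is to exploit frame indifference to transfer the problem from the non-convex set $GL^+(d)$ to the convex cone of positive definite symmetric matrices, where a standard Taylor-expansion argument becomes available. By Lemma \ref{lem:pos_det}, the assumption $\mechen(y_1), \mechen(y_2) \le M$ yields a constant $C_M$ with $\abs{\nabla y_i(x)} \le C_M$ and $\det \nabla y_i(x) \ge C_M^{-1}$ pointwise in $\Omega$. Hence the Cauchy-Green tensors $C_i(x) \defas \nabla y_i(x)^T \nabla y_i(x)$ lie pointwise in a compact subset $K_C$ of the open cone of symmetric positive definite matrices, whose convex hull $K \defas \operatorname{co}(K_C)$ is still compactly contained in that cone. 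By \ref{W_frame_invariace} and \ref{C_frame_indifference}, there exist $C^2$ functions $\tilde\elpot$ and $\tilde\cplpot(\cdot,\theta)$ on positive definite symmetric matrices with $\elpot(F) = \tilde\elpot(F^T F)$ and $\cplpot(F, \theta) = \tilde\cplpot(F^T F, \theta)$. The crucial point is that the straight segment $C_t \defas (1-t) C_1(x) + t C_2(x)$ now stays inside $K$.

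Next, I would establish a uniform bound $\abs{\pl^2_{CC} \tilde\elpot(C)} + \sup_{\theta \ge 0}\abs{\pl^2_{CC} \tilde\cplpot(C, \theta)} \le C_M$ for $C \in K$. For $\tilde\elpot$ this follows from \ref{W_regularity} and invertibility of the change of variables on $K$. For $\tilde\cplpot$ one differentiates the chain-rule identity $\pl_F \cplpot(F,\theta) = 2 F\, \pl_C \tilde\cplpot(F^T F, \theta)$ once more, isolates $\pl^2_{CC} \tilde\cplpot$, and then uses \ref{C_bounds} to control $\pl^2_{FF}\cplpot$ and \ref{C_lipschitz} to control $\pl_C \tilde\cplpot$ uniformly in $\theta$, together with the fact that $F$ is invertible with controlled norm on $K$. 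With these bounds in hand, Taylor's theorem with integral remainder gives, pointwise in $x$,
\begin{equation*}
  \tilde\elpot(C_2) \ge \tilde\elpot(C_1) + \pl_C \tilde\elpot(C_1) : (C_2 - C_1) - C_M \abs{C_2 - C_1}^2,
\end{equation*}
and the analogous inequality for $\tilde\cplpot(\cdot,\theta)$ (with the same $C_M$).

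Finally, I would convert the first-order term back to $F$-derivatives. Writing $H \defas \nabla y_2 - \nabla y_1$, one has
\begin{equation*}
  C_2 - C_1 = F_1^T H + H^T F_1 + H^T H,
\end{equation*}
and since $\pl_C \tilde\elpot(C_1)$ is symmetric, the identity
\begin{equation*}
  \pl_C \tilde\elpot(C_1) : (C_2 - C_1) = \pl_F \elpot(F_1) : H + \pl_C \tilde\elpot(C_1) : H^T H
\end{equation*}
holds (using $\pl_F \elpot(F_1) = 2 F_1 \pl_C \tilde\elpot(C_1)$), and the second summand on the right is bounded by $C_M \abs{H}^2$. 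The same manipulation applies to $\tilde\cplpot(\cdot,\theta)$. Combining with $\abs{C_2 - C_1} \le C_M \abs{H}$, integrating over $\Omega$, and absorbing the quadratic remainders into a new $C_M$ yields both stated inequalities. The main obstacle is the non-convexity of $GL^+(d)$, which rules out a direct Taylor expansion at the $F$-level (as in the local version of \cite[Proposition 3.2]{tve_orig}); working in the $C$-variable bypasses it and, thanks to the pointwise a priori bounds from Lemma \ref{lem:pos_det}, allows a \emph{global} $\Lambda$-convexity estimate between any two admissible deformations with bounded mechanical energy.
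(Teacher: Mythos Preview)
Your argument is correct, but it proceeds along a genuinely different line from the paper's proof. The paper does \emph{not} pass to the Cauchy--Green tensor. Instead it works directly in the $F$-variable and handles the non-convexity of $GL^+(d)$ by a splitting of~$\Omega$: on the ``good'' set $G=\{x:\ |\nabla y_2-\nabla y_1|\le\delta_M\}$ (with $\delta_M$ chosen so that the straight segment $F_t=(1-t)\nabla y_1+t\nabla y_2$ stays in a fixed compact $K\subset GL^+(d)$) one applies a mean-value/Taylor argument in~$F$ using that $|\partial^2_{FF}\elpot|$ is bounded on $K$; on the complement $\Omega\setminus G$ one uses the crude inequality $1\le\delta_M^{-2}|\nabla y_2-\nabla y_1|^2$ together with the pointwise boundedness of $\elpot(\nabla y_i)$ and $\partial_F\elpot(\nabla y_1)$ (from Lemma~\ref{lem:pos_det}) to absorb everything into the quadratic remainder. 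For $\cplpot$ the same splitting is used, and the uniformity in $\theta$ is obtained from \ref{C_lipschitz} and the first bound in \ref{C_bounds} rather than by isolating $\partial^2_{CC}\tilde\cplpot$.

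Your route via $C=F^TF$ is more structural: it exploits frame indifference to land in a convex domain, so a \emph{single} Taylor expansion suffices and no case distinction is needed; the price is the chain-rule bookkeeping in converting the first-order term back to $\partial_F$ and in bounding $\partial^2_{CC}\tilde\cplpot$ uniformly in $\theta$ (which, as you note, follows from \ref{C_lipschitz} and \ref{C_bounds} after composing with the smooth map $C\mapsto\sqrt{C}$ on the compact set $K$). The paper's approach is more elementary and does not actually use frame indifference for $\elpot$; its good/bad splitting is a robust device that reappears elsewhere (cf.\ \cite[Subsection~2.3]{cesik}). Both arguments yield the same conclusion with constants of the same type.
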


 \EEE
 
\begin{proof}
\BBB We first prove the statement for $\elpot$. \EEE By Lemma \ref{lem:pos_det} there exists a constant \BBB  $C^*_M$ \EEE  depending on  $M$ such that for all $x \in \Omega$ and $l \in \{ 1,2\}$ it holds that
  \begin{equation}\label{strain_det_bounds}
    |\nabla y_l(x)| \leq C_M^*, \quad \det(\nabla y_l(x)) \geq \frac{1}{C_M^*}.
  \end{equation}
By the continuity of the determinant we can find $\delta_M > 0$ such that for all $\lambda \in (0, 1)$ and $F_1, \, F_2 \in GL^+(d)$ with \ZZZ $|F_1|, |F_2| \le C_M^*$, \EEE $\det(F_1), \, \det(F_2) \geq \frac{1}{C_M^*}$ and $|F_1 - F_2| \leq \delta_M$ we have that $\det(\lambda F_1 + (1 - \lambda) F_2) \geq \frac{1}{2 C_M^*}$. \BBB The \EEE set
  \begin{equation*}
    K \defas \left\{F \in GL^+(d) \colon |F| \leq C_M^*, \, \det(F) \geq \frac{1}{2 C_M^*}\right\}
  \end{equation*}
  is a compact subset of $GL^+(d)$.
  Hence, by the $C^2$-regularity of $\elpot$ it follows that
 \begin{align*}
    C_M \defas \sup_{F \in K} | \ZZZ \partial_{FF}^2 \EEE \elpot(F)| < \infty.
  \end{align*}
  With $\delta_M$ as above,  let us  define the set \BBB $G \defas \{x \in \Omega \colon  |\nabla y_2(x) - \nabla y_{1}(x)| \leq \delta_M \}$.  Moreover, we define $y_t = (1-t)y_1 + ty_2$ for $t \in [0,1]$. \EEE   By the previous reasoning, for every $x \in G$  it holds that $\nabla y_t(x) \in \BBB K \EEE$ and
  \begin{equation}\label{dist_pa_pc_interp}
  \begin{aligned}
    &\abs{\pl_F \elpot(\nabla y_t(x)) - \pl_F \elpot(\nabla y_1(x))} \leq C_M \abs{\nabla y_t(x) - \nabla y_1(x)} \le  C_M  \abs{\nabla y_2(x) - \nabla y_1(x)}.
  \end{aligned} 
  \end{equation}
  On the one hand,   by the Gateaux differentiability of $\mathcal{W}^{\rm el}$ (see \cite[Proposition 3.2]{tve_orig}) \BBB and the chain rule  we have that
  \begin{align*}
\int_G \elpot(\nabla y_2) \, \di x- \int_G \elpot(\nabla y_1) \, \di x & = \int_0^1  \int_G  \partial_F\elpot(\nabla y_t)  : \big( \nabla y_2 - \nabla y_1\big) \, \di x \ste \di t .
  \end{align*}\EEE
  On the other hand, using \eqref{dist_pa_pc_interp} we can estimate
  \begin{align*}
    &\Bigg|
       \int_0^1  \int_G  \partial_F\elpot(\nabla y_t)  : \big( \nabla y_2 - \nabla y_1\big) \, \di x \ste \di t \EEE
        - \int_0^1  \int_G  \partial_F\elpot(\nabla y_1)  : \big( \nabla y_2 - \nabla y_1\big)   \di x \ste \di t \EEE
      \Bigg| 
    \leq C_M \Vert \nabla y_2 - \nabla y_1\Vert_{L^2(\Omega)}^2.
  \end{align*}
  By the definition of $G$ we also see that, by possibly increasing $C_M$,  it holds
  \begin{align} \label{for explanation2}
  &  \Big|\int_{\Omega\setminus G} \elpot(\nabla y_2) \, \di x - \int_{\Omega \setminus G} \elpot(\nabla y_1) \di x   \BBB -  \int_{\Omega \setminus G}
      \pl_F \elpot(\nabla y_1) : \big( \nabla y_2 - \nabla y_1 \big) \EEE \di x\Big|
    \notag \\  & \quad \leq \int_{\Omega \setminus G} \frac{|\elpot(\nabla y_1)| + |\elpot(\nabla y_2)| \BBB +  \delta_M|\pl_F \elpot(\nabla y_1)| \EEE }{\delta_M^2} |\nabla y_2 - \nabla y_1|^2 \di x \leq C_M \Vert \nabla y_2 - \nabla y_1 \Vert_{L^2(\Omega)}^2,
  \end{align}
 \BBB where we used that $\elpot(\nabla y_1) $, $\elpot(\nabla y_2)$, and $|\pl_F \elpot(\nabla y_1)|$  are uniformly bounded by  \eqref{strain_det_bounds}.  The combination of the  \EEE last three estimates gives  the statement for $\elpot$.

\BBB The argument for $\cplpot$ is similar. However, the bounds in \eqref{dist_pa_pc_interp} and \eqref{for explanation2} do not follow immediately from the compactness of $K$ due to the presence of the temperature $\theta \in L^1(\Omega)$.  To obtain the analogous bounds, we use \ref{C_lipschitz}, the first inequality in \ref{C_bounds}, and    \eqref{strain_det_bounds}. \EEE    
\end{proof}

\BBB As a second auxiliary result, we establish a bound on the mechanical energy. \EEE 
  
\begin{lemma}[Mechanical energy bound in the
time-discrete setting]\label{lemma: mechenbound}

Given $M > 0$, there exists a constant $C_M>0$ such that the  following holds:   Suppose   that for $\tau \in (0,\tau_0)$ and  $k \in \setof{1, \, \ldots, \, T / \tau}$ the sequences $\yst 0, \, \ldots, \, \yst k$ and $\tst 0, \, \ldots, \, \tst k$ \BBB constructed in Subsection \ref{MMscheme} \EEE  exist, and that  $ \mathcal{G}^{(l)}\leq M$ for all $l \in \{0, \, \ldots, \, k-1\}$.  Then,   it holds that 
 \begin{align*} 
 & \mechen(\yst k)   + \frac{\rho \tau }{2h} \sum_{l=k - h/\tau + 1}^k \Vert \ddif \yst l \Vert_{L^2(\Omega)}^2 + \sum_{l = 1}^k  \tau \Big(   2\diss_\eps( \yst{l-1}, \ddif  \yst l, \tst{l-1})   +      \frac{\rho }{2h}  \Vert \ddif \yst l - \ddif \yst{l - h/\tau} \Vert_{L^2(\Omega)}^2    \Big)
     \\ &\leq
      \mechen(y_{0,\eps})     + \EEE  \frac{\rho}{2} \Vert \BBB y_{0,\eps}' \EEE \Vert_{L^2(\Omega)}^2 -     \tau \sum_{l=1}^k  \int_\Omega
      \pl_F \cplpot(\nabla \yst {l-1}, \tst{l-1}) : \ddif\nabla \yst l \di x
      + \tau \sum_{l = 1}^k ( \lst l, \ddif \yst l )_2    + C_M \tau V_k, 
  \end{align*} 
  \BBB where $\mathcal{R}_\eps$ is given in \eqref{Repps} and  $V_k$  in \eqref{def_Vk}. \EEE
\end{lemma}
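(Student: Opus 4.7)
The plan is to derive the claimed inequality by testing the Euler--Lagrange equation \eqref{mechanical_step_el} with the discrete velocity $z = \ddif \yst{l}$ (an admissible test function since $\yst{l} - \yst{l-1} \in H^{3}(\Omega;\R^{d}) \cap H^{1}_{0}(\Omega;\R^{d})$) and then summing over $l = 1, \ldots, k$. The prefactor~$2$ in front of $\diss_{\eps}$ on the left-hand side of the claim will arise from the homogeneity identity \eqref{diss_rate}, $\partial_{\dot{F}} R(\nabla \yst{l-1}, \ddif \nabla \yst{l}, \tst{l-1}) : \ddif \nabla \yst{l} = 2R(\nabla \yst{l-1}, \ddif \nabla \yst{l}, \tst{l-1})$, together with the obvious identity $\eps\, \ddif \nabla \Delta \yst{l} : \ddif \nabla \Delta \yst{l} = \eps |\ddif \nabla \Delta \yst{l}|^{2}$; in view of \eqref{Repps} these reproduce exactly $2\diss_{\eps}(\yst{l-1}, \ddif \yst{l}, \tst{l-1})$.

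To recover the mechanical energy $\mechen(\yst{k})$ after summation, I would apply the $\Lambda$-convexity of $\elen$ from Lemma~\ref{lemma_ lambda convex} (with $y_{1} = \yst{l}$, $y_{2} = \yst{l-1}$) to obtain
\begin{equation*}
  \int_{\Omega} \pl_{F} \elpot(\nabla \yst{l}) : \ddif \nabla \yst{l} \, \di x
  \ge \tfrac{1}{\tau}\bigl( \elen(\yst{l}) - \elen(\yst{l-1}) \bigr) - C_{M} \tau \,\Vert \ddif \nabla \yst{l}\Vert_{L^{2}(\Omega)}^{2},
\end{equation*}
together with the cleaner chain-rule inequality $\int_{\Omega} DH(\Delta \yst{l}) \cdot \ddif \Delta \yst{l} \,\di x \ge \tau^{-1}(\hyen(\yst{l}) - \hyen(\yst{l-1}))$ provided by the convexity of $\hypot$. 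Applicability of $\Lambda$-convexity requires a uniform bound on $\mechen(\yst{l})$ for every $l \le k$; for $l \le k-1$ this is built into the hypothesis $\mathcal{G}^{(l)} \le M$, while for $l = k$ it follows from the one-step estimate \eqref{bad_mechen_bound_2} of Proposition~\ref{prop:existence_mechanical_step}. The coupling contribution $\int \pl_{F}\cplpot(\nabla \yst{l}, \tst{l-1}) : \ddif \nabla \yst{l}$ coming from the Euler--Lagrange equation must be converted into the same expression evaluated at $\nabla \yst{l-1}$ as required in the claim; using the uniform bound on $\pl^{2}_{FF}\cplpot$ from \ref{C_bounds} this substitution costs only an error of order $C_{M}\tau \Vert \ddif \nabla \yst{l}\Vert_{L^{2}(\Omega)}^{2}$. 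Summed over $l$, all such errors assemble into the final term $C_{M}\tau V_{k}$.

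The main technical point is the treatment of the time-delayed inertial contribution $-\tfrac{\rho}{h}(\ddif \yst{l} - \ddif \yst{l-h/\tau}, \ddif \yst{l})_{2}$ on the right-hand side of \eqref{mechanical_step_el}. I would use the classical identity $2(a-b, a) = \abs{a}^{2} - \abs{b}^{2} + \abs{a-b}^{2}$ to rewrite it as
\begin{equation*}
  -\tfrac{\rho}{2h}\bigl( \Vert \ddif \yst{l}\Vert_{L^{2}(\Omega)}^{2} - \Vert \ddif \yst{l-h/\tau}\Vert_{L^{2}(\Omega)}^{2} + \Vert \ddif \yst{l} - \ddif \yst{l-h/\tau}\Vert_{L^{2}(\Omega)}^{2} \bigr).
\end{equation*}
The last term immediately produces the hyperbolic fluctuation $\frac{\rho}{2h}\Vert \ddif \yst{l} - \ddif \yst{l-h/\tau}\Vert_{L^{2}(\Omega)}^{2}$ appearing in the conclusion. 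The remaining two form a \emph{shifted} telescoping sum with step $h/\tau$: exploiting the prescribed initial extension $\yst{m} = y_{0,\eps} + m\tau\, y_{0,\eps}'$ for $m \in \{-h/\tau, \ldots, 0\}$, one has $\ddif \yst{m} = y_{0,\eps}'$ on this range, so the boundary contribution supplies precisely $\tfrac{\rho}{2}\Vert y_{0,\eps}'\Vert_{L^{2}(\Omega)}^{2}$ on the right, together with the averaged kinetic term $\tfrac{\rho\tau}{2h}\sum_{l=k-h/\tau+1}^{k} \Vert \ddif \yst{l}\Vert_{L^{2}(\Omega)}^{2}$ on the left. I expect the careful bookkeeping of this shifted telescoping sum to be the main obstacle, but once correctly identified the structure matches the claim exactly.
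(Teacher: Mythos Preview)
Your proposal is correct and follows essentially the same route as the paper: test \eqref{mechanical_step_el} with $\ddif\yst l$, use convexity of $H$ and $\Lambda$-convexity of $\elpot$ (Lemma~\ref{lemma_ lambda convex}) for the chain-rule inequalities, and handle the inertial term via the polarization identity and the shifted telescoping sum exactly as you describe. The only cosmetic difference is in the coupling term: the paper converts $\pl_F\cplpot(\nabla\yst{l},\tst{l-1})$ to $\pl_F\cplpot(\nabla\yst{l-1},\tst{l-1})$ by applying the $\Lambda$-convexity statement of Lemma~\ref{lemma_ lambda convex} for $\cplpot$ twice (once in each direction) and summing, whereas you invoke the uniform bound on $\partial^{2}_{FF}\cplpot$ from \ref{C_bounds} directly; both yield the same $C_M\tau V_k$ error, but note that a naive mean-value argument must still cope with the possibility that the segment between $\nabla\yst{l-1}$ and $\nabla\yst{l}$ leaves $GL^+(d)$, which is precisely the good-set/bad-set splitting already built into Lemma~\ref{lemma_ lambda convex}.
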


\begin{proof}
  Using Proposition \ref{prop:existence_mechanical_step} for $l$ in place of $k$, \eqref{diss_rate}, and testing \eqref{mechanical_step_el} with $z = \ddif \yst l$ it follows that
  \begin{align}
  \label{el_mech_test_step_l}
    0
    = &  \int_\Omega
      \pl_F \felpot(\nabla \yst l, \tst{l-1}) : \ddif \nabla \yst l
      +   D H \ee(\Delta \yst l) \cdot \ddif \Delta \yst l  \BBB + \eps  \delta_{\tau}  \nabla \Delta y^{(l)}_{\tau} :  \delta_{\tau}  \nabla \Delta y^{(l)}_{\tau}    \EEE \di x 
        \\
    & + \int_\Omega \drate(\nabla \yst{l-1}, \ddif \nabla \yst l, \tst{l-1}) \di x  - \int_\Omega \lst l \cdot  \ddif \yst l \di x \BBB + \frac{\rho}{h} \int_\Omega \big(\ddif \yst l - \ddif \yst{l-h/\tau} \big) \cdot  \ddif \yst l \di x.   \nonumber \EEE
   \end{align}
 By the convexity of $\hypot$ (see \ref{H_regularity}) it follows for  $l \in \{ 1, \ldots, k \}$  that
  \begin{align}\label{hypot_sum_lower_bound}
    \hyen(\Delta \yst{l-1})& \geq
    \hyen(\Delta \yst l)
      +  \int_\Omega  D H\ee(\Delta \yst l) \cdot (\Delta \yst{l-1} - \Delta \yst l)  \, \di x   =  \hyen(\Delta \yst l)
      \BBB - \EEE    \tau    
      \int_\Omega  D H \ee(\Delta  \yst l) \cdot \ddif \Delta \yst l \di x, \EEE
  \end{align}
  where we recall the notation in \eqref{hyperelastic}. \BBB 
   We now perform a similar argument for the elastic energy. \EEE  \ZZZ Using     \eqref{def_Cf},   \eqref{diff_F_E}--\eqref{Gl},   and $\mathcal{G}^{(l)}\leq M$ for $l \in \{ 0, \, \ldots, \,  k - 1 \}$, we get  
\begin{align}\label{usedlateragain}
 \mechen(\yst {l-1}) \le \mathcal{G}^{(l-1)} +  ( f((l-1) \tau), \yst{l-1} )_2 \le 2 \mathcal{G}^{(l-1)}  + C + C_T C_{f}^2 \le 2 M  + C + C_T C_{f}^2 
 \end{align}
  for all $l \in \{ 1, \, \ldots, \, k \}$. Thus, we can apply Proposition \ref{prop:existence_mechanical_step} for all $l \in \{ 1, \, \ldots, \, k \}$,  where now $C_M$ may also depend on $T$ and $f$. Then,  using again  $\mathcal{G}^{(l)}\leq M$ for $l \in \{ 0, \, \ldots, \,  k - 1 \}$, by \eqref{bad_mechen_bound_2} we find  for all $l \in \{ 1, \, \ldots, \, k \}$ that 
  \begin{equation}\label{eq: MMM}
    \mechen(\yst l)
    \leq C_M   \big(
        1
        +   \Vert f  \Vert_{L^2(I \times \Omega)}^2  \EEE 
    \big)  \ste+  \ZZZ   2\mathcal{G}^{(l-1)}   
    \leq 2 M + C_M(1 + C_T C_{f}^2).\EEE
  \end{equation}
  Consequently, by   \eqref{difff},  \ZZZ  \eqref{usedlateragain}--\eqref{eq: MMM}, and \EEE Lemma \ref{lemma_ lambda convex}  applied for $y_1 = \yst {l}$ and $y_2 = \yst {l-1}$ we find \EEE 
  \begin{equation}\label{elpot_sum_lower_bound}
    \tau  \int_\Omega
      \pl_F \elpot(\nabla \yst l) : \ddif\nabla \yst l \di x
    \geq \elen(\yst l) - \elen( \yst {l-1}) - C_M \tau^2 \int_\Omega \abs{\ddif \nabla \yst l}^2 \di x
  \end{equation} 
  \ZZZ for a possibly larger $C_M$, where we recall the definition in \eqref{purely_elastic}. \EEE 
 Multiplying \eqref{el_mech_test_step_l} by $\tau$, using \eqref{hypot_sum_lower_bound} and \eqref{elpot_sum_lower_bound},  and   summing over $l \in \{ 1, \ldots, k \}$,  we conclude   
 \begin{align} \label{preliaus}
 & \mechen(\yst k)   + \frac{\tau\rho}{h} \sum_{l=1}^k  \int_\Omega \big(   \ddif \yst l - \ddif \yst{l-h/\tau} \big) \cdot  \ddif \yst l \di x  \notag \\
 &\quad+   \sum_{l = 1}^k\tau \Big( \int_\Omega \drate(\nabla \yst{l-1}, \ddif \nabla \yst l, \tst{l-1}) \di x   +\eps  \Vert \delta_{\tau}  \nabla \Delta y^{(l)}_{\tau} \Vert^2_{L^2(\Omega)} \Big)
    \notag  \\ &\qquad\leq
      \mechen(y_{0,\eps}) 
       -     \tau \sum_{l=1}^k  \int_\Omega
      \pl_F \cplpot(\nabla \yst l, \tst{l-1}) : \ddif\nabla \yst l \di x
      + \tau \sum_{l = 1}^k ( \lst l, \ddif \yst l )_2    + C_M \tau V_k, 
  \end{align} 
  where we employed the definitions in \eqref{mechanical} and \eqref{def_Vk}. Using the identity \EEE
$$
\Pi_l \defas \int_\Omega \big(   \ddif \yst l - \ddif \yst{l-h/\tau} \big) \cdot  \ddif \yst l \di x
    = \frac{1}{2} (\Vert \ddif \yst l \Vert_{L^2(\Omega)}^2 - \Vert \ddif \yst{l - h/\tau} \Vert_{L^2(\Omega)}^2 + \Vert \ddif \yst l - \ddif \yst{l - h/\tau} \Vert_{L^2(\Omega)}^2),
$$
summing over $l \in \{ 1, \ldots, k \}$, and recalling \BBB the definition  $\yst m =  y_{0,\eps} + m \tau y_{0,\eps}'$ \EEE for $m \in \{-h/\tau, \ldots, 0\}$ we derive that
  \begin{align}\label{accelia}
   \BBB  \sum_{l=1}^k  2\tau      \Pi_l  & \BBB = \EEE  \sum_{l=1}^k  \tau \Vert \ddif \yst l - \ddif \yst{l - h/\tau} \Vert_{L^2(\Omega)}^2  +   \sum_{l=k - h/\tau + 1}^k \tau \Vert \ddif \yst l \Vert_{L^2(\Omega)}^2
      - \sum_{l=-h/\tau + 1}^0 \tau \Vert \ddif \yst l \Vert_{L^2(\Omega)}^2 
   \notag  \\ & =  \sum_{l=1}^k  \tau \Vert \ddif \yst l - \ddif \yst{l - h/\tau} \Vert_{L^2(\Omega)}^2 + \sum_{l=k - h/\tau + 1}^k \tau \Vert \ddif \yst l \Vert_{L^2(\Omega)}^2
      - h \Vert  \BBB y_{0,\eps}' \EEE \Vert_{L^2(\Omega)}^2.
  \end{align} \BBB  
 We apply  Lemma \ref{lemma_ lambda convex} first for $y_1 = \yst {l-1}$ and $y_2 = \yst l$,  then for  $y_1 = \yst {l}$ and $y_2 = \yst {l-1}$, and sum the equations to get
 $$ 0 \ge    \int_\Omega
      \Big(
        \pl_F \cplpot(\nabla \yst l, \tst{l-1})
        - \pl_F \cplpot(\nabla \yst {l-1}, \tst{l-1})
      \Big) : \big( \nabla \yst {l-1}   - \nabla \yst {l}    \big)   \di x
     - 2C_M \tau^2 \int_\Omega \abs{\ddif \nabla \yst l}^2 \di x. $$
 Rearranging and summing over $l \in \lbrace 1,\ldots, k\rbrace$ yields 
  \begin{equation}\label{cpldiff_bound}
    \tau \sum_{l = 1}^k \int_\Omega
      \Big(
        \pl_F \cplpot(\nabla \yst l, \tst{l-1})
        - \pl_F \cplpot(\nabla \yst {l-1}, \tst{l-1})
      \Big) : \ddif \nabla \yst l \di x
    \geq - 2C_M \tau V_k.
  \end{equation}
Eventually, recalling  \eqref{diss_rate}--\eqref{Repps}, the combination of  \eqref{preliaus}, \eqref{accelia}, and \eqref{cpldiff_bound}  concludes the proof.   
 \end{proof}

\BBB 
 
 We now proceed with the proofs of Lemma \ref{lem:inductive_toten_bound} and Lemma \ref{lem:Vk_bound}. \EEE
 
 \begin{proof}[Proof of Lemma \ref{lem:inductive_toten_bound}] 
\ZZZ Recalling the argument in \eqref{usedlateragain}, we observe that Proposition \ref{prop:existence_thermal_step} is applicable by passing to a larger value of $M$. We test \eqref{thermal_step_el}  (for $l$ in place of $k$) \EEE with $\vphi = 1$ to obtain
  \begin{align*}
    0
    = &  \int_\Omega
     \Big( \ddif \BBB w_\tau^{(l)} \EEE
      -\pl_F \cplpot(\nabla \yst {l-1}, \tst{l-1}) : \ddif \nabla \yst l
      - \drate(\nabla \yst{l-1}, \ddif \nabla\yst l, \tst{l-1}) 
    \di x   -  \eps |\delta_{\tau}   \nabla \Delta y^{(\tau)}_{k} |^{2} \wedge \tau^{-1} \Big) \di x \nonumber  \\ 
    & \quad    + \kappa \int_{\partial \Omega} (\tst l - \btst l) \di \haus^{d-1}. 
  \end{align*}
Multiplying  this equation by $\tau$, summing over $l \in \{1, \, \ldots, \, k\}$,  \BBB and adding  to  the estimate in Lemma \ref{lemma: mechenbound}, by  \eqref{diss_rate}--\eqref{Repps} \EEE  we discover that
%  \begin{equation}\label{test_all_steps}
%  \begin{aligned}
%    \int_\Omega \RRR w_{0,\eps} \EEE \di x - \int_\Omega \BBB w^k_\tau \EEE \di x
%    &\geq
%    \tau \sum_{l = 1}^k \Big(
%      \int_\Omega \pl_F \elpot(\nabla \yst l) : \ddif\nabla \yst l \di x
%      + \int_\Omega   D H \ee( \Delta \yst l) \cdot \ddif \Delta \yst l\di x
%    \Big) \\
%    &\phantom{\geq}\quad + \tau \sum_{l = 1}^k \int_\Omega
%      \Big(
%        \pl_F \cplpot(\nabla \yst l, \tst{l-1})
%        - \pl_F \cplpot(\nabla \yst {l-1}, \tst{l-1})
%      \Big)
%      : \ddif \nabla \yst l \di x \\
%    &\phantom{\geq}\quad - \sum_{l = 1}^k \Bigg(
%      \tau \kappa \int_{\partial \Omega} \btst l -  \tst l \di \haus^{d-1}
%      + \tau \langle \lst l, \ddif \yst l \rangle
%    \Bigg) \\
%    &\phantom{\geq}\quad + \frac{\rho}{2h} \sum_{l=k - h/\tau + 1}^k \tau \Vert \ddif \yst l \Vert_{L^2(\Omega)}^2
%      - \frac{\rho}{2} \Vert y_0' \Vert_{L^2(\Omega)}^2.
%  \end{aligned} 
%  \end{equation}
 \begin{align*} 
 & \mechen(\yst k)   + \frac{\rho\tau}{2h} \sum_{l=k - h/\tau + 1}^k  \Vert \ddif \yst l \Vert_{L^2(\Omega)}^2   +  \BBB  \int_\Omega w^{(k)}_\tau \EEE \di x
     \\ &\leq
      \mechen(y_{0,\eps})  +    \int_\Omega  w_{0,\eps}  \di x    + \EEE  \frac{\rho}{2} \Vert \BBB y_{0,\eps}' \EEE  \Vert_{L^2(\Omega)}^2  
      + \tau \sum_{l = 1}^k ( \lst l, \ddif \yst l )_2    + \tau \sum_{l = 1}^k \kappa \int_{\partial \Omega} (\btst l - \tst l) \di \haus^{d-1}   + C_M \tau V_k, 
  \end{align*} 
\BBB where $w_{0,\eps}  =  \inten(\nabla y_{0,\eps}, \theta_0)$. \EEE  Recalling the definition of   $\toten$ in \eqref{toten},   we conclude that
\begin{align}\label{test_all_steps_v2}
  &  \toten(\yst k, \tst k)   + \frac{\rho\tau }{2h} \sum_{l=k - h/\tau + 1}^k \Vert \ddif \yst l \Vert_{L^2(\Omega)}^2 \notag\\ 
    &\leq       \toten(y_{0,\eps}, \BBB \theta_{0} \EEE )   + \frac{\rho}{2} \Vert \BBB y_{0,\eps}' \EEE  \Vert_{L^2(\Omega)}^2   
    + C_M \tau V_k       + \tau \sum_{l = 1}^k ( \lst l, \ddif \yst l )_2
      + \tau \sum_{l = 1}^k \kappa \int_{\partial \Omega} (\btst l - \tst l) \di \haus^{d-1}  .
  \end{align}
\BBB Now, we \EEE estimate the last two terms on the right-hand side of (\ref{test_all_steps_v2}).
  By the nonnegativity of $\tst l$ and the definition of $\btst l$ we can bound 
  \begin{equation}\label{b_surface}
    \tau \sum_{l = 1}^k \kappa \int_{\partial \Omega}(\btst l - \tst l) \di \haus^{d-1}
      \leq  \ZZZ \tau \EEE \sum_{l = 1}^k \kappa \int_{\partial \Omega} \btst l \di \haus^{d-1}
    = \kappa \int_0^{k \tau} \int_{\partial \Omega} \theta_\flat \di \haus^{d-1} \di t.
  \end{equation}
 We define  the piecewise affine function $\hat y_\tau(t) = \frac{t - (l-1)\tau}{\tau}  y_\tau^{(l)} + \frac{l\tau - t}{\tau} y_\tau^{(l-1)}   $ for $t \in [(l-1)\tau, l\tau]$ and   $l \in \{ 1, \ldots, k \}$, and note that \EEE   $\ddif   \yst l  = \partial_t \hat y_\tau(t)$ for   $t \in ((l-1)\tau, l\tau)$.   Consequently, integration by parts yields
  \begin{align}\label{ibp_force_term}
    \sum_{l = 1}^k  \tau ( \lst l, \ddif \yst l )_2
    & = \int_0^{k\tau} ( f(t), \partial_t \hat y_\tau(t))_2 \di t
    =  ( f(k\tau), \hat y_\tau(k \tau) )_2
      - ( f (0), \BBB y_{0,\eps} \EEE )_2
      - \int_0^{k\tau} ( \partial_t f(t), \hat y_\tau(t) )_2 \di t \nonumber \\
    & \leq ( f(k\tau), \hat y_\tau(k \tau) )_2
      - ( f(0), \BBB  y_{0,\eps}\EEE )_2
      + \int_0^{k\tau} \BBB  \Vert \partial_t f(t) \Vert_{L^2(\Omega)} 
        \Vert \hat y_\tau(t) \Vert_{L^2 (\Omega)} \EEE \di t.
  \end{align}
  By Poincar\'e's inequality \EEE and \ref{W_lower_bound} we have for every $t \in ((l-1)\tau, l\tau)$ that
  \begin{equation*}
    \Vert \hat y_\tau(t) \Vert^2_{\BBB L^2 (\Omega)}
    \leq C (
      \Vert \nabla \yst{l-1} \Vert^2_{L^2(\Omega)} + \Vert \nabla \yst l \Vert^2_{L^2(\Omega)}
    )
    \leq C \big(1 + \elen(\yst{l-1}) + \elen(\yst l)\big).
  \end{equation*}
  Therefore, by \eqref{ell_bound},   \eqref{diff_F_E},    the definition of $\mathcal G^{(l)}$ in \eqref{Gl}, and $\sqrt{s} \leq 1 + s$ for all $s \geq 0$ we get
  \begin{align*}
    \int_0^{k \tau}
      \Vert \partial_t f(t) \Vert_{L^2 (\Omega)}
      \Vert \hat y_\tau(t) \Vert_{L^2 (\Omega)} \di t     &\leq C \sum_{l=1}^k \Big(
        1 + \elen(\yst{l-1}) + \elen(\yst l)
      \Big)
      \int_{(l-1) \tau}^{l \tau} \Vert \partial_t f(t) \Vert_{\BBB L^2 (\Omega)} \di t \\
    &\quad\leq C \sum_{l=1}^k \Big(
      \big(\mathcal{G}^{(l-1)} + \mathcal{G}^{(l)} \big)
      \int_{(l-1)\tau}^{l\tau} \Vert \partial_t f(t) \Vert_{\BBB L^2 (\Omega)} \di t
    \Big) +  C_T (C_{f} + C_{f}^3 ). 
  \end{align*}
  Then, using an index shift and   $C_{f} \leq \frac{2}{3} + \frac{1}{3}C_{f}^3$ \EEE we get
  \begin{align*}
    &\int_0^{k\tau} \hspace{-0.1cm} \Vert \partial_{t}{f}(t) \Vert_{L^2(\Omega)}
      \Vert \hat{y}_\tau(t)\Vert_{L^2(\Omega)}  \di t  \leq C \sum_{l=0}^k \Big(
      \mathcal{G}^{(l)} \int_{(l-1)\tau}^{l\tau}
        \big(
          \Vert \partial_{t}{f}(t) \Vert_{L^2 (\Omega)}
          + \Vert \partial_{t}{f}(t + \tau) \Vert_{L^2 (\Omega)}
        \big) \di t
    \Big) + C_T (1  + C_{f}^3)
  \end{align*}
  for a possibly larger $C_T>0$.
  Plugging this into \eqref{ibp_force_term} and using \eqref{b_surface} to estimate the terms on the right-hand side of (\ref{test_all_steps_v2}), \BBB we \EEE conclude the proof by the definition of  $\mathcal G^{(l)}$ in \eqref{Gl}\ee.
\end{proof}

\begin{proof}[Proof of Lemma \ref{lem:Vk_bound}]
 \BBB Applying  Lemma \ref{lemma: mechenbound}  and using the nonnegativity of $\mechen$ we get \EEE 
 \begin{align}\label{forstrain0}
 & \sum_{l = 1}^k  \tau \Big(  \int_\Omega \drate(\nabla \yst{l-1}, \ddif \nabla \yst l, \tst{l-1}) \di x  +     \eps   \| \delta_{\tau}  \nabla \Delta y^{(l)}_{\tau}   \|_{L^{2}(\Om)}^{2}  \Big)
     \\ &\leq
      \mechen(y_{0,\eps})     + \EEE  \frac{\rho}{2} \Vert y_{0,\eps} ' \Vert_{L^2(\Omega)}^2 -     \tau \sum_{l=1}^k  \int_\Omega
      \pl_F \cplpot(\nabla \yst {l-1}, \tst{l-1}) : \ddif\nabla \yst l \di x
      + \tau \sum_{l = 1}^k ( \lst l, \ddif \yst l )_2    + C_M \tau V_k. \notag
  \end{align} 
 \ZZZ As $   \mechen(\yst {l-1}) \le   2 M  + C + C_T C_{f}^2 $   for all $l \in \{ 1, \, \ldots, \, k \}$,    see the argument in \eqref{usedlateragain},  \BBB employing also Lemma \ref{lem:pos_det}  \EEE  we can apply \BBB the generalized Korn's inequality in the form \cite[Corollary 3.4]{tve_orig}, \EEE leading to
  \begin{equation}\label{forstrain1}
    \int_\Omega \xi(\nabla \yst{l-1}, \ddif \nabla \yst l, \tst{l-1}) \di x
    \geq \frac{1}{C_M} \Vert \ddif \nabla \yst l \Vert_{L^2(\Omega)}^2,
  \end{equation}
  \BBB for a constant $C_M$ depending on $M$, $T$, and $f$. \EEE   By H\"older's inequality, Poincar\'e's inequality, and Young's inequality with constant $\lambda \in (0, 1)$ we derive that 
  \begin{align}\label{forstrain2}
    |( f_\tau^{(l)}, \ddif \yst l )_2|
    \leq C \Vert f_\tau^{(l)} \Vert_{L^2(\Omega)} \Vert \ddif \nabla \yst l \Vert_{L^2(\Omega)}  
    &\leq \frac{C}{\lambda} \Vert f_\tau^{(l)} \Vert_{L^2 (\Omega)}^2
      + \lambda \Vert \ddif \nabla \yst l \Vert_{L^2(\Omega)}^2.
  \end{align}
  Choosing $\lambda < \frac{1}{2 C_M}$ above, \BBB summing over $l \in \lbrace 1,\ldots, k \rbrace$ in \eqref{forstrain1}--\eqref{forstrain2}, and plugging into  \eqref{forstrain0}, we find  \EEE 
  \begin{align}\label{oooj}
  \BBB   & \frac{1}{2C_M}V_k   +  \sum_{l = 1}^k  \tau     \eps   \| \delta_{\tau}  \nabla \Delta y^{(l)}_{\tau}   \|_{L^{2}(\Om)}^{2}     = \EEE  \frac{1}{2C_M} \sum_{l = 1}^k \tau \Vert \ddif \nabla \yst l \Vert_{L^2(\Omega)}^2  +  \sum_{l = 1}^k  \tau     \eps   \| \delta_{\tau}  \nabla \Delta y^{(l)}_{\tau}   \|_{L^{2}(\Om)}^{2}  
     \\    &   
    \leq \mechen(y_{0,\eps}  ) + \frac{\rho}{2} \Vert y_{0,\eps} ' \Vert_{L^2(\Omega)}^2 - \BBB     \tau \sum_{l=1}^k  \int_\Omega
      \pl_F \cplpot(\nabla \yst {l-1}, \tst{l-1}) : \ddif\nabla \yst l \di x  + C_M \tau V_k\EEE  + C_M C_T C_{f}^2, \notag
  \end{align}
  \BBB where we also used \eqref{ell_bound}. Since  $|\partial_F W^{\rm cpl}(F,\theta)|  \le 2C_0(1 + |F|) \le  2C_0(2 + |F|^2)$ for all $F \in GL^+(d)$ and $\theta>0$, see \cite[Lemma~3.4]{tve}, applying Young's inequality, \ref{W_lower_bound}, and \eqref{mechanical} we get for some $C_M^*>0$ that
\begin{align*}
\tau \sum_{l=1}^k  \int_\Omega
      \pl_F \cplpot(\nabla \yst {l-1}, \tst{l-1}) : \ddif\nabla \yst l \di x &\le \frac{1}{8C_M} \sum_{l = 1}^k \tau \Vert \ddif \nabla \yst l \Vert_{L^2(\Omega)}^2 +  C^*_M \tau \sum_{l = 1}^k \int_\Omega  (1+ |\nabla \yst {l-1}|^2) \, \di x \\
      & \le \frac{1}{8C_M} V_k  +  C^*_M \tau \sum_{l = 1}^k   \big(1+ \mechen(\yst {l-1})\big) \,.
      \end{align*}
 Plugging this into  \eqref{oooj}, and possibly decreasing $\tau_0$ such that $ C_M^2 \tau_0 \leq \frac{1}{8}$ holds true, we get 
$$ \frac{1}{4C_M}V_k +   \sum_{l = 1}^k  \tau     \eps   \| \delta_{\tau}  \nabla \Delta y^{(l)}_{\tau}   \|_{L^{2}(\Om)}^{2}   \le \mechen(y_{0,\eps} ) + \frac{\rho}{2} \Vert y_{0,\eps} ' \Vert_{L^2(\Omega)}^2   +   C_M \tau \sum_{l = 1}^k   \big(1+ \mechen(\yst {l-1})\big) + C_M C_T C_{f}^2
$$
for $C_M \ZZZ >0 \EEE$ sufficiently large.
  Multiplying both sides with $4C_M$ then leads to the desired estimate \eqref{Vk_bound}.
\end{proof}

\BBB 
 
\subsection{A priori bounds, compactness, and regularity} \label{sec: APB}

Given \EEE $\yst 0, \ldots,  \yst {T / \tau}$ and $\tst 0, \ldots, \tst{T / \tau}$ from Proposition \ref{prop:minmoves_existence}, we define the following interpolations:
for $k \in \setof{ \BBB -h/\tau, \EEE \ldots,   T \ee / \tau}$, let $\overline y_\tau(k\tau) = \underline y_\tau(k\tau) = \hat y_\tau(k\tau)  \defas \yst k$ and for $t \in ((k-1)\tau, k\tau)$ let
\begin{align}\label{interpolations}
  \overline{y}_\tau(t) &\defas \yst k, &
  \underline{y}_\tau(t) & \defas \yst{k-1}, &
  \hat{y}_\tau(t) &\defas \frac{k \tau - t}{\tau} \yst{k-1} + \frac{t - (k-1)\tau}{\tau} \yst k.
\end{align}
A similar notation is employed for $\overline{\theta}_\tau$, $\underline{\theta}_\tau$, and $\hat{\theta}_\tau$, \BBB for \EEE $\overline{w}_\tau$, $\underline{w}_\tau$, and $\hat{w}_\tau$, \BBB and for $\overline{f}_\tau$. \EEE  The next proposition lists several a priori bounds for the sequences of interpolations.

\begin{proposition}[A priori bounds \BBB and compactness\EEE]\label{prop:a_priori_bounds}
  Let $T, \, h, \, \eps > 0$ and $\tau_0$ be as in Proposition \ref{prop:minmoves_existence}.
  Then, there exists a constant $C$ only depending on $T$, \BBB $y_{0,\eps}$, $y_{0,\eps}'$,  $\theta_{0}$,   $f$, and $\theta_\flat$ \EEE such that for all $\tau \in (0, \tau_0)$ with $T /  h \ee \in \N$ and $h / \tau \in \N$ the following bounds hold true:
  \begin{subequations}
  \begin{align}
    \Vert \overline y_\tau \Vert_{L^\infty(\BBB I_h; \EEE  W^{2, p}(\Omega))}  
    + \Vert \det(\nabla \overline y_\tau)^{-1} \Vert_{L^\infty( \BBB I_h \EEE \times \Omega)}
         &\leq C \label{Linfty_W2p_def}, \\
    \Vert \hat y_\tau \Vert_{ H^1( \BBB I_h; \EEE H^1(\Omega))\ee}    + \BBB \sup_{t \in [0, T]} \EEE  \mint_{t - h}^t \Vert \partial_t \hat y_\tau(s) \Vert_{L^2(\Omega)}^2 \di s  \BBB  + \sqrt{\eps}   \| \partial_{t} \rb  \hat{y}_{\tau }\|_{L^2(  I_h;  H^3(\Omega))} \EEE
      &\leq C \label{H1_def}, \\
    \Vert \overline \theta_\tau \Vert_{L^\infty(I; L^1(\Omega))}
    + \Vert \overline w_\tau \Vert_{L^\infty(I; L^1(\Omega))} &\leq C \label{Linfty_L1_temp}.
  \end{align}
  Moreover, for each $q \in [1, \frac{d + 2}{d})$ and $r \in [1, \frac{d+2}{d+1})$ we can find constants $C_q>0$ and $C_r>0$ such that
  \begin{align}
    \Vert \overline \theta_\tau \Vert_{L^q(I \ee \times \Omega)}
    + \Vert \overline w_\tau \Vert_{L^q(I \ee \times \Omega)} \leq C_q \label{Lq_temp}, \\
    \Vert \nabla \overline \theta_\tau \Vert_{L^r(I \ee \times \Omega)}
    + \Vert \nabla \overline w_\tau \Vert_{L^r(I \ee \times \Omega)} \leq C_r. \label{W1r_temp}
      \end{align}
  \end{subequations}
  \BBB Moreover, there exist \EEE  $y_h \in\BBB C \EEE (I_h; \Yid) \cap  H^1( \ZZZ I_h; \EEE H^3(\Omega; \R^d))\ee$   and $\theta_h \in L^1(I; W^{1, 1}(\Omega))$ with \BBB   $y_h(t) = y_{0,\eps} + t y_{0,\eps}'$ for all $t \in [-h, 0]$ and \EEE $\theta \geq 0$ a.e.\ such that, up to taking subsequences (not relabeled), as $\tau \to 0$ it holds  that
  \begin{subequations}
  \begin{align}
    \hat y_\tau &\weakly y_h \quad\text{weakly in } H^1(\BBB I_h; \EEE H^3(\Omega;\R^d)), \label{def_H1_conv} \\
    \overline y_\tau &\to y_h \quad\text{strongly in } L^\infty(I_h; W^{1, \infty}(\Omega; \R^{d})) \BBB \ \ \text{ and strongly in } L^\infty(I_h; W^{2, p}(\Omega; \R^{d})), \EEE \label{def_Linfty_W1infty_conv}  \\
    \overline \theta_\tau &\rightharpoonup \theta_h \quad \text{ and } \quad
    \overline w_\tau \rightharpoonup w_h
    \quad\text{weakly in } L^r(I; W^{1,r}(\Omega)) \text{ for any } r \in [1, \tfrac{d+2}{d+1}), \label{temp_Lr_W1r_conv} \\
    \overline \theta_\tau &\to \theta_h \quad \text{ and } \quad
    \overline w_\tau \to w_h
    \quad\text{strongly in } L^s(I \times \Omega) \text{ for any } s \in [1, \tfrac{d+2}{d}), \label{temp_Ls_conv}
  \end{align}
  \end{subequations}
  where $w_h \defas \inten(\nabla y_h, \theta_h)$.    Note that the convergence \BBB in \EEE  \eqref{def_Linfty_W1infty_conv} also hold true for  $\underline y_\tau$ or $\hat y_\tau$ instead of $\overline y_\tau$.
  Moreover, the convergences in \eqref{temp_Lr_W1r_conv} and \eqref{temp_Ls_conv} remain true after replacing $\overline \theta_\tau$ ($\overline w_\tau$) with $\underline \theta_\tau$ ($\underline w_\tau$) or $\hat \theta_\tau$ ($\hat w_\tau$), respectively.
\end{proposition}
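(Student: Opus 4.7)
The plan is to transfer the discrete a priori bounds from Proposition \ref{prop:minmoves_existence} to the piecewise constant and piecewise affine interpolations defined in \eqref{interpolations}, and then to extract convergent subsequences by a combination of weak and Aubin--Lions--type compactness. First, the bound on the total energy in \eqref{mechen_apriori_bound} gives $\mechen(\yst k) \le M$ uniformly in $k$ and $\tau$. Invoking Lemma~\ref{lem:pos_det} yields, for all $k$ and $t$, the uniform estimate $\Vert \yst k\Vert_{W^{2,p}(\Omega)}\le C_M$ and $\det(\nabla\yst k)\ge 1/C_M$, and this transfers directly to $\overline y_\tau$, $\underline y_\tau$, $\hat y_\tau$, giving \eqref{Linfty_W2p_def}. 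The bound \eqref{velocity_apriori_bound} combined with the identity $\partial_t \hat y_\tau(t)=\ddif\yst k$ on $((k-1)\tau,k\tau)$ yields $\Vert\partial_t\hat y_\tau\Vert_{L^2(I_h;H^1(\Omega))}\le C$ and $\sqrt\eps\,\Vert\partial_t\hat y_\tau\Vert_{L^2(I_h;H^3(\Omega))}\le C$ (after elliptic regularity for $\Delta$ on $\Omega$ with the imposed Dirichlet trace, as in the proof of Proposition~\ref{prop:existence_mechanical_step}). The averaged kinetic-energy term $\mint_{t-h}^{t}\Vert\partial_t\hat y_\tau(s)\Vert_{L^2}^2\di s$ is exactly the sum appearing in \eqref{mechen_apriori_bound} after division by $h$ and is therefore bounded uniformly, completing \eqref{H1_def}.

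The estimate \eqref{Linfty_L1_temp} is immediate from \eqref{inten_lipschitz_bounds}: since $\toten(\yst k,\tst k)\le M$ and $\mechen\ge 0$, we have $\int_\Omega \wst k\di x \le M$ and $\int_\Omega \tst k\di x\le \aC M$. The improved integrability \eqref{Lq_temp} and the gradient bound \eqref{W1r_temp} are genuinely nontrivial and are obtained by the Boccardo--Gallou\"et approach for parabolic equations with $L^1$ right-hand side, applied to the discrete heat equation \eqref{thermal_step_el}. The argument is essentially the same as in \cite[Proposition 4.2]{tve} (see also \cite{tve_orig, BoccardoGallouet89Nonlinear}): one tests \eqref{thermal_step_el} with a truncation $T_\delta(\overline\theta_\tau)$ and uses the uniform control of the dissipation rate $\drate$ in $L^1$ (from \eqref{velocity_apriori_bound} and \ref{D_bounds}), the control of $\pl_F\cplpot:\pl_t\nabla y$ in $L^1$ (via Young's inequality and \ref{C_bounds}), the control of the $\eps$-regularization after truncation at $\tau^{-1}$, and the boundary term $\kappa\bt$. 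This yields the subcritical integrability exponents $q<(d+2)/d$ and $r<(d+2)/(d+1)$ uniformly in $\tau$, $h$, and $\eps$, and the same bounds for $w_\tau$ follow from \eqref{inten_lipschitz_bounds} and the chain rule bounding $\nabla w_\tau$ via $\nabla\theta_\tau$ and $\nabla y_\tau$. The key observation (standard in this context) is that all right-hand sides in the discrete heat equation are bounded in $L^1$ independently of $\tau$, $h$, $\eps$, which is exactly what Proposition~\ref{prop:minmoves_existence} guarantees.

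With the uniform bounds in hand, the compactness assertions follow along familiar lines. The weak convergence \eqref{def_H1_conv} is a consequence of \eqref{H1_def} after restricting to $I_h$. For the strong convergence \eqref{def_Linfty_W1infty_conv}, we use that $\hat y_\tau$ is bounded in $L^\infty(I_h;W^{2,p}(\Omega;\R^d))$ with time derivative in $L^2(I_h;H^1(\Omega;\R^d))$; the Aubin--Lions lemma combined with the compact embedding $W^{2,p}(\Omega)\subset\subset W^{1,\infty}(\Omega)$ (valid since $p>d$) yields strong convergence of $\hat y_\tau$ in $C(I_h;W^{1,\infty})$, and the standard estimate $\Vert \overline y_\tau - \hat y_\tau\Vert_{L^\infty(I_h;H^1)}\le \tau\Vert\partial_t\hat y_\tau\Vert_{L^2(I_h;H^1)}\cdot\tau^{1/2}\to 0$ transfers the convergence to $\overline y_\tau$ and $\underline y_\tau$; the second half of \eqref{def_Linfty_W1infty_conv} is then obtained by interpolation between the uniform $L^\infty(I_h;W^{2,p})$ bound and the strong $W^{1,\infty}$ convergence. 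The weak convergence \eqref{temp_Lr_W1r_conv} is immediate from \eqref{W1r_temp} and weak compactness in the reflexive space $L^r(I;W^{1,r}(\Omega))$. The main difficulty is the strong convergence \eqref{temp_Ls_conv}: here one needs a temporal estimate of $\overline\theta_\tau$ (or $\overline w_\tau$), which is extracted by comparison in \eqref{thermal_step_el} to get $\partial_t\hat w_\tau$ bounded in the dual of some suitable space (e.g.\ $L^{r'}(I;W^{1,r'}(\Omega))^*$ after truncation), then Aubin--Lions combined with \eqref{Lq_temp} yields relative compactness in $L^s(I\times\Omega)$ for $s<(d+2)/d$. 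The identification $w_h=\inten(\nabla y_h,\theta_h)$ follows pointwise a.e.\ by continuity of $\inten$ and the strong convergences of $\nabla\overline y_\tau$ and $\overline\theta_\tau$. The hardest technical step is the construction of the temporal translate estimate for $\overline w_\tau$ in the presence of the $\eps$-regularization; this is done exactly as in \cite[Proposition~4.2]{tve}, exploiting that the additional term $\eps|\ddif\nabla\Delta\yst k|^2\wedge\tau^{-1}$ is nonnegative and uniformly bounded in $L^1$ by \eqref{velocity_apriori_bound}.
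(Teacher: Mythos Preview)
Your proof sketch follows essentially the same route as the paper, and the bounds \eqref{Linfty_W2p_def}--\eqref{Linfty_L1_temp}, the Boccardo--Gallou\"et argument for \eqref{Lq_temp}--\eqref{W1r_temp}, and the compactness statements \eqref{def_H1_conv}, \eqref{temp_Lr_W1r_conv}, \eqref{temp_Ls_conv} are handled correctly. (The paper phrases the temperature-gradient estimate via the weighted $L^2$-bound
\[
\int_I\int_\Omega \frac{\eta}{(1+\overline w_\tau)^{1+\eta}}|\nabla\overline w_\tau|^2\,\di x\,\di t\le C,
\]
citing \cite[Lemma~3.19, Theorem~3.20]{tve} rather than \cite[Proposition~4.2]{tve}, but this is the same Boccardo--Gallou\"et mechanism you describe, driven only by the uniform $L^1$-bound on the dissipation including the $\eps$-term.)

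There is, however, one genuine gap. Your argument for the \emph{second} convergence in \eqref{def_Linfty_W1infty_conv}, namely strong convergence in $L^\infty(I_h;W^{2,p}(\Omega;\R^d))$, does not work as stated: interpolation between a uniform $L^\infty(I_h;W^{2,p})$ bound and strong $L^\infty(I_h;W^{1,\infty})$ convergence only yields strong convergence in spaces strictly below $W^{2,p}$ (e.g.\ $W^{2-\delta,p}$ or $W^{2,q}$ with $q<p$), not in $W^{2,p}$ itself. A simple obstruction is a sequence of the form $n^{-2}\sin(n\,\cdot)$, which is bounded in $W^{2,p}$, converges to $0$ in $W^{1,\infty}$, but whose second derivatives oscillate without converging strongly. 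The paper instead uses the $\eps$-dependent bound $\sqrt{\eps}\,\|\partial_t\hat y_\tau\|_{L^2(I_h;H^3(\Omega))}\le C$---which you have already derived---together with the compact embedding $H^3(\Omega)\subset\subset W^{2,p}(\Omega)$ (valid precisely because $p<2^*$), and applies Aubin--Lions directly with $X=H^3$, $B=W^{2,p}$. Since $\eps>0$ is fixed at this stage of the argument, the $\eps$-dependence of the $H^3$-bound is harmless.
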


\begin{proof}
\BBB Except for  the estimate on $\| \partial_{t} \rb  \hat{y}_{\tau }\|_{L^2(  I_h;  H^3(\Omega))}$, \EEE
  the bounds \eqref{Linfty_W2p_def}--\eqref{Linfty_L1_temp} are a direct consequence of the a priori bounds \eqref{mechen_apriori_bound}--\eqref{velocity_apriori_bound} from Proposition~\ref{prop:minmoves_existence}, \BBB Lemma \ref{lem:pos_det}, \eqref{inten_lipschitz_bounds}, \EEE and  our definition of the different interpolations in time, \BBB where we particularly use that $ \hat{y}_{\tau}(t) = y_{0,\eps} + t y_{0,\eps}'$ for $t \in (-h,0)$. \EEE
\BBB The remaining estimate in \eqref{H1_def} is based on the bound 
\begin{align}\label{into1} 
    \| \partial_{t}   \nabla  \hat{y}_{\tau }\|_{L^2(I_h \times \Omega)} + \sqrt{\eps}   \| \partial_{t} \rb \nabla \Delta \hat{y}_{\tau }\|_{L^2(I_h \times \Omega)}  \EEE \le C 
\end{align}
  provided by \eqref{velocity_apriori_bound}.  Elliptic regularity   for the operator $\Delta$ and the fact that $ \partial_{t} \hat{y}_{\tau } = 0$ on $I_h\times \partial \Omega $   imply 
\begin{align}\label{into2} 
  \Vert \partial_{t} \hat{y}_{\tau } \Vert_{L^2(I_h; H^3(\Omega))} \le C   \Vert \partial_{t} \Delta \hat{y}_{\tau } \Vert_{L^2(I_h; H^1(\Omega))}.
    \end{align}   
  Eventually, we use the interpolation inequality $\Vert \Delta v \Vert_{L^2(\Omega)}  \le C \Vert   \nabla  \Delta v \Vert_{L^2(\Omega)} + C  \Vert   \nabla   v \Vert_{L^2(\Omega)} $ for all $v \in H^3(\Omega;\R^d)$ which can be shown by the standard contradiction-compactness argument. This along with \eqref{into1}--\eqref{into2} indeed yields the remaining bound in \eqref{H1_def}.  \EEE

  The proof of \eqref{Lq_temp}--\eqref{W1r_temp} relies on a proof of a weighted $L^2$-bound on the temperature gradient, \BBB namely \EEE
  \begin{equation}\label{weighted_L2_temp}
    \int_0^T \int_\Omega
      \frac{\eta}{(1 + \overline w_\tau)^{1 + \eta}} |\nabla \overline w_\tau|^2
    \di x \di t \leq C
  \end{equation}
  for any $\eta \in (0, 1)$, where $C$ is a constant only depending on $T$.
  We refer to e.g.~\cite[Theorem 3.20]{tve} for further details.
  It \BBB thus \EEE remains to show \eqref{weighted_L2_temp}. \BBB
As noticed in the proof of Proposition \ref{prop:existence_thermal_step},  our thermal step coincides with the one from \cite{tve} up to adding the term $\eps |\delta_{\tau}   \nabla \Delta y^{(k)}_{\tau} |^{2} \wedge \tau^{-1}$ to the dissipation $ \drate(\nabla \yst{k-1}, \ddif \nabla \yst k, \tst{k-1})$. \EEE The proof of the weighted $L^2$-bound \eqref{weighted_L2_temp} in \cite[Lemma 3.19]{tve} relies only on a uniform $L^1$-bound on  \BBB the dissipation.   \EEE  In view of  \eqref{H1_def}, a uniform  $L^1$-bound (in $\tau$, $h$, \BBB and $\eps$) \EEE is still  available in the current setting and the arguments in   \cite[Lemma 3.19]{tve} also apply here.

\BBB The a priori bounds along with a diagonal sequence argument show \eqref{def_H1_conv}--\eqref{temp_Ls_conv}. More precisely, \EEE     the convergence \eqref{def_H1_conv} is a straightforward consequence of the a priori \BBB bound \eqref{H1_def} and \EEE Banach's selection principle. \BBB The convergences in \eqref{def_Linfty_W1infty_conv} follow from the embeddings $ H^3(\Omega; \R^d) \subset \subset  W^{2, p}(\Om; \R^{d}) \subset \subset W^{1, \infty}(\Om; \R^{d})$ (recall that $p \in (3, 6)$ for $d=3$), and \eqref{H1_def} along with the  Aubin-Lions' lemma.  \ZZZ Next, \eqref{temp_Lr_W1r_conv} follows from \eqref{Lq_temp}--\eqref{W1r_temp}. Eventually, \EEE    the convergence in \eqref{temp_Ls_conv} \BBB and the identification  $w_h = \inten(\nabla y_h, \theta_h)$ \EEE can be shown using the bounds \eqref{Lq_temp}--\eqref{W1r_temp}, the Aubin-Lions' lemma, and interpolation with the bound in \eqref{Linfty_L1_temp}.
  For further details we refer, e.g., to \cite[Lemma 4.2]{tve}.
\end{proof}
\BBB
 
We proceed with further regularity results for $\hat{y}_\tau$ which hinge on a special case of elliptic regularity, see  Lemma \ref{lem:ellip_reg_spec} in the appendix. This will allow us to prove better regularity for the temperature variable, see Corollary \ref{prop:a_priori_bounds2} below, and it will also be instrumental for the passage $\eps \to 0$ in Section \ref{sec: epstozero}.
 
\begin{proposition}[Higher regularity of the deformation]\label{cor:higher_reg_y}
 Let $T, \, h, \, \eps > 0$ and $\tau \in (0,\tau_0)$ be as in Proposition~\ref{prop:minmoves_existence}. 
  Then, $\hat{y}_\tau \in L^2( I;    H^4(\Omega; \R^d))$, $\partial_t\hat{y}_\tau \in L^2(I;   H^5(\Omega; \R^d))$,  and for each $t \in  I $ the time derivative   $\partial_t \hat{y}_\tau$ satisfies the   boundary conditions  
  \begin{equation}\label{higher_bdry_cond_y}
    \partial_\nu \Delta  \partial_t \hat{y}_\tau(t) = \Delta \partial_t \hat{y}_\tau(t) = \Deltatwo \partial_t \hat{y}_\tau(t) = 0 \qquad \text{$\haus^{d-1}$-a.e.~in } \partial \Omega.
  \end{equation}
\end{proposition}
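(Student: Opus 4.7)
My plan is to argue inductively over the time steps, exploiting that the $\eps$-regularization in \eqref{mechanical_step} turns the Euler--Lagrange equation \eqref{mechanical_step_el} into a sixth-order elliptic boundary-value problem for the increment $v \defas \ddif y_\tau^{(k)}$. The base case is covered by the standing assumptions $y_{0,\eps} \in \Yidreg$ and $y_{0,\eps}' \in H^{3}(\Om; \R^{d}) \cap H^1_0(\Omega; \R^d)$: for $k \leq 0$ the discrete solution is $y_\tau^{(k)} = y_{0,\eps} + k\tau y_{0,\eps}'$, which lies in $H^4(\Omega;\R^d)$ with the boundary traces $\Delta y_\tau^{(k)} = \partial_\nu \Delta y_\tau^{(k)} = 0$ on $\partial\Omega$, while $v = y_{0,\eps}'$ trivially satisfies $v = 0$ on $\partial\Omega$.

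For the inductive step, assume $y_\tau^{(k-1)} \in H^4(\Omega;\R^d)$ with $\Delta y_\tau^{(k-1)} = \partial_\nu \Delta y_\tau^{(k-1)} = 0$ on $\partial\Omega$. Integrating by parts in \eqref{mechanical_step_el} against test functions $z \in H^3(\Omega;\R^d) \cap H^1_0(\Omega;\R^d)$ and using that the traces $\partial_\nu z$ and $\Delta z$ on $\partial\Omega$ are free, I identify $v$ as the solution of the boundary-value problem
\begin{equation*}
    \eps\,\Delta^3 v = F_\tau^{(k)} \ \text{ in } \Omega, \qquad v = 0,\ \partial_\nu \Delta v = 0,\ \Deltatwo v = 0 \ \text{ on } \partial\Omega,
\end{equation*}
where
\begin{equation*}
    F_\tau^{(k)} \defas f_\tau^{(k)} - \tfrac{\rho}{h}\bigl(\ddif y_\tau^{(k)} - \ddif y_\tau^{(k - h/\tau)}\bigr) + \diver\!\bigl(\pl_F \felpot(\nabla y_\tau^{(k)}, \tst{k-1}) + \pl_{\dot F} \disspot(\nabla y_\tau^{(k-1)}, \nabla v, \tst{k-1})\bigr) - \Delta\!\bigl(DH(\Delta y_\tau^{(k)})\bigr).
\end{equation*}
The essential condition $v = 0$ on $\partial\Omega$ is inherited from the Dirichlet data of $y_\tau^{(k)}$ and $y_\tau^{(k-1)}$, while $\partial_\nu \Delta v = 0$ and $\Deltatwo v = 0$ are the natural boundary conditions emerging from the $\eps$-regularization.

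Using the inductive hypothesis, the $W^{2,p}$-bound and the pointwise positivity of $\det(\nabla y)$ from Lemma \ref{lem:pos_det}, the $C^2$-regularity of $\felpot$ and $\disspot$ together with the $p$-growth of $DH$, each summand of $F_\tau^{(k)}$ can be controlled in the appropriate negative Sobolev space. Iterating the special elliptic regularity result of Lemma \ref{lem:ellip_reg_spec} for $\eps\,\Delta^3$ with the three homogeneous boundary conditions above---in practice by factoring $\Delta^3 = \Delta \circ \Delta^2$ and bootstrapping---then yields $v \in H^5(\Omega;\R^d)$.

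To close the induction I still have to propagate the boundary condition $\Delta v = 0$ on $\partial\Omega$. This is extracted from the natural boundary condition $DH(\Delta y_\tau^{(k)}) = 0$ on $\partial\Omega$ associated with the strain-gradient term $\int_\Omega DH(\Delta y_\tau^{(k)}) \cdot \Delta z \, \di x$; since $DH(\xi)$ vanishes only at $\xi = 0$ (by the formula $DH(\xi) = \max\{2, p|\xi|^{p-2}\}\xi$), this forces $\Delta y_\tau^{(k)} = 0$ on $\partial\Omega$, and together with the inductive $\Delta y_\tau^{(k-1)} = 0$ yields $\Delta v = 0$ on $\partial\Omega$ and $y_\tau^{(k)} = y_\tau^{(k-1)} + \tau v \in H^4(\Omega;\R^d)$ with the required traces. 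The pointwise boundary conditions \eqref{higher_bdry_cond_y} for $\partial_t \hat y_\tau$ and the integrability $\hat y_\tau \in L^2(I; H^4(\Omega;\R^d))$, $\partial_t \hat y_\tau \in L^2(I; H^5(\Omega;\R^d))$ then follow from the piecewise-constant nature of $\partial_t \hat y_\tau$ on the time grid. I expect the main technical obstacle to be the negative-Sobolev estimate for the nonlinear term $\Delta(DH(\Delta y_\tau^{(k)}))$, which couples the $p$-growth of $D^2 H$ with the $W^{2,p}$-regularity of $y_\tau^{(k)}$ via the chain rule; once this is settled, the sixth-order regularizing operator dominates and the elliptic bootstrap is routine.
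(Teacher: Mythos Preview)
Your plan shares the paper's two essential ingredients: reading \eqref{mechanical_step_el} as an equation of the form $\int_\Omega \nabla\Delta v:\nabla\Delta z = \langle g, z\rangle$ and applying Lemma~\ref{lem:ellip_reg_spec}, and extracting the Dirichlet trace $\Delta y_\tau^{(k)}=0$ on $\partial\Omega$ from the natural condition $DH(\Delta y_\tau^{(k)})=0$ via the injectivity of $DH$ at the origin. The organizational difference is that you induct over time steps, whereas the paper treats all $t$ at once and instead bootstraps over the \emph{integrability exponent} of $\Delta\overline y_\tau$: starting from $\Delta\overline y_\tau\in L^p$ one has $DH(\Delta\overline y_\tau)\in L^{p/(p-1)}$, so Lemma~\ref{lem:ellip_reg_spec}\ref{item:ellip_reg_H4} yields $\partial_t\hat y_\tau\in W^{4,p/(p-1)}$, Sobolev embedding improves $\Delta\overline y_\tau$, and after finitely many iterations one reaches $H^4$; only then does part~\ref{item:ellip_reg_H5} apply, delivering $H^5$ and the condition $\Delta^2\partial_t\hat y_\tau=0$.

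Your time-step induction does not actually feed into the hard term: $\Delta\bigl(DH(\Delta y_\tau^{(k)})\bigr)$ depends on $y_\tau^{(k)}$, not on $y_\tau^{(k-1)}$, so the inductive hypothesis on $y_\tau^{(k-1)}\in H^4$ is a red herring there, and one still needs the integrability bootstrap at each fixed $k$. The paper's organization makes the logical dependence between the two parts of Lemma~\ref{lem:ellip_reg_spec} transparent: the natural condition $\Delta^2 v=0$ requires $g\in H^{-1}$, which needs $\nabla(DH(\Delta\overline y_\tau))\in L^2$, available only \emph{after} $H^4$ regularity (hence $\Delta\overline y_\tau\in L^\infty$) is established---so the three boundary conditions do not emerge simultaneously, contrary to what your write-up suggests. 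Once you insert the integrability bootstrap and respect this order, your outline converges to the paper's Step~2; the induction over $k$ can then be dropped without loss.
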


Note that $\hat{y}_\tau$ only has $H^4$-regularity in space due to the regularity of the initial condition $y_{0,\eps}$, see \eqref{eq: yidre}. The next lemma provides some useful bounds on $\Delta \hat{y}_{\tau}$ and on $\Delta^{2} \hat{y}_{\tau}$, which will be particularly crucial for passing to the limit  $\eps \to 0$ in Section~\ref{s:vanishing-regularisation}. \BBB Define
\begin{align}\label{r-def}
 \varrho =  \frac{2p}{4p - (p-2)d }
 \end{align}
and note that $\varrho \in (\frac{1}{2},1)$ since $p >2$ for $d=2$ and $p \in (3,6)$ for $d=3$.   \EEE

\begin{lemma}[Bounds from regularity]
\label{l:laplace-regularity} 
 \BBB  Let $T, \, h, \, \eps > 0$. Then, for $\tau_0$ sufficiently small depending on $\eps$ and $h$ such that Proposition \ref{prop:minmoves_existence} is applicable  and for    $\tau \in (0,\tau_0)$,  there exists a constant $C>0$ only depending on $T$,  $y_{0,\eps}$, $y_{0,\eps}'$, $\theta_{0}$,   $f$, and $\theta_\flat$ such that 
\begin{align}
\label{e:lap-reg-mu}
  \| \Delta \overline{y}_{\tau}  \|_{L^{2} (I; H^{1}(\Om))}  + \|   |\Delta \overline{y}_{\tau}|^{\frac{p-2}{2}} \nabla(\Delta \overline{y}_{\tau})  \|_{L^{2} (I \times \Omega)}   & \leq C  \big( 1 + \sqrt{\eps } \| y_{0,\varepsilon}\|_{H^{4}(\Om)} + \| y'_{0, \varepsilon}\|_{H^{1}( \Om)} \big), 
 \\
 \label{e:lap-reg-mu1.7}
 \|  \,  \Deltatwo  \hat{y}_{\tau}  \|_{L^{2}(I \times \Omega)} 
& \leq C  \eps^{-1/2} \big( 1 +   \sqrt{\varepsilon} \| y_{0, \eps}\|_{H^{4}(\Om)}  + \| y'_{0, \eps}\|_{H^{1}(\Om)} \ee \big) ,
  \\
\label{e:lap-reg-mu1.5}
\|  \nabla {D H}(\Delta \overline{y}_\tau)\|_{L^2(I;L^{p'}(\Omega))} & 
 \leq C \big( 1 +   \sqrt{\varepsilon} \| y_{0, \eps}\|_{H^{4}(\Om)}  + \| y'_{0, \eps}\|_{H^{1}(\Om)} \ee \big)  , 
  \\
\label{e:lap-reg-mu2}
 \|  \,  \Delta \partial_{t} \hat{y}_{\tau}  \|_{L^{2}(I;H^2(\Omega))} 
& \leq C  \varepsilon^{-\frac{1+\varrho}{2}}  \big( 1 +   \sqrt[4]{\varepsilon} \| y_{0, \eps}\|_{H^{4}(\Om)}  + \ZZZ \| y'_{0, \eps}\|_{H^{1}(\Om)}   \EEE \big).
\end{align}
\end{lemma}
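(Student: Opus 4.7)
The plan is to test the Euler--Lagrange equation~\eqref{mechanical_step_el} with $z = -\Delta y_\tau^{(k)}$ and then sum over $k$, realizing the heuristic elliptic argument of the introduction in the discrete-in-time setting. A preliminary regularity step is needed: by induction in $k$, using that $y_{0,\eps} \in \Yidreg$ and iterating elliptic regularity for~\eqref{mechanical_step_el}, one verifies that $y_\tau^{(k)} \in H^5(\Om; \R^d) \cap \Yidreg$ with the additional natural boundary condition $\Deltatwo y_\tau^{(k)} = 0$ on $\partial\Om$. These boundary conditions, which stem from the free variation in~\eqref{mechanical_step}, simultaneously make $z = -\Delta y_\tau^{(k)} \in H^3(\Om; \R^d) \cap H^1_0(\Om; \R^d)$ an admissible test function and ensure that every integration by parts carried out below is free of boundary contributions.

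For~\eqref{e:lap-reg-mu}, the term coming from $H$ reads
\begin{equation*}
  \int_\Om DH(\Delta y_\tau^{(k)}) \cdot \Delta z \,\di x
    = \int_\Om \nabla DH(\Delta y_\tau^{(k)}) : \nabla \Delta y_\tau^{(k)} \,\di x
    \ge c \int_\Om \max\{2, p\,|\Delta y_\tau^{(k)}|^{p-2}\}\,|\nabla \Delta y_\tau^{(k)}|^2 \,\di x
\end{equation*}
by the explicit form~\eqref{e:psi} of $\hypotsclr$, which simultaneously yields the $L^2$-control on $\nabla \Delta y_\tau^{(k)}$ and the weighted estimate on the left-hand side of~\eqref{e:lap-reg-mu}. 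Two integrations by parts in the $\eps$-term (using $\Delta y_\tau^{(k)} = \partial_\nu \Delta y_\tau^{(k)} = 0$ on $\partial\Om$) reduce it to $\eps \int_\Om \ddif \Deltatwo y_\tau^{(k)} \cdot \Deltatwo y_\tau^{(k)} \,\di x$, and the elementary identity $\tau \ddif a_k \cdot a_k = \tfrac12(|a_k|^2 - |a_{k-1}|^2 + |a_k - a_{k-1}|^2)$ telescopes its sum over $k$ into $\tfrac{\eps}{2}\|\Deltatwo y_\tau^{(K)}\|_{L^2(\Om)}^2 - \tfrac{\eps}{2}\|\Deltatwo y_{0,\eps}\|_{L^2(\Om)}^2$ plus a nonnegative remainder; the initial term is exactly what produces the $\sqrt{\eps}\|y_{0,\eps}\|_{H^4(\Om)}$ contribution on the right-hand side of~\eqref{e:lap-reg-mu}. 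The elastic, coupling, dissipative, and forcing terms all give integrals of the form $\int_\Om (\ldots):\nabla \Delta y_\tau^{(k)} \,\di x$, which are absorbed by Young's inequality into the good $\|\nabla \Delta y_\tau^{(k)}\|_{L^2(\Om)}^2$-term thanks to the $L^\infty$-bound on $\nabla y_\tau^{(k)}$ from~\eqref{Linfty_W2p_def}, assumption~\ref{D_bounds}, and the summability of $\ddif \nabla y_\tau^{(k)}$ from~\eqref{velocity_apriori_bound}.

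The main obstacle is the inertial contribution $-\tfrac{\rho}{h} \int_\Om (\ddif y_\tau^{(k)} - \ddif y_\tau^{(k-h/\tau)}) \cdot z \,\di x$, for which a direct estimate yields a prohibitive $1/h$ factor. Setting $m \defas h/\tau$, I would dispose of this obstacle by a discrete summation by parts in $k$, which after reindexing rewrites the sum as $\sum_k \tfrac{\rho \tau}{h} \int_\Om \ddif y_\tau^{(k)} \cdot (\Delta y_\tau^{(k+m)} - \Delta y_\tau^{(k)}) \,\di x$ plus boundary contributions for $k \le 0$ (where $\ddif y_\tau^{(k)} = y_{0,\eps}'$, producing the $\|y_{0,\eps}'\|_{H^1(\Om)}$ term in~\eqref{e:lap-reg-mu}) and for $k$ within a window of size $m$ of $K$ (controlled via the averaged kinetic-energy bound of~\eqref{mechen_apriori_bound} combined with the $L^\infty(I;L^p)$-bound on $\Delta \overline y_\tau$). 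Writing $\Delta y_\tau^{(k+m)} - \Delta y_\tau^{(k)} = \tau \sum_{j=k+1}^{k+m} \Delta \ddif y_\tau^{(j)}$, integrating by parts once more in space (no boundary contribution since $\ddif y_\tau^{(j)} = 0$ on $\partial\Om$), applying the Cauchy--Schwarz inequality and exploiting the identity $\tau m / h = 1$ absorbs the $1/h$ factor entirely and bounds the inertial contribution by $C\rho \sum_k \tau \|\nabla \ddif y_\tau^{(k)}\|_{L^2(\Om)}^2$, which is uniformly $O(1)$ by~\eqref{velocity_apriori_bound}. This summation-by-parts manoeuvre is by far the most delicate point of the proof.

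The remaining three estimates follow as corollaries. For~\eqref{e:lap-reg-mu1.5}, the pointwise bound $|\nabla DH(v)| \le C(1+|v|^{p-2})|\nabla v|$ combined with H\"older's inequality (with exponents $2p/(p-2)$ and $2$) and the $L^\infty(I; L^p)$-bound on $\Delta \overline y_\tau$ yields $\|\nabla DH(\Delta \overline y_\tau)\|_{L^{p'}(\Om)} \le C \|\,|\Delta \overline y_\tau|^{(p-2)/2} \nabla \Delta \overline y_\tau\|_{L^2(\Om)}$, after which integration in time together with~\eqref{e:lap-reg-mu} closes the argument. For~\eqref{e:lap-reg-mu1.7}, the telescoping identity already derived in fact gives $\eps \|\Deltatwo \overline y_\tau\|_{L^\infty(I; L^2)}^2 \le C$, hence $\|\Deltatwo \overline y_\tau\|_{L^2(I \times \Om)} \le C \sqrt{T}\, \eps^{-1/2}$, and the same bound for $\hat y_\tau$ follows by convexity. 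Finally, for~\eqref{e:lap-reg-mu2} I would use the strong discrete form $\eps \ddif \Delta^3 y_\tau^{(k)} = \mathrm{div}(\pl_F W + \pl_{\dot F} R) - \Delta DH(\Delta y_\tau^{(k)}) + f_\tau^{(k)} - \tfrac{\rho}{h}(\ddif y_\tau^{(k)} - \ddif y_\tau^{(k-m)})$, control each right-hand term in an appropriate negative Sobolev norm (using~\eqref{e:lap-reg-mu1.5} for the singular $\Delta DH$-term), and apply elliptic regularity combined with a Sobolev interpolation inequality balancing the $\eps^{-1/2}$ baseline from~\eqref{H1_def} against the $\eps^{-1}$ penalty arising when solving for $\ddif \Delta^3 y_\tau^{(k)}$; the specific value of the exponent $\varrho$ defined in~\eqref{r-def} is exactly what this interpolation produces, yielding the $\eps^{-(1+\varrho)/2}$ bound in~\eqref{e:lap-reg-mu2}.
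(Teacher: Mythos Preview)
Your approach to \eqref{e:lap-reg-mu} and \eqref{e:lap-reg-mu1.7} is correct and takes a genuinely different route from the paper. The paper tests the Euler--Lagrange equation with $z=(T-t)\Delta\hat y_\tau$: the weight $(T-t)$ is introduced precisely so that the boundary contribution of the inertial term near $t=T$ vanishes. You instead test with $z=-\Delta y_\tau^{(k)}$ unweighted and handle the $k\approx K$ boundary of the discrete summation by parts directly, via Cauchy--Schwarz together with the averaged kinetic-energy bound $\tfrac{\rho\tau}{h}\sum_{l=K-m+1}^K\|\delta_\tau y_\tau^{(l)}\|_{L^2}^2\le C$ from \eqref{mechen_apriori_bound} and the $L^\infty(I;L^p)$ bound on $\Delta\overline y_\tau$. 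Both devices work; yours avoids the extension-of-the-time-interval argument, while the paper's weight makes the continuous-in-time computations slightly cleaner. One small correction: only $\delta_\tau y_\tau^{(k)}\in H^5$, whereas $y_\tau^{(k)}\in H^4$ (limited by $y_{0,\eps}\in\Yidreg$), so $z=-\Delta y_\tau^{(k)}$ is merely $H^2\cap H^1_0$; you need the paper's preliminary integration by parts \eqref{IBP} to admit $H^2$ test functions before proceeding. Your derivation of \eqref{e:lap-reg-mu1.5} matches the paper's.

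The genuine gap is in \eqref{e:lap-reg-mu2}. Reading off $\eps\,\delta_\tau\Delta^3 y_\tau^{(k)}$ from the strong form and bounding each right-hand side pointwise in $k$ cannot give an $h$-independent estimate: the inertial term $\tfrac{\rho}{h}(\delta_\tau y_\tau^{(k)}-\delta_\tau y_\tau^{(k-m)})$ in any negative Sobolev norm carries a factor $h^{-1}$, and even using the bound $\sum_k\tau\|\delta_\tau y_\tau^{(k)}-\delta_\tau y_\tau^{(k-m)}\|_{L^2}^2\le Ch$ from Lemma~\ref{lemma: mechenbound} still leaves $h^{-1/2}$. The paper avoids this by testing with $(T-t)\partial_t\Delta\hat y_\tau$: after one spatial integration by parts the inertial contribution becomes $\tfrac{\rho}{h}\int(\partial_t\nabla\hat y_\tau(t)-\partial_t\nabla\hat y_\tau(t-h)):\partial_t\nabla\hat y_\tau(t)(T-t)$, which by the polarization identity telescopes into a sum of \emph{nonnegative} terms minus $\tfrac{\rho}{2}\|\nabla y'_{0,\eps}\|_{L^2}^2\cdot\fint_0^h(T-t)\,\di t$; see \eqref{e:last-step-3}. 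The inertial term thus has the good sign and is simply dropped rather than estimated. The remaining work is to control $\|\nabla DH(\Delta\overline y_\tau)\|_{L^2(I;L^{p'})}\|\partial_t\nabla\Delta\hat y_\tau\|_{L^2(I;L^p)}$ on the right, and the exponent $\varrho$ arises from Gagliardo--Nirenberg interpolation of $\|\partial_t\nabla\Delta\hat y_\tau\|_{L^p}$ between $\|\partial_t\nabla\Delta\hat y_\tau\|_{L^2}\le C\eps^{-1/2}$ (from \eqref{H1_def}) and $\|\partial_t\Delta^2\hat y_\tau\|_{L^2}$ (the quantity being estimated), not from the $H^3$--$H^5$ interpolation you sketch.
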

\BBB
We postpone the proofs of the two results to the end of the subsection and first present the following consequence.

\begin{corollary}[Further a priori bounds]\label{prop:a_priori_bounds2}
  Let $T, \, h, \, \eps > 0$ and $\tau_0$ be as in Proposition \ref{prop:minmoves_existence}.
  Then, there exists a constant $C_\eps>0$ only depending on $T$, $\eps $, $y_{0,\eps}$, $y_{0,\eps}'$,  $\theta_{0}$,   $f$, and $\theta_\flat$  such that for all $\tau \in (0, \tau_0)$ with $T /  h \ee \in \N$ and $h / \tau \in \N$ the following bounds hold true:
      \begin{align}
    \Vert  \overline \theta_\tau \Vert_{L^2(I; H^1(\Omega))}
    + \Vert \overline w_\tau \Vert_{L^2(I;H^1(\Omega))} &\leq C_\eps, \label{W1r_temp-new}\\    
\Vert  \hat{w}_\tau \Vert_{H^1(I; (H^1(\Omega))^*)} &\leq C_\eps \label{W1r_temp-new2},\\
 \Vert  \hat{y}_\tau \Vert_{L^2(I; H^4(\Omega))} &\leq C_\eps \label{y-new2}.       
    \end{align}
In particular,   $\theta_h$ and $w_h$ from Proposition \ref{prop:a_priori_bounds} satisfy $\theta_h, w_h \in L^2(I; H^1(\Omega)) $ and $w_h\in H^1(I; (H^1(\Omega))^*)$   with  $w_h(0) =  \inten(\nabla y_{0,\eps}, \theta_0)$. Moreover, $y_h$ lies in $ L^\infty(  I;  \Yidreg)$.
\end{corollary}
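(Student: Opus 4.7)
The approach leverages the $\eps$-dependent regularity of $\hat y_\tau$ afforded by Lemma \ref{l:laplace-regularity} to sharpen the thermal bounds of Proposition \ref{prop:a_priori_bounds}. The estimate \eqref{y-new2} is immediate: by Proposition \ref{cor:higher_reg_y} one has $\Delta \partial_t \hat y_\tau(t) = \partial_\nu \Delta \partial_t \hat y_\tau(t) = 0$ on $\partial\Omega$, so from $\hat y_\tau(t) = y_{0,\eps} + \int_0^t \partial_t \hat y_\tau\,ds$ together with $y_{0,\eps} \in \Yidreg$ we see that $\hat y_\tau(t)$ inherits all $\Yidreg$-boundary conditions for every $t \in I$; two applications of elliptic regularity for the Laplace operator combined with \eqref{e:lap-reg-mu1.7} then give $\|\hat y_\tau\|_{L^2(I; H^4(\Omega))} \le C_\eps$.

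For \eqref{W1r_temp-new}, applying elliptic regularity to \eqref{e:lap-reg-mu2} upgrades it to $\|\partial_t \hat y_\tau\|_{L^2(I; H^4(\Omega))} \le C_\eps$, and the Sobolev embedding $H^4(\Omega) \hookrightarrow W^{2,\infty}(\Omega)$ valid for $d \le 3$ yields $\|\partial_t \nabla \hat y_\tau\|_{L^2(I; L^\infty(\Omega))} \le C_\eps$, while $H^1(\Omega) \hookrightarrow L^6(\Omega)$ gives $\|\partial_t \nabla \Delta \hat y_\tau\|_{L^2(I; L^6(\Omega))} \le C_\eps$. Combined with \eqref{Linfty_W2p_def}, \eqref{velocity_apriori_bound}, and Assumptions \ref{D_bounds}, \ref{C_bounds}, this shows that the source terms on the right-hand side of \eqref{thermal_step_el} — the dissipation $\drate$, the adiabatic term $\partial_F \cplpot : \ddif \nabla \yst k$, and the regularization $\eps |\ddif \nabla \Delta \yst k|^2 \wedge \tau^{-1}$ — are all bounded uniformly in $\tau$ in $L^1(I; L^2(\Omega))$. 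Testing \eqref{thermal_step_el} with $\varphi = \tst k$, applying a discrete chain rule for $\int_\Omega \ddif \wst k\,\tst k$ based on the two-sided bound $1/C_0 \le \partial_\theta \inten \le C_0$, using the coercivity \eqref{spectrum_bound_K} of $\hcm$ and the trace inequality for the Robin term, and closing with a discrete Gronwall argument, we obtain $\|\overline\theta_\tau\|_{L^\infty(I; L^2(\Omega))} + \|\nabla\overline\theta_\tau\|_{L^2(I; L^2(\Omega))} \le C_\eps$. The analogous bound on $\overline w_\tau$ then follows from \eqref{inten_lipschitz_bounds}, the $C^2$-smoothness of $\inten$, and \eqref{Linfty_W2p_def}.

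For \eqref{W1r_temp-new2}, rearranging \eqref{thermal_step_el} gives, for every $\varphi \in H^1(\Omega)$,
\begin{equation*}
|\langle \ddif \wst k, \varphi \rangle|
\le C\,\big(\|\hcm \nabla \tst k\|_{L^2(\Omega)} + \|\text{sources}\|_{L^2(\Omega)} + \kappa\|\tst k - \btst k\|_{L^2(\partial\Omega)}\big)\|\varphi\|_{H^1(\Omega)},
\end{equation*}
and taking the $L^2(I)$-norm, using the bounds from the previous paragraph, produces \eqref{W1r_temp-new2}. Weak-$\ast$ compactness applied to \eqref{W1r_temp-new}--\eqref{W1r_temp-new2} identifies the limits with $\theta_h$ and $w_h$ from Proposition \ref{prop:a_priori_bounds}, and $w_h(0) = \inten(\nabla y_{0,\eps}, \theta_0)$ follows from $\hat w_\tau(0) = w_{0,\eps}$ via the embedding $H^1(I;(H^1(\Omega))^*) \hookrightarrow C(I; (H^1(\Omega))^*)$. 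Finally, $y_h \in L^\infty(I; \Yidreg)$ is obtained from the $\tau$-uniform bound $\|\partial_t \hat y_\tau\|_{L^2(I; H^4(\Omega))} \le C_\eps$: the fundamental theorem of calculus then gives $\|\hat y_\tau\|_{L^\infty(I; H^4(\Omega))} \le C_\eps$, the $\Yidreg$-boundary conditions are preserved pointwise in $t$ as above, and both properties pass to the weak-$\ast$ limit. The main technical obstacle is the discrete chain rule for $\int_\Omega \ddif \wst k\,\tst k$: because $\inten$ couples $\nabla y$ and $\theta$, this quantity splits into a $\theta$-monotone piece (treatable via the two-sided bounds on $\partial_\theta \inten$) and a cross term $\int_\Omega (\inten(\nabla \yst k, \tst{k-1}) - \inten(\nabla \yst{k-1}, \tst{k-1}))\,\tst k$ which can only be absorbed via the newly available $L^2(I; L^\infty(\Omega))$-regularity of $\partial_t \nabla \hat y_\tau$ — precisely the mechanism that forces the $\eps$-dependence of the constants in the corollary.
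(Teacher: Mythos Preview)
Your proposal is correct and follows essentially the same strategy as the paper: use the $\eps$-dependent higher regularity of $\partial_t\hat y_\tau$ from Lemma~\ref{l:laplace-regularity} to place the thermal source terms in an $L^2$-space, then run the parabolic energy estimate for~\eqref{thermal_step_el}. The only difference is packaging: the paper outsources the latter step to \cite[Proposition~4.2]{tve_orig} once the source is shown to be in $L^2(I_h;L^2(\Omega))$, whereas you sketch the argument inline (test with $\tst k$, discrete chain rule, Gronwall), and your identification of the cross term in $\int_\Omega\ddif\wst k\,\tst k$ as the spot requiring the $L^2(I;L^\infty(\Omega))$-bound on $\partial_t\nabla\hat y_\tau$ is exactly the mechanism that reference hides.
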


\begin{proof}
First, by elliptic regularity along with the boundary conditions $\hat{y}_\tau = \id$ and $\Delta \hat{y}_\tau = 0$ on $I_h \times \partial \Omega$ (see \eqref{def_Yid}, \eqref{eq: yidre}, and \eqref{higher_bdry_cond_y}), and the fact that $ \Omega$ has  $C^5$-boundary, we get 
$$\Vert \hat{y}_\tau(t) \Vert_{H^4(\Omega)} \le C \Vert \Delta \hat{y}_\tau(t) \Vert_{H^2(\Omega)}, \quad \quad  \Vert \Delta \hat{y}_\tau(t) \Vert_{H^2(\Omega)} \le C \Vert \Delta^2 \hat{y}_\tau(t) \Vert_{L^2(\Omega)}  $$      
 for a.e.\ $t \in I_h$. This along with \eqref{e:lap-reg-mu1.7} shows  \eqref{y-new2}. Moreover, $y_h \in L^\infty(  I;  \Yidreg)$ (see \eqref{eq: yidre}) follows from   \eqref{H1_def}, \eqref{higher_bdry_cond_y}, \eqref{e:lap-reg-mu2},  and the fact that $y_{0,\eps} \in \Yidreg$. \EEE
 
 Due to \eqref{e:lap-reg-mu2}, we find \ste that $ \eps | \partial_t \nabla \Delta \hat{y}_\tau |^{2}$ is bounded in $L^2(I_h;L^2(\Omega))$ for a bound depending on $\eps$, but independent of $\tau$ and $h$. \BBB Therefore, the term $
    \pl_F \cplpot(\nabla \underline{y}_\tau, \underline{\theta}_\tau) : \partial_t \nabla \hat{y}_\tau
    + \drate(\nabla \underline{y}_\tau, \partial_t \nabla \hat{y}_\tau, \underline{\theta}_\tau)  + \eps |\partial_t   \nabla \Delta \hat{y}_\tau |^{2}  \wedge \tau^{-1}$ appearing in the second line of \eqref{thermal_step_el} is bounded in \ste $L^{2} (I_{h};L^2(\Omega))$ \BBB for a bound depending on $\eps$, but independent of~$\tau$ and~$h$. This regularity allows us to apply the a priori estimates in \cite[Proposition 4.2]{tve_orig}. This yields \eqref{W1r_temp-new}--\eqref{W1r_temp-new2}, and then the regularity of the limits $\theta_h$ and $w_h$ is a direct consequence of weak compactness. By \cite[Lemma 4.5(iii)]{RBMFMKLM} we get $\theta_h, w_h \in C(I;L^2(\Omega)) $  which along with   \ref{C_regularity} and $y_h \in C  (I_h; \Yid)$  also shows $w_h(0) =  \inten(\nabla y_{0,\eps}, \theta_0)$. This concludes the proof.        
\end{proof}

  \BBB
  We now come to the proofs of Proposition \ref{cor:higher_reg_y} and Lemma \ref{l:laplace-regularity}. As they are purely of technical nature, the reader might want to skip these proofs on first reading of the paper.  
  
\EEE

\begin{proof}[Proof of Proposition \ref{cor:higher_reg_y}] 
We recall the notation in \eqref{interpolations} and for convenience we drop the index $\tau$ in the entire proof. \ZZZ As a preliminary step, we first show $\hat{y} \in H^1(I; H^4  (\Omega; \R^d))$, and afterwards the statement.

  \textit{Step 1 ($\hat{y} \in H^1(I; H^4  (\Omega; \R^d))$):} \EEE    We show that, if 
$\overline{y} \in L^2(I;  W^{2,q}   (\Omega; \R^d))$ for some $q \in [ p, 2(p-1)]$, then 
\begin{equation}\label{neumann_timeder0} 
\hat{y} \in H^1(I;  W^{4,\frac{q}{p-1}}  (\Omega; \R^d)), \quad \quad \overline{y} \in L^2(I;   W^{2,q(1+\eta)}  (\Omega; \R^d)),
\end{equation}
where  $\eta \defas \infty$ for $d=2$ and $\eta \defas  \frac{5}{p-1} - 1>0$ for $d=3$, as well as 
\begin{equation}\label{neumann_timeder}
  \partial_\nu \Delta \partial_t \hat{y}(t) = 0 \qquad \text{\BBB $\haus^{d-1}$-a.e.~on } \partial \Omega.
\end{equation}
Once this has been shown,   this argument can first be applied for $q = p $ by \eqref{Linfty_W2p_def}, and then by  a bootstrapping argument, after a finite number of repetitions depending on $\eta$, we get $\hat{y} \in H^1(I; H^4  (\Omega; \R^d))$. 

Let us show  \eqref{neumann_timeder0}--\eqref{neumann_timeder}. Suppose that $\overline{y} \in L^2(I;  W^{2,q}   (\Omega; \R^d))$ for some $ q \in [ p, 2(p-1)]$.  For $t \in I$, we define $g^{\rm 3rd}(t) \in X_q^*$ in the dual space of $X_q\defas W^{2,(\frac{q}{p-1})'}(\Omega; \R^d) \cap H^1_0(\Omega; \R^d)$ by 
\begin{align}\label{eq: g def}
 \langle g^{\rm 3rd}(t), z \rangle  &  \defas  \int_\Omega   D H(\Delta \overline{y}(t)) \cdot \Delta z
      + \Big(         \pl_F \felpot(\nabla \overline{y}(t), \underline{\theta}(t))
        + \pl_{\dot F} \disspot(\nabla \underline{y}(t), \partial_t \nabla \hat{y}(t), \underline{\theta}(t))
      \Big) : \nabla z \di x \notag \\
      & \quad - \int_\Omega \overline{f}(t) \cdot z \di x  + \frac{\rho}{h}   \int_{\Omega} (\partial_t \hat{y}(t) - \partial_t \hat{y}(t - h)) \cdot z \di x  
      \end{align}
for all $z \in X_q$.    By \ref{H_bounds} and the assumption \ste $\overline{y} \in  L^2 (I;W^{2,q}(\Omega;\R^d))$ \BBB  we get 
\ste $D H(\Delta \overline{y}) \in L^2 (I;L^{\frac{q}{p-1}}(\Omega;\R^d))$  \BBB and thus    $g^{\rm 3rd} \in L^2(I;X_q^*)$ by the bounds from Proposition \ref{prop:a_priori_bounds}.   Fixing $z \in X_q \cap H^3(\Omega;\R^d)$   and using   $z$ as a test function in \eqref{mechanical_step_el}, \EEE we derive that
 \begin{align}\label{z_pde}
  &-\eps \int_{\Om}  \nabla \Delta \BBB \partial_t \hat{y}(t) \EEE : \nabla \Delta z \di x   =   \langle g^{\rm 3rd}(t), z \rangle  ,
\end{align}
i.e., $g^{\rm 3rd}$ represents the regularization of third order. \EEE Due to \eqref{z_pde}, for every $t \in \BBB I\EEE$ we can apply Lemma \ref{lem:ellip_reg_spec}\ref{item:ellip_reg_H4} for $u \defas \partial_t \hat{y}(t)$.
With \eqref{u_H4_bound}--\eqref{neumann_laplace_u} this shows $\partial_t \hat{y} \in L^2(\BBB I ; W^{4,\frac{q}{p-1}}\EEE (\Omega; \R^d))$ as well as  \eqref{neumann_timeder}. As \BBB $y_{0, \eps} \in \Yidreg$,  the above statements together with \eqref{eq: yidre} directly lead to $\hat{y}  \in H^1(I;  W^{4,\frac{q}{p-1}}    (\Omega; \R^d))$. By Sobolev embedding we get that $\overline{y} \in  L^2(I;  W^{2, r}   (\Omega; \R^d))$, where $r = (\frac{q}{p-1})^{**}$.  Since $q \ge p$ and thus $\frac{q}{p-1} \ge \frac{6}{5}$, we get $r=\infty$ for $d=2$ and for $d=3$ we have $r = (3\frac{q}{p-1})(3-2\frac{q}{p-1})^{-1}  \ge \frac{5q}{p-1} =  q (1+\eta)$. This concludes the proof of \eqref{neumann_timeder0}.

\ZZZ   \textit{Step 2 (Proof of the statement):}  \EEE From now on we can suppose that $\hat{y} \in H^1(I; H^4  (\Omega; \R^d))$. Due to this  \EEE improved regularity of $\hat{y}$, we have for each $t \in I$ \BBB and any \EEE $z \in C^\infty_c(\Omega; \R^d)$ that
\begin{equation}\label{another integration by parts}
  \int_\Omega  D H(\Delta \overline{y} \BBB (t) \EEE ) \cdot \Delta z \di x
  = -\int_\Omega \nabla ( D H(\Delta \overline{y} \BBB (t) \EEE  )) : \nabla z \di x.
\end{equation}
\BBB Recalling \eqref{e:Hdef}--\eqref{e:psi}, an elementary computation for a general $v \in H^4 (\Omega;\R^d)$  yields that pointwise a.e.~in $ \Omega$ \EEE  it holds that
\begin{equation}\label{nabla2x_hypot}
  \nabla ( D H(\Delta v)) = \begin{cases}
    2 \nabla \Delta v &\text{if } p |\Delta v|^{p-2} \leq 2, \\
    p (p-2) |\Delta v|^{p-4} \Delta v \otimes \big( ( \nabla  \Delta v)^T \Delta v \big)    + p |\Delta v|^{p-2} \nabla \Delta v &\text{else}.
  \end{cases}
\end{equation}
\ZZZ Taking \EEE  the Frobenius norm on both sides of \eqref{nabla2x_hypot} we see \BBB a.e.~in $ \Omega$ that \EEE
\begin{equation}\label{nabla2x_hypot2}
  |\nabla ( D H(\Delta v))| \leq \begin{cases}
    2 |\nabla \Delta v| &\text{if } p |\Delta v|^{p-2} \leq 2, \\
    p (p-1) |\Delta v|^{p-2}  |\nabla \Delta v| &\text{else}.
  \end{cases}
\end{equation}
\ZZZ By \EEE Sobolev embedding we have that \BBB $H^2(\Omega; \R^d) \subset  L^\infty(\Omega; \R^d)$ for $d=2,3$. \EEE
Hence, \ZZZ $\Delta \overline{y} \in L^\infty(I \times \Omega; \R^d)$. \EEE In particular, this shows $\nabla ( D H(\Delta \overline{y})) \in L^2(I \times \Omega; \BBB  \R^{d\times d}) \EEE $ and therefore, \BBB recalling the definition in \eqref{eq: g def} and \eqref{another integration by parts}, we derive \EEE $g^{\rm 3rd} \in L^2(I; H^{-1}(\Omega; \R^d))$ by arbitrariness of $z$, where we have used that $C^\infty_c(\Omega; \R^d)$ is dense in $H^1_0(\Omega; \R^d)$.  \BBB (Now, $\langle \cdot, \cdot \rangle$  stands for the dual pairing between $H^{1}_0(\Omega; \R^d)$ and  $H^{-1}(\Omega; \R^d)$.) \EEE By Lemma \ref{lem:ellip_reg_spec}\ref{item:ellip_reg_H5}, \eqref{neumann_timeder}, \BBB and the bounds from Proposition \ref{prop:a_priori_bounds} \EEE we derive that $\partial_t \hat{y} \in L^2(I; H^5(\Omega; \R^d))$ and \BBB that $\partial_t \hat{y}$ satisfies
\begin{align}\label{a bound}
\Vert \partial_t \hat{y} \Vert_{L^2(I; H^5(\Omega))}  \le C_{\eps,h}
\end{align}
for a constant $C_{\eps,h}$ depending on $\eps$ and $h$. Moreover, we have  
 \EEE the boundary condition
\begin{equation}\label{dirichlet_laplace2x_timeder}
\Deltatwo \partial_t \hat{y}(t) = 0 \qquad \text{$\haus^{d-1}$-a.e.~on } \partial \Omega \ \ \text{for $t \in I$.}
\end{equation}
\BBB To conclude the proof, it \EEE remains to show  $\Delta \partial_t \hat{y}(t) =   0$  $\haus^{d-1}$-a.e.~on $\partial \Omega$ for $t \in \ZZZ I \EEE$.  \BBB Defining for $z \in H\defas H^3(\Omega;\R^d) \cap H^1_0(\Omega;\R^d)$
\begin{align*}
\langle g^{\rm 2nd}(t), z \rangle &  \defas  \EEE  -\eps \int_{\Om} \nabla \Delta \partial_{t} \hat{y}(t) : \nabla \Delta z \di x
  - \int_\Omega \Big( 
      \pl_F \felpot(\nabla \overline{y}(t), \underline{\theta}(t))
      + \pl_{\dot F} \disspot(\nabla \underline{y}(t), \partial_t \nabla \hat{y}(t), \underline{\theta}(t))
    \Big) : \nabla z \di x \\
  &\phantom{\quad=}\quad
    + \int_\Omega \overline{f}(t) \cdot z \di x
    - \frac{\rho}{h} \int_{\Omega} (\partial_t \hat{y}(t) - \partial_t \hat{y}(t - h)) \cdot z \di x,
 \end{align*}
we can rewrite \eqref{eq: g def}--\eqref{z_pde} \BBB as  
\begin{equation}\label{eq: second order}
  \int_\Omega  D H(\Delta \overline{y}(t)) \cdot \Delta z \di x = 
  \langle g^{\rm 2nd}(t), z \rangle
\end{equation}
for $t \in I$, i.e., $g^{\rm 2nd}$ represents the term with second derivative. \EEE Using the improved regularity of $\partial_t\hat{y}$ and the boundary conditions \eqref{neumann_timeder} and  \eqref{dirichlet_laplace2x_timeder}, the first integral \BBB in the definition of $g^{\rm 2nd}(t)$ \EEE can be written as
\begin{align*}
%\label{bbyyy parts}
  \int_\Omega \nabla \Delta  \partial_t \hat{y}(t) : \nabla \Delta z \di x
  &= - \int_\Omega \Deltatwo  \partial_t \hat{y}(t) \cdot \Delta z \di x + \int_{\partial \Omega} \partial_\nu \Delta   \partial_t \hat{y}(t) \cdot \Delta z \di \haus^{d-1}\notag \\
  &= \int_\Omega \nabla \Deltatwo   \partial_t \hat{y}(t) : \nabla z \di x - \int_{\partial \Omega} \Deltatwo   \partial_t \hat{y}(t) \cdot \partial_\nu z \di \haus^{d-1}  = \int_\Omega \nabla \Deltatwo   \partial_t \hat{y}(t) : \nabla z \di x
\end{align*}
\BBB for each $z \in H$.  As $\partial_t \hat{y} \in L^2(I; H^5(\Omega; \R^d))$,  this shows   $g^{\rm 2nd}(t) \in H^{-1}(\Omega; \R^d)$ \ZZZ for each $t \in I$. \EEE Now, it is standard to find $v(t) \in H^1_0(\Omega;\R^d)$ such that $\langle g^{\rm 2nd}(t) , z \rangle =   \int_\Omega v(t) \cdot \Delta z \di x$
for all $z \in H^2(\Omega;\R^d ) \cap H^1_0(\Omega;\R^d)$, see 
\eqref{NNNNN}--\eqref{NNNNN2} below for details. Given an arbitary $\varphi \in L^2(\Omega;\R^d)$ and choosing $z \in H^2(\Omega;\R^d) \cap H^1_0(\Omega;\R^d)$ with $\Delta z = \varphi$, this along with \eqref{eq: second order} shows
$$  \int_\Omega  \big( D H(\Delta \overline{y}(t)) - v(t) \big) \cdot \varphi \di x = 0. $$
This yields $D H(\Delta \overline{y}(t)) = v(t)$ a.e.\ in $\Omega$ and thus   it holds that ${D H}(\Delta \ZZZ \bar{y} \EEE (t)) = 0$ $\mathcal{H}^{d-1}$-a.e.~on~$\partial \Omega$ for $t \in I$. Since  \EEE $${D H}(v) = \max\{2, p |v|^{p-2}\}v = 0 \ \Leftrightarrow \ v = 0 \in \R^d$$ 
\BBB and $y_{0,\eps} \in \Yidreg$, \EEE this concludes the proof of~\eqref{higher_bdry_cond_y}.  
\end{proof}
\ee

\begin{proof}[Proof of Lemma \ref{l:laplace-regularity}] 
\BBB  For notational convenience,  we define the weighted Bochner space 
$$\Vert \cdot \Vert _{L^2_T(I; L^q(\Omega))} \defas  \Vert \sqrt{T-t}  \, \cdot \Vert_{L^2(I; L^q(\Omega))},$$ and employ a similar notation for Sobolev spaces.  We first note that it is not restrictive to establish the bounds \eqref{e:lap-reg-mu}--\eqref{e:lap-reg-mu2} only for the weighted space. Indeed, by extending $\theta_\flat$ and $f$ suitably on the time interval $[T,T+\eta]$ for some $\eta>0$ small, we can establish time-discrete solutions on the time interval $[-h,T+\eta]$, see Proposition~\ref{prop:minmoves_existence}. Then, a control on $\Vert \cdot \Vert _{L^2_{T+\eta}([0,T+\eta]; L^q(\Omega))}$ will directly imply a control on $\Vert \cdot \Vert _{L^2(I; L^q(\Omega))}$ for a constant additionally depending on $\eta$. To simplify notation, we use the interval $\ZZZ I= \EEE [0,T]$ instead of $[0,T+\eta]$ in the sequel.  As in the  proof of Proposition \ref{cor:higher_reg_y}, we omit the index $\tau$.

\BBB
  \textit{Step 1 (Proof of \eqref{e:lap-reg-mu} and \eqref{e:lap-reg-mu1.7}):}   
As $\partial_t \hat{y} \in L^2(I; H^4(\Omega; \R^d))$ with $\partial_\nu\Delta \partial_t \hat{y}(t) = 0$ $\haus^{d-1}$-a.e.~on $\partial \Omega$ for $t \in I$ by Proposition \ref{cor:higher_reg_y}, by an integration by part we can rewrite the $\eps$-dependent term in  \eqref{mechanical_step_el} as 
\begin{align}\label{IBP}
  \int_\Omega\eps   \nabla\Delta \partial_t  \hat{y} : \nabla \Delta z \ee \di x & =  -    \int_\Omega\eps     \Deltatwo  \partial_t \hat{y} : \Delta z \ee \di x +   \int_{\partial \Omega}\eps    \partial_\nu\Delta \partial_t  \hat{y} :  \Delta z \ee \di \haus^{d-1}   =  -    \int_\Omega\eps    \Deltatwo \partial_t  \hat{y} : \Delta z \ee \di x
  \end{align}
 for each $t \in    ((k-1)\tau, k\tau)$ and $z \in \ZZZ H^3(\Omega;\R^d) \EEE \cap H^1_0(\Omega;\R^d)$. \ZZZ Thus, by approximation we can test \eqref{mechanical_step_el} with functions in $H^2(\Omega;\R^d) \cap H^1_0(\Omega;\R^d)$. \EEE As   $\Delta \hat{y} \in L^2(I; H^2(\Omega; \R^d))$ with $\Delta \hat{y}(t) = 0$ $\haus^{d-1}$-a.e.~on $\partial \Omega$ for   $t \in I$ by Proposition \ref{cor:higher_reg_y} \ZZZ  and the fact that $y_{0,\eps} \in \Yidreg$ (see \eqref{eq: yidre}), \EEE   we discover that $z(t, x) \defas (T - t)\Delta \hat{y}(t,x)$ for $t \in ((k-1)\tau, k\tau)$  is a  valid test function \BBB in  \eqref{mechanical_step_el}. After summation and  rearranging terms, and employing \eqref{IBP} \EEE this  yields
\begin{equation}\label{first_test_higher_reg}
\begin{aligned}
  &\eps \int_I \int_{\Om}   \Deltatwo \partial_t \hat{y} :  \Deltatwo \hat{y} (T - t) \di x \di t
  - \intQ  D H(\Delta \overline{y}) \cdot \Deltatwo \hat{y} (T - t) \di x \di t \\
  &\quad= \intQ
    \big(
      \pl_F \felpot(\nabla \overline{y}, \underline{\theta})
      + \pl_{\dot F} \disspot(\nabla \underline{y}, \partial_t \nabla \hat{y}, \underline{\theta})
    \big) : \nabla \Delta\hat{y} (T - t) \di x \di t \\
  &\phantom{\quad=}\quad - \intQ \overline{f} \cdot \BBB \Delta \hat{y}(t) \EEE (T - t) \di x \di t
    + \frac{\rho}{h} \int_I \int_\Omega \big(\partial_t \hat{y}(t) - \partial_t \hat{y}(t - h)\big) \cdot \Delta \hat{y}(t) (T - t) \di x \di t.
\end{aligned}
\end{equation}
\BBB (The advantage of multiplying with $(T-t)$ will become apparent in \eqref{e:lap-reg-mu-int} below.) Using \eqref{higher_bdry_cond_y} \ZZZ  and   $y_{0,\eps} \in \Yidreg$  \EEE we integrate by parts in the second term on the left-hand side above, which  leads to
\BBB
\begin{align*}
  &-\intQ  D H(\Delta \overline{y}) \cdot \Deltatwo \hat{y}(T - t) \di x \di t \\
  &\quad= \intQ \nabla ( D H(\Delta \overline{y})) : \nabla \Delta \hat{y} (T - t) \di x \di t
    - \int_I \int_{\partial \Omega}  D H(\Delta \overline{y}) \cdot \partial_\nu \Delta \hat{y} (T - t) \di \haus^{d-1} \di t \nonumber \\
      &\quad \ge  \intQ \nabla ( D H(\Delta \overline{y})) : \nabla \Delta \overline{y} (T - t) \di x \di t
    - C\tau\Vert  \nabla ( D H(\Delta \overline{y})) \Vert_{L^2(I \times \Omega )} \Vert \nabla \Delta \partial_t \hat{y} \Vert_{L^2(I \times \Omega )},
 \end{align*}
where in the last step we exploited the definition in \eqref{interpolations}. In view of \eqref{nabla2x_hypot}, we get  that 
\begin{align*}
& \intQ \nabla ( D H(\Delta \overline{y})) : \nabla \Delta \overline{y} (T - t) \di x \di t \\ 
& = \intQ \max\{2, p |\Delta \overline{y}|^{p-2} \} |\nabla \Delta \overline{y}|^2 (T - t) \di x \di t    + \intQ \indic_{\{p |\Delta \overline{y}|^{p-2} \geq 2\}} p(p-2) |(\nabla \Delta \overline{y})^{T} \Delta \ZZZ \overline{y}|^{2} |\Delta \overline{y}|^{p-4}  \EEE  (T - t)   \, \di x \, \di t \nonumber \\  
   & \geq
  \intQ \max\{2, p |\Delta \overline{y}|^{p-2} \} |\nabla \Delta \overline{y}|^2 (T - t) \di x \di t. \nonumber
\end{align*}
 By $y_{0,\eps} \in \Yidreg$  and \eqref{a bound} we find \ste $\Vert \Delta \overline{y} \Vert_{L^\infty (I;L^\infty(\Omega))} \le C_{\eps,h}$ \BBB and $\Vert \overline{y} \Vert_{L^2(I; H^3(\Omega))} \le C_{\eps,h}$. Then, again using  \ZZZ \eqref{nabla2x_hypot}--\eqref{nabla2x_hypot2} \EEE and \eqref{H1_def} we eventually find  
\begin{align}\label{ibp_H_term_higher_reg}
 -\intQ  D H(\Delta \overline{y}) \cdot \Deltatwo \hat{y}(T - t) \di x \di t \ge  \intQ \max\{2, p |\Delta \overline{y}|^{p-2} \} |\nabla \Delta \overline{y}|^2 (T - t) \di x \di t - \tau C_{\eps,h}.
 \end{align} \EEE
By the chain rule we write 
\begin{align}\label{ibp_timeder_higher_reg}
   \int_I \int_{\Om} \Deltatwo \partial_t \hat{y} \cdot \Deltatwo \hat{y} (T - t) \di x \di t  & \BBB  =
   \int_I \frac{\di}{\di t}\Big( \frac{1}{2} \int_{\Om} |\Deltatwo \hat{y}|^2 (T - t) \di x \Big) \di t
  + \frac{1}{2} \int_I \int_{\Omega} |\Deltatwo \hat{y}|^2 \di x \di t \nonumber \\
  &= \frac{1}{2} \int_I \int_{\Omega} |\Deltatwo \hat{y}|^2 \di x \di t - \frac{T}{2} \|\Deltatwo y_{0, \eps}\|_{L^2(\Omega)}^2\,.   
\end{align}
By the definition of $\Yid$ and $\hat{y} \in L^\infty( \ZZZ I_h; \EEE \Yid)$, it follows that $\partial_t \hat{y}(t) = 0$ \BBB $\haus^{d-1} $-a.e.~in \EEE  $\partial \Omega$ for  $t \in I$. Hence, integrating by parts  leads to
\begin{align*}
  \int_I \int_\Omega (\partial_t \hat{y}(t) - \partial_t \hat{y}(t - h)) \cdot  \Delta \hat{y} \BBB (t) \EEE (T - t) \di x \di t   
%  & = 
%  -   \int_I \int_\Omega (\partial_t \nabla \hat{y}(t) - \partial_t \nabla \hat{y}(t - h)) : \nabla \hat{y}(t) (T - t) \di x \di t \nonumber \\
%  & \quad + \int_I \int_{\partial \Omega} (\partial_t \nabla \hat{y}(t) - \partial_t \nabla \hat{y}(t - h)) \cdot \partial_\nu \hat{y}(t) (T - t) \di \haus^{d-1} \di t   \\ 
   & = 
  -   \int_I \int_\Omega (\partial_t \nabla \hat{y}(t) - \partial_t \nabla \hat{y}(t - h)) : \nabla \hat{y}(t) (T - t) \di x \di t.
\end{align*}
Now, using the substitution \BBB $t \mapsto  t - h$   \BBB we derive \EEE
\begin{align}\label{ibp_acceleration_higher_order}
  \frac{\rho}{h} \int_I \int_\Omega (\partial_t \hat{y}(t) - \partial_t \hat{y}(t - h)) \cdot \Delta \hat{y}  \BBB (t) \EEE  (T - t) \di x \di t  & = \BBB \Pi \EEE    + \rho  \int_\Omega \nabla y_{0, \eps}' : \mint_0^h \nabla \hat{y}(t) (T - t) \di t \di x \\ & \quad 
  -  \BBB \rho \mint_{T - h}^T \EEE \int_\Omega \partial_t \nabla \hat{y}(t) : \nabla \hat{y}(t) \BBB (T - t) \EEE \di x \di t, \nonumber
\end{align} \BBB
where 
$$ \Pi \defas \rho \int_0^{T - h} (T-t) \int_\Omega \partial_t \nabla \hat{y}(t) : \frac{\nabla \hat{y}(t + h) - \nabla \hat{y}(t)}{h} \di x \di t \BBB - \rho \int_0^{T-h} \int_\Omega \partial_{t} \nabla \hat{y} (t)  \colon  \nabla \hat{y} (t+h) \di x \di t. \EEE $$
Using that $\frac{\nabla \hat{y}(t + h) - \nabla \hat{y}(t)}{h} = \mint_{t}^{t+h} \partial_{t} \nabla \hat{y} (s) \, \di s  $ and applying H\"older's inequality, we can check that
$$|\Pi| \le  C\| \partial_{t} \nabla \hat{y} \|^2_{L^{2}(I \times \Om)} +  C\| \partial_{t} \nabla \hat{y} \|_{L^{2}(I \times \Om)} \|  \nabla \hat{y} \|_{L^{2}(I \times \Om)} \le  C\|  \nabla \hat{y} \|^2_{H^{1}(I; L^{2} (\Om))}.  $$
\EEE Combining \eqref{first_test_higher_reg}--\eqref{ibp_acceleration_higher_order} \BBB then \EEE leads   to
\sa
\begin{equation}
\label{e:1111}
\begin{aligned}
&\intQ \max\{2, p |\Delta \overline{y}|^{p-2} \} |\nabla \Delta \overline{y}|^2 (T - t) \di x \di t
%\intQ |\nabla \Delta \hat{y}|^2 (T - t) \di x \di t
 % + \frac{p}{4} \intQ |\Delta \hat{y}|^{p-2} |\nabla \Delta \hat{y}|^2 (T - t) \di x \di t
  + \frac{\eps}{2}  \int_I \int_{\Omega} |\Deltatwo \hat{y}|^2   \di x \di t -  \frac{T\eps}{2} \|\Deltatwo y_{0, \eps}\|_{L^2(\Omega)}^2 \\
&\quad\leq 
%2(p-2) \intQ |\Delta \hat{y}|^{p-2} (T - t) \di x \di t
  \intQ
    \big(
      \pl_F \felpot(\nabla \overline{y}, \underline{\theta})
      + \pl_{\dot F} \disspot(\nabla \underline{y}, \partial_t \nabla \hat{y}, \underline{\theta})
    \big) : \nabla \Delta \hat{y} (T - t) \di x \di t  
  - \intQ \overline{f} \cdot \Delta \hat{y} (T - t) \di x \di t \\ & \qquad 
  \BBB + \tau C_{\eps,h} + C \EEE  \|  \nabla \hat{y} \|^{2}_{\BBB H^{1}(I; L^{2} (\Om))}  
  + \rho \int_\Omega \nabla y_{0, \eps}' : \mint_0^h \nabla \hat{y}(t) (T - t) \di t \di x \\
  &\qquad\quad- \rho \mint_{T - h}^T \int_\Omega \partial_t \nabla \hat{y}(t) : \nabla \hat{y}(t) (T - t) \di x \di t. 
\end{aligned}
\end{equation}
  In view of the bounds   \eqref{Linfty_W2p_def}--\eqref{W1r_temp},   we have that 
\begin{align}\label{that is easier} 
& \sup_{h>0, \, \eps \in (0, 1)} \| \partial_{F} W(\nabla \overline{y}, \underline{\theta}) + \partial_{\dot{F}} R(\nabla \underline{y}, \partial_{t} \nabla \hat{y}, \underline{\theta}) \|_{L^{2} (I \times \Omega)} <+\infty\,,\notag \\
& \sup_{h>0, \, \eps \in (0, 1)} \|  \overline{y}\|_{L^{\infty} (I; W^{2, p} (\Om))} + \BBB \| \hat{y}\|_{H^1 (I; H^{1}(\Om))} \EEE <+\infty\,,
\end{align}
\BBB where we use \ref{W_regularity}, \ref{C_lipschitz}, and \ref{D_quadratic}. Moreover, we choose $\tau_0$ small enough such that $\tau_0 C_{\eps, h} \le 1$.  \EEE Hence, by H\"older's inequality,  \BBB the weighted \EEE Young's inequality, and by the boundary condition $\Delta \hat{y} (t) = 0$ for a.e.~$t \in [0, T]$ and $\mathcal{H}^{d-1}$-a.e.~in~$\partial\Om$ \BBB (see \eqref{eq: yidre} and Proposition \ref{cor:higher_reg_y}),    \EEE we infer from \eqref{e:1111} that   
\begin{equation}\label{e:lap-reg-mu-int}
\BBB \sqrt{\eps} \|   \Deltatwo \hat{y} \|_{L^{2}(I \times \Om)} + \EEE \|   \Delta \overline{y} \|_{L^{2}_T (I; H^{1}(\Om))}
  + \|  \, |\Delta \overline{y}|^{\frac{p-2}{2}} \nabla(\Delta \overline{y}) \|_{L^{2}_T (I; L^2(\Om))} \\
  \leq
  C \big( 1 + \sqrt{\eps } \| y_{0, \eps}\|_{H^4(\Om)} + \| y'_{0, \eps}\|_{H^{1}(\Om)} \big)
\end{equation}
for some constant $C>0$ independent of~$\eps \in (0, 1)$ and $h>0$,  \BBB where the notation $L^{2}_T (I; H^{1}(\Om))$ has been introduced at the beginning of the proof. Here, the factor $(T-t)$ guarantees that the last term on the right-hand side of \eqref{that is easier}  can be controlled \ZZZ uniformly in $h$. \EEE  This shows \eqref{e:lap-reg-mu} and \eqref{e:lap-reg-mu1.7}.  \EEE

\BBB      \textit{Step 2 (Proof of \eqref{e:lap-reg-mu1.5}):} We proceed with \eqref{e:lap-reg-mu1.5}.  In view of   \eqref{nabla2x_hypot2},  it holds that  
\begin{align*}
| \, \nabla ({D H}(\Delta \overline{y}) ) | & \leq C  \big(1+|\Delta \overline{y}|\big)^{\frac{p-2}{2}} \, \Big|\big(1+|\Delta \overline{y}|\big)^{\frac{p-2}{2}} \nabla \Delta \overline{y} \Big| 
\end{align*}
a.e.\ in $\Omega$. By   using H\"older's inequality for $\frac{p'}{2}+\frac{p-2}{2(p-1)}=1$ \BBB we estimate \begin{align}\label{e:lap-reg-mu-int2}
\|    \nabla {D H}(\Delta \overline{y})\|_{L^2_T(I;L^{p'}(\Omega))}^2
&\leq C   \int_I\bigg(\int_\Omega \big(1+ |\Delta \overline{y}|\big)^{\frac{(p-2)p}{2(p-1)}}\big(\sqrt{T - t} \,\big(1+|\Delta \overline{y}|\big)^{\frac{p-2}{2}}
|\nabla \Delta \overline{y}|\big)^{p'}\, \di x\bigg)^\frac{2}{p'}\, \di t \notag
\\
&\leq C   \|1+|\Delta \overline{y}|\|_{L^\infty(I;L^p(\Omega))}^{p-2}\|  (1+
|\Delta \overline{y}|)^{\frac{p-2}{2}} \nabla \Delta \overline{y}\|_{L^2_T(I; L^2(\Omega))}^2.
\end{align}
Note that $|\Delta \overline{y}|$ is  uniformly bounded in $L^{\infty} (I; L^{p}(\Om; \R^{d}))$ by  \eqref{Linfty_W2p_def}. Hence,~\eqref{e:lap-reg-mu1.5} follows from \eqref{e:lap-reg-mu-int}.

 \BBB
  \textit{Step 3 (Proof of \eqref{e:lap-reg-mu2}):}  \EEE
Finally, we prove \eqref{e:lap-reg-mu2}. In \EEE  view of Corollary~\ref{cor:higher_reg_y}, we may test the mechanical equation \BBB  \eqref{mechanical_step_el}  with $ z = (T - t) \partial_{t}\Delta \hat{y}(t)$ for $t \in ((k-1)\tau, k\tau)$.  After summation and  rearranging terms  this  yields \EEE 
\begin{align*}
      &-\eps \int_I \int_{\Om}\partial_{t} \nabla \Delta \hat{y} : \partial_{t} \nabla \Deltatwo \hat{y}(T - t) \di x \di t
  - \intQ  DH(\Delta \overline{y}) \cdot \partial_{t}\Deltatwo \hat{y} (T - t) \di x \di t \\
  &\quad= \intQ
    \big(
      \pl_F \felpot(\nabla \overline{y}, \underline{\theta})
      + \pl_{\dot F} \disspot(\nabla \underline{y}, \partial_t \nabla \hat{y}, \underline{\theta})
    \big) : \partial_{t} \nabla \Delta \hat{y} (T - t) \di x \di t \\
  &\phantom{\quad=}\quad - \intQ \overline{f} \cdot \partial_{t} \Delta \hat{y} (T - t) \di x \di t
    + \frac{\rho}{h} \ZZZ \int_I \EEE \int_\Omega (\partial_t \hat{y}(t) - \partial_t \hat{y}(t - h)) \cdot \partial_{t} \Delta \hat{y}(t) (T - t) \di x \di t.
\end{align*}
Integrating by parts the left-hand side and the last term on the right-hand side  above, by the boundary conditions~$\partial_{\nu}  \Delta \partial_{t} \hat{y} = 0$ and $\partial_{t} \hat{y}= 0$ on $\partial \Om$ for  $t \in I$, we get that
\begin{equation}
    \label{e:last-step-2}
    \begin{aligned}
            &\eps \int_I \int_{\Om}| \partial_{t} \Deltatwo \hat{y} |^{2} (T - t) \di x \di t
  + \intQ \nabla (  DH(\Delta \overline{y}) )  : \ee \partial_{t}\nabla \Delta \hat{y} (T - t) \di x \di t \\
  &\quad= \intQ
    \big(
      \pl_F \felpot(\nabla \overline{y}, \underline{\theta})
      + \pl_{\dot F} \disspot(\nabla \underline{y}, \partial_t \nabla \hat{y}, \underline{\theta})
    \big) : \partial_{t} \nabla \Delta \hat{y} (T - t) \di x \di t \\
  &\phantom{\quad=}\quad - \intQ \overline{f} \cdot \partial_{t} \Delta \hat{y} (T - t) \di x \di t
    - \frac{\rho}{h} \int_0^T \int_\Omega (\partial_t \nabla \hat{y}(t) - \partial_t \nabla \hat{y}(t - h)) \BBB : \EEE \partial_{t} \nabla \hat{y}(t) (T - t) \di x \di t.
    \end{aligned}
\end{equation}
 We denote  the last term on the right-hand side of \eqref{e:last-step-2} (without negative sign) by $\Pi_T$. An expansion yields 
\begin{align} \label{e:last-step-3}
\Pi_T & =     \frac{\rho}{2h} \ZZZ \int_I \EEE \big(  \Vert \partial_t \nabla \hat{y}(t) \Vert^2_{L^2(\Omega)} - \Vert \partial_t \nabla \hat{y}(t-h) \Vert^2_{L^2(\Omega)} + \Vert \partial_t  \nabla \hat{y}(t) - \partial_t  \nabla \hat{y}(t-h) \Vert^2_{L^2(\Omega)}  \big)  (T-t)  \di t \notag \\
&        =  \frac{\rho}{2} \int_{0}^{T-h} \int_{\Om} | \partial_{t} \nabla \hat{y} (t) |^{2}\, \di x \, \di t  + \ee \frac{\rho}{2} \mint_{T-h}^{T} \int_{\Om}  | \partial_{t} \nabla \hat{y}(t)|^{2} ( T - t) \, \di x \, \di t - \frac{\rho}{2} \| \nabla y'_{0, \varepsilon}\|^{2}_{L^{2} (\Om)} \mint_0^{h} (T-t) \di t  \ee \nonumber
        \\
        &
        \quad + \frac{\rho}{2 h} \intQ | \partial_{t} \nabla \hat{y} (t) - \partial_{t} \nabla \hat{y} (t - h) |^{2} ( T - t) \di x \di t\,.
\end{align}
Combining~\eqref{e:last-step-2} and \eqref{e:last-step-3} and applying H\"older inequality, \BBB we deduce that
\begin{align*}
        \varepsilon \|   \partial_{t} \Deltatwo  \hat{y} \|_{L^{2}_T(I; L^2(\Om))}^{2}     & \leq\|  \nabla (DH(\Delta \overline{y})) \|_{L^{2}_T (I; L^{p'}(\Omega))} \| \partial_{t} \nabla \Delta \hat{y} \|_{L^{2}_T(I; L^p(\Omega))}
      \notag  \\
        &\quad \phantom{\leq} + \sqrt{T} \| (  \pl_F \felpot(\nabla \overline{y}, \underline{\theta})
      + \pl_{\dot F} \disspot(\nabla \underline{y}, \partial_t \nabla \hat{y}, \underline{\theta})
    ) \|_{L^{2}(I \times \Omega)} \|  \partial_{t} \nabla \Delta \hat{y} \|_{L^{2}_T(I; L^2(\Omega))}
  \notag  \\
    &\quad \phantom{\leq} + \sqrt{T} \| f \|_{L^{2} ( I \times \Omega ) } \|  \partial_{t}   \Delta \hat{y}\|_{ L^{2}_T(I; L^2(\Omega))}
    + \BBB \frac{\rho T }{2}\| y'_{0, \varepsilon} \|_{H^{1}(\Om)}^{2}\,.  \EEE
\end{align*}
Then, by \eqref{that is easier}--\eqref{e:lap-reg-mu-int2}, H\"older's inequality,  and Poincar\'e's inequality along with $\Delta \ZZZ \partial_t \EEE \hat{y} = 0$ on $I \times \partial \Omega$ we derive 
\begin{align}    \label{e:last-step-4}
 \varepsilon    \|   \partial_{t} \Deltatwo \hat{y} \|_{L^{2}_T(I;L^2(\Omega))}^{2} 
                \leq \ C   ( 1  + \sqrt{\eps } \| y_{0, \varepsilon}\|_{H^{4} (\Om)} + \| y'_{0, \varepsilon}\|_{H^{1}(\Om)})  \|  \partial_{t}  \nabla  \Delta \hat{y}\|_{L^{2}_T(I; L^p(\Om))} + C   \| y'_{0, \varepsilon}\|_{H^{1}(\Om)}^{2}. 
                \end{align}
By  the  Gagliardo-Nirenberg interpolation inequality with $\theta = \frac{p-2}{2p}d$ ($\theta \in (0,1)$ as $p \in (3,6)$ for $d=3$),  \ZZZ see e.g.~\cite[Theorem 1.24]{Roubicek-book}) for \EEE  $r= p $, $\beta = 0$, $k=1$, and  $p = q = 2$, we get
$$ \|   \partial_{t} \nabla  \Delta \hat{y}\|_{L^{2}_T(I; L^p(\Om))} \le  \|   \partial_{t} \nabla  \Delta \hat{y}\|_{L^{2}_T(I; L^2(\Om))} +  \|   \partial_{t} \nabla  \Delta \hat{y}\|_{L^{2}_T(I; L^2(\Om))}^{1-\theta} \|   \partial_{t}  \nabla^2 \Delta \hat{y}\|_{L^{2}_T(I; L^2(\Om))}^\theta. $$
By elliptic regularity  for the operator $\Delta$ and the fact that $\Delta \partial_t \hat{y} = 0$ on $I \times \partial \Omega$ we get $\|   \partial_{t}  \nabla^2 \Delta \hat{y}\|_{L^{2}_T(I; L^2(\Om))} \le C \|   \partial_{t}   \Deltatwo \hat{y}\|_{L^{2}_T(I; L^2(\Om))}$. Then, using the weighted Young's inequality for $\lambda>0$ and exponent $\frac{2}{\theta}$ we get 
$$ \|   \partial_{t}  \nabla  \Delta \hat{y}\|_{L^{2}_T(I; L^p(\Om))} \le  \|   \partial_{t} \nabla  \Delta \hat{y}\|_{L^{2}_T(I; L^2(\Om))} +  C(\eps\lambda)^{-\frac{\theta}{2-\theta}}  \|   \partial_{t}  \nabla  \Delta \hat{y}\|_{L^{2}_T(I; L^2(\Om))}^{\frac{2(1-\theta)}{2-\theta}}     +  C\eps\lambda \|   \partial_{t}  \Deltatwo \hat{y}\|_{L^{2}_T(I; L^2(\Om))}^2. $$
The bound $\sqrt{\eps}   \| \partial_{t} \hat{y} \|_{L^2(  I;  H^3(\Omega))} \le C$ given by   \eqref{H1_def}  and the fact that $\frac{1}{2-\theta} =  \varrho  \in (\frac{1}{2},1)$ (see \eqref{r-def}) yield 
$$ \|   \partial_{t} \nabla  \Delta \hat{y}\|_{L^{2}_T(I; L^p(\Om))} \le C\eps^{-1/2} +  C_\lambda \eps^{-\frac{\theta}{2-\theta}}  \eps^{-\frac{1-\theta}{2-\theta}}     +  C\eps\lambda \|   \partial_{t}  \Deltatwo \hat{y}\|_{L^{2}_T(I; L^2(\Om))}^2 \le C_\lambda \eps^{-\varrho}  +  C\eps\lambda \|  \partial_{t}  \Deltatwo \hat{y}\|_{L^{2}_T(I; L^2(\Om))}^2.   $$
Multiplying inequality~\eqref{e:last-step-4} by \ste $\eps^{\varrho}$ \BBB and  choosing $\lambda$ sufficiently small we get 
\begin{align*}
 \varepsilon^{1+\varrho}   \|  \partial_{t} \Deltatwo \hat{y} \|_{L^{2}_T(I;L^2(\Omega))}^{2} 
               & \leq \ C   ( 1  + \sqrt{\eps } \| y_{0, \varepsilon}\|_{H^{4} (\Om)} + \| y'_{0, \varepsilon}\|_{H^{1}(\Om)})  + C \eps^\varrho \| y'_{0, \varepsilon}\|_{H^{1}(\Om)}^{2} 
    \end{align*}
for some constant~$C>0$ independent of~$\varepsilon \in (0, 1)$ and of~$ h>0$.  \EEE This concludes the proof of~\eqref{e:lap-reg-mu2} \BBB by using \eqref{higher_bdry_cond_y} and an elliptic regularity estimate. \EEE
\end{proof}

\subsection{Proof of Theorem \ref{thm:weak_sol_time_delayed}}

\BBB

In this subsection, we separately discuss the limiting \ZZZ passage \EEE $\tau \to 0$ for the mechanical and the heat-transfer equation, leading to \eqref{weak_sol_time_del_y} and \eqref{weak_sol_time_del_theta}, respectively. 
 
\begin{lemma}[Convergence of the mechanical equation]\label{lemma:convmechequ}
Let $(y_h,\theta_h)$ be as in Proposition \ref{prop:a_priori_bounds}. Then, for any test function  $z \in C^\infty(\BBB I \EEE \times \overline \Omega; \R^d)$ satisfying $z = 0$ on $\BBB I \EEE \times \partial \Omega$ we have that \eqref{weak_sol_time_del_y} holds.
\end{lemma}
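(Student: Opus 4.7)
The plan is to derive \eqref{weak_sol_time_del_y} by passing to the limit $\tau \to 0$ in the time-discrete Euler--Lagrange equation \eqref{mechanical_step_el}. First, I would rewrite \eqref{mechanical_step_el} at each time step $k \in \{1,\ldots,T/\tau\}$ in terms of the interpolations $\overline y_\tau, \underline y_\tau, \hat y_\tau, \underline{\theta}_\tau, \overline f_\tau$ from \eqref{interpolations}. Given a smooth test function $z \in C^\infty(I \times \overline\Omega; \R^d)$ with $z=0$ on $I \times \partial\Omega$, I would introduce the piecewise-constant (in time) approximation $\overline{z}_\tau(t) := z(k\tau,\cdot)$ for $t \in ((k-1)\tau, k\tau]$, which is admissible in \eqref{mechanical_step_el} for every $k$ and satisfies $\overline{z}_\tau \to z$ uniformly on $I \times \overline\Omega$ together with all its spatial derivatives. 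Multiplying the resulting identity by $\tau$ and summing over $k \in \{1,\ldots,T/\tau\}$ yields the $\tau$-level analogue of \eqref{weak_sol_time_del_y} with $(y_h,\theta_h,z,f)$ replaced by $(\overline y_\tau,\underline{\theta}_\tau,\overline{z}_\tau,\overline f_\tau)$, up to the shift that the dissipative stress is evaluated at $\nabla \underline y_\tau$ rather than $\nabla \overline y_\tau$.

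The next step is to pass to the limit $\tau \to 0$ term by term, relying on the compactness results of Proposition \ref{prop:a_priori_bounds}. The force term converges because $\overline f_\tau \to f$ in $L^2(I; L^2(\Omega;\R^d))$ by standard properties of time averages of continuous Bochner functions (recalling $f \in W^{1,1}(I;L^2(\Omega;\R^d)) \hookrightarrow C(I;L^2(\Omega;\R^d))$). The regularization term $\eps\, \partial_t \nabla \Delta \hat y_\tau : \nabla \Delta \overline{z}_\tau$ converges thanks to the weak convergence in \eqref{def_H1_conv} paired against uniform convergence of $\nabla \Delta \overline{z}_\tau$. The acceleration term converges since the weak $L^2$-convergence of $\partial_t \hat y_\tau$ and of its $h$-shift in $L^2(I_h \times \Omega; \R^d)$ can be paired against the strongly convergent $\overline{z}_\tau$. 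For the elastic stress $\pl_F W(\nabla \overline y_\tau, \underline{\theta}_\tau)$, the strong convergence $\nabla \overline y_\tau \to \nabla y_h$ in $L^\infty(I_h \times \Omega)$ (from \eqref{def_Linfty_W1infty_conv}) and the strong $L^s$-convergence of $\underline{\theta}_\tau$ with $s < (d+2)/d$ (from \eqref{temp_Ls_conv}), combined with \ref{W_regularity} and the uniform bound on $|\pl_F \elpot|$ and $|\pl_F \cplpot|$ guaranteed by \eqref{pos_det} and \ref{C_bounds}, give convergence by a dominated-convergence argument. For the strain-gradient term, the strong convergence $\Delta \overline y_\tau \to \Delta y_h$ in $L^\infty(I_h; L^p(\Omega;\R^d))$ together with the growth condition \ref{H_bounds} on $DH$ yields strong convergence of $DH(\Delta \overline y_\tau)$ in $L^\infty(I_h; L^{p'}(\Omega;\R^d))$, which is more than enough to pair against the uniformly convergent $\Delta \overline{z}_\tau$.

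The delicate point is the passage to the limit in the dissipative stress $\pl_{\dot F} \disspot(\nabla \underline y_\tau, \partial_t \nabla \hat y_\tau, \underline{\theta}_\tau)$. Using the quadratic structure \ref{D_quadratic} together with \eqref{chain_rule_Fderiv}, this term is linear in $\partial_t \nabla \hat y_\tau$ with coefficients that are continuous (and uniformly bounded, by \ref{D_bounds} and \eqref{pos_det}) functions of $(\nabla \underline y_\tau, \underline{\theta}_\tau)$. Thanks to \eqref{def_Linfty_W1infty_conv} and \eqref{temp_Ls_conv}, after passing to a subsequence I obtain a.e.\ convergence of these coefficients, and the uniform $L^\infty$-bound lets dominated convergence promote this to strong convergence in every $L^q(I \times \Omega)$ with $q < \infty$. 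Combined with the weak convergence $\partial_t \nabla \hat y_\tau \rightharpoonup \partial_t \nabla y_h$ in $L^2(I \times \Omega)$, the standard strong$\times$weak argument yields weak convergence of $\pl_{\dot F} \disspot(\nabla \underline y_\tau, \partial_t \nabla \hat y_\tau, \underline{\theta}_\tau)$ in $L^2(I \times \Omega)$, which pairs against the uniformly convergent $\nabla \overline{z}_\tau$. Collecting all the limits produces \eqref{weak_sol_time_del_y} for the limiting pair $(y_h,\theta_h)$.
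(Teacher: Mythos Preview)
Your proof is correct and follows essentially the same approach as the paper: test the discrete Euler--Lagrange equation \eqref{mechanical_step_el}, sum over $k$, and pass to the limit term by term using the compactness of Proposition~\ref{prop:a_priori_bounds} together with the generalized dominated convergence theorem, the crucial point being the linearity of $\pl_{\dot F} R$ in $\dot F$ from \ref{D_quadratic}. The paper's own proof is a terse two-sentence sketch of exactly this strategy; your version simply spells out the individual limit passages more carefully (including the explicit piecewise-constant approximation $\overline{z}_\tau$ of the test function and the strong$\times$weak argument for the viscous stress).
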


\EEE

\begin{proof}
\BBB By \eqref{def_H1_conv} we have that   $\partial_t \hat{y}_{\tau}  \weakly y_h$  weakly in $L^2(I_h;  H^3(\Omega;\R^d))$ \EEE and by  \BBB \eqref{def_Linfty_W1infty_conv} \EEE  it holds that $\hat{y}_{\tau}(t), \overline{y}_{\tau}(t)$, $\underline{y}_{\tau}(t) \to y_h$ strongly in $W^{2, p}(\Om; \R^{d})$ for a.e.~$t \in I$.
  Moreover, $(\hat{y}_{\tau})_\tau$, $(\overline{y}_{\tau})_\tau$, and $(\underline{y}_{\tau})_\tau$ are bounded in $L^\infty(I; W^{2, p}(\Om; \R^{d}))$ independently of $\tau$ and $h$.
  Hence, we deduce \eqref{weak_sol_time_del_y}  \ZZZ by \EEE testing \eqref{mechanical_step_el} with $z$, \BBB summing over $k$, \EEE and passing to the limit as $\tau \to 0$  using the \BBB generalized  dominated \EEE convergence theorem. \BBB Here, we crucially use that $ \pl_{\dot F} \disspot$ is linear in the second entry, cf.\ \ref{D_quadratic}.  
\end{proof}

\BBB 
Before we concern ourselves with the limiting passage in the heat-transfer equation, we establish a mechanical energy balance for $(y_h,\theta_h)$. \EEE

\BBB

\begin{lemma}[Mechanical energy balance]\label{lemma: chain rull}
Let $(y_h,\theta_h)$  \BBB be  as in Proposition \ref{prop:a_priori_bounds}  with $y_h(0, \cdot) = y_{0, \eps}$   satisfying \eqref{weak_sol_time_del_y}.  Then, for any $t \in I$ we have the mechanical energy balance 
\begin{align}\label{eq: mechenebal}
  \mathcal{M}(y_h(t))  &   +  \frac{\rho}{2}  \mint_{t-h}^t \Vert \partial_t y_h(s) \Vert^2_{L^2(\Omega)}  \di s   + \int_0^t    2\diss_\eps   (y_h,  \partial_t y_h, \theta_h)   \di s   +  \frac{\rho}{2h}\int_0^t  \Vert \partial_t y_h(s) - \partial_t y_h(s-h) \Vert^2_{L^2(\Omega)}     \di s   \nonumber
          \\
          & =  \mechen(y_{0,\eps})   +   \frac{\rho}{2}   \Vert y_{0,\eps}' \Vert^2_{L^2(\Omega)} 
             -  \int_0^t \int_\Omega 
       \pl_{F} W^{\rm cpl} (\nabla y_{h}, \theta_{h}) 
         : \partial_{t}\nabla y_{h}   \di x \di s
          +  \int_0^t \int_\Omega f \cdot \partial_{t} y_{h}   \di x \di s .       
\end{align}
\EEE
 
\end{lemma}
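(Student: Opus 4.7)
The plan is to test the mechanical weak formulation \eqref{weak_sol_time_del_y} with $z = \partial_t y_h \, \indic_{(0,t)}$ (suitably mollified) and apply chain rules to the conservative contributions. The first task is to extend the admissibility of test functions in \eqref{weak_sol_time_del_y}: by density, and using the a priori bounds from Proposition \ref{prop:a_priori_bounds} and Corollary \ref{prop:a_priori_bounds2}, each of the six terms extends continuously to $z \in L^2(I; H^3(\Omega;\R^d) \cap H^1_0(\Omega;\R^d))$. Since Corollary \ref{prop:a_priori_bounds2} ensures $y_h \in L^\infty(I; \Yidreg) \cap H^1(I_h; H^3(\Omega;\R^d))$ with $y_h = \id$ on $I \times \partial \Omega$ (so that $\partial_t y_h = 0$ on $I\times\partial\Omega$), the choice $z = \partial_t y_h \, \indic_{(0,t)}$ is admissible up to a standard mollification in time that is removed using the $H^1$-in-time regularity of $y_h$ with values in $H^3(\Omega;\R^d)$.

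With this test, I split $W = W^{\rm el} + W^{\rm cpl}$ and keep the coupling piece on the right-hand side. Using $\det \nabla y_h \ge 1/C$ from Lemma \ref{lem:pos_det} together with \ref{W_regularity}, the chain rule yields $\int_0^t \int_\Omega \pl_F W^{\rm el}(\nabla y_h) : \partial_t \nabla y_h \di x \di s = \elen(y_h(t)) - \elen(y_{0,\eps})$. Exploiting $|DH(\Delta y_h)| \le C|\Delta y_h|^{p-1}$, the fact that $\Delta y_h \in L^\infty(I; L^p)$, and that $\partial_t \Delta y_h \in L^2(I; H^1) \hookrightarrow L^2(I; L^{2^*})$ with $p < 2^*$, the chain rule likewise gives $\int_0^t \int_\Omega DH(\Delta y_h) \cdot \Delta \partial_t y_h \di x \di s = \hyen(y_h(t)) - \hyen(y_{0,\eps})$. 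The viscous contribution is handled via \eqref{diss_rate}: $\int_0^t \int_\Omega \pl_{\dot F} \disspot(\nabla y_h, \partial_t \nabla y_h, \theta_h): \partial_t \nabla y_h \di x \di s = \int_0^t 2\diss(y_h, \partial_t y_h, \theta_h) \di s$, and when combined with the $\eps$-regularizing term $\eps \int_0^t \|\partial_t \nabla \Delta y_h\|_{L^2}^2 \di s$, it matches $\int_0^t 2\diss_\eps(y_h, \partial_t y_h, \theta_h) \di s$ by the definition \eqref{Repps}.

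For the delayed-acceleration term I would apply the pointwise identity $2(a-b)\cdot a = |a|^2 - |b|^2 + |a-b|^2$ with $a = \partial_t y_h(s)$, $b = \partial_t y_h(s-h)$, integrate over $(0,t)\times \Omega$, perform the substitution $s \mapsto s-h$ in the $|b|^2$-integral, and use the prescribed values $\partial_t y_h \equiv y_{0,\eps}'$ on $[-h,0]$ to obtain
\begin{align*}
&\frac{\rho}{h}\int_0^t \int_\Omega (\partial_t y_h(s) - \partial_t y_h(s-h))\cdot \partial_t y_h(s) \di x \di s \\
&\quad = \frac{\rho}{2}\mint_{t-h}^t \|\partial_t y_h(s)\|_{L^2(\Omega)}^2 \di s - \frac{\rho}{2}\|y_{0,\eps}'\|_{L^2(\Omega)}^2 + \frac{\rho}{2h}\int_0^t \|\partial_t y_h(s) - \partial_t y_h(s-h)\|_{L^2(\Omega)}^2 \di s,
\end{align*}
in direct analogy with the time-discrete computation \eqref{accelia}. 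Collecting all contributions and rearranging produces \eqref{eq: mechenebal}.

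The main obstacle is the rigorous justification of using $z = \partial_t y_h \, \indic_{(0,t)}$ in a formulation initially written only for $C^\infty$ test functions: the approximation must simultaneously preserve the homogeneous Dirichlet condition on $\partial \Omega$ and enough spatial regularity for every term (in particular the cubic expression $\pl_F W$ and the fourth-order $DH(\Delta y_h)\cdot \Delta(\cdot)$) to pass to the limit, where the $\eps$-dependent estimates of Corollary \ref{prop:a_priori_bounds2} deliver the needed continuity. Once this approximation step is in place, the remaining chain-rule computations are routine under the established regularity of $y_h$, and the identity \eqref{eq: mechenebal} follows as equality because the chain rules on the continuous level are exact (no convexity inequality is used).
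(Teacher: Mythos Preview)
Your proof is correct and follows essentially the same route as the paper: test \eqref{weak_sol_time_del_y} with $z=\partial_t y_h\,\indic_{[0,t]}$, apply chain rules to the conservative terms, and expand the delayed-acceleration contribution via the polarization identity $2(a-b)\cdot a=|a|^2-|b|^2+|a-b|^2$ combined with $\partial_t y_h\equiv y_{0,\eps}'$ on $[-h,0]$. The only cosmetic difference is that the paper handles the chain rule for $\mechen$ in one stroke by invoking the chain rule for $\Lambda$-convex functionals (\cite[Proposition~3.6]{tve_orig} together with Lemma~\ref{lemma_ lambda convex}), whereas you split $\mechen=\elen+\hyen$ and justify each piece separately via the available $H^1(I_h;H^3)$ regularity and the growth of $DH$; both justifications are valid.
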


\begin{proof}
By the regularity $y_h \in H^1(I_h;H^3(\Omega;\R^d))$, using the chain rule for $\Lambda$-convex functionals (see \cite[Proposition 3.6]{tve_orig} and Lemma \ref{lemma_ lambda convex}), we first observe that the mechanical energy defined in \eqref{mechanical} satisfies  that $t \mapsto \mechen(y_h(t))$ lies in $W^{1,1}(I)$ and  that
\begin{align}\label{the cain rule}
\frac{\di}{\di t }   \mathcal{M}   (y_{h}) = \int_{\Om}    {D H}( \Delta y_{h})  : \partial_{t} \Delta y_{h} \di x + \int_{\Om} \partial_{F} \elpot ( \nabla y_{h} )  \cdot\partial_{t} \nabla y_{h} \di x    \quad \text{for a.e.\ $t \in I$}.
\end{align}
We  test  the equation  \eqref{weak_sol_time_del_y} with $z \defas\ee   \partial_{t} y_{h}  \indic_{[0,t]}  $, and obtain by \eqref{diss_rate} 
\begin{align*}
          &   \int_0^t \int_\Omega 
       \big(
        \pl_F \elpot(\nabla y_h) + \pl_{F} W^{\rm cpl} (\nabla y_{h}, \theta_{h}) \big) 
         : \partial_{t}\nabla y_{h}   \di x \di s +  \int_0^t \int_\Omega   D      \hypot( \Delta  y_h)   \cdot \partial_{t} \Delta  y_{h}   \di x \di s   
        \\
      &  
          -  \int_0^t \int_\Omega f \cdot \partial_{t} y_{h}   \di x \di s   +\frac{\rho}{h} \int_0^t \int_\Omega \big( \partial_t y_h(s) - \partial_t y_h(s - h) \big) \cdot \partial_{t} y_{h} (s)   \di x  \di s \nonumber
          \\
          &\qquad
           =  -\int_0^t \int_\Omega 2 \disspot(\nabla y_h, \partial_t \nabla y_h, \theta_h)  \di x \di s -  \eps \int_{0}^{t} \int_{\Om}   |\partial_t \nabla \ZZZ \Delta \EEE y_{h}|^2    \di x \di s  .     \nonumber
\end{align*}
Applying the chain rule \eqref{the cain rule} we find 
\begin{align}\label{MMMMM}
 & \mathcal{M} (y_h(t)) - \mechen(y_{0,\eps})       +  \int_0^t \int_\Omega 
       \pl_{F} W^{\rm cpl} (\nabla y_{h}, \theta_{h}) 
         : \partial_{t}\nabla y_{h}   \di x \di s
          -  \int_0^t \int_\Omega f \cdot \partial_{t} y_{h}   \di x \di s   
          \\
          &\quad
           =  -\int_0^t \int_\Omega \Big( 2 \disspot(\nabla y_h, \partial_t \nabla y_h, \theta_h)     +  \eps    |\partial_t \nabla \Delta y_{h}|^2  \Big)   \di x \di s   - \frac{\rho}{h} \int_0^t \int_\Omega \big( \partial_t y_h(s) - \partial_t y_h(s - h) \big) \cdot \partial_{t} y_{h} (s)  \di x    \di s.   \notag  
  \end{align}
\BBB Denoting the last term on the right-hand side  by $\Pi$ (without negative sign), and expanding it as in \eqref{e:last-step-3}, we derive 
\begin{align*}
\Pi & =     \frac{\rho}{2h} \int_0^t \big(  \Vert \partial_t y_h(s) \Vert^2_{L^2(\Omega)} - \Vert \partial_t y_h(s-h) \Vert^2_{L^2(\Omega)} + \Vert \partial_t y_h(s) - \partial_t y_h(s-h) \Vert^2_{L^2(\Omega)}  \big)    \di s \notag \\
&  =   \frac{\rho}{2}  \mint_{t-h}^t \Vert \partial_t y_h(s) \Vert^2_{L^2(\Omega)}  \di s  -  \frac{\rho}{2}  \mint_{-h}^0 \Vert \partial_t y_h(s) \Vert^2_{L^2(\Omega)}  \di s  + \frac{\rho}{2h}\int_0^t  \Vert \partial_t y_h(s) - \partial_t y_h(s-h) \Vert^2_{L^2(\Omega)}     \di s.  
\end{align*}
Plugging this into \eqref{MMMMM} and using    \eqref{Repps} as well as $ \partial_t y_h(s)  = y_{0,\eps}'$ for $s \in (-h,0)$, the proof of the mechanical energy balance is concluded. 
\end{proof}

\BBB 
\begin{lemma}[Convergence of the heat-transfer equation]\label{lemma: heattraf1}
Let $(y_h,\theta_h)$ be as in Proposition \ref{prop:a_priori_bounds}. Then, for any test function  $\varphi \in C^\infty(I \times \overline \Omega)$     equation \eqref{weak_sol_time_del_theta} is satisfied.
\end{lemma}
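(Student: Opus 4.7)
The plan is to test the Euler--Lagrange equation \eqref{thermal_step_el} (with $k$ replaced by $1,\dots,T/\tau$) against a piecewise constant interpolant $\overline{\vphi}_\tau$ of $\vphi(k\tau,\cdot)$, sum $\tau$ times the resulting identities, and rewrite them in the interpolation notation of \eqref{interpolations}. This yields the time-discrete analogue of \eqref{weak_sol_time_del_theta} in which the time-derivative $\partial_t \hat w_\tau$ is paired against $\overline{\vphi}_\tau$ in $L^2(I;(H^1(\Om))^\ast)\times L^2(I;H^1(\Om))$, the heat-flux term involves $\hcm(\nabla\underline{y}_\tau,\underline\theta_\tau)\nabla\overline\theta_\tau$, the Robin term $\kappa(\overline\theta_\tau-\overline\theta_{\flat,\tau})\overline\vphi_\tau$, and on the right-hand side the adiabatic term, the dissipation rate, and the truncated regularization. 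Passage to the limit in the linear/weak-strong blocks is then immediate from Proposition \ref{prop:a_priori_bounds} and Corollary \ref{prop:a_priori_bounds2}: the strong convergence $\nabla\underline{y}_\tau\to \nabla y_h$ in $L^\infty(I\times\Om)$ and $\underline\theta_\tau\to\theta_h$ in $L^s(I\times\Om)$ pass into the continuous coefficient $\hcm$ (bounded by \eqref{spectrum_bound_K}), combined with $\nabla\overline\theta_\tau\weakly\nabla\theta_h$ weakly in $L^2(I\times\Om)$; the boundary term converges by compactness of the trace together with $\overline\theta_{\flat,\tau}\to\bt$; and $\partial_t\hat w_\tau\weakly \partial_t w_h$ in $L^2(I;(H^1(\Om))^\ast)$ against $\overline\vphi_\tau\to\vphi$ strongly in $L^2(I;H^1(\Om))$.

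The adiabatic heat source $\pl_F\cplpot(\nabla\underline{y}_\tau,\underline\theta_\tau):\partial_t\nabla\hat y_\tau$ and the dissipation rate $\drate(\nabla\underline{y}_\tau,\partial_t\nabla\hat y_\tau,\underline\theta_\tau)$ are the genuinely nonlinear contributions: the first is linear in $\partial_t\nabla\hat y_\tau$, the second quadratic. Both require strong $L^2$ convergence of $\partial_t\nabla\hat y_\tau$ to close the passage. This is the main obstacle, and it will be resolved by means of the mechanical energy balance \eqref{eq: mechenebal} established in Lemma \ref{lemma: chain rull} for $(y_h,\theta_h)$ (which is already available through Lemma \ref{lemma:convmechequ}). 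The strategy is the by-now standard "energy-equality trick": summing \eqref{el_mech_test_step_l} over $k=1,\dots,t/\tau$ yields a time-discrete mechanical energy equality whose left-hand side contains $\sum_l\tau\,2\diss_\eps(\underline{y}_\tau,\partial_t\hat y_\tau,\underline\theta_\tau)$. All other terms in that identity pass to the limit by the convergences of Proposition \ref{prop:a_priori_bounds} and Corollary \ref{prop:a_priori_bounds2} (including $\mechen(\overline y_\tau(t))\to \mechen(y_h(t))$ for a.e.~$t$ by the chain rule for $\Lambda$-convex functionals, cf.~Lemma \ref{lemma_ lambda convex}), while the dissipation term is only lower semicontinuous, giving
\[
\liminf_{\tau\to 0}\int_0^t 2\diss_\eps(\underline{y}_\tau,\partial_t\hat y_\tau,\underline\theta_\tau)\,\di s \ge \int_0^t 2\diss_\eps(y_h,\partial_t y_h,\theta_h)\,\di s .
\]
Comparison with \eqref{eq: mechenebal} upgrades this to an equality. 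Since $\diss_\eps$ is a uniformly positive definite quadratic form in $(\partial_t\nabla\hat y_\tau,\sqrt\eps\,\partial_t\nabla\Delta\hat y_\tau)$ by \ref{D_quadratic}--\ref{D_bounds} (applied, as in \cite[Prop.~5.1]{tve_orig}, after using the generalized Korn's inequality and the uniform determinant bound from Lemma \ref{lem:pos_det}) convergence of the dissipation together with weak convergence implies strong convergence
\[
\partial_t\nabla\hat y_\tau\to\partial_t\nabla y_h \text{ in }L^2(I\times\Om),\qquad \sqrt\eps\,\partial_t\nabla\Delta\hat y_\tau\to \sqrt\eps\,\partial_t\nabla\Delta y_h \text{ in }L^2(I\times\Om).
\]

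With this strong convergence at hand, the adiabatic term passes by weak-strong duality (the $\pl_F\cplpot$ factor converges strongly in $L^\infty$ by continuity), and the quadratic dissipation rate converges in $L^1(I\times\Om)$ by continuity of $\drate$ in its first and third arguments (again with the uniform bounds from Lemma \ref{lem:pos_det}) and strong $L^2$ convergence of the velocity gradient. It remains to handle the truncation $\eps|\partial_t\nabla\Delta\hat y_\tau|^2\wedge\tau^{-1}$: using the $\tau$-uniform bound of Corollary \ref{prop:a_priori_bounds2} on $\partial_t\nabla\Delta\hat y_\tau$ in $L^2(I\times\Om)$, the set $A_\tau=\{\eps|\partial_t\nabla\Delta\hat y_\tau|^2>\tau^{-1}\}$ satisfies $|A_\tau|\le C\eps\tau\to 0$, and on its complement the truncation is inactive; hence the strong $L^2$ convergence of $\sqrt\eps\,\partial_t\nabla\Delta\hat y_\tau$ transfers to strong $L^1$ convergence of the truncated quantity to $\eps|\partial_t\nabla\Delta y_h|^2$ via equi-integrability. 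Collecting all convergences gives \eqref{weak_sol_time_del_theta} for every smooth $\vphi$, which completes the proof.
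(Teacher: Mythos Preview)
Your proposal is correct and follows essentially the same route as the paper: both arguments hinge on upgrading the weak convergence of $\partial_t\nabla\hat y_\tau$ (and $\sqrt{\eps}\,\partial_t\nabla\Delta\hat y_\tau$) to strong $L^2$ convergence via the ``energy-equality trick'', comparing the $\liminf$ of the discrete mechanical energy inequality (Lemma~\ref{lemma: mechenbound}) against the continuous balance \eqref{eq: mechenebal}, and then passing to the limit in the remaining terms of \eqref{thermal_step_el} using Proposition~\ref{prop:a_priori_bounds} and Corollary~\ref{prop:a_priori_bounds2}. A couple of minor imprecisions: summing \eqref{el_mech_test_step_l} gives an \emph{inequality} with an $O(\tau)$ error (not an equality) once you apply the $\Lambda$-convexity bounds; the convergence $\mechen(\overline y_\tau(t))\to\mechen(y_h(t))$ follows directly from the strong $W^{2,p}$ convergence in \eqref{def_Linfty_W1infty_conv} rather than from any chain rule; and the kinetic terms on the left-hand side of Lemma~\ref{lemma: mechenbound} are only lower semicontinuous (not convergent) at this stage, but this is all that the argument requires.
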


\BBB
 
\begin{proof}
The essential point is to show strong convergence of the strain rates, namely
\begin{equation}\label{eq: strngi conv}
 \nabla  \partial_t  \hat y_\tau  \to   \nabla   \partial_t y_h \text{ and }   \nabla \Delta  \partial_t \ZZZ \hat{y}_\tau \EEE  \to   \nabla  \Delta \partial_t y_h  \quad\text{strongly in } L^2(I;   L^2(\Omega; \R^{d\times d})  ). 
 \end{equation}
Once this is achieved, using the convergences in  \eqref{def_H1_conv}--\eqref{temp_Ls_conv}, we can almost verbatim follow the proof of  \cite[Proposition 4.6]{tve}, recalling that the  scheme for the heat-transfer equation differs from the one in  \cite{tve} only by the regularizing term $\eps |\partial_t \nabla \Delta \hat y_\tau|^{2} \wedge \tau^{-1}$, see \eqref{thermal_step_el} and \cite[Equation (3.11)]{tve}. The only difference is that for the term
$$ \int_\Omega  \partial_t \hat{w}_\tau \vphi \di x $$
in \eqref{thermal_step_el} we do not perform an integration by parts in time, but directly pass to the limit using that $\partial_t\hat{w}_\tau \weakly \partial_t w_h$ in $L^2( \ZZZ I; \EEE (H^1(\Omega))^*)$ by Corollary \ref{prop:a_priori_bounds2}. This gives   \eqref{weak_sol_time_del_theta}.

The argument for showing \eqref{eq: strngi conv} is along the lines of \cite[Lemma 4.5]{tve} or \cite[Proposition 5.1]{tve_orig}, and relies on passing to the limit in the mechanical energy balance. We briefly sketch the argument. In view of the notation in \eqref{interpolations}, we can write the discrete mechanical energy estimate in Lemma \ref{lemma: mechenbound}  as
\begin{align*} 
 & \mechen(\overline{y}_\tau(T))   + \frac{\rho}{2} \mint_{T-h}^T \Vert \partial_t \hat{y}_\tau(t) \Vert_{L^2(\Omega)}^2 \, \di t +  \int_0^T 2 \diss_\eps( \underline{y}_\tau, \partial_t  \hat{y}_\tau, \underline{\theta}_\tau)   \di t  +    \frac{\rho }{2h} \int_0^T  \Vert  \partial_t \hat{y}_\tau(t) -  \partial_t \hat{y}_\tau(t-h)\Vert_{L^2(\Omega)}^2 \, \di t
     \\ &\leq
      \mechen(y_{0,\eps})     +   \frac{\rho}{2} \Vert \BBB y_{0,\eps}' \EEE  \Vert_{L^2(\Omega)}^2 -     \int_0^T  \int_\Omega
      \pl_F \cplpot(\underline{y}_\tau,  \underline{\theta}_\tau) :  \partial_t \nabla  \hat{y}_\tau \di x \, \di t
      + \int_0^T \big( \overline{f}_\tau(t), \partial_t \hat{y}_\tau(t)  \big)_2 \, \di t    + C_M \tau V_{T/\tau}.
  \end{align*} 
  Employing \eqref{def_H1_conv}--\eqref{temp_Ls_conv}, standard lower semicontinuity arguments (see \cite[Equation (4.15)]{tve} and also \cite[Theorem~7.5]{FonsecaLeoni07Modern} for a general result) imply,  
  \begin{subequations}
  \begin{align}
 \liminf_{\tau \to 0} \mechen(\overline{y}_\tau(T)) &  \geq \mechen(y_h(T)), \label{two ini1} \\
 \liminf_{\tau \to 0} \int_0^T
      \int_\Omega\drate(\nabla \underline{y}_\tau, \partial_t \nabla \hat{y}_\tau, \underline{\theta}_\tau) \di x  \di t
     & \geq \int_0^T \int_\Omega\drate(\nabla y_h, \partial_t  \nabla  y_h, \theta_h) \di x \di t, \label{two ini2} \\
 \liminf_{\tau \to 0} \int_0^T
      \int_\Omega    \eps |\partial_t \nabla \Delta  \hat{y}_{\tau}|^2 \di x  \di t
     & \geq \int_0^T \int_\Omega \eps |\partial_t \nabla \Delta  {y}_h|^2   \di x \di t, \label{two ini3}\\          
      \liminf_{\tau \to 0}   \frac{\rho}{2} \mint_{T-h}^T\Vert \partial_t \hat{y}_\tau( \ZZZ t) \EEE \Vert_{L^2(\Omega)}^2 \, \di t 
     & \geq  \frac{\rho}{2}  \mint_{T-h}^T \Vert \partial_t y_h( \ZZZ t) \EEE \Vert^2_{L^2(\Omega)}  \di t, \label{two ini4}\\
           \liminf_{\tau \to 0}  \frac{\rho }{2h} \int_0^T  \Vert  \partial_t \hat{y}_\tau(t) -  \partial_t \hat{y}_\tau(t-h)\Vert_{L^2(\Omega)}^2 \, \di t 
     & \geq        \frac{\rho}{2h}\int_0^T  \Vert \partial_t y_h(t) - \partial_t y_h(t-h) \Vert^2_{L^2(\Omega)}     \di t. \label{two ini5}
  \end{align}
  \end{subequations} 
Since $\int_0^T ( \overline{f}_\tau(t), \partial_t \hat{y}_\tau(t) )_2 \, \di t \to \int_0^T \int_\Omega f \cdot \partial_{t} y_{h}   \di x \di t $, as well as $\int_0^T  \int_\Omega
      \pl_F \cplpot(\underline{y}_\tau,  \underline{\theta}_\tau) :  \partial_t \nabla  \hat{y}_\tau \di x \, \di t$ converges to $\int_0^T \int_\Omega 
       \pl_{F} W^{\rm cpl} (\nabla y_{h}, \theta_{h}) 
         : \partial_{t}\nabla y_{h}   \di x \di t$   by \eqref{def_H1_conv}--\eqref{temp_Ls_conv}, and  $\tau V_{T/\tau} \to 0$  as $\tau \to 0$ by \eqref{H1_def}, using the mechanical energy balance \eqref{eq: mechenebal}, we find that all estimates  \eqref{two ini1}--\eqref{two ini5} are actually equalities, see \cite[Equation (4.15)]{tve} for details on this argument. In particular, the convergence in \eqref{two ini2} implies  the first part of \eqref{eq: strngi conv},    by repeating the arguments in  \cite[Equation (4.16)ff.]{tve}. Eventually,   the convergence in \eqref{two ini3} provides the second part of \eqref{eq: strngi conv}. 
\end{proof}

\begin{proof}[Proof of Theorem \ref{thm:weak_sol_time_delayed}]
The statement follows by collecting the regularity for $y_h$, $\theta_h$, and $w_h$ given in Proposition~\ref{prop:a_priori_bounds} and Corollary~\ref{prop:a_priori_bounds2}, and the identification of the limiting equations in Lemmas \ref{lemma:convmechequ} and \ref{lemma: heattraf1}.
\end{proof} 
  \EEE

\section{Vanishing time-delay}\label{sec:vanish_h}

We recall that for each $T, \, h , \, \eps> 0$, Theorem \ref{thm:weak_sol_time_delayed} guarantees the existence of $(y_h, \theta_h)$ such that   $\BBB y_h  \in L^\infty( \BBB I; \EEE \Yidreg) \cap H^1(I_h; H^3(\Omega; \R^d))$,   $\theta_h \in L^2(I; H^1_+(\Omega))$, and \BBB such that equations \eqref{weak_sol_time_del_y}--\eqref{weak_sol_time_del_theta} hold. We also recall that $y_h(t) = \BBB y_{0,\eps} + t y_{0,\eps}' \EEE $ for $t \in [-h, 0]$. \EEE
The goal of this section is to pass to the limit $h \to 0$ in \eqref{weak_sol_time_del_y}--\eqref{weak_sol_time_del_theta}.
\BBB We start by a compactness result. \EEE

\begin{lemma}[Compactness]\label{h_lem:compactness}
   \BBB There exist $  y_\eps    \in L^{\infty}(I;  \Yidreg)   \cap  H^1(I; H^3(\Omega; \R^d))$ with $y_\eps(0) = \ZZZ y_{0,\eps} \EEE $, and  $ \theta_\eps   \in L^2(I; H^1_+(\Omega))  $, as well as    $w_\eps  \defas \inten(\nabla y_\eps, \theta_\eps) \in L^2(I; H^1(\Omega)) \cap H^1(I; (H^1(\Omega))^*)$ with  \ZZZ $ w_\eps(0) =   w_{0,\eps}  $  \EEE such that, up to selecting a subsequence (not relabeled), it holds that
  \begin{subequations}
  \begin{align}
    y_h &\weakly y_\eps \quad\text{weakly in } \BBB H^1(I; H^3(\Omega; \R^d)), \EEE  \label{h_def_H1_conv} \\
    y_h & \to y_\eps \quad\text{strongly in } \BBB C \EEE (I; W^{1, \infty}(\Omega; \R^{d})) \BBB \ \ \text{ and strongly in } C(I; W^{2, p}(\Omega; \R^{d})), \label{h_def_L2_W2p_conv} \\
    \theta_h &\to \theta_\eps \quad \text{ and } \quad
    w_h \to w_\eps
    \text{ strongly in } \ZZZ  L^s(I \times \Omega) \text{ for any } s \in [1, \tfrac{d+2}{d}), \EEE \label{h_temp_Lr_W1r_conv} \\
  \BBB      \theta_h &\weakly \theta_\eps \quad \text{ and } \quad
    w_h \weakly w_\eps
    \text{ weakly in }   L^2(I; H^1(\Omega))  , \EEE \label{h_temp_Lr_W1r_conv2} \\
        w_h& \weakly w_\eps
    \text{ weakly in } H^1(I; (H^1(\Omega))^*) . \label{h_temp_Ls_conv}
  \end{align}
  \end{subequations}
\end{lemma}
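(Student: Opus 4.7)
The plan is to first record all $h$-uniform bounds. Every estimate in Proposition~\ref{prop:a_priori_bounds} and Corollary~\ref{prop:a_priori_bounds2} is uniform in both $\tau$ and $h$, so by lower semicontinuity the same bounds hold for $(y_h,\theta_h,w_h)$ after the $\tau \to 0$ passage already carried out in Theorem~\ref{thm:weak_sol_time_delayed}. In particular I would retain, uniformly in $h$: the $\eps$-independent controls $\|y_h\|_{L^\infty(I;W^{2,p})} + \|(\det \nabla y_h)^{-1}\|_{L^\infty(I\times\Omega)} + \|\theta_h\|_{L^\infty(I;L^1)} + \|w_h\|_{L^\infty(I;L^1)} \le C$ and $\|\theta_h\|_{L^q(I\times\Omega)} + \|w_h\|_{L^q(I\times\Omega)} \le C_q$ for any $q < (d+2)/d$; and the $\eps$-dependent controls $\|y_h\|_{H^1(I;H^3)} + \|\theta_h\|_{L^2(I;H^1)} + \|w_h\|_{L^2(I;H^1)} + \|w_h\|_{H^1(I;(H^1)^*)} \le C_\eps$. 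Integrating $\partial_t y_h \in L^2(I;H^3)$ forward from $y_h(0)=y_{0,\eps}$ additionally yields $\|y_h\|_{L^\infty(I;H^3)} \le C_\eps$; combining \eqref{e:lap-reg-mu2} with elliptic regularity for $\Delta$ even gives $\|y_h\|_{L^\infty(I;H^4)} \le C_\eps$, which will be needed to conclude $y_\eps \in L^\infty(I;\Yidreg)$.

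By Banach's selection principle I would then extract, along a non-relabeled subsequence, the weak convergences $y_h \weakly y_\eps$ in $H^1(I;H^3)$, $\theta_h \weakly \theta_\eps$ and $w_h \weakly w_\eps$ in $L^2(I;H^1)$, and $w_h \weakly w_\eps$ in $H^1(I;(H^1)^*)$, establishing \eqref{h_def_H1_conv}, \eqref{h_temp_Lr_W1r_conv2} and \eqref{h_temp_Ls_conv}. For the strong convergence of the deformation, combining the uniform $L^\infty(I;H^3)$ bound with the equi-Hölder estimate $\|y_h(t)-y_h(s)\|_{H^3} \le C_\eps |t-s|^{1/2}$ inherited from $H^1(I;H^3)$, together with the compact embeddings $H^3(\Omega) \subset\subset W^{2,p}(\Omega) \subset\subset W^{1,\infty}(\Omega)$ (valid for both $d=2,3$ thanks to $p<6$), the Arzelà-Ascoli theorem yields relative compactness of $\{y_h\}$ in $C(I;W^{2,p}) \cap C(I;W^{1,\infty})$ and hence \eqref{h_def_L2_W2p_conv}. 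In particular $y_\eps(0) = y_{0,\eps}$ and the pointwise bound $\det \nabla y_\eps \ge 1/C >0$ is preserved.

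For the temperature variables I would apply the Aubin-Lions-Simon lemma to the Gelfand triple $H^1(\Omega) \subset\subset L^2(\Omega) \subset (H^1(\Omega))^*$, which with the bounds of Step~1 delivers $w_h \to w_\eps$ strongly in $L^2(I\times\Omega)$; interpolating this $L^2$-convergence with the uniform $L^q$-bound for $q \in (1,(d+2)/d)$ upgrades it to strong convergence in $L^s(I\times\Omega)$ for every $s \in [1,(d+2)/d)$. The main obstacle lies in obtaining the analogous strong convergence of $\theta_h$ and in identifying the limit as $w_\eps = \inten(\nabla y_\eps,\theta_\eps)$, because no direct time-derivative bound is available for $\theta_h$. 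The plan to overcome this is to invoke the strict monotonicity $\partial_\theta \inten \ge C_0^{-1}>0$ from~\ref{C_bounds}, which ensures that $\inten(F,\cdot)$ is invertible with a globally Lipschitz inverse $\Phi(F,\cdot)$, and that $\Phi$ is continuous in $F$ by virtue of~\ref{C_regularity}--\ref{C_lipschitz}. Writing $\theta_h = \Phi(\nabla y_h, w_h)$, the strong convergence $\nabla y_h \to \nabla y_\eps$ from Step~3, together with the a.e.\ convergence $w_h \to w_\eps$ extracted from Step~4 and the uniform $L^q$ bound, then implies strong convergence $\theta_h \to \theta_\eps \defas \Phi(\nabla y_\eps, w_\eps)$ in $L^s(I\times\Omega)$ by dominated convergence; this simultaneously identifies $w_\eps = \inten(\nabla y_\eps,\theta_\eps)$ and yields \eqref{h_temp_Lr_W1r_conv}.

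Finally, the initial condition $w_\eps(0) = w_{0,\eps}$ would be obtained from the continuous embedding $L^2(I;H^1) \cap H^1(I;(H^1)^*) \hookrightarrow C(I;L^2)$, which has an $h$-uniform embedding constant, combined with $w_h(0) = w_{0,\eps}$ and the weak-in-$L^2$ convergence at $t=0$. The regularity $y_\eps \in L^\infty(I;\Yidreg)$ follows from the $L^\infty(I;H^4)$-bound established in Step~1 together with preservation of the boundary conditions in~\eqref{eq: yidre} under the strong convergence in $C(I;W^{2,p})$ and the weak convergence of $\Delta^2 y_h$, while the nonnegativity $\theta_\eps \ge 0$ is preserved in the a.e.\ limit.
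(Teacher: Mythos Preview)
Your proposal is correct and follows essentially the same approach as the paper: inheriting the $h$-uniform bounds from Proposition~\ref{prop:a_priori_bounds} and Corollary~\ref{prop:a_priori_bounds2} via lower semicontinuity, then extracting the convergences by weak compactness and Aubin--Lions. The paper's proof is extremely terse (it simply refers back to those earlier results and to \cite[Proposition~5.1]{tve_orig}), so you have filled in details that the paper omits---in particular the Arzel\`a--Ascoli argument for \eqref{h_def_L2_W2p_conv} (the paper invokes Aubin--Lions here, which in Simon's version also gives $C(I;W^{2,p})$) and the explicit use of the inverse map $\Phi(F,\cdot)$ of $\inten(F,\cdot)$ to transfer strong convergence from $w_h$ to $\theta_h$ (the paper outsources this step to \cite[Lemma~4.2]{tve}). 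One small remark: your interpolation step for $w_h$ is unnecessary, since once Aubin--Lions gives $w_h\to w_\eps$ in $L^2(I\times\Omega)$ and $\tfrac{d+2}{d}\le 2$, the $L^s$-convergence for $s<\tfrac{d+2}{d}$ is immediate on the bounded domain.
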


\BBB

\begin{proof}
As the a priori bounds derived in Proposition \ref{prop:a_priori_bounds} and Corollary \ref{prop:a_priori_bounds2} are independent of $h$, we see by the lower semicontinuity of norms that the same bounds hold true for $(y_h, \theta_h)$ \ZZZ in \EEE place of $(y_\tau, \theta_\tau)$. Then, \ZZZ \eqref{h_def_H1_conv}--\eqref{h_temp_Lr_W1r_conv} \EEE can be obtained exactly as in the proof of Proposition \ref{prop:a_priori_bounds}, and \eqref{h_temp_Lr_W1r_conv2} --\eqref{h_temp_Ls_conv} follow from the bounds in Corollary \ref{prop:a_priori_bounds2}  by weak compactness and the  Aubin-Lions' lemma, see also \cite[Proposition 5.1]{tve_orig}.
\end{proof}

\BBB
We collect a priori bounds for the limits $(y_{\eps}, \theta_{\eps})$ \EEE which directly follow from~Proposition \ref{prop:a_priori_bounds}, Lemma~\ref{l:laplace-regularity}, Lemma \ref{h_lem:compactness},   and lower semicontinuity: for each $q \in [1, \frac{d + 2}{d})$ and $r \in [1, \frac{d+2}{d+1})$ we can find constants $C$, $C_q$, and $C_r$ independent of~$\eps$  such that
\begin{subequations}
\label{e:epsilon_a_priori_1}
\begin{align}
  \Vert y_{\eps} \Vert_{L^\infty(I; W^{2, p}(\Omega))}
  + \Vert \det(\nabla y_{\eps})^{-1} \Vert_{L^\infty(I \ee \times \Omega)}
    &\leq C \label{mu_Linfty_W2p_def}, 
    \\
  \Vert y_{\eps} \Vert_{H^1(I \ee \times \Omega)} + \BBB  \sqrt{\eps}  \| \partial_{t}  {y}_{\eps}\|_{L^{2}(I;H^3(\Om))} \EEE 
    &\leq C \label{mu_H1_def}, \\
%    \| \Delta y_{\eps} ( T - t) \|_{L^{2} (I; H^{1}(\Om; \R^{d}))}  + \| |\Delta y_{\eps}|^{\frac{p-2}{2}} \nabla(\Delta y_{h}) (T - t)  \|_{L^{2} (0, T; L^{2} (\Om; \R^{d \times d}))} & \leq C, \label{mu_Deltay}\\
  \Vert \theta_{\eps} \Vert_{L^\infty(I; L^1(\Omega))}
  + \Vert w_{\eps} \Vert_{L^\infty(I; L^1(\Omega))} &\leq C \label{mu_Linfty_L1_temp}\\
    \Vert \theta_{\eps} \Vert_{L^q(I \ee \times \Omega)}
  + \Vert  w_{\eps} \Vert_{L^q(I \ee \times \Omega)} \leq C_q \label{mu_Lq_temp}, \\
  \Vert \nabla \theta_{\eps} \Vert_{L^r(I \ee \times \Omega)}
  + \Vert \nabla w_{\eps} \Vert_{L^r(I \ee \times \Omega)} \leq C_r. \label{mu_W1r_temp}
\end{align}
\end{subequations}
Furthermore,  \BBB there exist $\varrho \in (\frac{1}{2},1)$ and $C>0$ independently of $\eps$ \EEE with 
\begin{subequations}
\label{e:lar-reg-eps-finale}
\begin{align}
    \label{e:lap-reg-mu-final}
  \|  \Delta y_{\eps}   \|_{L^{2} (I; H^{1}(\Om))}  + \|  |\Delta y_{\eps}|^{\frac{p-2}{2}} \nabla(\Delta y_{\eps})  \|_{L^{2} (I \times \Om)} & \leq C  \big( 1 + \sqrt{\eps } \| y_{0,\varepsilon}\|_{H^{4}(\Om)} + \| y'_{0, \varepsilon}\|_{H^{1}( \Om)} \big) ,  
 \\
\label{e:lap-reg-mu1.5-final}
\|  \nabla {D H}(\Delta y_\eps)\|_{L^2(I;L^{p'}(\Omega))} & 
 \leq C \big( 1 + \sa \sqrt{\varepsilon} \| y_{0, \eps}\|_{H^{4}(\Om)}  + \| y'_{0, \eps}\|_{H^{1}(\Om)} \ee \big) \,, 
 \\
\label{e:lap-reg-mu2-final}
 \BBB \|   \Delta \partial_{t} y_{\eps}  \|_{L^{2}(I; H^2(\Om))} \EEE  &
  \leq C \eps^{-\frac{1+\varrho}{2}} \big( 1 + \sa \sqrt[4]{\varepsilon} \| y_{0, \eps}\|_{H^{4}(\Om)}  + \ZZZ \| y'_{0, \eps}\|_{H^{1}(\Om)} \EEE \ee \big).
  \end{align}
\end{subequations}  
\MMM Moreover, we have \EEE
\begin{subequations}
\label{e:bdry_eps}
\begin{align}
\label{e:bdry_eps_1}
\Delta y_{\eps}(t) &= \partial_{\nu} \Delta y_{\eps}(t) = 0 \qquad \text{on $\partial \Omega$ for a.e.~$t \in I$,}
\\
\label{e:bdry_eps_2}
\Delta \partial_{t} y_{\eps} (t) &= \partial_{\nu} \Delta \partial_{t} y_{\eps}(t) = 0 \qquad \text{on $\partial\Om$ for a.e.~$t \in I$.}
\end{align}
\end{subequations}
In particular,~\eqref{e:bdry_eps_2} follows from the boundary conditions deduced in Proposition~\ref{cor:higher_reg_y}, the estimates obtained in Lemma~\ref{l:laplace-regularity} on~$(\partial_{t}\ee y_{h})_h$, and a compactness argument, while~\eqref{e:bdry_eps_1} is a consequence of~\eqref{e:bdry_eps_2} \BBB and the fact that $y_{0,\eps} \in \Yidreg$, see \eqref{eq: yidre}. \EEE

The rest of the  section is devoted to the proof of  \BBB Theorems \ref{thm:main-thermal-elasto-regu} and \ref{thm:main-thermal-elasto-regu2}. In particular, we show that $(y_\eps,\theta_\eps)$ is a solution to the regularized system \eqref{mechanical_equation}--\eqref{e:regularized-thermal}. As the second bound in \eqref{H1_def} transfers uniformly to $\partial_t y_h$, there exists $y_{T,\eps}' \in L^2(\Omega;\R^d)$ such that, \ZZZ as $h \to 0$, \EEE
\begin{align}\label{yTstrich}
\mint_{T-h}^T\partial_t y_h(s) \di s \weakly  y_{T,\eps}' \ \  \text{weakly in $L^2(\Omega;\R^d)$}. 
\end{align}   \EEE

\EEE

% We next define $b_h \colon I \to \R^d$ \BBB by \EEE
%\begin{equation}
%  b_{h}(t) \defas \mint_{t-h}^{t} \partial_t y_{h}(s) \di s.
%\end{equation}
%Notice that by Jensen's inequality combined with \eqref{h_def_H1_conv}  we can find a constant \BBB independent of $h$ \EEE such that
%\begin{equation}\label{bh_bound}
%  \Vert b_h \Vert_{L^2(0, T; H^1(\Omega; \R^d))} \leq C.
%\end{equation}

\begin{proposition}[Auxiliary mechanical equation]\label{prop:h_convergence_mechanical}
  Let \BBB $(y_\eps, \theta_\eps)$ \EEE be as in Lemma~\ref{h_lem:compactness}.
  Then, $(y_\eps, \theta_\eps)$ satisfies
\begin{equation} \label{mechanical_equationNNN}
  \begin{aligned}
    0= & \BBB \int_I \EEE \int_{\Om} \partial_{F} W(\nabla y_\eps, \theta_\eps) : \nabla z \di x \di t
    +   \BBB \int_I \EEE \int_{\Om} {D H} ( \Delta  y_\eps) \cdot \Delta z \di x \di t  + \BBB  \eps \int_I   \int_{\Om} \partial_t \nabla \Delta  y_\eps : \nabla \Delta z \di x \di t \EEE   \\
    &\quad - \BBB \int_I \EEE  \int_{\Om} f \cdot z \di x \di t
    + \BBB \int_I \EEE \int_{\Om} \partial_{\dot{F}} R(\nabla y_\eps, \partial_t \nabla y_\eps, \theta_\eps) : \nabla z \di x \di t  - \rho \BBB \int_I \EEE \int_{\Om} \partial_t y_\eps \cdot \partial_t z \di x \di t \\
    &\qquad + \rho \int_{\Om}  \BBB \big(y'_{T,\eps}  \cdot z(T)  \EEE   -  y'_{0,\eps}  \cdot z(0)\big) \di x,  
  \end{aligned}
  \end{equation}  
for every $z \in C^{\infty} ( \ZZZ I \EEE \times \overline{\Omega}; \R^{d})$ with $z = 0$ on $\ZZZ I \EEE\times \partial \Om$.
\end{proposition}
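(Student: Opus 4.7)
The plan is to pass to the limit $h \to 0$ in the weak formulation \eqref{weak_sol_time_del_y} for $(y_h,\theta_h)$. All integrands except the discrete acceleration term converge in a straightforward way; the discrete difference requires a summation-by-parts step to match the distributional inertia in \eqref{mechanical_equationNNN}.

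\textbf{Step 1: limit of the non-inertial terms.} For a fixed test function $z \in C^\infty(I\times \overline\Omega;\R^d)$ with $z = 0$ on $I\times \partial\Omega$, the convergences \eqref{h_def_H1_conv}--\eqref{h_temp_Lr_W1r_conv} of Lemma \ref{h_lem:compactness} are sufficient to pass to the limit in the elastic, coupling, strain-gradient, regularization, and dissipation terms. Specifically, since $\nabla y_h \to \nabla y_\eps$ strongly in $C(I; L^\infty(\Omega))$, $\Delta y_h \to \Delta y_\eps$ strongly in $C(I;L^p(\Omega))$, and $\theta_h \to \theta_\eps$ strongly in $L^s(I\times\Omega)$ for some $s \in (1, \frac{d+2}{d})$, the continuity assumption \ref{W_regularity} and the growth bounds in \ref{C_bounds} yield convergence of $\partial_F W(\nabla y_h,\theta_h)$ in $L^q(I\times\Omega)$ for every finite~$q$ (by Vitali's theorem). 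The term involving $DH(\Delta y_h)$ converges by \ref{H_bounds}. The dissipation term is linear in $\partial_t \nabla y_h$ by \ref{D_quadratic} and \eqref{chain_rule_Fderiv}, so the weak convergence $\partial_t \nabla y_h \weakly \partial_t \nabla y_\eps$ in $L^2(I\times\Omega)$ combined with the strong convergence of the coefficient $2 \nabla y_h D(C_h,\theta_h)$ in $L^2(I\times\Omega)$ passes to the limit. Finally, the regularizing term is handled via $\partial_t \nabla \Delta y_h \weakly \partial_t \nabla \Delta y_\eps$ weakly in $L^2(I\times\Omega)$ from \eqref{h_def_H1_conv}.

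\textbf{Step 2: summation by parts for the discrete acceleration.} The key identity rewrites
\begin{align*}
  -\frac{\rho}{h}\int_0^T\!\!\int_\Omega \big(\partial_t y_h(t)-\partial_t y_h(t-h)\big)\cdot z(t)\,\di x\di t
  &= \rho \int_0^{T-h}\!\!\int_\Omega \partial_t y_h(t)\cdot \frac{z(t+h)-z(t)}{h}\,\di x\di t \\
  &\quad - \rho \mint_{T-h}^{T}\!\!\int_\Omega \partial_t y_h(t)\cdot z(t)\,\di x\di t \\
  &\quad + \rho \mint_{-h}^{0}\!\!\int_\Omega \partial_t y_h(s)\cdot z(s+h)\,\di x\di s,
\end{align*}
obtained from the substitution $s=t-h$ together with the splittings $\int_0^T = \int_0^{T-h}+\int_{T-h}^T$ and $\int_{-h}^{T-h} = \int_{-h}^0 + \int_0^{T-h}$. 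On $[-h,0]$ one has $\partial_t y_h \equiv y'_{0,\eps}$ by construction, so the last term converges to $\rho\int_\Omega y'_{0,\eps}\cdot z(0)\,\di x$ by the uniform continuity of $z$. For the middle term, the weak convergence \eqref{yTstrich} and continuity of $z$ give $\rho\int_\Omega y'_{T,\eps}\cdot z(T)\,\di x$. For the leading term, the uniform convergence $\frac{z(\cdot+h)-z(\cdot)}{h}\to \partial_t z$ in $C(I\times\overline\Omega)$ combined with the weak convergence $\partial_t y_h \weakly \partial_t y_\eps$ in $L^2(I\times\Omega)$ yields $\rho \int_I\int_\Omega \partial_t y_\eps \cdot \partial_t z\,\di x\di t$.

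\textbf{Step 3: assembly.} Combining Steps 1 and 2 with \eqref{weak_sol_time_del_y} gives exactly \eqref{mechanical_equationNNN}, with the new boundary contributions at $t=0$ and $t=T$ encoded through $y'_{0,\eps}$ and $y'_{T,\eps}$. The principal technical point is Step 2: without an a priori bound on $\partial_{tt}^2 y_h$ we cannot pass to the limit in the finite difference pointwise, and the summation-by-parts trick (together with the identification of the boundary limit $y'_{T,\eps}$ via \eqref{yTstrich}) is essential to produce the correct weak formulation. A minor bookkeeping issue is that the mean over $[-h,0]$ of $z(\cdot+h)$ converges to $z(0)$ because $z$ is smooth on $\overline I$, which justifies interpreting the initial velocity condition $\partial_t y_\eps(0)=y'_{0,\eps}$ implicitly through the boundary term.
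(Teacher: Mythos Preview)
Your proof is correct and follows essentially the same approach as the paper: both arguments rewrite the discrete acceleration via the change of variables $s=t-h$ to obtain the difference quotient of $z$ and the two boundary averages, then pass to the limit term by term using the convergences in Lemma~\ref{h_lem:compactness} and \eqref{yTstrich}. Your presentation of the summation-by-parts identity is slightly more explicit, and your initial-time boundary term $\rho\mint_{-h}^0\int_\Omega \partial_t y_h(s)\cdot z(s+h)\,\di x\,\di s$ coincides with the paper's $\rho\int_\Omega y'_{0,\eps}\cdot \mint_0^h z(t)\,\di t\,\di x$ once one uses $\partial_t y_h\equiv y'_{0,\eps}$ on $[-h,0]$.
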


\begin{proof}
Notice that by a change of variables we can rewrite~\eqref{weak_sol_time_del_y} for every $z \in  C^\infty(I \times \overline \Omega; \R^{d})\ee$ with $z = 0$ on $I \times  \partial \Omega\ee$ as
\begin{equation*}
\begin{aligned}
  &\intQ  D \hypot(\Delta y_h) \cdot \Delta z
    + \Big(
      \pl_F \felpot(\nabla y_h, \theta_h)
      + \pl_{\dot F} \disspot(\nabla y_h, \partial_t \nabla y_h, \ZZZ \theta_h \EEE )
    \Big) : \nabla z \di x \di t  \\
  &\quad   + \eps \int_I \int_{\Om}  \partial_t  \nabla\Delta   y_{h} \cdot  \nabla \Delta \ee z \di x \di t \EEE
  - \rho \int_0^{T - h} \int_\Omega \partial_t y_h(t) \cdot \frac{z(t + h) - z(t)}{h} \di x \di t
     \\
  &\quad - \rho \int_\Omega  \BBB y_{0,\eps}' \EEE  \cdot \mint_0^h   z(t)\ee \di t \di x + \rho \mint_{T-h}^{T} \int_\Omega \partial_{t} y_{h} (t) \cdot z(t) \di x \di t = \intQ f \cdot z \di x \di t.
\end{aligned}
\end{equation*}
\BBB By the smoothness of $z$ and \eqref{yTstrich} it directly follows that
  \begin{equation*}
  \int_\Omega  y_{0,\eps}' \cdot \mint_0^h z(t) \di t \di x \to \int_\Omega  y_{0,\eps}' \cdot  z(0) \di x  \qquad    \mint_{T-h}^{T} \int_\Omega \partial_{t} y_{h} (t) \cdot z(t) \di x \di t \to \int_\Omega  y_{T,\eps}' \cdot  z(T) \di x    \ \  \text{ as } h \to 0.
  \end{equation*} \EEE
  Moreover, the convergence in \eqref{h_def_H1_conv} and the smoothness of $z$ show by weak-strong convergence
  \begin{equation*}
    \int_0^{T-h\ee} \int_\Omega \partial_t y_h(t) \cdot \frac{z(t + h) - z(t)}{h} \di x \di t \to \int_0^T \int_\Omega \partial_t y_\eps \cdot \partial_t z \di x \di t \qquad \text{as } h \to 0.
  \end{equation*}
  By \BBB \eqref{h_def_L2_W2p_conv}, \EEE  assumption \ref{H_bounds}, \ee and   \BBB the generalized  dominated  convergence theorem, \EEE we derive that
  \begin{equation*}
    \intQ   D H \ee(\Delta  y_h) \cdot \Delta z \di x \di t \to \intQ   D H \ee(\Delta y_\eps) \cdot \Delta z \di x \di t \qquad \text{as } h \to 0.
  \end{equation*}
  Finally, by \BBB  \eqref{h_def_H1_conv}--\eqref{h_temp_Lr_W1r_conv}  \EEE the convergence of all remaining terms follows, leading to the desired equation \eqref{mechanical_equationNNN}.
\end{proof}

\BBB
\begin{proposition}[Mechanical equation]\label{prop:h_convergence_mechanical2}
  Let  $(y_\eps, \theta_\eps)$   be as in Lemma~\ref{h_lem:compactness}.
  Then, $(y_\eps, \theta_\eps)$ satisfies \eqref{mechanical_equation}. Moreover, it holds that $\partial_{tt}^2 y_\eps \in L^2(I; (H^3(\Omega;\R^d) \cap H^1_0(\Omega;\R^d))^*)$ with $\Vert \partial_{tt}^2 y_\eps \Vert_{L^2(I; (H^3(\Omega) \cap H^1_0(\Omega))^*)} \le C $ for a constant $C>0$ independent of $\eps$. In particular, $\partial_t y_\eps \in C(I;L^2(\Omega;\R^d))$ with $\partial_t y_\eps(0) = y'_{0,\eps}$ and $\partial_t y_\eps(T) = y'_{T,\eps}$. 
\end{proposition}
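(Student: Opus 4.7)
The plan is to derive \eqref{mechanical_equation} from the auxiliary identity \eqref{mechanical_equationNNN} by integration by parts in time, once sufficient regularity on $\partial^2_{tt}y_\eps$ has been established so that the pairing $\rho\int_I \langle\partial^2_{tt}y_\eps,z\rangle\,\di t$ makes sense. I would set $X\defas H^3(\Omega;\R^d)\cap H^1_0(\Omega;\R^d)$ and first test \eqref{mechanical_equationNNN} with $z\in C^\infty_c((0,T);X)$, for which $z(0)=z(T)=0$, to read off
\begin{align*}
\rho\int_I (\partial_t y_\eps,\partial_t z)_2\,\di t = \mathrm{RHS}(z),
\end{align*}
where $\mathrm{RHS}(z)$ collects the remaining five bulk terms.

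Next, I would estimate $\mathrm{RHS}(z)$ by $C\|z\|_{L^2(I;X)}$ with $C$ \emph{independent of $\eps$}: the elastic-stress term uses \ref{W_regularity}, \ref{C_lipschitz}, \ref{C_bounds} together with \eqref{mu_Linfty_W2p_def} and \eqref{mu_Lq_temp}; the strain-gradient term is controlled via $\|DH(\Delta y_\eps)\|_{L^{p'}}\|\Delta z\|_{L^p}$ using \ref{H_bounds}, \eqref{mu_Linfty_W2p_def}, and the Sobolev embedding $H^3(\Omega)\hookrightarrow W^{2,p}(\Omega)$ (valid since $p\in(2,\infty)$ for $d=2$ and $p\in(3,6)$ for $d=3$); the dissipation term uses \ref{D_bounds}, the $L^\infty$-bound on $\nabla y_\eps$ from \eqref{mu_Linfty_W2p_def}, and the $L^2$-bound on $\partial_t\nabla y_\eps$ from \eqref{mu_H1_def}; and the body-force term is trivial. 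The seemingly dangerous regularizing term $\eps\int_I\int_\Omega \partial_t\nabla\Delta y_\eps:\nabla\Delta z\,\di x\,\di t$ is in fact of order $\sqrt{\eps}$ after splitting $\eps=\sqrt{\eps}\cdot\sqrt{\eps}$ and using $\sqrt{\eps}\|\partial_t y_\eps\|_{L^2(I;H^3)}\le C$ from \eqref{mu_H1_def}. This yields $\partial^2_{tt}y_\eps\in L^2(I;X^*)$ with the claimed $\eps$-uniform bound, and \eqref{mechanical_equationNNN} can be rewritten with $-\rho\int_I(\partial_t y_\eps,\partial_t z)_2\,\di t$ replaced by $\rho\int_I\langle \partial^2_{tt}y_\eps,z\rangle\,\di t$ for such test functions.

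I would then combine this with $\partial_t y_\eps\in L^2(I;X)$ — which follows from \eqref{mu_H1_def} and from $\partial_t y_\eps=0$ on $I\times\partial\Omega$ (since $y_\eps(t)|_{\partial\Omega}=\id$ is $t$-independent) — so that the Gelfand triple $X\hookrightarrow L^2(\Omega;\R^d)\hookrightarrow X^*$ together with \cite[Lemma~7.3]{Roubicek-book} gives $\partial_t y_\eps\in C(I;L^2(\Omega;\R^d))$ and the integration-by-parts formula
\begin{align*}
-\rho\int_I\int_\Omega \partial_t y_\eps\cdot\partial_t z\,\di x\,\di t
=\rho\int_I\langle \partial^2_{tt}y_\eps,z\rangle\,\di t
-\rho\,(\partial_t y_\eps(T),z(T))_2+\rho\,(\partial_t y_\eps(0),z(0))_2
\end{align*}
for every $z\in C^\infty(I\times\overline\Omega;\R^d)$ with $z=0$ on $I\times\partial\Omega$. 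Inserting this into \eqref{mechanical_equationNNN} and comparing with \eqref{mechanical_equation} leaves the boundary identity
\begin{align*}
\rho\int_\Omega(y'_{T,\eps}-\partial_t y_\eps(T))\cdot z(T)\,\di x
-\rho\int_\Omega(y'_{0,\eps}-\partial_t y_\eps(0))\cdot z(0)\,\di x = 0.
\end{align*}
Choosing $z$ with $z(T)=0$ and $z(0)$ arbitrary in $C^\infty_c(\Omega;\R^d)$ yields $\partial_t y_\eps(0)=y'_{0,\eps}$; the symmetric choice gives $\partial_t y_\eps(T)=y'_{T,\eps}$. What remains is exactly \eqref{mechanical_equation}.

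The main obstacle is securing the $\eps$-uniform bound on $\partial^2_{tt}y_\eps$: one must recognize that the regularization term $\eps\partial_t\nabla\Delta y_\eps$ is bounded in $L^2(I\times\Omega)$ only up to a factor $\sqrt{\eps}$, and not uniformly, so the bound has to be extracted by carefully exploiting~\eqref{mu_H1_def} rather than a naive pointwise-in-$\eps$ estimate.
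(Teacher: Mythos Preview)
Your proof is correct and follows essentially the same route as the paper: estimate each term of \eqref{mechanical_equationNNN} against $\|z\|_{L^2(I;X)}$ to obtain the $\eps$-uniform bound $\partial^2_{tt}y_\eps\in L^2(I;X^*)$, invoke \cite[Lemma~7.3]{Roubicek-book} for the Gelfand triple $X\hookrightarrow L^2\hookrightarrow X^*$ to get $\partial_t y_\eps\in C(I;L^2)$ and the integration-by-parts formula, and then identify the endpoint values by comparing \eqref{mechanical_equation} with \eqref{mechanical_equationNNN}. You are actually more explicit than the paper in isolating the $\sqrt{\eps}$ splitting for the regularizing term, which the paper subsumes under ``the corresponding operator norms are uniformly bounded independently of~$\eps$''.
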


\begin{proof}
In view of  \ZZZ \eqref{mu_Linfty_W2p_def}--\eqref{mu_W1r_temp}, all time integrals \EEE  in \eqref{mechanical_equationNNN} except for $\rho    \int_I  \int_{\Om} \partial_t y_\eps \cdot \partial_t z \di x \di t$ lie in $ L^2(I;(H^3(\Omega) \cap H^1_0(\Omega))^*)$, where for the nonlinear term $  \int_I  \int_{\Om} {D H} ( \Delta  y) \cdot \Delta z \di x \di t $ we particularly use \ref{H_bounds}, \eqref{mu_Linfty_W2p_def},  and the fact  that $\Delta z \in L^2(I; L^p(\Omega))$ for each $z \in L^2(I;H^3(\Omega) \cap H^1_0(\Omega))$ ($p \in (3,6)$ for $d=3$). More precisely, the corresponding operator norms are uniformly bounded independently of $\eps$.  Then, by definition of weak derivatives we get that $\partial_{tt}^2 y_\eps \in L^2(I; (H^3(\Omega;\R^d) \cap H^1_0(\Omega;\R^d))^*)$ exists and  is bounded independently of $\eps$. Moreover,  an integration by parts in time shows  \eqref{mechanical_equation} for $z \in C^{\infty} ( \ZZZ I \EEE  \times \overline{\Omega}; \R^{d})$ with $z = 0$ on $ \ZZZ I \EEE \times \partial \Om$ and $z(0)=z(T) = 0$. Then, by a density argument we observe that the assumption $z(0)=z(T)=0$ can be dropped.   Eventually, $\partial_t y_\eps \in C(I;L^2(\Omega;\R^d))$ follows from \cite[Lemma 7.3]{Roubicek-book},  and $\partial_t y_\eps(0) = y'_{0,\eps}$ as well as $\partial_t y_\eps(T) = y'_{T,\eps}$ follow by integration by parts in time of  \eqref{mechanical_equation} for general  $z \in C^{\infty} ( \ZZZ I \EEE \times \overline{\Omega}; \R^{d})$  and a comparison with \eqref{mechanical_equationNNN}.  
\end{proof}

\EEE 

\BBB We proceed with a mechanical energy balance which is the analog to the one   in \eqref{eq: mechenebal}. We note that the balance can be formulated for each $t \in I$ since $y_\eps \in C(I; W^{2, p}(\Omega; \R^{d}))$ and  $\partial_t y_\eps \in C(I;L^2(\Omega;\R^d))$.

\BBB 
\begin{lemma}[Mechanical energy balance]\label{lemma: chain rull-2}
  Let  $(y_\eps, \theta_\eps)$ \EEE be as in Lemma~\ref{h_lem:compactness}  satisfying~\eqref{mechanical_equation}.  Then, for any $t \in I$ we have the mechanical energy \ste balance~\eqref{e:limit-energy-2-NNN-for result }.
%\begin{align}
%\label{e:limit-energy-2-NNN}
%  \mathcal{M}(y_\eps(t))  &   +  \frac{\rho}{2}   \Vert \partial_t y_\eps(t) \Vert^2_{L^2(\Omega)}    + \int_0^t   2\diss_\eps  (y_\eps,  \partial_t y_\eps, \theta_\eps)   \di s       \di s   \nonumber
%          \\
%          & =  \mechen(y_{0,\eps})   +   \frac{\rho}{2}   \Vert y_{0,\eps}' \Vert^2_{L^2(\Omega)} 
%             -  \int_0^t \int_\Omega 
%       \pl_{F} W^{\rm cpl} (\nabla y_{\eps}, \theta_{\eps}) 
%         : \partial_{t}\nabla y_{\eps}   \di x \di s
%          +  \int_0^t \int_\Omega f \cdot \partial_{t} y_{\eps}   \di x \di s. 
%\end{align}
\end{lemma}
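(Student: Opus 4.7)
The plan follows the strategy of Lemma \ref{lemma: chain rull} for the time-delayed system, now with the genuine inertial term $\rho \partial_{tt}^2 y_\eps$ in place of the discrete-in-time difference. The idea is to test \eqref{mechanical_equation} with $z = \partial_t y_\eps \phi_\delta$ for a cutoff $\phi_\delta$ approximating $\indic_{[0,t]}$, and then combine the result with a chain rule for $\mechen(y_\eps(\cdot))$ together with the duality identity $\tfrac{d}{ds}\|\partial_t y_\eps\|_{L^2(\Omega)}^2 = 2 \langle \partial^2_{tt} y_\eps, \partial_t y_\eps \rangle$, valid by \cite[Lemma 7.3]{Roubicek-book}.

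First, I verify admissibility. The $\tau \to 0$ passage preserves the bound from \eqref{H1_def}, yielding $\partial_t y_\eps \in L^2(I; H^3(\Omega;\R^d))$ (with an $\eps$-dependent constant, which is harmless here since $\eps>0$ is fixed). Together with the Dirichlet condition $\partial_t y_\eps(s) = 0$ on $\partial \Omega$, inherited from $y_\eps(s) \in \Yid$, this gives $\partial_t y_\eps \in L^2(I; H^3(\Omega;\R^d) \cap H^1_0(\Omega;\R^d))$. In combination with $\partial^2_{tt} y_\eps \in L^2(I; (H^3(\Omega;\R^d) \cap H^1_0(\Omega;\R^d))^*)$ from Proposition \ref{prop:h_convergence_mechanical2}, the test function $\phi_\delta \partial_t y_\eps$ is admissible in \eqref{mechanical_equation} by density.

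For the stored energy, the chain rule for $\Lambda$-convex functionals \cite[Proposition 3.6]{tve_orig}, together with the $\Lambda$-convexity of $\elpot$ on sublevel sets (Lemma \ref{lemma_ lambda convex}) and the convexity of $\hypot$, gives $s \mapsto \mechen(y_\eps(s)) \in W^{1,1}(I)$ with
\[
  \tfrac{d}{ds}\mechen(y_\eps(s)) = \int_\Omega \partial_F \elpot(\nabla y_\eps) : \partial_t \nabla y_\eps\, dx + \int_\Omega DH(\Delta y_\eps) \cdot \partial_t \Delta y_\eps\, dx.
\]
Testing \eqref{mechanical_equation} with $\phi_\delta \partial_t y_\eps$, using the decomposition $\partial_F W = \partial_F \elpot + \partial_F \cplpot$ from \eqref{eq: free energy} and the identity $\partial_{\dot F} R(\nabla y_\eps, \partial_t \nabla y_\eps, \theta_\eps) : \partial_t \nabla y_\eps = \drate(\nabla y_\eps, \partial_t \nabla y_\eps, \theta_\eps)$ from \eqref{diss_rate}, integrating the chain rule from $0$ to $t$, and noting that the $\eps$-regularization contributes exactly $\eps \int_0^t \int_\Omega |\partial_t \nabla \Delta y_\eps|^2\,dx\,ds$, produces (after rearrangement) the balance \eqref{e:limit-energy-2-NNN-for result }.

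The main technical subtlety is that $\indic_{[0,t]}\partial_t y_\eps$ is not smooth in time, so it cannot be plugged directly into \eqref{mechanical_equation}. The resolution is a Steklov-type approximation: choose $\phi_\delta \in C^\infty_c((-\delta,t+\delta))$ with $\phi_\delta \equiv 1$ on $[0,t]$, combine with a time mollification of $\partial_t y_\eps$ to obtain admissible smooth test functions, and pass to the limit $\delta \to 0$. The inertial contribution converges to $\tfrac{\rho}{2}\|\partial_t y_\eps(t)\|_{L^2(\Omega)}^2 - \tfrac{\rho}{2}\|y'_{0,\eps}\|_{L^2(\Omega)}^2$ via the pointwise-in-time continuity $\partial_t y_\eps \in C(I; L^2(\Omega;\R^d))$ from Proposition \ref{prop:h_convergence_mechanical2}, while the other terms converge by weak--strong pairing using the regularity ensured by Lemma \ref{h_lem:compactness} and \eqref{e:lar-reg-eps-finale}, and the stored-energy terms match the chain-rule output by continuity of $\mechen(y_\eps(\cdot))$ along $y_\eps \in C(I; W^{2,p}(\Omega;\R^d))$.
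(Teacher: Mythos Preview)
Your proposal is correct and follows essentially the same approach as the paper: test \eqref{mechanical_equation} with $\partial_t y_\eps$ localized to $[0,t]$, invoke the $\Lambda$-convex chain rule \eqref{the cain rule} for $\mechen$, and use the duality identity $\rho\int_0^t \langle \partial^2_{tt} y_\eps, \partial_t y_\eps\rangle\,ds = \tfrac{\rho}{2}\|\partial_t y_\eps(t)\|_{L^2}^2 - \tfrac{\rho}{2}\|y'_{0,\eps}\|_{L^2}^2$ for the inertial term. The paper's proof is more terse, absorbing your $\phi_\delta$-cutoff and mollification step into the phrase ``by an approximation argument,'' but the logical content is the same.
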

\BBB

\begin{proof}
Since $\partial^{2}_{tt} \ZZZ y_\eps \EEE$ lies in $L^{2} (I; ( H^{3} (\Om; \R^{d}) \cap H^{1}_{0} (\Om; \R^{d}))^{*})$, see Proposition \ref{prop:h_convergence_mechanical2},  by an approximation argument  we can test~\eqref{mechanical_equation} with $z =    \partial_{t} y_\eps  \indic_{[0,t]} $ and  obtain
\begin{align}
\label{e:limit-energy}
  & \int_0^t \int_\Omega
       \Big(
        \pl_F \elpot(\nabla y_\eps) + \pl_{F} W^{\rm cpl} (\nabla y_\eps , \theta_\eps ) \Big) 
         : \partial_{t} \nabla y_\eps   \di x \di t + \int_0^t \int_\Omega \BBB D \EEE 
      \hypot( \Delta  y_\eps)  : \partial_{t}\Delta y_\eps  \di x \di s         
       \nonumber       \\
         &\quad\quad
             =  -   \int_0^t 
       2 \diss_\eps( y_\eps , \partial_t  y_\eps , \theta_\eps )    \di s    - \rho \int_0^t \langle  \partial^2_{tt} y_\eps, \partial_t y_\eps \rangle \di s  +   \int_0^t \int_\Omega f \cdot \partial_{t} y_\eps   \di x \di s .
\end{align}
Using  the chain rule we find
\begin{align*}
\rho \int_0^t \langle  \partial^2_{tt} y_\eps, \partial_t y_\eps \rangle \di s & =  \frac{\rho}{2} \int_0^t \int_{\Om} \frac{\di}{\di t } |\partial_t y_\eps|^{2} \,    \di x \di s    =  \frac{\rho}{2}   \Vert \partial_t y_h(t) \Vert^2_{L^2(\Omega)}   - \frac{\rho}{2}   \Vert y_{0,\eps}' \Vert^2_{L^2(\Omega)} .
\end{align*}
Combining this with the chain rule in \eqref{the cain rule} (for $y_\eps$ in place of $y_h$) and plugging into \eqref{e:limit-energy}, the proof is concluded, \ZZZ using again \eqref{diss_rate}--\eqref{Repps}. \EEE
\end{proof}

 \EEE

\BBB
\begin{proposition}[Heat-transfer equation]\label{prop:h_convergence_heat}
  Let  $(y_\eps, \theta_\eps)$  be as in Lemma~\ref{h_lem:compactness}.
  Then, $(y_\eps, \theta_\eps)$ satisfies~\eqref{e:regularized-thermal}.
\end{proposition}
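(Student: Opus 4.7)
The plan is to pass to the limit $h \to 0$ in the time-delayed heat-transfer equation \eqref{weak_sol_time_del_theta}, mimicking the strategy of Lemma \ref{lemma: heattraf1} (which handled the analogous $\tau \to 0$ limit). The linear terms are disposed of directly from Lemma \ref{h_lem:compactness}: $\int_I\int_\Omega \hcm(\nabla y_h,\theta_h)\nabla\theta_h\cdot\nabla\varphi\,\di x\,\di t$ by weak-strong convergence combined with the continuity of $\hcm$ and the uniform determinant bound \eqref{mu_Linfty_W2p_def}; the boundary contribution $\kappa\int_I\int_{\partial\Omega}(\bt-\theta_h)\varphi\,\di\haus^{d-1}\di t$ via trace continuity on $L^s(I;W^{1,s}(\Omega))$; the term $\int_I\langle\partial_t w_h,\varphi\rangle\di t$ via \eqref{h_temp_Ls_conv}; and the coupling term $\int_I\int_\Omega\pl_F\cplpot(\nabla y_h,\theta_h):\pl_t\nabla y_h\,\varphi\,\di x\,\di t$ by the strong convergence \eqref{h_def_L2_W2p_conv} and \eqref{h_temp_Lr_W1r_conv} combined with the weak convergence \eqref{h_def_H1_conv}.

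The genuine obstacle is the quadratic dissipation term $\drate(\nabla y_h,\partial_t\nabla y_h,\theta_h)\varphi$ and the quadratic regularization term $\eps|\partial_t\nabla\Delta y_h|^2\varphi$: weak convergence of $\partial_t y_h$ in $H^1(I;H^3(\Omega;\R^d))$ is insufficient for either. To handle these, I would establish
\[
\partial_t\nabla y_h \to \partial_t\nabla y_\eps, \qquad \partial_t\nabla\Delta y_h \to \partial_t\nabla\Delta y_\eps \quad\text{strongly in } L^2(I\times\Omega),
\]
by passing to the limit in the mechanical energy balance \eqref{eq: mechenebal}. Using the lower semicontinuity arguments already invoked in \eqref{two ini1}--\eqref{two ini5}, I obtain
\begin{align*}
\liminf_{h\to 0} \mechen(y_h(T)) &\ge \mechen(y_\eps(T)),\\
\liminf_{h\to 0} \int_0^T\!\!\int_\Omega \drate(\nabla y_h,\partial_t\nabla y_h,\theta_h)\,\di x\,\di t &\ge \int_0^T\!\!\int_\Omega \drate(\nabla y_\eps,\partial_t\nabla y_\eps,\theta_\eps)\,\di x\,\di t,\\
\liminf_{h\to 0} \eps\int_0^T\!\!\int_\Omega |\partial_t\nabla\Delta y_h|^2 \di x\,\di t &\ge \eps\int_0^T\!\!\int_\Omega |\partial_t\nabla\Delta y_\eps|^2\di x\,\di t,\\
\liminf_{h\to 0} \frac{\rho}{2}\mint_{T-h}^T\!\!\|\partial_t y_h(s)\|_{L^2(\Omega)}^2\,\di s &\ge \frac{\rho}{2}\|\partial_t y_\eps(T)\|_{L^2(\Omega)}^2,
\end{align*}
the last bound using $\partial_t y_\eps\in C(I;L^2(\Omega;\R^d))$ and $\partial_t y_\eps(T)=y'_{T,\eps}$ from Proposition \ref{prop:h_convergence_mechanical2} together with \eqref{yTstrich}. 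The discrete acceleration term $\frac{\rho}{2h}\int_0^T\|\partial_t y_h(s)-\partial_t y_h(s-h)\|^2_{L^2(\Omega)}\di s$ is nonnegative and is simply dropped. The right-hand side of \eqref{eq: mechenebal} converges to that of \eqref{e:limit-energy-2-NNN-for result } by weak-strong convergence for the force and coupling terms, using \eqref{h_def_H1_conv} and \eqref{h_def_L2_W2p_conv}--\eqref{h_temp_Lr_W1r_conv}.

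Comparison with the mechanical energy balance \eqref{e:limit-energy-2-NNN-for result } for $y_\eps$ (Lemma~\ref{lemma: chain rull-2}) forces all liminf inequalities above to be equalities. In particular, together with \ref{D_bounds} this implies the desired strong $L^2(I\times\Omega)$ convergence of $\partial_t\nabla y_h$ and $\partial_t\nabla\Delta y_h$ (as in the reasoning following \eqref{two ini5}). With these strong convergences, combined with the continuity of $D$ from \ref{D_quadratic} and the strong $L^s$-convergence of $\theta_h$ from \eqref{h_temp_Lr_W1r_conv}, the passage to the limit in $\drate(\nabla y_h,\partial_t\nabla y_h,\theta_h)\varphi$ and $\eps|\partial_t\nabla\Delta y_h|^2\varphi$ is straightforward, yielding \eqref{e:regularized-thermal}.
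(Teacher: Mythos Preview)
Your proof proposal is correct and follows essentially the same approach as the paper's own proof: both identify the strong $L^2$-convergence of $\partial_t\nabla y_h$ and $\partial_t\nabla\Delta y_h$ as the key step, obtain it by combining lower semicontinuity in the mechanical energy balance \eqref{eq: mechenebal} (dropping the nonnegative discrete acceleration term) with the limiting balance of Lemma~\ref{lemma: chain rull-2}, and then pass to the limit in \eqref{weak_sol_time_del_theta} term by term. The ingredients you invoke---\eqref{yTstrich} together with Proposition~\ref{prop:h_convergence_mechanical2} for the kinetic term at $T$, and the argument following \eqref{two ini5} to upgrade equality of dissipations to strong convergence---match the paper exactly.
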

\EEE

\begin{proof}
\BBB

As in the proof of Lemma \ref{lemma: heattraf1}, the essential point is to show strong convergence of the strain rates, namely
\begin{equation}\label{eq: strngi conv-new}
\nabla   \partial_t   y_h  \to   \nabla   \partial_t  y_\eps \text{ and }   \nabla \Delta  \partial_t  y_h  \to    \nabla  \Delta  \partial_t  y_\eps  \quad\text{strongly in } L^2(I;   L^2(\Omega; \ZZZ  \R^{d\times d}) \EEE  ),
 \end{equation}
 as then one can pass to the limit in each term by using the convergences in  \eqref{h_def_H1_conv}--\eqref{h_temp_Ls_conv}. Rearranging the terms in \eqref{eq: mechenebal}, dropping one nonnegative term, and passing to the liminf as $h \to 0$, by the convergences in Lemma~\ref{h_lem:compactness} we get  
\begin{align}\label{a new lsc0} 
\liminf_{h \to 0}  & \Big(    \mathcal{M}   (y_{h} (T) )   + \frac{\rho}{2}  \mint_{T-h}^T \Vert \partial_t y_h(s) \Vert^2_{L^2(\Omega)}  \di s    +  \int_I 2\diss_\eps( y_h, \partial_t  y_h, \theta_h)  \di t    \Big)               \notag \\ 
&  \le  \mathcal{M}   (y_{0, \varepsilon})    +   \frac{\rho }{2}   \Vert y_{0,\eps}' \Vert^2_{L^2(\Omega)}      + \lim_{h \to 0}  \int_I \int_\Omega 
        f \cdot \partial_{t} y_{h}     \di x \di t  -  \lim_{h \to 0} \int_I \int_\Omega \pl_{F} W^{\rm cpl} (\nabla y_{h}, \theta_{h}) 
         : \partial_{t}\nabla y_{h}    \di x \di t \notag \\
         &  \ZZZ = \EEE  \mathcal{M}   (y_{0, \varepsilon})    +   \frac{\rho }{2}   \Vert y_{0,\eps}' \Vert^2_{L^2(\Omega)}   +  \int_I \int_\Omega 
         f \cdot \partial_{t} y_\eps  \di x \di t -     \int_I \int_\Omega  \pl_{F} W^{\rm cpl} (\nabla y_\eps, \theta_\eps) 
         : \partial_{t}\nabla y_\eps       \di x \di t.
\end{align}
% +  \frac{\rho}{2h}\int_0^T \int_\Omega  \Vert \partial_t y_h(t) - \partial_t y_h(t-h) \Vert^2_{L^2(\Omega)}   (T-t)\di x  \di t 
By the convergences in Lemma~\ref{h_lem:compactness} and standard lower semicontinuity arguments  we get 
\begin{align}\label{a new lsc} 
 \liminf_{h \to 0}   \BBB  \mathcal{M} \EEE (y_{h} (T) )       &  \geq     \mathcal{M}  (y_\eps (T) )    , \notag \\
 \liminf_{h \to 0} \int_I 2\diss_\eps( y_h, \partial_t  y_h, \theta_h)    \di t      & \geq \int_I 2\diss_\eps( y_\eps, \partial_t  y_\eps, \theta_\eps)   \di t, \notag\\
 \liminf_{h \to 0}   \frac{\rho}{2}   \mint_{T-h}^T \Vert \partial_t y_h(s) \Vert^2_{L^2(\Omega)}  \di s    & \ge  \frac{\rho}{2}   \Vert \partial_t y_\eps(T) \Vert^2_{L^2(\Omega)}.   
  \end{align}
 For the first two estimates we also refer to  \cite[Equation (4.15)]{tve} and the last one follows from \eqref{yTstrich} and Proposition~\ref{prop:h_convergence_mechanical2}.
Combining \eqref{a new lsc0}--\eqref{a new lsc}  with the mechanical energy balance in Lemma \ref{lemma: chain rull-2}, we conclude that all inequalities in \eqref{a new lsc}  are actually equalities.  Then, \eqref{eq: strngi conv-new} follows exactly as in the final argument of the proof of Lemma~\ref{lemma: heattraf1}.  
\end{proof}

 \BBB

\begin{proof}[Proofs of Theorems \ref{thm:main-thermal-elasto-regu} and \ref{thm:main-thermal-elasto-regu2}]
The weak formulation, the regularity properties of $(y_\eps,\theta_\eps,w_\eps)$, and the initial conditions for $y_\eps,\partial_t y_\eps, w_\eps$ follow from Lemma \ref{h_lem:compactness}, Proposition \ref{prop:h_convergence_mechanical2}, and Proposition \ref{prop:h_convergence_heat}. The mechanical energy balance is given in Lemma \ref{lemma: chain rull-2}. 

Concerning \eqref{internal energy balance-for result }, we observe that by density we can test \eqref{e:regularized-thermal} with functions $\varphi \in W^{1,1}(I)$ which are independent of the space variable $x$ and satisfy $\varphi(T) = 0$. For $t \in (0,T)$ fixed, we define the test function $\varphi$ with $\varphi \equiv 1$ on $(0,t-\delta)$, $\varphi \equiv 0$ on $(t+\delta,T)$, and $\varphi' \equiv -\frac{1}{2\delta}$ on $(t-\delta, t + \delta)$. Then, by an integration by parts in time for the term $\int_I \langle \partial_t w_\eps, \vphi  \rangle \di t$, in the limit $\delta \to 0$,   \eqref{e:regularized-thermal} yields 
\begin{align*}
    0 = & \int_0^t -\Big( \drate(\nabla y_\eps, \partial_t \nabla y_\eps, \theta_\eps
        ) 
        + \pl_F \cplpot(\nabla y_\eps, \theta_\eps) : \pl_t \nabla y_\eps +  \eps  |\partial_t \nabla \Delta y_\eps|^2
      \Big)  \di x \di t \\
    &\quad       - \kappa \int_0^t \int_{\partial \Omega} (\bt - \theta_\eps)   \di \haus^{d-1} \di t
    + \lim_{\delta \to 0} \frac{1}{2\delta}\int_{t-\delta}^{t+\delta}  \int_\Omega w_\eps  \di x \di t   - \int_\Omega w_{0,\eps}  \di x. \nonumber
\end{align*} 
As $w_\eps \in C(I;L^2 (\Omega))$ by Lemma \ref{h_lem:compactness} and \cite[Lemma 7.3]{Roubicek-book}, we find $\lim_{\delta \to 0} \frac{1}{2\delta}\int_{t-\delta}^{t+\delta}  \int_\Omega w_\eps  \di x \di s =   \int_\Omega w_\eps(t,x)  \di x$, which concludes the proof of \eqref{internal energy balance-for result }. 
Eventually, \eqref{eq:energy-old} follows by summation of \eqref{e:limit-energy-2-NNN-for result } and \eqref{internal energy balance-for result }.
\end{proof}

\section{Vanishing regularization: Proof of Theorem \ref{thm:main-thermal-elasto-unregu}}\label{sec: epstozero}
\label{s:vanishing-regularisation}

This section is devoted to the analysis of the limit of the regularized thermo-elastodynamic system~\eqref{mechanical_equation}--\eqref{e:regularized-thermal} as $\eps \to 0$. This will show existence of solutions to the system~\eqref{mechanical_equation_final-def-unreg}--\eqref{e:new-thermal-equation-lim}, i.e.,  \BBB Theorem~\ref{thm:main-thermal-elasto-unregu}. First, \EEE given  initial data $y_{0} \in \Yid$ and $y'_{0} \in H^{1}_{0} (\Om; \R^{d})$, we consider  suitable regularizations $y_{0, \eps} \in \BBB \Yidreg\EEE$ \BBB (see \eqref{eq: yidre}) \EEE and $y'_{0, \varepsilon} \in H^{3} (\Om; \R^{d})$ such that  
\begin{align}
& y_{0, \eps} \to y_{0} \text{ in $W^{2, p}(\Om; \R^{d})$ \quad and \quad} y'_{0, \eps} \to y'_{0} \text{ in $H^{1}(\Om; \R^{d})$,} \label{e:hpmu0}
\\
& \limsup_{\eps \to 0} \BBB \sqrt[4]{\eps} \EEE \| y_{0, \eps}\|_{\ste H^{4} (\Om) \BBB} <+\infty\,. \label{e:hpmu}
%\\
%& \limsup_{\eps \to 0} \eps^{\frac{1}{4}} \| y'_{0,\eps} \|_{H^{1} (\Om)} <+\infty.\label{e:hpmu1}
\end{align}
\BBB This can be achieved by considering regularizations $(\varphi_\eps)_\eps \subset C^\infty_c(\Omega;\R^d)$ with $\varphi_\eps \to \Delta y_{0} \in L^p(\Omega;\R^d)$, and choosing $y_{0, \eps} \in C^\infty(\Omega;\R^d) \cap \mathcal{Y}_\id$ as the solution to $\Delta y_{0, \eps} = \varphi_\eps$. Then, $\partial_\nu \Delta  y_{0,\eps} = \Delta  y_{0,\eps}   = 0 $ on $\partial \Omega$ holds by construction and \eqref{e:hpmu0}--\eqref{e:hpmu} can be achieved by the elliptic regularity estimate 
$\Vert   y_{0, \eps} - y_0 \Vert_{W^{2,p}(\Omega)} \le C \Vert \Delta( y_{0, \eps} - y_0) \Vert_{L^p(\Omega)}  $, see \cite[Lemma~9.17]{gt}. 
\EEE

For every $\eps>0$,  in Theorem~\ref{thm:main-thermal-elasto-regu} we have shown the existence of a solution~$(y_{\eps}, \theta_{\eps})$ to the regularized thermo-elastodynamic system~\eqref{mechanical_equation}--\eqref{e:regularized-thermal}.  In the following lemma, we summarize the compactness properties of such \BBB sequences. \EEE

\begin{lemma}
\label{l:mu-compactness}
Let $(y_{\eps}, \theta_{\eps})$ be \BBB a \EEE sequence of solutions to \eqref{mechanical_equation}--\eqref{e:regularized-thermal} with initial data $(y_{0, \eps}, y'_{0, \eps}, \theta_{0})$  \BBB given by Theorem~\ref{thm:main-thermal-elasto-regu}. \EEE Then, there exists $(y, \theta) \in \BBB (L^\infty(I;\Yid) \cap \EEE H^{1}(\BBB I; \EEE H^{1} (\Om; \R^{d}))) \times L^{1} (\BBB I; \EEE W^{1, 1}_{+}(\Om))$ such that, up to a subsequence,
it holds \RB for any $q \in (1, \ste 2^{*} \RB)$ that \EEE
  \begin{subequations}
  \begin{align}
    y_{\eps} &\weaklystar y \text{ weakly* in } L^\infty(I; \BBB W^{2,p}(\Omega;\R^d))  \EEE \text{ and weakly in } H^{1}( \BBB I; \EEE  H^{1}(\Om; \R^{d})) ,\label{mu_def_Linfty_W2p_conv} \\
  %  & y_{\eps} \rightharpoonup y \text{ weakly in } H^{1}(0, T; H^{1}(\Om; \R^{d})) \text{ and weakly in } L^{2}_{\rm loc} (0, T; H^{3}(\Om; \R^{d})), \label{mu_weakly_conv}\\
    y_{\eps} &\to y \text{ in } L^\infty(I; W^{1, \infty}(\Omega; \R^{d })) \RB \text{ and in } L^2(I; W^{2, p}(\Omega; \R^d))\EEE, \label{mu_def_Linfty_W1infty_conv} \\
\BBB \partial_{t} y_{\eps} & \to \partial_{t} y  \text{ in  $L^{2}(I; L^{q} (\Om; \R^{d}))$}, \EEE \label{mu_yt_L2_L2}\\
    \theta_{\eps} &\rightharpoonup \theta \quad \text{ and } \quad
     w_{\eps} \rightharpoonup w
    \text{ weakly in } L^r(I; W^{1,r}(\Omega)) \text{ for any } r \in [1, \tfrac{d+2}{d+1}), \label{mu_temp_Lr_W1r_conv} \\
    \theta_{\eps} &\to \theta \quad \text{ and } \quad
     w_{\eps} \to w
    \text{ in } L^s(I \times \Omega) \text{ for any } s \in [1, \tfrac{d+2}{d}). \label{mu_temp_Ls_conv}
  \end{align}
  \end{subequations}
    \BBB where    $w \defas \inten(\nabla y, \theta)$.  \EEE   
\end{lemma}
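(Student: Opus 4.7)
The plan is to combine the $\eps$-independent a priori bounds \eqref{e:epsilon_a_priori_1}, the higher regularity estimates \eqref{e:lap-reg-mu-final}--\eqref{e:lap-reg-mu1.5-final}, and the uniform bound on $\partial^2_{tt} y_\eps$ hidden in Proposition \ref{prop:h_convergence_mechanical2}, proceeding in the same spirit as Section \ref{sec:vanish_h} but now carefully tracking uniformity in $\eps$. First, by Banach--Alaoglu, \eqref{mu_Linfty_W2p_def}--\eqref{mu_H1_def} directly yield (up to a subsequence) $y_\eps \weaklystar y$ weakly$^*$ in $L^\infty(I; W^{2,p}(\Omega;\R^d))$ and weakly in $H^1(I; H^1(\Omega;\R^d))$, i.e. \eqref{mu_def_Linfty_W2p_conv}; analogously, \eqref{mu_Linfty_L1_temp}--\eqref{mu_W1r_temp} give \eqref{mu_temp_Lr_W1r_conv}.

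For the strong convergences \eqref{mu_def_Linfty_W1infty_conv}, the crucial observation is that combining the $L^2(I; H^1(\Omega))$-bound on $\Delta y_\eps$ from \eqref{e:lap-reg-mu-final} with the Dirichlet condition $y_\eps = \id$ on $I \times \partial\Omega$ and elliptic regularity for $-\Delta$ on the $C^5$-domain $\Omega$ yields an $\eps$-independent bound $\|y_\eps\|_{L^2(I; H^3(\Omega))} \le C$. Via the compact embeddings $H^3(\Omega) \subset\subset W^{2,p}(\Omega)$ (since $p < 2^*$) and $W^{2,p}(\Omega) \subset\subset W^{1,\infty}(\Omega)$ (since $p > d$), together with the uniform bound $\partial_t y_\eps \in L^2(I; H^1)$, Aubin--Lions (resp.~Simon) compactness delivers $y_\eps \to y$ strongly in both $C(I; W^{1,\infty}(\Omega; \R^d))$ and $L^2(I; W^{2,p}(\Omega; \R^d))$. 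The uniform positivity of $\det \nabla y_\eps$ from \eqref{mu_Linfty_W2p_def} transfers to the limit by uniform convergence of $\nabla y_\eps$, so $y \in L^\infty(I; \Yid)$.

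The main obstacle is the strong convergence \eqref{mu_yt_L2_L2} of the velocities. The key ingredient is a uniform bound $\|\partial^2_{tt} y_\eps\|_{L^2(I; (H^3(\Omega) \cap H^1_0(\Omega))^*)} \le C$: every term in the weak formulation \eqref{mechanical_equation} is controlled $\eps$-uniformly via \eqref{mu_Linfty_W2p_def}, \ref{H_bounds} and \ref{D_quadratic}, while the regularizing contribution $\eps \int_I \int_\Omega \partial_t \nabla \Delta y_\eps : \nabla \Delta z \, \di x \di t$ has dual norm at most $\sqrt{\eps} \cdot (\sqrt{\eps} \|\partial_t y_\eps\|_{L^2(I; H^3)}) \|z\|_{L^2(I; H^3)} \le C\sqrt{\eps} \|z\|_{L^2(I; H^3)}$ thanks to \eqref{mu_H1_def}, and therefore vanishes in the limit. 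Combined with the uniform bound $\partial_t y_\eps \in L^2(I; H^1)$ and the compact chain $H^1 \subset\subset L^q \subset (H^3 \cap H^1_0)^*$ valid for any $q \in (1, 2^*)$, Aubin--Lions yields the desired \eqref{mu_yt_L2_L2}.

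Finally, for the thermal variables \eqref{mu_temp_Ls_conv}, the weighted $L^2$-gradient estimate \eqref{weighted_L2_temp} and the $L^1$-bound on the full dissipation, including $\eps |\partial_t \nabla \Delta y_\eps|^2$, are $\eps$-independent by virtue of the energy balance \eqref{e:limit-energy-2-NNN-for result }. Hence from \eqref{e:regularized-thermal} one obtains a uniform control on $\partial_t w_\eps$ in a suitable negative Sobolev space, and the parabolic compactness argument of \cite[Lemma 4.2]{tve} and \cite[Proposition 5.1]{tve_orig} transfers verbatim to deliver the strong convergence of $w_\eps$ and $\theta_\eps$. The identification $w = W^{\rm in}(\nabla y, \theta)$ then follows from the pointwise a.e.\ convergences of $\nabla y_\eps$ and $\theta_\eps$, continuity of $W^{\rm in}$ under \ref{C_regularity}, and uniqueness of the weak limit.
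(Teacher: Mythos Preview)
Your proposal is correct and follows essentially the same route as the paper: both combine the $\eps$-independent bounds \eqref{e:epsilon_a_priori_1} and the $L^2(I;H^3)$ control coming from \eqref{e:lap-reg-mu-final} via elliptic regularity, invoke the uniform bound on $\partial^2_{tt} y_\eps$ from Proposition~\ref{prop:h_convergence_mechanical2} to run Aubin--Lions for \eqref{mu_yt_L2_L2}, and appeal to the parabolic compactness of \cite[Lemma~4.2]{tve} for the thermal variables. The only minor imprecision is that the $L^\infty(I;W^{1,\infty})$ convergence in \eqref{mu_def_Linfty_W1infty_conv} needs the $L^\infty(I;W^{2,p})$ bound rather than the $L^2(I;H^3)$ one, but you have already recorded that in \eqref{mu_def_Linfty_W2p_conv}.
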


\begin{proof}
The \RB convergences in~\eqref{mu_def_Linfty_W2p_conv} and \eqref{mu_temp_Lr_W1r_conv}--\eqref{mu_temp_Ls_conv} as well as the first convergence in \EEE \eqref{mu_def_Linfty_W1infty_conv} \EEE are obtained arguing as in \BBB the proof of Proposition~\ref{prop:a_priori_bounds}, \EEE relying on the estimates \eqref{mu_Linfty_W2p_def}--\eqref{mu_W1r_temp}. As for~\eqref{mu_yt_L2_L2}, we apply \BBB the   Aubin-Lions' lemma as follows: \EEE by~\eqref{mu_H1_def} we have that $\partial_{t} y_{\eps}$ is bounded in $L^{2} (I; H^{1}(\Om; \R^{d}))$ \BBB and \RB Proposition~\ref{prop:h_convergence_mechanical2} yields that $\RB\partial^{2}_{tt}\EEE y_{\eps}$ is bounded in $L^{2} (I; ( H^{3} (\Om; \R^{d}) \cap H^{1}_{0} (\Om; \R^{d}))^{*})$. \EEE Hence,~\eqref{mu_yt_L2_L2} holds \BBB by the compact embedding $H^{1}(\Om; \R^{d}) \subset \subset L^q(\Omega;\R^d)$ for $q < 2^*$. \EEE  The identification $w  = W^{\rm in} (\nabla y, \theta)$ follows as in the proof of Proposition~\ref{prop:a_priori_bounds}, cf.\ also~\cite[Lemma~4.2]{tve}. \RB Note that by~\eqref{e:lap-reg-mu-final} and elliptic regularity we follow that $y_\eps$ is bounded in $L^2(I; H^3(\Omega; \R^d))$. Consequently, yet another application of the Aubin-Lions' lemma using also the boundedness of $\partial_t y_\eps$ in $L^2(I \times \Omega; \R^d)$ shows the second convergence in~\eqref{mu_def_Linfty_W1infty_conv}.\EEE
\end{proof}

\subsection{The mechanical equation} 
\BBB We recall that the mechanical  equation~\eqref{mechanical_equation} is equivalent to the formulation in \eqref{mechanical_equationNNN}.  \EEE
%In order to pass to the limit in the  \eqref{mechanical_equationNNN}, \BBB the essential part is to prove the convergence of the term $D \RB H\BBB(\Delta y_\eps)$. This is achieved by  \EEE  Minty's trick, hinging on the convexity of the second order regularization term~$H$.  

\begin{proposition}
\label{thm:mu_convergence_mechanical}
\ste  Let $(y, \theta)$ be as in Lemma~\ref{l:mu-compactness}. Then, $(y, \theta)$ satisfies~\eqref{mechanical_equation_final-def-unreg}. 
%Moreover,  for a.e.~$t \in I$ we have that $y_{\eps} (t) \to y(t)$ in $W^{2, p} (\Om; \R^{d})$.
\end{proposition}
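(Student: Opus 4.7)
The strategy is to start from the equivalent formulation \eqref{mechanical_equationNNN} of the regularized mechanical equation for $(y_\eps,\theta_\eps)$, to restrict the test functions to the smaller class $z\in C^\infty(I\times\overline{\Omega};\R^d)$ with $z=0$ on $I\times\partial\Omega$ \emph{and} $z(T)=0$ (so that the terminal boundary term $\rho\int_\Omega y_{T,\eps}'\cdot z(T)\,\di x$ disappears), and then to pass to the limit $\eps\to 0$ term by term using the compactness collected in Lemma~\ref{l:mu-compactness}. The target identity \eqref{mechanical_equation_final-def-unreg} will follow.

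The first, and key, step is to show that the regularizing contribution vanishes. Combining the bound \eqref{e:lap-reg-mu2-final} with assumption \eqref{e:hpmu} and elliptic regularity, we obtain $\|\partial_t\nabla\Delta y_\eps\|_{L^2(I\times\Omega)}\le C\eps^{-(1+\varrho)/2}$, so that
\[
\Big|\eps\int_I\int_\Omega \partial_t\nabla\Delta y_\eps:\nabla\Delta z\,\di x\di t\Big|
\le C\,\eps^{(1-\varrho)/2}\,\|\nabla\Delta z\|_{L^2(I\times\Omega)}\longrightarrow 0,
\]
since $\varrho\in(\tfrac12,1)$. The inertial term is handled by the weak convergence $\partial_t y_\eps\weakly \partial_t y$ in $L^2(I;H^1(\Omega;\R^d))$ from \eqref{mu_def_Linfty_W2p_conv} paired with the smooth $\partial_t z$, and the initial term $\rho\int_\Omega y_{0,\eps}'\cdot z(0)\,\di x$ converges by \eqref{e:hpmu0}; the force term is $\eps$-independent.

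For the stress contributions we argue as follows. Since $\nabla y_\eps\to\nabla y$ uniformly on $I\times\overline{\Omega}$ by \eqref{mu_def_Linfty_W1infty_conv} and the uniform lower bound $\det(\nabla y_\eps)\ge 1/C$ from \eqref{mu_Linfty_W2p_def} confines the gradients to a compact subset of $GL^+(d)$, the $C^2$-regularity in \ref{W_regularity} gives $\partial_F W^{\rm el}(\nabla y_\eps)\to \partial_F W^{\rm el}(\nabla y)$ uniformly. For the coupling stress we combine the pointwise continuity from \ref{C_regularity} with the a.e.\ convergence $\theta_\eps\to\theta$ from \eqref{mu_temp_Ls_conv}, and use the $|\cdot|\le 2C_0(1+|F|)$ bound on $\partial_F W^{\rm cpl}$ (derived from \ref{C_zero_temperature}, \ref{C_bounds} as in the proof of Lemma~\ref{lem:Vk_bound}) together with the uniform bound on $\nabla y_\eps$ to invoke Vitali's convergence theorem, getting $\partial_F W^{\rm cpl}(\nabla y_\eps,\theta_\eps)\to\partial_F W^{\rm cpl}(\nabla y,\theta)$ in $L^q(I\times\Omega)$ for any $q<\infty$. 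For the hyperelastic term, the strong convergence $\Delta y_\eps\to\Delta y$ in $L^2(I;L^p(\Omega;\R^d))$ (from \eqref{mu_def_Linfty_W1infty_conv}) together with \ref{H_bounds} and dominated convergence yields $DH(\Delta y_\eps)\to DH(\Delta y)$ in $L^2(I;L^{p'}(\Omega;\R^d))$.

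The main obstacle is the viscous stress term $\partial_{\dot F}R(\nabla y_\eps,\partial_t\nabla y_\eps,\theta_\eps)$, since $\partial_t\nabla y_\eps$ only converges weakly in $L^2(I\times\Omega)$. Here we exploit the quadratic structure \ref{D_quadratic} via identity \eqref{chain_rule_Fderiv}, which makes the viscous stress \emph{affine} in $\partial_t\nabla y_\eps$ with coefficient $2\nabla y_\eps\,D(C_\eps,\theta_\eps)$, where $C_\eps=(\nabla y_\eps)^T\nabla y_\eps$. Thanks to the uniform boundedness of $D$ given by \ref{D_bounds}, the continuity of $D$ from \ref{D_quadratic}, the uniform convergence of $\nabla y_\eps$, and the a.e.\ convergence of $\theta_\eps$, Vitali's theorem again delivers $2\nabla y_\eps D(C_\eps,\theta_\eps)\to 2\nabla y\,D(C,\theta)$ strongly in $L^q(I\times\Omega)$ for every $q<\infty$. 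Pairing this strong convergence with the weak convergence $\partial_t\nabla y_\eps\weakly \partial_t\nabla y$ in $L^2(I\times\Omega)$ then yields weak convergence of $\partial_{\dot F}R(\nabla y_\eps,\partial_t\nabla y_\eps,\theta_\eps)$ in, say, $L^{3/2}(I\times\Omega)$, which is enough to pass to the limit when tested against the smooth $\nabla z$. Collecting all limits proves \eqref{mechanical_equation_final-def-unreg}.
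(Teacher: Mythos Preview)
Your proposal is correct and follows essentially the same approach as the paper: start from \eqref{mechanical_equationNNN}, take test functions with $z(T)=0$ to kill the terminal term, and pass to the limit term by term using the compactness from Lemma~\ref{l:mu-compactness}, exploiting in particular the linearity of $\partial_{\dot F}R$ in the rate variable for the viscous stress. The only notable difference is that for the regularizing term the paper uses the simpler bound \eqref{mu_H1_def}, namely $\sqrt{\eps}\,\|\partial_t y_\eps\|_{L^2(I;H^3(\Omega))}\le C$, which directly gives $\big|\eps\int_I\int_\Omega \partial_t\nabla\Delta y_\eps:\nabla\Delta z\,\di x\di t\big|\le C\sqrt{\eps}\to 0$; your route via \eqref{e:lap-reg-mu2-final} also works but invokes a stronger (and here unnecessary) regularity estimate.
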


 \ste
 \begin{proof}
 We test~\eqref{mechanical_equationNNN} with $z \in C^{\infty} (I \times \overline{\Om}; \R^{d})$ with $z = 0 $ in $I \times \partial \Om$ and $z(T) = 0$. Thanks to the convergences in~\eqref{mu_def_Linfty_W2p_conv}--\eqref{mu_temp_Ls_conv} and to assumptions~\eqref{W_regularity}, \eqref{H_regularity} and \eqref{H_bounds}, \eqref{C_regularity}, \eqref{D_quadratic}, and \eqref{e:hpmu0}, we have that
 \begin{subequations}
 \begin{align}
    \int_{I} \int_{\Om} \partial_{F} W(\nabla y_{\eps}, \theta_{\eps}) : \nabla z \di x \di t & \to  \int_{I} \int_{\Om} \partial_{F} W(\nabla y, \theta) : \nabla z \di x \di t\,,\label{e:limW}\\
    \int_{I} \int_{\Om} DH(\Delta y_{\eps}) \cdot \Delta z \di x \di t & \to \int_{I} \int_{\Om} DH(\Delta y) \cdot \Delta z \di x \di t \,,\label{e:limH}\\
   \int_I  \int_{\Om} \partial_{\dot{F}} R(\nabla y_\eps, \partial_t \nabla y_\eps, \theta_\eps) : \nabla z \di x \di t & \to \int_I  \int_{\Om} \partial_{\dot{F}} R(\nabla y, \partial_t \nabla y, \theta) : \nabla z \di x \di t \,,\label{e:limdiss}\\
    \int_{I} \int_{\Om} \partial_{t} y_{\eps} \cdot \partial_t z\di x \di t & \to \int_{I} \int_{\Om} \partial_{t} y \cdot \partial_t z\di x \di t \,,\label{e:limpartial}\\
    \int_{\Om} y_{0, \eps} \cdot z(0) \di x & \to \int_{\Om} y_{0} \cdot z(0) \di x \,.\label{e:liminitial}
 \end{align}
 \end{subequations}
 In particular, in~\eqref{e:limH} we have used~\eqref{mu_def_Linfty_W1infty_conv} and in~\eqref{e:limdiss} we have exploited the linear structure of~$\partial_{\dot{F}} R$ with respect to~$\partial_{t} y_{\eps}$. Finally, estimate~\eqref{mu_H1_def} implies that
 \begin{displaymath}
     \eps \int_{I} \int_{\Om}  \partial_t \nabla \Delta  y_\eps : \nabla \Delta z \di x \di t \to 0\,.
 \end{displaymath}
 Hence, the pair~$(y, \theta)$ satisfies~\eqref{mechanical_equation_final-def-unreg}.
 \end{proof}

\EEE

\BBB

\subsection{The heat-transfer equation} \EEE

We are left to consider the limit as $\eps \to 0$ in the \BBB heat-transfer \EEE equation~\eqref{e:regularized-thermal}. To this purpose, we now derive a weaker form of the regularized heat equation which is suitable for the limit procedure. This  relies on integration by parts and on the chain rule for the mechanical \ZZZ energy. \EEE  For notational convenience, for $\psi \in C^\infty(\overline{\Omega})$ we define  
\begin{align}\label{speEnot}
\toten(y,\theta;\psi) = \int_\Omega  \Big(\elpot(\nabla y) +  \hypot(\Delta y) + W^{\rm in}(y,\theta) \Big) \psi  \di x.   
\end{align}
Notice that for the limiting passage \BBB we will also crucially use the bounds in  \eqref{e:lar-reg-eps-finale}.  
\EEE 

\begin{proposition}
\label{p:new_formulation}
For every $\eps >0$, every $\varphi \in C^{\infty} (I \times \overline{\Om})$   \BBB of the form $\varphi = \psi \eta $ for  \EEE $\psi \in C^{\infty} (\overline{\Om})$ and $\eta \in C^{\infty}(\ZZZ I \EEE )$ with $\eta (T) = 0$  it holds
\begin{align}
\label{e:nnn}
0 = & \intQ   \eta \,  \hcm(\nabla y_{\eps}, \theta_{\eps}) \nabla \theta_{\eps} \cdot \nabla \psi           - \kappa  \BBB \int_I \EEE \int_{\partial \Omega}  \eta \psi \, (\bt - \theta_{\eps})   \di \haus^{d-1} \di t   -  \intQ \psi \eta \, f \cdot \partial_{t} y_{\eps} \di x \di t \notag \\
 &    -   \int_I \partial_{t} \eta\, \Big( \toten(y_{\eps}, \theta_\eps; \psi  )   + \int_\Omega \frac{\rho}{2} | \partial_{t} y_{\eps}|^{2} \psi \di x   \Big) \di t 
-            \eta (0) \,  \Big( \toten(y_{0,\eps}, \theta_0; \psi)  +  \int_{\Om} \frac{\rho}{2}    | y'_{0, \eps}|^{2} \psi  \, \di x \Big) 
  \notag \\
 &  + \intQ \eta  \Big(   \pl_F W(\nabla y_{\eps}, \theta_{\eps}) + \pl_{\dot{F}} R (\nabla y_{\eps}, \partial_{t} \nabla  y_{\eps}, \theta_{\eps})  \Big) : (\partial_{t} y_{\eps} \otimes \nabla \psi) \, \di x \, \di t      \notag  \\
&  - \intQ \eta \, {D H}(\Delta y_{\eps}) \cdot \partial_{t} y_{\eps} \Delta \psi \, \di x \di t   - 2\intQ \eta \, \nabla ({D H}(\Delta y_{\eps}) ) : (\partial_{t} y_{\eps} \otimes \nabla \psi)\, \di x \di t  \notag  \\  
 & - \eps\intQ \eta \, \partial_{t} \Deltatwo  y_{\eps} \cdot \big( 2\partial_{t}  \nabla  y_{\eps} \nabla\psi + {\rm div} (\partial_{t} y_{\eps} \otimes \nabla \psi) \big) \di x \di t \notag \\ 
& +   \eps\intQ \eta \, \partial_{t} \nabla \Delta  y_{\eps} :  \partial_{t} \nabla y_{\eps} \Delta \psi \di x \di t  
      - 2\eps\intQ \eta \, \partial_{t} \nabla \Delta y_{\eps} :  \partial_{t} \nabla  y_{\eps} \nabla^{2}\psi \di x \di t .
\end{align}
\end{proposition}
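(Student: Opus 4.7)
The plan is to derive \eqref{e:nnn} by combining the heat-transfer equation \eqref{e:regularized-thermal} tested with $\varphi = \psi\eta$ with the mechanical equation \eqref{mechanical_equation} tested against $z = \partial_t y_\eps \psi\eta$; this rigorously executes at the $\eps$-level the formal cancellation sketched in \eqref{e:thermal-strong-3}--\eqref{e:thermal-strong-7}. I would first justify admissibility of $z$: the regularity $\partial_t y_\eps \in L^2(I; H^4(\Omega; \R^d))$ follows from \eqref{e:lap-reg-mu2-final}, \eqref{e:bdry_eps_2}, and elliptic regularity for the Dirichlet Laplacian; combined with $\partial_t y_\eps \in C(I; L^2(\Omega; \R^d))$ from Proposition~\ref{prop:h_convergence_mechanical2}, $\eta(T)=0$, and a density argument, $z$ becomes an admissible test in \eqref{mechanical_equation}.

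Next I would process the mechanical test. Splitting $\partial_F W = \partial_F W^{\rm el} + \partial_F W^{\rm cpl}$ and using the chain-rule identities $\partial_F W^{\rm el}(\nabla y_\eps) : \partial_t \nabla y_\eps = \partial_t \elpot(\nabla y_\eps)$ and $DH(\Delta y_\eps) \cdot \partial_t \Delta y_\eps = \partial_t H(\Delta y_\eps)$, followed by integration by parts in time using $\eta(T)=0$, produces the terms in the second line of \eqref{e:nnn} involving $\elpot$, $H$, and the initial datum $y_{0,\eps}$. Expanding $\Delta(\partial_t y_\eps\psi) = \psi \partial_t\Delta y_\eps + 2\partial_t\nabla y_\eps\nabla\psi + \partial_t y_\eps \Delta\psi$ and integrating by parts in space the cross term $2 DH(\Delta y_\eps) \cdot \partial_t \nabla y_\eps \nabla\psi$ (using $\partial_t y_\eps = 0$ on $\partial\Omega$) generates the $-2\nabla(DH(\Delta y_\eps)) : (\partial_t y_\eps\otimes \nabla\psi)$ and $-DH(\Delta y_\eps) \cdot \partial_t y_\eps \Delta\psi$ contributions. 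The dissipative piece $\pl_{\dot F} R : \partial_t \nabla y_\eps \psi\eta$ from the test cancels the $-\xi$ source in \eqref{e:regularized-thermal} by \eqref{diss_rate}, and $\pl_F W^{\rm cpl} : \partial_t\nabla y_\eps \psi\eta$ cancels the adiabatic source. The inertial contribution is handled via $\langle \partial_{tt}^2 y_\eps, \partial_t y_\eps \psi\eta \rangle = \tfrac12 \psi\eta\,\partial_t |\partial_t y_\eps|^2$, justified by Proposition~\ref{prop:h_convergence_mechanical2} and $\partial_t y_\eps \in C(I;L^2)$; integration by parts in time then delivers the kinetic-energy terms $\tfrac{\rho}{2}|\partial_t y_\eps|^2$ and the contribution of $y'_{0,\eps}$. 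On the thermal side, integration by parts in time of $\int_I \langle \partial_t w_\eps, \psi\eta\rangle\,\di t$, justified by $w_\eps \in H^1(I;(H^1(\Omega))^*) \cap C(I;L^2(\Omega))$ with $w_\eps(0) = w_{0,\eps}$, produces the $-\int_I\int_\Omega W^{\rm in}(\nabla y_\eps,\theta_\eps)\psi\partial_t \eta$ and $-w_{0,\eps}\psi\eta(0)$ terms.

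The main obstacle, and the most delicate computation, concerns the fourth-order regularization term $\eps \int_I\int_\Omega \eta\, \partial_t \nabla\Delta y_\eps : \nabla\Delta(\partial_t y_\eps \psi) \,\di x\,\di t$. Using $\partial_\nu \partial_t \Delta y_\eps = 0$ from \eqref{e:bdry_eps_2}, a first integration by parts in space rewrites it as $-\eps \int_I\int_\Omega \eta\, \partial_t \Delta^2 y_\eps \cdot \Delta(\partial_t y_\eps \psi) \,\di x\,\di t$; expanding $\Delta(\partial_t y_\eps\psi)$ and performing a further integration by parts on the $\psi\partial_t \Delta y_\eps$ diagonal piece (using $\partial_t \Delta y_\eps = 0$ on $\partial \Omega$) produces the quadratic $\eps \psi\eta |\partial_t \nabla \Delta y_\eps|^2$ that cancels exactly against the last term of \eqref{e:regularized-thermal}. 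The remaining cross pieces are then recast, via the identity $\diver(\partial_t y_\eps \otimes \nabla\psi) = \partial_t\nabla y_\eps\nabla\psi + \partial_t y_\eps\Delta\psi$ and further integrations by parts on the surplus $\partial_t\nabla\Delta y_\eps$ contributions (again using \eqref{e:bdry_eps_2}), into the three $\eps$-dependent terms displayed on the last two lines of \eqref{e:nnn}. The legitimacy of these iterated integrations by parts rests precisely on the spatial $H^4$-regularity of $\partial_t y_\eps$ from Lemma~\ref{l:laplace-regularity} and the hierarchy of boundary conditions in \eqref{e:bdry_eps}; summing the processed thermal equation and mechanical test then yields exactly the identity \eqref{e:nnn}.
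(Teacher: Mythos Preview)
Your proposal is correct and follows essentially the same approach as the paper: test \eqref{e:regularized-thermal} with $\psi\eta$, test \eqref{mechanical_equation} with $\partial_t y_\eps\psi\eta$, apply the chain rules for $\mechen$ and the kinetic energy, and add. The only cosmetic difference is in the treatment of the $\eps$-regularization term: the paper first expands $\nabla\Delta(\psi\partial_t y_\eps)$ directly into the diagonal piece plus the cross terms $J_3$ and $J_4$ (then integrates $J_4$ by parts and manipulates $J_3$ through several integrations by parts), whereas you first integrate by parts to get $-\eps\int\eta\,\partial_t\Delta^2 y_\eps\cdot\Delta(\psi\partial_t y_\eps)$, then expand $\Delta(\psi\partial_t y_\eps)$ and reintegrate the diagonal piece; both routes land on the same three $\eps$-terms in \eqref{e:nnn} after invoking \eqref{e:bdry_eps_2}.
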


\BBB Note that except for the $\eps$-dependent terms this formulation coincides with the one in \eqref{e:new-thermal-equation-lim}. \EEE

\begin{proof}
\BBB For $\eps>0$, we define \EEE  $z \defas \eta \psi \partial_{t} y_{\eps}$ and,  \ZZZ  by \eqref{mu_H1_def}, \EEE we note that $z \in L^{2} (I; H^{3}(\Om; \R^{d}))$ and $z = 0$ in $I \times \partial \Omega$. Recall that  $\partial^{2}_{tt} y_{\eps} \in L^{2}(I; (H^{3} (\Om; \R^{d}) \cap H^{1}_{0} (\Om; \R^{d}))^{*})$ \BBB by Proposition \ref{prop:h_convergence_mechanical2}.    Testing  the  heat-transfer \EEE equation~\eqref{e:regularized-thermal} and performing an integration by parts in time we may write
\begin{align}
\label{e:nnn3}
\Pi & \defas  \intQ     \eta \psi \, \BBB \xi \EEE ( \nabla y_{\eps} , \partial_t \nabla y_{\eps} , \theta_{\eps} ) \di x \di t
    + \eps \int_I \int_{\Om}  \eta \psi \,   | \partial_{t} \nabla \Delta  y_{\eps} |^{2} \di x \di t  + \intQ   \eta \psi \, \pl_F \cplpot(\nabla y_{\eps}, \theta_{\eps}) : \pl_t \nabla y_{\eps} \, \di x \di t \nonumber
    \\
    &
    = \intQ  \Big( \eta \,  \hcm(\nabla y_{\eps}, \theta_{\eps}) \nabla \theta_{\eps} \cdot \nabla \psi         -  \psi \, w_{\eps} \partial_{t} \eta  \Big) \, \di x \di t
    - \kappa  \BBB \int_I \EEE \int_{\partial \Omega}  \eta \psi \, (\bt - \theta_{\eps})   \di \haus^{d-1} \di t
     - \int_\Omega \BBB w_{0,\eps} \EEE \psi \eta ( 0 )  \di x. 
\end{align}
\BBB Our goal is to rewrite the terms on the left hand side, i.e., $\Pi$. To this end, \EEE we test the regularized mechanical equation~\eqref{mechanical_equation} \ZZZ with $z = \eta \psi \partial_{t} y_{\eps}$: \BBB   this yields \EEE 
\begin{align}
\label{e:bbb}
&\intQ 
       \eta \Big(
        \pl_F \elpot(\nabla y_\eps) + \pl_{F} W^{\rm cpl} (\nabla y_{\eps}, \theta_{\eps}) \Big) 
         : \nabla (\psi \partial_{t} y_{\eps})  \di x \di t + \intQ 
        \eta \pl_{\dot F} \disspot(\nabla y_\eps, \partial_t \nabla y_\eps, \theta_\eps)
       : \nabla (\psi \partial_{t} y_{\eps})  \di x \di t  \nonumber
         \\
         & \quad  +  \eps \int_I \int_{\Om}  \eta \, \partial_{t} \nabla \Delta y_{\eps}  :   \nabla \Delta (\psi \partial_{t} y_{\eps}) \di x \di t   +   \intQ   \eta  \BBB D \EEE
      \hypot( \Delta  y_\eps)  \cdot \Delta (\psi \partial_{t} y_{\eps} ) \, \di x \di t \notag \\ &   \quad     \quad     \quad     \quad     \quad     \quad     \quad     \quad     \quad     \quad     \quad     \quad     \quad   
        \          = 
            \intQ \psi \eta \, f \cdot \partial_{t} y_{\eps} \di x \di t      - \rho \int_I  \left\langle \partial^{2}_{tt} y_{\eps} , \partial_{t} y_{\eps} \, \eta \psi  \right\rangle  \di t  
  .\end{align} 
\BBB We expand the terms on the left-hand side of  \eqref{e:bbb} by expanding $\nabla (\psi \partial_{t} y_{\eps})$, $\nabla \Delta (\psi \partial_{t} y_{\eps})$, and $\Delta (\psi \partial_{t} y_{\eps})$. This yields 
\begin{align}\label{rhs}
\intQ \!
       \eta \Big(
        \pl_F \elpot(\nabla y_\eps) + \pl_{F} W^{\rm cpl} (\nabla y_{\eps}, \theta_{\eps}) \Big) 
         : \nabla (\psi \partial_{t} y_{\eps})  \di x \di t & =   \intQ \!
       \eta\psi \,   \pl_{F} W^{\rm cpl} (\nabla y_{\eps}, \theta_{\eps})
         : \partial_{t} \nabla  y_{\eps} \di x \di t + J_0  +  J_1,\notag \\ 
     \intQ         \eta \pl_{\dot F} \disspot(\nabla y_\eps, \partial_t \nabla y_\eps, \theta_\eps)
       : \nabla (\psi \partial_{t} y_{\eps})  \di x \di t  & =  \intQ 
        \eta\psi \,  \pl_{\dot F} \disspot(\nabla y_\eps, \partial_t \nabla y_\eps, \theta_\eps)
       : \partial_{t} \nabla y_{\eps} \di x \di t  + J_2,\notag \\
   \eps  \intQ \eta\,  \partial_{t} \nabla \Delta y_{\eps}  :   \nabla \Delta (\psi \partial_{t} y_{\eps}) \di x \di t  &     
     = \eps\intQ\eta \psi \, |\partial_{t} \nabla \Delta  y_{\eps}|^{2} \di x \di t + J_3 + J_4, \notag  \\
      \intQ   \eta D 
      \hypot( \Delta  y_\eps)  \cdot \Delta (\psi \partial_{t} y_{\eps} ) \, \di x \di t &= \intQ \eta \psi \, {D H}(\Delta y_\eps) \cdot \partial_{t}\Delta y_{\eps} \, \di x \di t   + J_5,   
\end{align}
where for brevity we \ste have written\BBB  
\begin{align}\label{allJ}
J_0 &\defas \intQ \eta \psi \, \partial_{F} W^{\rm el} (\nabla y_{\eps}) : \partial_{t} \nabla  y_{\eps} \di x \di t, \notag \\  J_1 & \defas \intQ \eta  \Big(        \pl_F \elpot(\nabla y_\eps) + \pl_{F} W^{\rm cpl} (\nabla y_{\eps}, \theta_{\eps}) \Big) : (\partial_{t} y_{\eps} \otimes \nabla \psi) \, \di x \, \di t, \notag \\
 J_2 &\defas   \intQ \eta \,  \pl_{\dot F} \disspot(\nabla y_\eps, \partial_t \nabla y_\eps, \theta_\eps)        : (\partial_{t} y_{\eps} \otimes \nabla \psi) \di x \di t, \notag \\ 
       J_3 & \defas  \eps \intQ \eta \, \partial_{t} \nabla \Delta  y_{\eps} : ( \partial_{t} \Delta  y_{\eps} \otimes \nabla \psi) \di x \di t   , \notag \\ 
        J_4 & \defas  \eps \intQ \eta \,   \partial_{t} \nabla \Delta y_{\eps} : \nabla (  \partial_{t} \nabla y_{\eps} \nabla \psi) \di x \di t + \eps \intQ\eta \,  \partial_{t} \nabla \Delta  y_{\eps} : \nabla ({\rm div} ( \partial_{t} y_{\eps} \otimes \nabla \psi)) \di x \di t , \notag \\    
 J_5 &\defas \intQ \eta \, {D H}(\Delta y_{\eps}) \cdot \partial_{t} y_{\eps} \Delta \psi \, \di x \di t + 2\intQ \eta \, {D H}(\Delta y_{\eps}) \cdot \partial_{t} \nabla y_{\eps} \nabla \psi \, \di x \di t.
\end{align}
Recalling \eqref{diss_rate}, we see that the first three terms on the right-hand sides of \eqref{rhs} correspond to the terms in \eqref{e:nnn3} whose sum is denoted by $\Pi$. This along with \eqref{e:bbb} yields 
\begin{align*}
&\Pi + \sum_{i=0}^4J_i  = 
           \intQ \psi \eta \, f \cdot \partial_{t} y_{\eps} \di x \di t   -   \intQ   \eta  \BBB D \EEE
      \hypot( \Delta  y_\eps)  \cdot \Delta (\psi \partial_{t} y_{\eps} ) \, \di x \di t    
 - \rho \int_I   \left\langle \partial^{2}_{tt} y_{\eps} ,  \partial_{t} y_{\eps} \, \eta \psi  \right\rangle   \di t
   \nonumber.
\end{align*}
Using the last equation in \eqref{rhs} and the definition of $J_0$, we get
\begin{align}   \label{e:ccccooo}
&\Pi + \sum_{i=1}^5J_i  = 
         \intQ \psi \eta \, \Big( f \cdot \partial_{t} y_{\eps}    -   \partial_{F} W^{\rm el} (\nabla y_{\eps}) : \partial_{t} \nabla  y_{\eps}  - {D H}(\Delta y_\eps) \cdot \partial_{t}\Delta y_{\eps} \Big) \, \di x \di t   
 - \rho \int_I   \left\langle \partial^{2}_{tt} y_{\eps} ,  \partial_{t} y_{\eps} \, \eta \psi  \right\rangle   \di t
    \,.
\end{align}
\BBB Using the chain rule and $\eta(T) = 0$, \EEE we rewrite the last term on the right-hand side   as
    \begin{align}
    \label{e:nnn2}
        \rho \int_I \left\langle \partial^{2}_{tt} y_{\eps},  \partial_t y_{\eps} \psi \eta \right\rangle  \di t & =
        \frac{\rho}{2} \intQ \frac{\di}{\di t} (| \partial_{t} y_{\eps}|^{2} \eta)\psi \di x \di t - \frac{\rho}{2} \intQ \psi | \partial_{t} y_{\eps}|^{2}  \partial_{t} \eta \di x \di t \\
        &
        =  -\frac{\rho}{2} \int_{\Om} \ZZZ \psi \EEE \eta(0)   \,  | y'_{0, \eps}|^{2} \, \di x  - \frac{\rho}{2} \intQ \psi | \partial_{t} y_{\eps}|^{2}  \partial_{t} \eta \di x \di t .\nonumber
\end{align}
By the chain rule \ZZZ for the mechanical energy, \EEE see \eqref{the cain rule} \BBB (for $y_\eps$ in place of $y_h$), and integration by parts \EEE we   have 
\begin{align}
\label{e:nnn4}
\intQ \eta \psi \, & \Big( \partial_{F} W^{\rm el} (\nabla y_{\eps}) : \partial_{t} \nabla  y_{\eps}  + {D H}(\Delta y_\eps) \cdot \partial_{t}\Delta y_{\eps} \Big) \, \di x \di t  
      = \int_I \eta \, \frac{\di}{\di t} \int_{\Om} \psi\big(  \elpot (\nabla y_{\eps}) + H(\Delta y_{\eps} ) \big)\di x \di t   \nonumber
      \\
      &
      = - \int_{\Om} \eta (0) \, \psi  \big(  \elpot (\nabla y_{0,\eps}) + H(\Delta y_{0,\eps} ) \big)\di x - \intQ \partial_{t} \eta\, \psi \big(   \elpot (\nabla y_{\eps}) + H(\Delta y_{\eps} ) \big)\di x \di t  .
\end{align}
\BBB Next, we manipulate $J_3$, $J_4$, $J_5$. \EEE Recall that $\Delta y_{\eps} = 0$ and $\partial_{t} y_{\eps} =0$ on~$\partial \Om$ for a.e.~$t \in I$ (see \eqref{e:bdry_eps}) \BBB and $DH(0)=0$ by \eqref{e:Hdef}--\eqref{e:psi}. \EEE We   \BBB   perform an integration by parts in  $J_5$  to get \EEE 
\begin{align}
\label{e:ccc0}
       J_5 &  =  \intQ \eta \,  {D H}(\Delta y_{\eps}) \cdot \partial_{t} y_{\eps} \Delta \psi \, \di x \di t - 2\intQ \eta \, \nabla ({D H}(\Delta y_{\eps}) ) : (\partial_{t} y_{\eps} \otimes \nabla \psi)\, \di x \di t \notag \\ & \quad - 2 \intQ \eta \, {D H}(\Delta y_{\eps}) \cdot \partial_{t} y_{\eps} \Delta \psi\, \di x \di t \nonumber
      \\
      &
      =   - \intQ \eta \, {D H}(\Delta y_{\eps}) \cdot \partial_{t} y_{\eps} \Delta \psi \, \di x \di t   - 2\intQ \eta \, \nabla ({D H}(\Delta y_{\eps}) ) : (\partial_{t} y_{\eps} \otimes \nabla \psi)\, \di x \di t .   
\end{align}
By integrating by parts and recalling the boundary conditions $ \Delta \partial_{t} y_{\eps} = \partial_{\nu}  \Delta  \partial_{t} y_{\eps} = 0$ on $\partial \Om$ for a.e.~$t \in I$ (see \eqref{e:bdry_eps}),   we  rewrite $J_4$ as 
\begin{align}
\label{e:eee}
J_4     = - \eps\intQ \eta \, \partial_{t} \Deltatwo  y_{\eps} \cdot \big( \partial_{t}  \nabla  y_{\eps} \nabla\psi + {\rm div} (\partial_{t} y_{\eps} \otimes \nabla \psi) \big) \di x \di t,
 \end{align}
 \BBB and, by elementary but tedious computations,  $J_3$ can be written as \EEE
 \begin{align}
    \label{e:fff}
\frac{1}{\eps} J_3 & =   \intQ \eta \, \partial_{t} \nabla \Delta  y_{\eps}   : ( \partial_{t} \Delta  y_{\eps} \otimes \nabla \psi) \di x \di t 
    = - \intQ  \eta \, \partial_{t} \Delta  y_{\eps} \cdot \big( (\partial_{t} \nabla \Delta  y_{\eps} \nabla \psi) + \partial_{t} \Delta  y_{\eps} \Delta \psi \big) \di x \di t \notag
    \\
    &
    = \intQ \eta \, \partial_{t}\nabla \Delta  y_{\eps} : \big( \partial_{t} \nabla^{2}  y_{\eps} \nabla \psi + \partial_{t}  \nabla  y_{\eps} \Delta \psi\big) \di x \di t   + \intQ \eta \, \partial_{t} \Delta  y_{\eps} \cdot \big(\partial_{t} \nabla^{2} y_{\eps} \nabla^{2} \psi + \partial_{t} \nabla  y_{\eps} \nabla \Delta \psi \big) \di x \di t \nonumber
    \\
    &
    = - \intQ \eta \, \partial_{t} \Deltatwo  y_{\eps} \cdot ( \partial_{t} \nabla  y_{\eps} \nabla  \psi)  \di x \di t + \intQ \eta \, \partial_{t} \nabla \Delta  y_{\eps} :  \partial_{t} \nabla y_{\eps} \Delta \psi \di x \di t \nonumber
    \\
    &
    \quad - 2\intQ \eta \, \partial_{t} \nabla \Delta y_{\eps} :  \partial_{t} \nabla  y_{\eps} \nabla^{2}\psi \di x \di t
    \,.  
\end{align}
\BBB Recalling the definition in \eqref{speEnot} and combining \eqref{e:nnn3} with  \eqref{allJ}--\eqref{e:fff} we obtain the statement. More precisely, \eqref{e:nnn2} and \eqref{e:nnn4} contribute to  the \ZZZ  second  line \EEE in \eqref{e:nnn}, the third line corresponds to $J_1+J_2$ (see \eqref{allJ}) and the last three lines correspond to $J_3+J_4+J_5$ (see \eqref{e:ccc0}--\eqref{e:fff}).  
\end{proof}

We are now in a position to pass to the limit as $\eps \to 0$ in the modified heat-transfer equation~\eqref{e:nnn}. This will conclude the \ZZZ proof \BBB of Theorem~\ref{thm:main-thermal-elasto-unregu}. \EEE

\begin{proposition}
\label{thm:mu_convergence_thermal}
Let $(y, \theta)$ be  \BBB given by \EEE Lemma~\ref{l:mu-compactness}. Then, $(y, \theta)$ satisfies~\eqref{e:new-thermal-equation-lim}. 
\end{proposition}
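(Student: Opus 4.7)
The plan is to derive \eqref{e:new-thermal-equation-lim} by taking the limit $\eps \to 0$ in the modified identity \eqref{e:nnn} from Proposition~\ref{p:new_formulation}, first for separable test functions $\varphi = \psi \eta$ with $\psi \in C^\infty(\overline\Om)$ and $\eta \in C^\infty(I)$ satisfying $\eta(T)=0$, and then extending to general $\varphi \in C^\infty(I\times\overline\Om)$ with $\varphi(T)=0$ by a standard density argument (finite linear combinations of such products are dense in $C^1(I\times\overline\Om)$, and every term appearing in \eqref{e:new-thermal-equation-lim} depends continuously on $\varphi$ with respect to that topology).

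For the $\eps$-independent contributions in \eqref{e:nnn} the limit is a routine weak--strong argument based on Lemma~\ref{l:mu-compactness}: the conductivity term uses weak $L^r$-convergence of $\nabla\theta_\eps$ together with strong convergence of $\nabla y_\eps$ and $\theta_\eps$ and the continuity of $\mathcal{K}$; the boundary and force terms use \eqref{mu_def_Linfty_W1infty_conv}--\eqref{mu_temp_Ls_conv}; and the time derivative on the potential $\toten(y_\eps,\theta_\eps;\psi) + \tfrac{\rho}{2}\int|\partial_t y_\eps|^2\psi$ converges by strong $L^\infty(I;W^{2,p})$-convergence of $y_\eps$, strong $L^s$-convergence of $w_\eps$, and strong $L^2(I;L^q)$-convergence of $\partial_t y_\eps$ for $q<2^*$ (in particular $|\partial_t y_\eps|^2 \to |\partial_t y|^2$ in $L^1(I\times\Om)$). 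The adiabatic coupling $(\partial_F W + \partial_{\dot F} R):(\partial_t y_\eps\otimes \nabla\psi)$ pairs the weak $L^2$-limit of the linear term $\partial_{\dot F}R$ (whose coefficient $D(\nabla y_\eps^T\nabla y_\eps,\theta_\eps)$ converges strongly by the continuity in \ref{D_quadratic} together with \eqref{mu_def_Linfty_W1infty_conv} and \eqref{mu_temp_Ls_conv}) with the strong $L^2$-convergence of $\partial_t y_\eps \otimes \nabla\psi$. The term $D H(\Delta y_\eps)\cdot\partial_t y_\eps\,\Delta\psi$ converges using strong $L^p$-convergence of $\Delta y_\eps$ (via $|DH(v)|\le C|v|^{p-1}$ and generalised dominated convergence) combined with \eqref{mu_yt_L2_L2}; and for $\nabla(DH(\Delta y_\eps))$ we use the $\eps$-uniform bound \eqref{e:lap-reg-mu1.5-final} in $L^2(I;L^{p'}(\Om))$ together with the strong $L^2(I;L^{p'})$-convergence of $DH(\Delta y_\eps)$ to identify the weak limit as $\nabla(DH(\Delta y))$, which then pairs against the strongly convergent factor $\partial_t y_\eps \otimes \nabla\psi$.

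The main obstacle lies in the three $\eps$-weighted lines of \eqref{e:nnn} and this is precisely where Lemma~\ref{l:laplace-regularity} (more precisely its $\eps$-uniform counterparts \eqref{e:lar-reg-eps-finale}) becomes essential. For the term involving $\eps\partial_t\Deltatwo y_\eps$, elliptic regularity for $\Delta$ applied with the homogeneous Navier-type boundary conditions \eqref{e:bdry_eps_2} upgrades \eqref{e:lap-reg-mu2-final} to $\|\partial_t \Deltatwo y_\eps\|_{L^2(I\times\Om)} \le C\eps^{-(1+\varrho)/2}$ for some $\varrho\in(1/2,1)$ independent of $\eps$; combined with the $\eps$-uniform bound on $\partial_t y_\eps$ in $L^2(I;H^1(\Om))$ from \eqref{mu_H1_def}, this contribution is bounded by $C\eps\cdot\eps^{-(1+\varrho)/2}=C\eps^{(1-\varrho)/2}\to 0$. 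For the remaining two $\eps$-terms involving $\eps\partial_t\nabla\Delta y_\eps$ contracted with $\partial_t\nabla y_\eps$, the Cauchy--Schwarz inequality gives the estimate $\sqrt{\eps}\cdot\bigl(\sqrt{\eps}\|\partial_t\nabla\Delta y_\eps\|_{L^2(I\times\Om)}\bigr)\cdot\|\partial_t\nabla y_\eps\|_{L^2(I\times\Om)}$, where the two parenthesised factors are bounded uniformly in $\eps$ by \eqref{mu_H1_def}, so the full quantity is $O(\sqrt{\eps})\to 0$. Putting these vanishing contributions together with the limits computed in the previous paragraph, \eqref{e:nnn} passes to \eqref{e:new-thermal-equation-lim} on separable test functions, and the density argument concludes the proof.
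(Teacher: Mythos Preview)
Your proof is correct and follows the same strategy as the paper's: reduce to separable test functions by density, pass to the limit in the $\eps$-independent terms of \eqref{e:nnn} via Lemma~\ref{l:mu-compactness} and the bound \eqref{e:lap-reg-mu1.5-final}, and kill the $\eps$-weighted remainders using \eqref{mu_H1_def} and \eqref{e:lap-reg-mu2-final} together with $\varrho<1$. One minor slip: Lemma~\ref{l:mu-compactness} only gives strong convergence of $y_\eps$ in $L^2(I;W^{2,p})$, not $L^\infty(I;W^{2,p})$, but combined with the uniform $L^\infty(I;W^{2,p})$ bound \eqref{mu_Linfty_W2p_def} this still suffices for the convergence of $\int_I\partial_t\eta\,\toten(y_\eps,\theta_\eps;\psi)\,\di t$.
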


\begin{proof}
\BBB In order to show \eqref{e:new-thermal-equation-lim}, by a density argument it suffices to consider test functions of the form   $\varphi = \psi \eta $ for    $\psi \in C^{\infty} (\overline{\Omega})$ and $\eta \in C^{\infty} (I)$ with $\eta(T) = 0$. \EEE We test~\eqref{e:nnn} with $\varphi$ and pass to the limit term by term.  

Thanks to the convergences stated in Lemma~\ref{l:mu-compactness}, % the \BBB fact that  $y_{\eps} (t) \to y(t)$ in $W^{2, p} (\Om; \R^{d})$  for a.e.~$t \in I$ (see Proposition \ref{thm:mu_convergence_mechanical}),  \EEE 
the estimates~\eqref{e:epsilon_a_priori_1}--\eqref{e:lar-reg-eps-finale}, and the assumptions~\ref{W_regularity}--\ref{W_lower_bound}, \ref{H_regularity}--\ref{H_bounds},~\ref{C_regularity}--\ref{C_bounds}, \ref{D_quadratic}--\ref{D_bounds}, and \eqref{e:hpmu0}, we have that \BBB each of the terms in the first three lines of \eqref{e:nnn} converges to the respective term in the first three lines of \eqref{e:new-thermal-equation-lim} (with $\eta \psi$ in place of $\varphi$).  Here, we againg exploit that $ \pl_{\dot F} \disspot$ is linear in the second entry, cf.\ \ref{D_quadratic}.   Therefore, it suffices to show
\begin{align}\label{STS1}
 &\lim_{\eps \to 0} \Big( \intQ \eta \, {D H}(\Delta y_{\eps}) \cdot \partial_{t} y_{\eps} \Delta \psi \, \di x \di t   + 2\intQ \eta \, \nabla ({D H}(\Delta y_{\eps}) ) : (\partial_{t} y_{\eps} \otimes \nabla \psi)\, \di x \di t  \Big) \notag \\
& =  \intQ \eta \, {D H}(\Delta y) \cdot \partial_{t} y \Delta \psi \, \di x \di t   + 2\intQ \eta \, \nabla ({D H}(\Delta y) ) : (\partial_{t} y \otimes \nabla \psi)\, \di x \di t ,
 \end{align}
\begin{align}\label{STS12}
\lim_{\eps \to 0} \eps\intQ \eta \, \partial_{t} \Deltatwo  y_{\eps} \cdot \big( 2\partial_{t}  \nabla  y_{\eps} \nabla\psi + {\rm div} (\partial_{t} y_{\eps} \otimes \nabla \psi) \big) \di x \di t  = 0,
\end{align}
\begin{align}\label{STS13} 
\lim_{\eps \to 0}  \Big( \eps\intQ \eta \, \partial_{t} \nabla \Delta  y_{\eps} :  \partial_{t} \nabla y_{\eps} \Delta \psi \di x \di t  
      - 2\eps\intQ \eta \, \partial_{t} \nabla \Delta y_{\eps} :  \partial_{t} \nabla  y_{\eps} \nabla^{2}\psi \di x \di t  \Big) \ste = 0 .
\end{align} \BBB
We start with \eqref{STS1}. Recalling that $\partial_t y_\eps$ converges strongly in $L^2(I;L^p(\Omega;\R^d))$ by \eqref{mu_yt_L2_L2} (recall $p <6$ for $d=3$), the key point is to show \ste the strong (resp.~weak) convergence of ${D H}(\Delta y_{\eps})$ (resp.~$\nabla ({D H}(\Delta y_{\eps})) $)  in $L^2(I; L^{p'} (\Om))$. \EEE %  In view of the definition of~$H$ (see~\eqref{hyperelastic}--\eqref{e:psi}), a.e.\ in $\Omega$ it holds that \EEE 
%\begin{align}
%\label{e:HHH1}
%|\eta \,  {D H}( \Delta y_{\eps} ) | & \leq C |\eta| \, \max \{ | \Delta y_{\eps} |, | \Delta y_{\eps} |^{p-1} \} \,,
%\end{align}
%for some positive constant~$C$ independent of~$\eps$, $\eta$, and $\psi$. 
Since \ste $ y_{\eps} \to  y$ in $L^{2} (I ; W^{2, p} (\Om; \R^{d}))$ (cf.~\eqref{mu_def_Linfty_W1infty_conv}),
%, see Proposition \ref{thm:mu_convergence_mechanical}, and $ \max \{ | \Delta y_{\eps} |, | \Delta y_{\eps} |^{p-1} \}$ is  bounded  in  $L^{\infty} (I; L^{p'} (\Om; \R^{d}))$ by  \eqref{mu_Linfty_W2p_def},   
we infer \ZZZ by \ref{H_bounds} \EEE that \EEE 
\begin{align}\label{LH1}
    \eta \,{D H}(\Delta y_{\eps}) \ \to \ \eta {D H}(\Delta y) \qquad \text{in $L^{2} (I; L^{p'} (\Om; \R^{d}))$.}
\end{align}
\BBB In view of  \eqref{e:lap-reg-mu1.5-final}, by weak compactness  \EEE  we also have that
\begin{align}\label{LH2}
\eta \,\nabla ({D H}(\Delta y_{\eps}))  \ \rightharpoonup \ \eta \nabla ( {D H}(\Delta y) )\qquad \text{in $L^{2}(I; L^{p'} (\Om; \R^{d \times d}))$.} 
\end{align}
Now, \eqref{LH1}--\eqref{LH2} along with \eqref{mu_yt_L2_L2} show \eqref{STS1}.

\BBB Eventually, \EEE by H\"older's inequality, we get that
\begin{align*}
 \bigg|\intQ   \eta \, \partial_{t} \Deltatwo  y_{\eps} & \cdot \big( 2\partial_{t} \nabla  y_{\eps} \nabla \psi + {\rm div} ( \partial_{t} y_{\eps} \otimes \nabla \psi) \big) \di x \di t \bigg| 
 \\
 &
 \leq C \|  \partial_{t} \Deltatwo  y_{\eps} \|_{L^{2}(I \times \Om)}  \big( \| \eta \nabla \psi \|_{L^{\infty} (\Omega)}  \ZZZ + \| \eta \nabla^2 \psi \|_{L^{\infty} (\Omega)} \big) \EEE \big(   \|  \partial_{t} y_{\eps}\|_{L^{2}(I \times \Om)} +  \ZZZ  \|    \, \partial_{t} \nabla y_{\eps}\|_{L^{2}(I \times \Om)}  \EEE \big) \,,
 \\
 \bigg|\intQ  \eta \,  \partial_{t} \nabla \Delta y_{\eps}  & : \big( 2 \partial_{t} \nabla  y_{\eps} \nabla^{2} \psi - \partial_{t} \nabla  y_{\eps} \Delta \psi) \di x \di t \bigg|
 \\
 &
\leq C  \|   \partial_{t} \nabla \Delta  y_{\eps} \|_{L^{2}(I \times \Om)} \BBB \| \eta  \nabla^{2} \psi \|_{L^{\infty} (\Omega)}  \EEE   \|   \partial_{t} \nabla  y_{\eps}\|_{L^{2}(I \times \Om)}\,.
\end{align*}
Thus, we infer from \BBB \eqref{mu_H1_def} and   \eqref{e:lap-reg-mu2-final} (recall $\varrho <1$) that \eqref{STS12}--\eqref{STS13} hold. This concludes the proof. \EEE 
\end{proof}

\BBB
\begin{proof}[Proof of Theorem \ref{thm:main-thermal-elasto-unregu}]
The weak formulation follows from Propositions \ref{thm:mu_convergence_mechanical} and \ref{thm:mu_convergence_thermal}. \ste We deduce the   regularity  $y\in L^2(I; H^3(\Omega;\R^d))$ and     $(1+\abs{\Delta y})^\frac{p-2}{2}\abs{\nabla \Delta y}^2\in L^2(I\times \Omega)$ by \BBB the bound \eqref{e:lap-reg-mu-final} and lower semicontinuity of norms as $\eps \to 0$, again applying an elliptic regularity estimate.
\end{proof}
\EEE

\BBB 

\subsection*{Acknowledgements} RB  and MF  have been supported by  the DFG project FR 4083/5-1 and  by the Deutsche Forschungsgemeinschaft (DFG, German Research Foundation) under Germany's Excellence Strategy EXC 2044 -390685587, Mathematics M\"unster: Dynamics--Geometry--Structure, as well as by the DAAD projects 57600633 and 57702972. SS  was supported by the ERC-CZ Grant CONTACT LL2105 funded by the Ministry of Education, Youth and Sport of the Czech Republic, and  also acknowledges the support of the VR Grant 2022-03862 of the Swedish Science Foundation and the supported by Charles University Research Centre program No. UNCE/24/SCI/005. SS is a member of the Necas center for mathematical modeling. SA has been supported by the Austrian Science Fund through the Stand Alone project 10.55776/P35359, by the Italian Ministry of Research (MUR) through the PRIN 2022 Project No.~2022HKBF5C ``Variational Analysis of Complex Systems in Materials Science, Physics and Biology'', and by the University of Naples Federico II through the FRA Project ``ReSinApas''. SA is also member of the Gruppo Nazionale per l'Analisi Matematica, la Probabilit\`a e le loro Applicazioni (INdAM--GNAMPA).

The authors would like to thank {\sc M.~Kru\v{z}\'ik} and {\sc U.~Stefanelli} for fruitful discussions on the content of the work. The authors are thankful to the University of Vienna for its hospitality, and to  the Erwin Schr\"odinger Institute in Vienna where part of this work has been developed during the workshop ``Between Regularity and Defects: Variational and Geometrical Methods in Materials Science".

\appendix 

\section{A special case of elliptic regularity}

\BBB We formulate and prove the lemma used in Subsection \ref{sec: APB}. \EEE 

\begin{lemma}[A special case of elliptic regularity]\label{lem:ellip_reg_spec}
  \BBB Consider the Banach space  $X \defas W^{2,q}(\Omega; \R^d) \cap H^1_0(\Omega; \R^d)$ for some $q>1$. \EEE   Moreover, let $u \in H^3(\Omega; \R^d) \cap H^1_0(\Omega; \R^d)$ and $g \in X^*$ be such that for all $z \in C^\infty(\overline \Omega; \R^d)$ with $z = 0$ on $\partial \Omega$ it holds that
  \begin{equation}\label{ellip_reg_weak_form}
    \int_\Omega \nabla \Delta u : \nabla \Delta z = \langle g, z \rangle,
  \end{equation}
  where $\langle \cdot, \cdot \rangle$ denotes the dual pairing between $X$ and $X^*$.
  Then, the following holds true:
  \begin{enumerate}[label=(\alph*)]
  \item \label{item:ellip_reg_H4}
We have that $u \in \BBB W^{4,q'} \EEE (\Omega; \R^d)$ with
    \begin{equation}\label{u_H4_bound}
      \|u  \|_{\BBB W^{4,q'}(\Omega)} \leq C \|g\|_{X^*} + \BBB C|\mu|
    \end{equation}
    for a constant $C \BBB > 0 \EEE $ only depending on $\Omega$, \BBB where $\mu \defas \mint_\Omega \Delta u \di x$.  \EEE
    Moreover, the following boundary condition holds true:
    \begin{equation}\label{neumann_laplace_u}
      \partial_\nu \Delta u = 0 \qquad \text{$\haus^{d-1}$-a.e.~on } \partial\Omega.
    \end{equation}
  \item \label{item:ellip_reg_H5}
    If we additionally have $g \in H^{-1}(\Omega; \R^d)$, then $u \in H^5(\Omega; \R^d)$ with
    \begin{equation}\label{u_H5_bound}
      \|u    \|_{H^5(\Omega)} \leq C \|g\|_{H^{-1}(\Omega)} \BBB + C|\mu|, \EEE
    \end{equation}
    satisfying the boundary condition
    \begin{equation}\label{dirichlet_laplace2x_u}
      \Deltatwo u = 0 \qquad \text{$\haus^{d-1}$-a.e.~on } \partial\Omega.
    \end{equation}
  \end{enumerate}  
\end{lemma}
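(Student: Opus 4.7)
The strategy is to reduce the fourth-order problem \eqref{ellip_reg_weak_form} to a second-order Neumann problem for the auxiliary function $v \defas \Delta u \in H^1(\Omega;\R^d)$. For any $\phi \in C^\infty(\overline\Omega;\R^d)$, let $z_\phi \in C^\infty(\overline\Omega;\R^d) \cap H^1_0(\Omega;\R^d)$ denote the classical solution of the Dirichlet Poisson problem $\Delta z_\phi = \phi$ in $\Omega$, $z_\phi = 0$ on $\partial\Omega$, which exists and is smooth up to the boundary by the $C^5$-regularity of $\partial\Omega$. Plugging $z = z_\phi$ into \eqref{ellip_reg_weak_form} yields the key identity
\[
  \int_\Omega \nabla v : \nabla \phi \di x \;=\; \langle g, z_\phi\rangle \;=:\; L(\phi).
\]
The crucial observation is that the set of admissible test functions in \eqref{ellip_reg_weak_form} is all smooth $z$ vanishing on $\partial\Omega$ (not only those with compact support), so that $\phi = \Delta z_\phi$ ranges over all of $C^\infty(\overline\Omega;\R^d)$, which will be what forces the Neumann condition on $v$ below.

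For part (a), the Calderón-Zygmund estimate on the $C^5$-domain $\Omega$ gives $\|z_\phi\|_{W^{2,q}(\Omega)} \le C\|\phi\|_{L^q(\Omega)}$, so $|L(\phi)| \le C\|g\|_{X^*}\|\phi\|_{L^q}$. By duality, $L$ extends to a functional on $L^q(\Omega;\R^d)$ represented by some $T \in L^{q'}(\Omega;\R^d)$ with $\|T\|_{L^{q'}} \le C\|g\|_{X^*}$, and the identity becomes
\[
  \int_\Omega \nabla v : \nabla \phi \di x = \int_\Omega T \cdot \phi \di x \qquad \forall\, \phi \in L^q(\Omega;\R^d).
\]
This is the weak formulation of the Neumann problem $-\Delta v = T$ in $\Omega$, $\partial_\nu v = 0$ on $\partial\Omega$; the compatibility $\int_\Omega T = 0$ follows automatically by taking $\phi \equiv 1$. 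Standard $W^{2,q'}$-regularity for the Neumann Laplacian then yields $v \in W^{2,q'}(\Omega;\R^d)$ together with the trace condition \eqref{neumann_laplace_u} and the bound $\|v\|_{W^{2,q'}} \le C(\|g\|_{X^*} + |\mu|)$, where the $|\mu|$-term arises because $v$ is determined only up to a constant by the Neumann problem. Finally, applying Calderón-Zygmund once more to the Dirichlet problem $\Delta u = v \in W^{2,q'}$ with $u \in H^1_0$ promotes $u$ to $W^{4,q'}(\Omega;\R^d)$ and delivers \eqref{u_H4_bound}.

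For part (b), I would repeat the strategy while exploiting that for $\phi \in H^{-1}(\Omega;\R^d)$ the Dirichlet solution $z_\phi \in H^1_0$ satisfies $\|z_\phi\|_{H^1_0} \le C\|\phi\|_{H^{-1}}$ by Lax-Milgram. Hence $|L(\phi)| \le C\|g\|_{H^{-1}}\|\phi\|_{H^{-1}}$, so $L$ extends to $H^{-1}(\Omega;\R^d)$ and, by the Hilbert-space duality $(H^{-1})^* = H^1_0$, is represented by some $T \in H^1_0(\Omega;\R^d)$ with $\|T\|_{H^1_0} \le C\|g\|_{H^{-1}}$. The Neumann regularity then upgrades to $v \in H^3(\Omega;\R^d)$, and a final Dirichlet elliptic estimate yields $u \in H^5(\Omega;\R^d)$ with \eqref{u_H5_bound}. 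The boundary condition \eqref{dirichlet_laplace2x_u} comes for free: since $\Deltatwo u = \Delta v = -T$ a.e.~in $\Omega$ and both sides lie in $H^1(\Omega;\R^d)$, taking traces gives $\Deltatwo u = 0$ on $\partial\Omega$ because $T \in H^1_0$ has vanishing trace.

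The main obstacle is conceptual rather than technical: one must avoid the temptation to test \eqref{ellip_reg_weak_form} only against compactly supported $\phi$, which would give $-\Delta v = T$ merely as a distribution in $\Omega$ and entirely miss the Neumann boundary trace. The key is to exploit fully the set of admissible test functions $z$ — smooth up to $\partial\Omega$ with $z|_{\partial\Omega} = 0$ — so that $\Delta z$ sweeps out all of $C^\infty(\overline\Omega;\R^d)$, thereby encoding the zero Neumann condition on $v$. Once this identification is made, the rest reduces to classical elliptic regularity for the Neumann and Dirichlet Laplacian.
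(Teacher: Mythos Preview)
Your approach is correct and shares the paper's core idea---reducing \eqref{ellip_reg_weak_form} to a Neumann problem for $\Delta u$---but your execution is noticeably more direct. The paper first represents $g$ explicitly by functions $G, G_j, G_{ij} \in L^{q'}$, approximates these by smooth data, solves Dirichlet problems $-\Delta v_k = g_k$ and passes to a weak limit $v \in L^{q'}$ satisfying $\int_\Omega v \cdot \Delta z = -\langle g, z\rangle$, then solves a separate Neumann problem $\Delta w = v - m$, and finally argues that $\Delta u - w$ is constant by testing against $\Deltatwo z$. Your substitution $z = z_\phi$ (the Dirichlet solution of $\Delta z_\phi = \phi$) collapses these steps: it immediately exhibits $\Delta u$ itself as a weak Neumann solution with right-hand side $T \in L^{q'}$ obtained from the single duality bound $|L(\phi)| \le C\|g\|_{X^*}\|\phi\|_{L^q}$. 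This avoids the approximation step and the a~posteriori identification $\Delta u = w + \mathrm{const}$, and makes the provenance of the natural boundary condition \eqref{neumann_laplace_u} transparent. Your treatment of part~(b) via $(H^{-1})^* = H^1_0$ is likewise cleaner than the paper's (which reuses \eqref{neumann_laplace_u} to integrate by parts and again introduces an auxiliary Dirichlet solution before identifying $\Deltatwo u$ with it).

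One small technicality: you assert $z_\phi \in C^\infty(\overline\Omega)$, but the boundary is only $C^5$, so $z_\phi$ is a~priori only in $C^{4,\alpha}$ (Schauder) or $W^{5,r}$ for finite $r$. This is harmless---all you need is $z_\phi \in H^3 \cap W^{2,q} \cap H^1_0$ so that it becomes an admissible test function once \eqref{ellip_reg_weak_form} is extended by density (the paper does exactly the same, implicitly testing against $z \in H^4 \cap H^1_0$).
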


\begin{proof}
  \textit{Step 1 ($W^{4,q'}$-regularity):}
  Using \eqref{ellip_reg_weak_form} and integrating by parts we see that for all $z \in C^\infty(\overline \Omega; \R^{d})$ with \BBB $z=\partial_\nu \Delta z = 0$ \EEE on $\partial \Omega$ \BBB it holds that \EEE
  \begin{equation}\label{ellip_reg_weak_form_2}
    \langle g, z \rangle
    = \int_\Omega \nabla \Delta u : \nabla \Delta z \di x
    = - \int_\Omega \Delta u \cdot \Deltatwo z \di x + \int_{\partial \Omega} \Delta u \cdot \partial_\nu \Delta z \di \haus^{d-1}
    = - \int_\Omega \Delta u \cdot \Deltatwo z \di x.
  \end{equation}
\BBB By representation of the dual space we find  $G,G_j,G_{ij} \in L^{q'}(\Omega;\R^d)$ such that 
\begin{align}\label{weak_form_vk0}
\langle g ,z \rangle = \int_\Omega G\cdot z \di x + \sum_{j=1}^d G_j\cdot \partial_jz \di x + \sum_{i,j=1}^d G_{ij}\cdot \partial^2_{ij}z \di x
\end{align}
with  $\Vert G \Vert_{L^{q'}(\Omega)} + \sum_{j=1}^d\Vert G_j \Vert_{L^{q'}(\Omega)}+ \sum_{i,j=1}^d\Vert G_{ij} \Vert_{L^{q'}(\Omega)} \le C \Vert g \Vert_{X^*}$.   We approximate $G$, $G_j$,  and $G_{ij}$ by sequences $G^k, G_j^k, G_{ij}^k \in C^\infty_c(\Omega;\R^d)$ converging to the respective functions in $L^{q'}(\Omega;\R^d)$. We  set
$$g_k \defas G_k  - \sum_{j=1}^d \partial_j G_j^k + \sum_{i,j=1}^d \partial_{ij}^2 G_{ij}^k, $$
\EEE     
and let $v_k \in H^1_0(\Omega; \R^d)$ be \BBB the \EEE weak solution to
  \begin{equation*}
  \left\{
  \begin{aligned}
    - \Delta v_k &= g_k &&\text{in } \Omega, \\
    v_k &= 0 &&\text{on } \partial \Omega.
  \end{aligned}
  \right.
  \end{equation*}
  In particular, \BBB by integration by parts, \EEE for all $z \in C^\infty(\overline \Omega; \R^d)$ with $z = 0$ on $\partial \Omega$ it holds that
  \begin{equation}\label{weak_form_vk}
    \int_\Omega v_k \cdot \Delta z \di x = - \BBB \int_\Omega g_k \cdot z \, \di x = -  \int_\Omega  \Big( G_k \cdot z  + \sum_{j=1}^d G_j^k \cdot \partial_j z + \sum_{i,j=1}^d G_{ij}^k \cdot \partial^2_{ij} z \Big) \, \di x. \EEE
  \end{equation}
  Moreover, let $\BBB w_k \EEE \in H^1_0(\Omega; \R^d)$ be \BBB the \EEE weak solution to
  \begin{equation*}
  \left\{
  \begin{aligned}
    \Delta w_k &= \BBB |v_k|^{\frac{2-q}{q-1}} \EEE v_k &&\text{in } \Omega, \\
    w_k &= 0 &&\text{on } \partial \Omega.
  \end{aligned}
  \right.
  \end{equation*}
  As $\Omega$ has   $C^5$-boundary,  by elliptic regularity we see that \BBB $w_k \in W^{2,q}(\Omega; \R^d)$ with
  \begin{equation*}
   \|w_k\|_{W^{2,q}(\Omega)} \leq C \||v_k|^{\frac{2-q}{q-1}}  v_k\|_{L^{q}(\Omega)} = C \|  v_k\|^{1/(q-1)}_{L^{q'}(\Omega)},
  \end{equation*}
  where the constant $C$ only depends on $\Omega$.
  With \eqref{weak_form_vk}, this shows \BBB
  \begin{align}\label{vk_L2_bound}
    \|v_k\|_{L^{q'}(\Omega)}^{q'}
   & = \int_\Omega v_k \cdot \Delta w_k \di x
    =- \int_\Omega g_k \cdot w_k \di x \notag\\
    & \leq \BBB C \Big( \Vert  G_k \Vert_{L^{q'}(\Omega)}  + \sum_{j=1}^d \Vert G_j^k \Vert_{L^{q'}(\Omega)} + \sum_{i,j=1}^d  \Vert G_{ij}^k \Vert_{L^{q'}(\Omega)}   \Big) \Vert w_k \Vert_{W^{2,q}(\Omega)}  \notag\\ &  \le   C \Vert g \Vert_{X^*}\Vert v_k \Vert^{1/(q-1)}_{L^{q'}(\Omega)}.
  \end{align} \EEE
 Consequently, there exists $v \in \BBB L^{q'} \EEE (\Omega; \R^d)$ such that, up to selecting a subsequence, $v_k \weakly v$ weakly in $\BBB L^{q'} \EEE (\Omega; \R^d)$.
  Passing to the limit $k \to \infty$ in \eqref{weak_form_vk} and \eqref{vk_L2_bound}, \BBB and recalling \eqref{weak_form_vk0}, \EEE we discover that $v$ satisfies
  \begin{align}
    \int_\Omega v \cdot \Delta z \di x &=   -\langle g, z \rangle, \label{weak_form_v} \\
    \BBB \|v\|_{L^{q'}(\Omega)} &\leq C \|g\|_{X^*}, \EEE \label{v_L2_bound}
  \end{align}
\BBB for all $z \in C^\infty(\overline \Omega; \R^d)$ with $z = 0$ on $\partial \Omega$. \EEE
\BBB Now, \EEE let $w \in H^1(\Omega; \R^d)$  \BBB with $\mint_\Omega w \di x = 0$ be  the  \EEE weak solution to
  \begin{equation}\label{neumann_w}
  \left\{
  \begin{aligned}
     \Delta w &= v \BBB - m \EEE  &&\text{in } \Omega, \\
    \partial_\nu w &= 0 &&\text{on } \partial \Omega,
  \end{aligned}
  \right. 
  \end{equation}\BBB
  with $m \defas \mint_\Omega v \di x$, \EEE
  i.e.,  $\int_\Omega w \di x = 0$ and $
 -  \int_\Omega \nabla w : \nabla z \di x = \int_\Omega (v-m) \cdot z \di x$   for all $z \in H^1(\Omega; \R^d)$.  
  As $\Omega$ has  $C^5$-boundary  and $v \in \BBB L^{q'} \EEE (\Omega; \R^d)$, by % \note{\err R: Sebastian wanted to find a good citation here (elliptic regularity in the Neumann setting.)} 
   elliptic regularity (see for instance \cite[Chapter 2, Section 5]{LionsM}) and \eqref{v_L2_bound} we derive that $w \in \BBB W^{2,q'}\EEE(\Omega; \R^d)$  \BBB (i.e., \eqref{neumann_w} holds in a pointwise sense) \EEE
  and
  \begin{equation}\label{w_H2_bound}
    \|w\|_{W^{2,q'}(\Omega)} \leq C \|v - \BBB m \EEE \|_{L^{q'}(\Omega)} \leq C \|g\|_{X^*}.
  \end{equation}
  Consequently, for $z$  with \BBB $z = \partial_\nu \Delta z = 0$  on $\partial \Omega$  \EEE we derive, due to $ \partial_\nu w = 0$ for $\haus^{d-1}$-a.e.~point in $\partial \Omega$, \BBB  \eqref{weak_form_v}, and \eqref{neumann_w} \EEE that  
  \begin{align*}
    \int_\Omega w \cdot \Deltatwo z \di x
    &= -\int_\Omega \nabla w : \nabla \Delta z \di x + \int_{\partial \Omega} w \cdot \partial_\nu \Delta z \di \haus^{d-1}  = \int_\Omega \Delta w \cdot \Delta z \di x \BBB - \EEE \int_{\partial \Omega} \partial_\nu w \cdot \Delta z \di \haus^{d-1} \nonumber \\
    &=   \int_\Omega v \cdot \Delta z \di x \BBB -  \int_\Omega \ste m \cdot \Delta z \di x  %= \int_\Omega v \cdot \Delta z \di x \EEE 
    =  - \langle g, z \rangle - \int_{\Omega} m \cdot \Delta z \di x\,. \EEE  
  \end{align*}
  \BBB Using \eqref{ellip_reg_weak_form_2} we get  \EEE
  \begin{equation}\label{diff_w_weak}
    \int_\Omega (\Delta u - w) \cdot \Deltatwo z \di x = 0
  \end{equation}
  for all $z \in \BBB C^\infty \EEE(\Omega; \R^d)$ with $z = \partial_\nu \Delta z = 0$ at $\haus^{d-1}$-a.e.~point in $\partial \Omega$ \ste and $\mint_\Omega \Delta z \di x = 0$.   \BBB We now show that $\Delta u - w$ constant a.e.~in $\Omega$. To this end, let \EEE $\vphi \in \BBB C^\infty \EEE (\Omega; \R^d)$ \BBB with $\mint_\Omega \varphi \di x = 0$ \EEE  be arbitrary.
  As $\Omega$ has  $C^5$-boundary, \EEE by elliptic regularity we can find $\tilde z \in H^2(\Omega; \R^d)$ such that  $\mint_\Omega \tilde z \di x = 0$ and \BBB
  \begin{equation*}
  \left\{
  \begin{aligned}
    - \Delta \tilde z &= \vphi &&\text{in } \Omega, \\
    \partial_\nu \tilde z  &= 0 &&\text{on } \partial \Omega,
  \end{aligned}
  \right. 
  \end{equation*}
  \BBB and, subsequently, \EEE we can find $z \in H^4(\Omega; \R^d)$ satisfying
  \begin{equation*}
  \left\{
  \begin{aligned}
    - \Delta z &= \tilde z &&\text{in } \Omega, \\
    z  &= 0 &&\text{on } \partial \Omega.
  \end{aligned}
  \right. 
  \end{equation*}
  Consequently, with \eqref{diff_w_weak} this leads to $    \int_\Omega (\Delta u - w) \cdot \vphi \di x = 0$, and by the arbitrariness of $\vphi$ to  \BBB $\Delta u - w$ constant  a.e.~in $\Omega$.  As $\mint_\Omega w \di x = 0$, we get    $\Delta u - w = \mu = \mint_\Omega \Delta u \di x$. We let $\nu \in H^1_0(\Omega;\R^d)$ such that $\Delta \nu = \mu$.    \EEE   As \BBB by assumption \EEE  $u \in H^1_0(\Omega; \R^d)$ and $\Omega$ has   $C^5$-boundary, and since $w \in W^{2,q'}(\Omega; \R^d)$, \EEE by elliptic regularity we see that $u \in \BBB W^{4,q'} \EEE(\Omega; \R^d)$ and \BBB 
  \begin{equation*}
    \|u -\nu \|_{W^{4,q'}(\Omega)} \BBB \leq C \| \Delta u - \mu \|_{W^{2,q'}(\Omega)} =  \EEE  C \|w\|_{W^{2,q'}(\Omega)}.
  \end{equation*}
 \BBB This along with \eqref{w_H2_bound} \BBB and the elliptic regularity estimate $    \|\nu \|_{W^{4,q'}(\Omega)}   \le C   \| \Delta \nu \|_{W^{2,q'}(\Omega)} \le C|\mu| $  shows \EEE  \eqref{u_H4_bound}.    Finally, \eqref{neumann_laplace_u} directly follows from \BBB $\Delta u - w$ constant \EEE  and \eqref{neumann_w}.  \EEE  
  This concludes the proof of \ref{item:ellip_reg_H4}.

  \textit{Step 2 ($H^5$-regularity):}
\BBB From now on, we  assume that  \EEE $g \in H^{-1}(\Omega; \R^d)$. \BBB Since then also $g \in H^*$, Step 1 yields \ZZZ $u \in W^{4,q'} \EEE (\Omega; \R^d)$. \EEE Thus, we can integrate by parts in \eqref{ellip_reg_weak_form} and use \eqref{neumann_laplace_u} to derive
  \begin{equation}\label{ellip_reg_weak_form_3}
    -\langle g, z \rangle
    = -\int_\Omega \nabla \Delta u : \nabla \Delta z \di x
    = \int_\Omega \Deltatwo u : \Delta z \di x - \int_{\partial \Omega} \partial_\nu \Delta u \cdot \Delta z \di \haus^{d-1}
    = \int_\Omega \Deltatwo u : \Delta z \di x
  \end{equation}
 \BBB   for all $z \in C^\infty(\overline{\Omega}; \R^d)$ \ZZZ with $z = 0$ on $\partial \Omega$, \EEE where $\langle \cdot, \cdot \rangle$ now denotes the dual pairing between $H^1_0(\Omega; \R^d)$ and $H^{-1}(\Omega; \R^d)$.
  Let $\BBB v \EEE  \in H^1_0(\Omega; \R^d)$ be \BBB the \EEE weak solution to
  \begin{align}\label{NNNNN}
  \left\{
  \begin{aligned}
    - \Delta v &= g  &&\text{in } \Omega, \\
   v &= 0 &&\text{on } \partial \Omega.
  \end{aligned}
  \right. 
  \end{align}
  In particular, we have with Poincar\'e's inequality that \BBB
  \begin{equation}\label{tildew_H1_bound}
    \|v\|_{H^1(\Omega)}^2
    \leq C \|\nabla v\|_{L^2(\Omega)}^2
    = \langle g, v \rangle
    \leq \|g\|_{H^{-1}(\Omega)} \|v\|_{H^1(\Omega)}.
  \end{equation} \EEE
  Furthermore, for all $z \in C^\infty(\overline{\Omega} ; \R^d)$ with $z = 0$ on $\partial \Omega$ it holds that
   \begin{align}\label{NNNNN2}
    -\langle g, z \rangle
    = -\int_\Omega \nabla v : \nabla z \di x
    = \int_\Omega v \cdot \Delta z - \int_{\partial \Omega} v \cdot \BBB \partial_{\nu} \EEE z \di \haus^{d-1}
    = \int_\Omega v\cdot \Delta z.
  \end{align}
  Subtracting \BBB this \EEE from \eqref{ellip_reg_weak_form_3}, we arrive at
  \begin{equation*}
    \int_\Omega (\Deltatwo u - v) \cdot \Delta z \di x = 0
  \end{equation*}
  for all $z \in C^\infty(\overline{\Omega}; \R^d)$ with $z = 0$ on $\partial \Omega$.
  By an argument similar to the one from Step 1 this leads to $\Deltatwo u = v$ a.e.~in $\Omega$. \ZZZ This also shows $\int_\Omega v \di x = \int_{\partial \Omega}  \partial_\nu \Delta u \di \mathcal{H}^{d-1} = 0  $ by \eqref{neumann_laplace_u}.    Then, \EEE    in view of  \ZZZ  \eqref{neumann_laplace_u} and \EEE \eqref{tildew_H1_bound}, \EEE we derive by elliptic regularity \ZZZ for Neumann problems (see for instance \cite[Chapter 2, Section~5]{LionsM}) \EEE that $\Delta u \in H^3(\Omega; \R^d)$ such that
  \begin{equation*}
    \|\Delta u   \BBB -\mu \EEE \|_{H^3(\Omega)} \leq  \BBB C  \|v\|_{H^1(\Omega)}\le \EEE  C \|g\|_{H^{-1}(\Omega)},
  \end{equation*}
 \BBB  where as before $\mu = \mint_\Omega \Delta u \di x$.  \EEE
  Hence, as $u = 0$ for $\haus^{d-1}$-a.e.~point on $\partial \Omega$ and $\Omega$ has $C^5$-boundary, yet another application \BBB of elliptic regularity \EEE leads to $u \in H^5(\Omega; \R^d)$ and the bound \eqref{u_H5_bound}. \ZZZ Then, \EEE 
    \eqref{dirichlet_laplace2x_u} follows from $\Deltatwo u = v$ a.e.~in $\Omega$ and \eqref{NNNNN}. \EEE 
\end{proof}

\typeout{References}

\end{document}